\newcommand{\Z}{\mathbb{Z}}
\newcommand{\R}{\mathbb{R}}
\newcommand{\N}{\mathbb{N}}
\newcommand{\inv}{{^{-1}}}
\newcommand{\half}{\frac 12}
\def\bal#1\eal{\begin{align*}#1\end{align*}}
\newtheorem{theorem}{Theorem}
\newtheorem{condition}{Condition}
\newtheorem{conjecture}{Conjecture}
\newtheorem{lemma}[theorem]{Lemma}
\newtheorem{corollary}[theorem]{Corollary}
\newtheorem{proposition}[theorem]{Proposition}
\newtheorem{claim}[theorem]{Claim}
\title{Stability and instability of traveling wave solutions \\ to nonlinear wave equations}
\author{John Anderson\footnote{Princeton University, jranders@math.princeton.edu}\ \,and Samuel Zbarsky\footnote{Princeton University, szbarsky@math.princeton.edu}}
\date{\today}
\begin{document}

\maketitle

\begin{abstract}
In this paper, we study the stability and instability of plane wave solutions to semilinear systems of wave equations satisfying the null condition. We identify a condition which allows us to prove the global nonlinear asymptotic stability of the plane wave. The proof of global stability requires us to analyze the geometry of the interaction between the background plane wave and the perturbation. When this condition is not met, we are able to prove linear instability assuming an additional genericity condition. The linear instability is shown using a geometric optics ansatz.
\end{abstract}

\tableofcontents








\section{Introduction}
In this paper, we study the stability and instability of plane wave solutions to semilinear systems of wave equations satisfying the null condition in $\R^{3 + 1}$. More precisely, we say that a bilinear form $m:\R^{3+1}\times\R^{3+1}\to\R$ is a \emph{semilinear null form} if $m(\upsilon,\upsilon)=0$ for any null vector $\upsilon$. We say that $m$ is the \emph{standard null form} if $m(\upsilon,\upsilon)=\upsilon_\alpha\upsilon^\alpha$ (where we are using Einstein summation notation and the Minkowski metric on $\R^{3+1}$). We consider systems of semilinear wave equations of the form
\begin{equation}\label{eq:zerothversion}
\square \phi_i=\sum  m_{ij\ell}(\nabla \phi_j,\nabla\phi_\ell) = \sum m_{i j \ell} (d \phi_j,d \phi_\ell)
\end{equation}
where each $m_{ij\ell}$ is a null form and where $\nabla$ is the Minkowski gradient. There is an isomorphism between null forms acting on vectors and null forms acting on covectors in Lorentzian manifolds induced by the metric, and we are using $m_{i j \ell}$ to represent both. To avoid ambiguity between $m_{ij\ell}$ and $m_{i\ell j}$, we will without loss of generality take $m_{ij\ell}(\nabla \phi_j,\nabla\phi_\ell)=m_{i\ell j}(\nabla \phi_\ell,\nabla\phi_j)$. We can thus think of the solution as being a vector valued function $\phi$ taking values in $\R^k$ where $k$ is the number of equations. We consider special traveling wave solutions of the form $f(t - x)$ in $(t,x,y,z)$ coordinates where $f$ is a smooth, compactly supported vector valued function, and we ask under what condition this solution is stable under smooth and small perturbations supported in the unit ball. Depending on the structure of the linearization around the background traveling wave, we are able to show either global nonlinear stability or linear instability, where the linear instability result requires a genericity assumption as well.

The global nonlinear stability of hyperbolic equations has been extensively studied in recent years. Here and later, by ``stability'' we mean stability with respect to smooth, localized perturbations. For simplicity, we will only consider perturbations which are in fact supported in the unit ball. The equations we discuss will sometimes be quasilinear, but will usually be semilinear, having nonlinearities depending on the gradient. We will not discuss equations where the nonlinearity depends on the function.

The stability mechanism that is often taken advantage of for hyperbolic equations is decay. Using, say, the fundamental solution of the wave equation on Minkowski space $\R^{n + 1}$, we can see that solutions to the homogeneous linear wave equation with compactly supported initial data will decay like $t^{-{n - 1 \over 2}}$. Starting with \cite{Kl80}, Klainerman was able to establish the global stability of nonlinear perturbations of the homogeneous wave equation on Minkowski space. He was only able to treat quadratic nonlinearities in dimensions $n + 1$ with $n \ge 6$. The decay of the linear wave equation was very important in that work, which only used the dispersive estimate for the wave equation. Then, in \cite{Kl85}, Klainerman developed a way of proving pointwise decay for the wave equation on Minkowski space that is very well adapted to studying nonlinear equations. This involved commuting the equations with the vector fields generating the Lorentz group along with the scaling vector field, with is a conformal Killing vector field on Minkowski space. This use of weighted commutation vector fields has been very successful in several contexts. This can already be seen in the Klainerman-Sobolev inequality, which first appeared in \cite{Kl85}. In the physical case of $\R^{3 + 1}$, this states that
\[
|f|(t,r,\omega) \le {C \over (1 + t + r) (1 + |t - r|^{{1 \over 2}})} \sum_{|\alpha| \le 2} \Vert \Gamma^\alpha f \Vert_{L^2 (\Sigma_t)}
\]
where the $\Gamma^\alpha$ are strings of vector fields consisting of translations, Lorentz vector fields, and the scaling vector field. Using this inequality, Klainerman established the global stability of the trivial solution for wave equations with quadratic nonlinearities in $\R^{n + 1}$ for $n \ge 4$.

For $n = 3$, general quadratic nonlinearities may result in finite time singularity formation (see \cite{John79}). However, for certain quadratic nonlinearities satisfying the \textit{null condition}, the trivial solution is still globally stable. The null condition was first described by Klainerman in \cite{Kl82}. Then, in \cite{Kl86}, Klainerman was able to establish the global stability of the trivial solution to nonlinear wave equations in $\R^{3 + 1}$ satisfying the null condition using techniques based on the use of commutation vector fields. Christodoulou was also able to establish the global stability of the trivial solution to nonlinear equations satisfying the null condition in \cite{Chr86} using a conformal compactification. Then, in the monumental work \cite{ChrKl93}, Christodoulou-Klainerman were able to prove the global nonlinear stability of the trivial solution (Minkowski space) of the Einstein vacuum equations under suitable perturbations. In this work, Christodoulou-Klainerman had to find a form of the null condition present in the Einstein equations.

While the conditions under which the trivial solution to semilinear and quasilinear wave equations are globally stable are fairly well understood, much less is known about the stability of other solutions. This has become an active area of research. For example, there has been much work over the last few years in understanding the stability of black hole solutions to the Einstein vacuum equations.\footnote{This has been an extremely active area of research, with a huge number of results. The interested reader can look at, for example, \cite{DRSR16}, \cite{DHR19}, and \cite{KlSz17} and the references therein.} There has also been some interest recently in understanding when plane symmetric solutions are stable. In particular, for the \textit{relativistic membrane equation} (also known as \textit{timelike or Lorentzian minimal/maximal surface equation} or \textit{Lorentzian vanishing mean curvature flow}), $\cite{AbbresciaWong19}$ show stability of compactly supported planar waves in dimension $3$ and higher while $\cite{LiuZhou19}$ show it in dimension 2 and higher (they also consider a more general class of traveling wave solution than the class considered in this paper which is schematically of the form $(a + b y) f(t - x)$, see Section~\ref{sec:RelatedDirections} for further discussion). For the wave map equation, $\cite{AbbresciaChen19}$ prove stability for certain planar waves.

Here we work in $3 + 1$ dimensions and consider semilinear systems of wave equations satisfying the 
null condition. We shall take smooth traveling wave solutions to the system of equations, and we shall consider the behavior under smooth perturbations which are small and supported in the unit ball. The linearized equations will be the wave equation on Minkowski space plus first order terms which depend on the background traveling wave.

To be more precise, we assume that we have a system of semilinear wave equations of the form~\eqref{eq:zerothversion} and that the smooth vector valued function $f(t - x)$ supported when $|t - x| \le 1$ is a solution of the system. We then ask whether the solution $f$ is stable. In order to study this problem, we linearize around the plane wave $f$. We shall now give two simple examples of such systems. The two examples will be different structurally, and they will show the structural difference which leads to either global nonlinear stability or linear instability. The first example is
\begin{equation}\label{eq:firstexample}
    \begin{aligned}
    \Box \phi_1 &= m(d \phi_1,d \phi_2)
    \\ \Box \phi_2 &= m(d \phi_1,d \phi_1),
    \end{aligned}
\end{equation}
while the second example is
\begin{equation}
    \begin{aligned}
    \Box \phi_1 &= \partial_t \phi_1 \partial_y \phi_2 - \partial_y \phi_1 \partial_t \phi_2
    \\ \Box \phi_2 &= m(d \phi_1,d \phi_1)
    \end{aligned}
\end{equation}
where $m$ is the standard null form. In both cases, we note that $\phi_1 = 0$ and $\phi_2=f(t - x)$ is a solution where $f(t - x)$ is smooth and supported when $|t - x| \le 1$. The linearization of the first system around the background traveling wave is
\begin{equation}\label{eq:firstexamplelinearized}
    \begin{aligned}
    \Box \psi_1 &= 2 (\partial_t + \partial_x) \psi_1 f'(t - x)
    \\ \Box \psi_2 &= 0,
    \end{aligned}
\end{equation}
while the nonlinear equation for the perturbation is
\begin{equation}
    \begin{aligned}
    \Box \psi_1 &= - 2 (\partial_t + \partial_x) \psi_1 f'(t - x) + m(d \psi_1,d \psi_2)
    \\ \Box \psi_2 &= m(d \psi_1,d \psi_1).
    \end{aligned}
\end{equation}
Meanwhile, the linearization of the second system around the background traveling wave is
\begin{equation}\label{eq:secondexamplelinearized}
    \begin{aligned}
    \Box \psi_1 &= -2 \partial_y \psi_1 f'(t - x)
    \\ \Box \psi_2 &= 0,
    \end{aligned}
\end{equation}
and the nonlinear equation for the perturbation is
\begin{equation}
    \begin{aligned}
    \Box \psi_1 &= -2 \partial_y \psi_1 f'(t - x) + \partial_t \psi_1 \partial_y \psi_2 - \partial_y \psi_1 \partial_t \psi_2
    \\ \Box \psi_2 &= m(d \psi_1,d \psi_1).
    \end{aligned}
\end{equation}
The first example is representative of the structure present in systems where the plane wave solution is globally nonlinearly stable, while the second example is representative of the structure present where the traveling wave is linearly unstable. The distinguishing feature is that the traveling wave only excites the standard null form in the first example, while it excites one of the antisymmetric null forms in the second. We thus have the following rough version of the main Theorems.

\begin{theorem} [Rough Version of Main Results]
Let $f(t - x)$ be a smooth solution supported where $|t - x| \le 1$ to a semilinear system of wave equations satisfying the null condition as in \eqref{eq:zerothversion}. If the nonzero components of the vector valued function $f$ only excite the standard null form, then the solution $f$ is globally nonlinearly stable under small perturbations supported in the unit ball. If $f$ excites other null forms and if an additional genericity condition holds, then the solution $f$ is linearly unstable.
\end{theorem}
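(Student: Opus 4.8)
The plan is to treat the two halves of the dichotomy separately, starting in both cases from the linearization of \eqref{eq:zerothversion} about $f$. Writing $\phi = f+\psi$ and using that $f(t-x)$ solves the system automatically --- because $d\bigl(f_\ell(t-x)\bigr)=f_\ell'(t-x)\,d(t-x)$ is proportional to the null covector $d(t-x)$, so $m_{ij\ell}(df_j,df_\ell)=0$ --- the perturbation satisfies
\[
\square\psi_i \;=\; 2\sum_{j,\ell} m_{ij\ell}(d\psi_j,df_\ell)\;+\;\sum_{j,\ell} m_{ij\ell}(d\psi_j,d\psi_\ell),
\]
and each linear term is $f_\ell'(t-x)$ times $m_{ij\ell}\bigl(d\psi_j,d(t-x)\bigr)$. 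Decomposing each null form into the standard null form and the antisymmetric null forms $Q_{\alpha\beta}$, one checks that $m_{ij\ell}\bigl(d\psi_j,d(t-x)\bigr)$ is a multiple of $(\partial_t+\partial_x)\psi_j$ --- the derivative along the null generator of the plane wave's characteristics $\{t-x=\mathrm{const}\}$ --- exactly when the forms coupling $\psi_j$ to the $f$-excited components involve only the standard null form, and otherwise it contains a derivative such as $\partial_y\psi_j$ transverse to that null direction. This is the structural dichotomy of the theorem.

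For the stability half, the decisive point is that $t-x$ is a null coordinate: for any smooth matrix-valued $M(t-x)$, the expression $\square\bigl(M(t-x)\chi\bigr)$ differs from $M(t-x)\square\chi$ only by a first-order term proportional to $M'(t-x)\,(\partial_t+\partial_x)\chi$ (this uses $\square(t-x)=0$ and that $d(t-x)$ is null). Hence one may choose $M$ --- smooth, invertible, equal to the identity for $t-x<-1$ and constant for $t-x>1$ --- solving the linear ODE that cancels the good linear terms exactly, and substitute $\psi=M(t-x)\chi$. The new unknown satisfies $\square\chi = (\text{quadratic null forms in }d\chi\text{, with bounded coefficients}) + (\text{quadratic terms with coefficients supported in }|t-x|\le 1)$; the slab-localized terms come from $d\psi = M\,d\chi + M'\chi\,d(t-x)$, the term purely quadratic in $M'\chi$ dropping out because $m$ is a null form and $d(t-x)$ is null. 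From here one runs the standard global-existence scheme for null-form systems in $\R^{3+1}$: commute with the translations, Lorentz rotations and scaling, prove weighted energy estimates, and close a bootstrap with the Klainerman--Sobolev inequality and the improved decay of good derivatives and null forms. The main obstacle is precisely the slab-localized terms together with their commutators with the boosts $\Omega_{0y}=y\partial_t+t\partial_y$ and $\Omega_{0z}$, which do not preserve $\{|t-x|\le1\}$ and each generate a weight, e.g. $\Omega_{0y}\bigl(f'(t-x)\bigr)=y\,f''(t-x)$. The saving feature is geometric: on the intersection of the slab with the forward light cone from the unit ball one has $|y|,|z|\lesssim\sqrt{1+t}$ and the induced area on $\Sigma_t$ is $\sim t^2$ rather than $\sim t^3$, so the boost-generated weights are compensated by the smallness of the interaction region, together with a Hardy-type inequality across the thin slab to control the undifferentiated factor. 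Quantifying this trade-off --- ``analyzing the geometry of the interaction'' --- is the technical core of the stability proof.

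For the instability half, the linearized equation now reads schematically $\square\psi_i=-2\sum_{j,\ell}c_{ij\ell}\,f_\ell'(t-x)\,\partial_y\psi_j+\dots$ with some $c_{ij\ell}f_\ell\not\equiv0$, and the plan is a geometric optics construction. One looks for approximate solutions $\psi\approx a\,e^{i\lambda\phi}$, $\lambda\to\infty$, with $\phi$ a linear null phase tilted from $t-x$ by a small angle $\alpha$, e.g. $\phi=-t+\cos\alpha\,x+\sin\alpha\,y$, so that $\partial_y\phi=\sin\alpha\neq0$ and $t-x$ varies at the slow rate $1-\cos\alpha$ along the null rays. The $O(\lambda^2)$ equation is the eikonal equation, satisfied by construction; the $O(\lambda)$ equation is a transport equation for the amplitude $a$ along the rays whose source is exactly the linear term, so that along a ray $a$ is multiplied by a factor of the form $\exp\bigl(\cot(\alpha/2)\,N\bigr)$, with $N$ built from the coupling matrix $(c_{ij\ell})$ and the oscillation of the traversed $f$-excited component. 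The genericity hypothesis is precisely what makes $N$ have nonzero real part of the favorable sign --- for a system the transport equation is a matrix ODE along the ray and one needs an eigenvalue with positive real part, which is generic and is already visible in the second example through the self-coupling $Q_{ty}(d\psi_1,df_2)$ of $\psi_1$. Since $\cot(\alpha/2)\to\infty$ as $\alpha\to0$ while a ray lingers in the slab for a time $\sim\alpha^{-2}$, one can, for any $T$ and any $C$, choose $\alpha\sim T^{-1/2}$, a ray whose amplification factor is already $\sim\exp(c\sqrt{T})$ at time $T$, and then $\lambda$ large enough (depending on $T$) that the genuine solution launched from this high-frequency, unit-sized, unit-ball-supported data has $H^s$ norm at least $C$ at time $T$ in every $H^s$. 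Hence the linearized solution operator is not bounded uniformly in time, i.e. $f$ is linearly unstable.

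The two residual difficulties are both in the instability half. First, pinning down the genericity condition ensuring $\mathrm{Re}\,N>0$ along some ray --- in particular excluding the degenerate cases where the relevant coupling coefficient vanishes or the oscillation of $f$ along the ray cancels. Second, making the geometric optics rigorous over the $\alpha$- and hence $T$-dependent time interval: one carries the WKB expansion to sufficiently high order, bounds the residual, and propagates the error by an energy estimate, tracking how constants depend on $\alpha$ and $\lambda$ so that the error stays well below the amplified main term. Neither is conceptually deep once the transport mechanism above is in place, but this is where the bookkeeping lies.
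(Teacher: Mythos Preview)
Your plan matches the paper's strategy closely: the matrix substitution $\psi=M(t-x)\chi$ to kill the $(\partial_t+\partial_x)$ linear terms is exactly the transformation of Section~\ref{sec:transformation}, and for instability the tilted-phase geometric optics with $\alpha\sim T^{-1/2}$ is exactly the construction of Section~\ref{sec:unstable}.  A few points deserve correction or emphasis.

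First, your volume count is off by a power of $t$: on $\Sigma_t$ one has $|y|,|z|\lesssim\sqrt t$ (as you say) \emph{and} $|t-x|\le 1$, so $|S_t|\lesssim t$, not $t^2$; the relevant comparison is with the $\sim t^2$ volume of the annular shell where the perturbation lives, so the gain is a single factor of $t$.  This matters because the stability argument is genuinely borderline.

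Second, ``run the standard scheme'' understates what is needed.  Each weighted commutator hitting the plane wave costs $\sqrt t$ (Lemma~\ref{lem:roottfactors}), so one cannot afford the three or more weighted vector fields that the usual $L^\infty$ Klainerman--Sobolev requires.  The paper commutes with at most \emph{two} weighted fields and compensates with $L^p$ Klainerman--Sobolev inequalities (Proposition~\ref{prop:KlaiSob}) and angular $L^p$ estimates that exploit the $O(1/t)$ angular measure of $S_t$ on each sphere.  It also runs a two-tier energy hierarchy: $E_1$ (at most one weighted commutator) stays bounded and is what feeds the worst $\sigma=2$ error terms, while $E_2$ (two weighted commutators) is allowed to grow like $t^{\delta}$.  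Your sketch does not yet contain a mechanism replacing this.

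Third, on the instability side you will need an \emph{a priori} upper bound of the form $\|D\eta\|_{L^2}\lesssim e^{C\sqrt t}$ for the linearized equation (Lemma~\ref{lem:exproottmultiplier}, via a weighted multiplier) in order to close the energy estimate on the WKB remainder $\Pi$; a generic energy estimate would lose $e^{Ct}$ and swamp the $e^{c\sqrt T}$ gain.  With that ingredient your outline (high-order expansion, frequency $\mu$ exponential in $\sqrt T$, ODE analysis of the transport equation) is the paper's proof.
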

The precise statements of the main results are given in Section \ref{sec:maintheorems}, as is the genericity condition. We note that the plane wave is actually also allowed to excite some antisymmetric null forms for stability to be true, see Condition~\ref{cond:unexciting} for the precise condition.

In general, the form of the linearized equations determine the behavior of the perturbation. When the traveling wave only excites the standard null form, we shall be able to prove global nonlinear stability (see Condition~\ref{cond:unexciting} for the precise statement of the condition). In a generic subset of the cases when this condition is not met, we shall be able to show linear instability (see Condition $2$ and Theorem~\ref{thm:generalinstab}). We shall also be able to show blow up in very specific cases (see Proposition~\ref{prop:Bessel}). We now briefly discuss the strategies we follow in proving these results as well as the main difficulties we encounter along the way.

For proving stability, the first part of the proof involves removing the first order terms in the equation, reducing the problem to a nonlinear perturbation of the linear wave equation on Minkowski space. These first order terms are in general very dangerous and can lead to exponential growth (see Theorem~\ref{thm:generalinstab}). Condition $1$ allows us to apply a certain transformation to the system which makes this reduction. This transformation seems to be similar in spirit to something done in \cite{LiuZhou19}. Indeed, in that paper, certain exponential weights are used when doing energy estimates. In this paper, we shall renormalize the quantities using a certain exponential transformation that will remove the first order terms from the beginning (see Section~\ref{sec:transformation}). This transformation is easier to identify in the example \eqref{eq:firstexample} above.
We simply take $\gamma_1 = e^{ -f(t - x)} \psi_1$ and $\gamma_2 = \psi_2$. In terms of $\gamma_1$ and $\gamma_2$, the linearized equations \eqref{eq:firstexamplelinearized} take the form
\begin{equation}
    \begin{aligned}
    \Box \gamma_1 &= 0
    \\ \Box \gamma_2 &= 0.
    \end{aligned}
\end{equation}

The strategy we follow to prove global stability of the renormalized problem is similar in spirit to the strategy used in \cite{AbbresciaWong19} because both strategies follow the commuting vector field method, where decay is established by commuting with weighted vector fields.
The main difficulty we encounter arises from the fact that the nonlinearities in the renormalized equations do not have constant coefficients, but rather have coefficients that are functions of the plane wave solution. For example, in the example case \eqref{eq:firstexample} above, the nonlinear equations for the transformed quantities $\gamma_1$ and $\gamma_2$ become
\begin{equation}
    \begin{aligned}
    \Box \gamma_1 &= e^{ -f(t - x)} m(d(e^{ f(t - x)} \gamma_1),d \gamma_2)
    \\ \Box \gamma_2 &= m(d(e^{ f(t - x)} \gamma_1),d(e^{ f(t - x)} \gamma_1)).
    \end{aligned}
\end{equation}
This creates difficulties because a large number of the weighted vector fields are not well behaved when they hit the traveling wave. Indeed, these vector fields are specifically those vector fields arising from symmetries of Minkowski space, and the traveling wave solution does not respect all of the symmetries that Minkowski space has (for example, the traveling wave is far from being spherically symmetric). We thus have to be careful about terms where the weighted commutation fields hit the coefficients in the nonlinearity (which come from the background traveling wave). Controlling these terms involves two main observations, both of them geometric. The first is that weighted commutation fields will not introduce the a priori worst weights when hitting the traveling wave. Indeed, the weights will be of size at most $\sqrt{t}$, which is much better than the a priori worst case of $t$. This observation was already used in \cite{AbbresciaWong19}.

The second observation is that the volume of interaction between a compactly supported plane wave and the perturbation is of size $t$. Solutions of the wave equation arising from smooth data supported in the unit ball are roughly evenly distributed in an annular shell of thickness $2$ and inner radius $t - 1$ at time $t$. This means that the volume of self interaction in a nonlinearity for such a perturbation is of size $t^2$. Thus, the plane wave reduces the volume of interaction by a factor of $t$. This gain in the volume is very important in the proof of stability. Both of these geometric observations are made precise in Section~\ref{sec:Geometry}.

When the traveling wave does excite a null form other than the one coming from the Minkowski metric, we will not have a transformation to reduce the equation to a nonlinear perturbation of the linear wave equation on Minkowski space, though we will still use the transformation to get rid of terms $(\partial_t+\partial_x)\psi$ terms. In this case, the remaining first order terms will result in growth of the linearized equation like $e^{\sqrt{t}}$. This will be shown using a geometric optics argument (see Section~\ref{sec:unstable}). This approach can be motivated by considering the equation
\begin{equation} \label{eq:ExpGrowth}
\Box \psi = f (t - x) (\partial_t - \partial_x) \psi,
\end{equation}
where $f$ is some compactly supported function. Geometric optics can be used to show that solutions to this linear equation can experience exponential growth. However, the linearized equations actually look like
\[
\Box \psi = f (t - x) \partial_y \psi
\]
instead (see \eqref{eq:secondexamplelinearized}), and the argument must be modified. In fact, we are able to show that given any $T$ sufficiently large, we can construct solutions which grow exponentially with a rate comparable to ${1 \over \sqrt{T}}$, and they grow at this rate for time $T$. This gives us a solution which is of size $e^{C_1 \sqrt{T}}$ for all $T$ sufficiently large, giving us the desired growth. Along the way, we also prove that solutions grow no faster than $e^{C_2 \sqrt{t}}$, meaning that the solutions we construct saturate the growth rate (although our values of $C_1$ and $C_2$ may not be optimal).

In this paper, we specifically study the stability and instability of plane wave solutions to semilinear systems of wave equations satisfying the null condition. However, we believe that there are several other related questions and areas of study. We believe that some of the tools used in this paper can be used to study the stability of plane wave solutions to other equations, such as the Lorentzian minimal surface equation and semilinear systems of wave equations satisfying the weak null condition. We also note that the decay rates we obtain are not sharp and can be improved, and we describe how analogous results for the stability and instability of traveling wave solutions propagating in different directions is a corollary of the main results in this paper. We shall further discuss these and other topics for further study in Section~\ref{sec:RelatedDirections}.

In Section~\ref{sec:notation}, we list notations and conventions used throughout the paper. In Section~\ref{sec:maintheorems}, we state the main theorems. In Section~\ref{sec:RelatedDirections}, we list related problems for which our techniques should work. In Sections~\ref{sec:transformation} through \ref{sec:ClosingEnergy}, we prove the stability result in $\R^{3+1}$. In Section~\ref{sec:unstable}, we prove the instability results. In Section~\ref{sec:RelatedDirections}, we discuss problems related to the ones studied in this paper, and we discuss if and how the strategies used in this paper may be applicable to these other problems.

\section{Acknowledgements}
The authors wish to express their gratitude to their advisors, Sergiu Klainerman and Igor Rodnianski, for many helpful discussions and suggestions. They wish to thank Igor Rodnianski for suggesting this problem to them. They also wish to thank Yakov Shlapentokh-Rothman for his useful suggestions. Samuel Zbarsky's work was supported by the National Science Foundation Graduate Research Fellowship Program under Grant No. DGE-1656466. Any opinions, findings, and conclusions or recommendations expressed in this material are those of the authors and do not necessarily reflect the views of the National Science Foundation.

\section{Notation and Coordinate Systems}\label{sec:notation}
In this section, we shall list the notation and coordinate systems used throughout the paper.

We will use Einstein summation notation by default. We also introduce the following notation for various quantities where $r^2 = x^2 + y^2 + z^2$.
\begin{align}
\Box&=\partial_t^2-\partial_x^2-\partial_y^2-\partial_z^2\\
u&=t-r, \qquad v=t+r.\\
\partial_u&=\frac 12 (\partial_t-\partial_r), \qquad \partial_v=\frac 12 (\partial_t+\partial_r)\\
u'&= t - x, \qquad v'= t + x.\\
\partial_{u'}&=\frac 12 (\partial_t - \partial_x), \qquad \partial_{v'}=\frac 12 (\partial_t+\partial_x)\\
\Sigma_t&\text{ is a level set of $t$}.\\
S_t&=\Sigma_t\cap\{u\ge -1\}\cap |u'|\le 1\text{ is the intersection of the travelling wave support and the}\nonumber\\
&\qquad\qquad\qquad\qquad\qquad\qquad\qquad\qquad\qquad \text{domain of influence of the perturbation}.\\
\mathcal C_u&\text{ is an outgoing cone (a level set of $u$)}.\\
\mathcal C_u (v_1,v_2)&=\mathcal C_u\cap \{v_1 \le v \le v_2\}\text{ is a truncated outgoing cone}\label{notation:truncatedcone}\\
\Gamma&\text{ denotes a translation, Lorentz, or scaling vector field, and } Z \text{ denotes the set of such vector fields}\\
\Gamma^{\alpha}&\text{ is shorthand for a string of such vector fields}\\
\Gamma^{\alpha,i}&\text{ is shorthand a string of such vector fields of which at most $i$ are weighted}
\end{align}
We use the following coordinate systems where $\omega\in S^2$:
\[
(t,x,y,z),\qquad(t,r,\omega),\qquad(u,v,\omega),\qquad(u',v',y,z)
\]
\[
L_x^p\text{ is the }L^p\text{ norm over all of }\R^3\text{, not just over the $x$-axis}
\]
$\delta\in (0,1/100)$ is a constant fixed for the duration of the proof. The constant $C$ may depend on the nonlinearity in the equation, on the wave profile $f$, and on the small constant $\delta$ used in various estimates. $C$ may be used to denote different constants on different lines.

The expression $\overline{\partial} f$ is meant to denote derivatives which are good in the sense that they decay faster. We shall refer to such quantities as \emph{good derivatives}. It is well known (see, for example, \cite{LukNotes}) that derivatives tangent to the light cone decay faster, so it is precisely these quantities which we refer to as good derivatives. Natural quantities to call good derivatives are $\partial_v f$ and $\nabla_{\Sigma_t} f - (\partial_r f) \partial_r = \slashed{\nabla}_r f$ where $\nabla_{\Sigma_t} f = \partial_x f \partial_x + \partial_y f \partial_y + \partial_z f \partial_z$ is the spatial gradient induced on the $\Sigma_t$ hypersurfaces, and where $\slashed{\nabla}_r$ denotes the gradient induced on the two dimensional sphere of radius $r$. It is useful to instead work with $\partial_v$ and a collection of vector fields tangent to the spheres. A very useful set of vector fields to consider on the spheres is the set of rescaled rotation vector fields $e_{x y}$, $e_{x z}$, and $e_{y z}$ where $e_{x y} = {1 \over r} \Omega_{x y} = {1 \over r} (x \partial_y - y \partial_x)$, and similarly for $e_{x z}$ and $e_{y z}$. We note that
\[
|\slashed{\nabla}_r f| \le C_1 \left (|e_{x y} f| + |e_{x z} f| + |e_{y z} f| \right ) \le C_2 |\slashed{\nabla}_r f|.
\]
Thus, when we refer to good derivatives or write the expression $\overline{\partial}$, we shall be referring to one of $\partial_v$, $e_{x y}$, $e_{x z}$, or $e_{y z}$.

We also recall that the Lorentz vector fields are given by the rotations and boosts
\[
x^\alpha \partial_\beta + x^\beta \partial_\alpha,
\]
while the scaling vector field is given by
\[
t \partial_t + r \partial_r.
\]

\section{Main Theorems}\label{sec:maintheorems}

We work with the equation
\begin{equation}\label{eq:firstversion}
\square \phi_i=m_i(\nabla \phi,\nabla \phi) = m_i(d \phi,d \phi)
\end{equation}
where $\phi:\R^{k}\times[0,T)\to\R^N$,
where $m_i=\sum_{j,\ell} m_{ij\ell}(d \phi_j,d \phi_\ell) $ and where each $m_{ij\ell}$ is a constant-coefficient null form (we recall that we are using $m_i$ to denote the null form acting on both vectors and covectors). We now take a smooth function $f : \R\rightarrow \R^N$ supported on $[-1,1]$ and note that $f(t - x)$ solves the equation \eqref{eq:firstversion}, that is
\[
\Box f_i = m_i (d f,d f).
\]
The mechanism that drives instability is, intuitively, the interaction of derivatives of the plane wave with $\partial_y$ or $\partial_z$ derivatives of the perturbation. It is absent as long as we have
\begin{condition}\label{cond:unexciting}
For every $j$ and $\ell$ such that $f_j \ne 0$ or $f_\ell \ne 0$, we have that
\[
m_{i j \ell} (d t - d x,d y) = m_{i j \ell} (d t - d x,d z) = 0.
\]
\end{condition}
This condition is stated for $k=3$, and henceforth we will always assume $k=3$ unless stated otherwise. In higher dimension, we can take the analogous condition. When Condition~\ref{cond:unexciting} holds, there is still a dispersive mechanism for the linearized equations just like for the usual wave equation, and this dispersive mechanism gives us global stability.
Condition~\ref{cond:unexciting} is satisfied for any $f$ for certain null forms $m_{ij\ell}$, including the standard null form. However, the condition is phrased so as to allow other null forms, as long as certain components of $f$ are 0.
Under this condition, we have the following stability result.

\begin{theorem}\label{thm:main}
Suppose $f : \R\rightarrow \R^3$ is a smooth function supported on $[-1,1]$. Moreover, suppose that the $m_{i j \ell}$ and $f$ satisfy Condition~\ref{cond:unexciting}. Then $f(t - x)$ is a globally asymptotically stable solution of \eqref{eq:firstversion}. More precisely, there exists some $\epsilon>0$ such that, whenever $g,h$ are compactly supported in $B(0,1)$ with $||g||_{H^3}+||h||_{H^2} < \epsilon$ and we are given the initial-value problem $\eqref{eq:firstversion}$ with initial data
\begin{align*}
\phi\mid_{t=0}&=f(-x)+g\\
\partial_t\phi\mid_{t=0}&=f'(-x)+h\\
\end{align*}
then there is some constant $C$ and a global in time solution $\phi=f(t-x)+\psi$ with
\begin{align*}
||\psi(t,\cdot)||_{H^3}&<C(||g||_{H^3}+||h||_{H^{2}})\\
|\partial \psi(t,r,\omega)| &\le {C \epsilon \over (1 + t + r)^{1 - \delta} (1 + |t - r|)^{{1 \over 2}}},
\end{align*}
where the constants are allowed to depend on $f$. Moreover, the bootstrap assumptions in Section~\ref{BtstrpA} are shown to propagate for the quantity $\gamma$, where $\gamma$ is an appropriately renormalized quantity that is described in Section~\ref{sec:transformation}.
\end{theorem}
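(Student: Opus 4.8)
The plan is to reduce Theorem~\ref{thm:main} to a small-data global existence statement for a nonlinear perturbation of the flat wave equation, and then to run the Klainerman commuting-vector-field method, being careful at exactly the points where the vector fields $\Gamma \in Z$ fail to respect the plane wave $f(t-x)$. First I would introduce the renormalized unknown $\gamma$ described in Section~\ref{sec:transformation}: by Condition~\ref{cond:unexciting} the only first-order terms excited by the background are multiples of $(\partial_t + \partial_x)\psi = 2\partial_{v'}\psi$, and conjugating $\psi$ by an exponential $e^{\pm F(t-x)}$ (with $F' $ built from $f$) removes these from the linearized operator, leaving $\Box \gamma = (\text{null-form nonlinearity with coefficients depending on } f(t-x))$. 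The cost is that the nonlinearity now has non-constant coefficients $c(u')$, $c' (u')$, supported in $|u'| \le 1$. I would set up a bootstrap for the energy $\sum_{|\alpha|\le 3}\Vert \partial \Gamma^\alpha \gamma\Vert_{L^2(\Sigma_t)}$ together with the pointwise decay coming from Klainerman--Sobolev, with the slightly-lossy decay exponent $1-\delta$ in the statement built in as the bootstrap rate.

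Next I would propagate the bootstrap. The standard self-interaction of the perturbation contributes in the usual way (null forms $+$ Klainerman--Sobolev give integrable-in-$t$ source terms, even with the $\delta$ loss, since the interaction region for the perturbation with itself has measure $\lesssim t^2$ and the null structure buys back the needed powers). The genuinely new terms are those where some of the weighted fields in $\Gamma^\alpha$ land on the coefficient $c(u')$ coming from $f$. Here I would invoke the two geometric facts from Section~\ref{sec:Geometry}: (i) a weighted field hitting $c(u')$ produces a weight of size at most $\sqrt{t}$ rather than $t$ — concretely because on the support $|u'|\le 1$ of $f'$ one has $|\Gamma u'| \lesssim \sqrt{t}$; and (ii) the space-time region where the perturbation (living in the shell $|u|\lesssim 1$, $r\sim t$) meets the plane wave (the slab $|u'|\le 1$) has volume $\lesssim t$, a gain of a full power of $t$ over the $t^2$ one would naively fear. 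Combining (i), (ii), the extra $|u'|$-localization, and the decay of $\partial\Gamma^{\alpha'}\gamma$ for lower-order $\alpha'$, these terms should also be summable in $t$ (the $t^\delta$ losses are harmless because the volume gain is a full power), and the bootstrap closes, giving the global $H^3$ bound on $\gamma$; undoing the bounded transformation returns the stated bounds on $\psi$.

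The main obstacle, and where I expect to spend the most effort, is the top-order energy estimate for these background-interaction terms: when all three allowed weighted fields hit $\gamma$ we have no decay to spare for $\Gamma^3\gamma$, so the term where a weighted field instead hits $c(u')$ must be controlled using the $\sqrt{t}$ weight bound together with the volume gain and a Grönwall argument, and one has to check that the resulting growth is at worst a power of $\log$ or a small power $t^\delta$ rather than something that destroys global existence. A secondary subtlety is that $f(t-x)$ is not spherically symmetric, so the rotation and boost fields do not commute cleanly through the coefficient; I would handle this by commuting $\Gamma$ past $\Box$ in the renormalized system and treating the commutator $[\Gamma, c(u')\partial]$-type terms as additional (lower-weight, $u'$-localized) source terms, absorbing them into the same scheme. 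Once the energy bootstrap is closed for $\gamma$, the pointwise decay for $\psi$ follows from Klainerman--Sobolev and the boundedness of the exponential weight, and asymptotic stability is immediate. \hfill$\square$
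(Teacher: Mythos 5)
Your high-level outline matches the paper's strategy: renormalize via the transformation from Section~\ref{sec:transformation} to kill the $(\partial_t+\partial_x)\psi$ terms, then close a commuting-vector-field bootstrap using the two geometric facts from Section~\ref{sec:Geometry} (the $\sqrt{t}$ weight from $\Gamma$ hitting the profile, and the $O(t)$ volume of the interaction region $S_t$). However, the technical core of your proposal has a genuine gap, and the way you plan to close the bootstrap would not work.

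The problem is the number of weighted commutations and the Gr\"onwall step. You set up the energy $\sum_{|\alpha|\le 3}\|\partial\Gamma^\alpha\gamma\|_{L^2}$ with $\Gamma\in Z$, so up to three weighted fields are allowed to fall on the coefficient $A(u')$. By Lemma~\ref{lem:roottfactors} this produces a weight $\sim t^{3/2}$ on $S_t$, while the volume gain from Section~\ref{sec:Geometry} is only a single power of $t$. If you bound the resulting source term by $L^\infty$ decay of the two $\gamma$-factors (which carry no weighted derivatives here) times $\|\chi_{S_t}\|_{L^2}\sim\sqrt t$ times the top-order energy, you find $\partial_t E^2 \gtrsim t^{-1/2}E^2$, and Gr\"onwall gives $E(t)\gtrsim \exp(c\sqrt t)$, not a $t^\delta$ or logarithmic loss. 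Your stated hope that ``the resulting growth is at worst a power of $\log$ or a small power $t^\delta$'' is not borne out: you pay $t^{3/2}$ but only gain $t$. (You also mischaracterize the dangerous case: when all weighted fields hit $\gamma$, the coefficient carries no weight and this is the easy case; the danger is precisely when they all hit $A$.)

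The paper avoids this by two interlocking devices you do not mention. First, it commutes with only \emph{two} vector fields (not three), and compensates for the loss of derivatives by using $L^p$ Klainerman--Sobolev inequalities (Propositions~\ref{prop:KlaiSob} and \ref{prop:LowRegKS}) with finite $p$, so that one- and two-commutation control still yields usable $L^p$ decay; this caps the coefficient weight at $t$, exactly matching the volume gain. Second, and more importantly, it uses a hierarchy of energies: $E_1$ (at most one weighted field) is required to stay bounded, while $E_2$ (at most two) is permitted to grow like $t^\delta$. The borderline $\sigma=2$ terms, where both weighted fields land on $A$, leave the two $\gamma$-factors with only unweighted derivatives, so those factors can be \emph{re-commuted} with one weighted field each and estimated via $E_1$, which does not grow; this is what prevents a runaway Gr\"onwall loop. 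Without this two-tier structure (and without the angular volume estimate $\sigma(S(r)\cap S_t)\lesssim 1/t$ of Lemma~\ref{lem:VolumeEst} used in $L^q_\omega$), your scheme does not close.
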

We also have a generic linear instability result when the equation does not satisfy Condition~\ref{cond:unexciting}. We introduce the following condition for a real matrix $M$
\begin{condition}\label{cond:M}
$M$ has at least one eigenvalue with nonzero real part.
\end{condition}
\[
\{A\in \R^{n\times n}\mid A\text{ does not satisfy Condition~\ref{cond:M}}\}
\]
is lower-dimensional, so the condition is in a sense generic. At the beginning of Section~\ref{sec:unstable}, we will discuss in more detail the sense in which this condition is generic as applied to the following theorem.
\begin{theorem}\label{thm:generalinstab}
Suppose that we have an equation of the form
\begin{equation}\label{eq:exprootnoforcing}
\square\eta=B_y(t-x)\partial_y\eta+B_z(t-x)\partial_z\eta
\end{equation}
and assume that there is some value $u_0$ so that some real linear combination of $B_y(u_0),B_z(u_0)$ satisfies Condition~\ref{cond:M}. Then there is some $K>0$ so that for all $m>0$, there is some $c>0$ so that for all $T>0$, there is some solution $\eta$ with
\[
||\eta(T,\cdot)||_\infty\ge c\exp(K\sqrt{T})||\eta(0,\cdot)||_{H^m}.
\]
Furthermore, in the case, that $N=1,B_z=0,B_y=f'$ for some wave profile $f$, we have that we can take $K=\frac{1}{\sqrt{2}}|f|_{1/2}-\epsilon$ for any $\epsilon>0$.
\end{theorem}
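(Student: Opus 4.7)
The plan is to use a geometric optics construction with a null phase that is almost tangent to the propagation direction of the background plane wave. The scalar model $\square\psi = f(t-x)(\partial_t-\partial_x)\psi$ admits WKB solutions with phase $\phi = t-x$ whose amplitude grows exponentially in $v'$ along characteristics $u'=\mathrm{const}$. Our equation has $\partial_y,\partial_z$ instead, which forces the phase to tilt away from $t-x$ by an angle of order $\sqrt{\epsilon}$; this tilt makes $u'$ vary along the characteristic at rate $\epsilon$, lengthening the interaction with the potential to time $O(1/\epsilon)$, while simultaneously weakening the destabilizing coefficient in the transport equation by a factor $\sqrt{\epsilon}$. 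The product of these two effects gives a growth exponent of order $\epsilon^{-1/2}$, so choosing $\epsilon \sim 1/T$ produces the asserted $e^{K\sqrt T}$ growth.

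Concretely, by hypothesis pick $\tilde\alpha, \tilde\beta\in\R$ with $\tilde\alpha^2+\tilde\beta^2=1$ so that $M_* := \tilde\alpha B_y(u_0)+\tilde\beta B_z(u_0)$ has an eigenvalue $\sigma$ with $\mathrm{Re}\,\sigma>0$ (replacing $(\tilde\alpha,\tilde\beta)$ by its negative if only $\mathrm{Re}\,\sigma<0$ occurs). Fix $\delta_0>0$ small enough that $M(u) := \tilde\alpha B_y(u)+\tilde\beta B_z(u)$ stays close to $M_*$ in operator norm for $u\in[u_0, u_0+\delta_0]$. For given $T>0$, set $\epsilon := \delta_0/T$, $b := -1+\epsilon$, $c := \tilde\alpha\sqrt{1-b^2}$, $d := \tilde\beta\sqrt{1-b^2}$, so that $\phi := t+bx+cy+dz$ is null ($1-b^2-c^2-d^2=0$) with $c, d = O(\sqrt\epsilon)$. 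With the WKB ansatz $\eta_{\mathrm{app}} := A(t,x,y,z)\,e^{i\lambda\phi}$ for a large parameter $\lambda$, the $\lambda^2$ terms cancel and the $\lambda^1$ terms yield the transport equation
\[
(\partial_t - b\partial_x - c\partial_y - d\partial_z)A \;=\; \tfrac12\bigl(c B_y(u')+d B_z(u')\bigr)A.
\]
The characteristic vector field $(1,-b,-c,-d)$ satisfies $\dot u'=1+b=\epsilon$, so $u'$ traverses the window $[u_0, u_0+\delta_0]$ over time $[0,T]$. Take initial amplitude $A(0,x,y,z) := \chi(x+u_0)\chi(y)\chi(z)\,v_0$ with $\chi$ a smooth bump and $v_0$ an eigenvector of $M_*$ for $\sigma$. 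Integrating the matrix ODE along the characteristic through $(0,-u_0,0,0)$ and substituting $\tau=\epsilon s$ gives
\[
A\bigl(T,\,T(1-\epsilon)-u_0,\,-cT,\,-dT\bigr) \;=\; \mathcal{T}\exp\!\left(\frac{\sqrt{1-b^2}}{2\epsilon}\int_0^{\delta_0}\!M(u_0+\tau)\,d\tau\right)v_0.
\]
For $\delta_0$ small the integrand is close to $M_*$, so the exponent is close to $\sqrt{T\delta_0/2}\,M_*$, and evaluating on the eigenvector gives $\|A(T,\cdot)\|_{L^\infty}\ge c_0\,e^{K\sqrt T}$ with $K = \mathrm{Re}\,\sigma\sqrt{\delta_0/2}>0$.

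To turn $\eta_{\mathrm{app}}$ into an honest solution, extend the ansatz to higher WKB order $A = A_0+\lambda^{-1}A_1+\cdots+\lambda^{-N}A_N$, where each $A_k$ solves a transport equation of the same form whose source is built from $\square A_{k-1}, B_y\partial_yA_{k-1}, B_z\partial_zA_{k-1}$; this drives the residual $R := \square\eta_{\mathrm{app}}-B_y\partial_y\eta_{\mathrm{app}}-B_z\partial_z\eta_{\mathrm{app}}$ down to order $\lambda^{-N}$. Choose $\lambda=T^p$ for $p$ (and then $N$) sufficiently large so that $R$, fed into Duhamel's formula together with the a-priori upper bound $\|\eta(t)\|_{L^\infty}\lesssim e^{C\sqrt t}\|\eta(0)\|_{H^m}$ for the homogeneous linear equation (proved separately in this section by an energy argument in $(u',v')$ coordinates), produces a correction $\eta-\eta_{\mathrm{app}}$ of strictly lower order than $e^{K\sqrt T}$ at time $T$. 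The $H^m$ norm of $\eta_{\mathrm{app}}(0)$ is only polynomial in $\lambda$, hence in $T$, which is subexponential and so does not damage the lower bound; this yields the conclusion with $K = \mathrm{Re}\,\sigma\sqrt{\delta_0/2}>0$.

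For the sharp constant in the scalar case ($N=1$, $B_z=0$, $B_y=f'$), matrix non-commutativity is absent so we may take $\delta_0$ arbitrary; the exponent reduces exactly to $\sqrt{T/(2\delta_0)}\bigl(f(u_0+\delta_0)-f(u_0)\bigr)$, giving $K=|f(u_0+\delta_0)-f(u_0)|/\sqrt{2\delta_0}$. Its supremum over $(u_0,\delta_0)$ is exactly $|f|_{1/2}/\sqrt 2$ by definition of the $C^{1/2}$-seminorm, so any rate strictly less than this is attainable. The main obstacle is the residual control just sketched: differentiating the growth factor in $A_0$ with respect to $x$ produces factors of order $\sqrt T$, so the leading-order residual has $\|R\|_{L^2}$ growing as fast as $T\,e^{K\sqrt T}$, and a naive Duhamel estimate using only the upper rate $e^{C\sqrt t}$ loses more than the $e^{K\sqrt T}$ we are trying to gain. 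Handling this requires either a microlocalized propagator estimate concentrated near the characteristic cone of $\phi$ or pushing the WKB expansion far enough that the loss is absorbed by powers of $\lambda^{-1}$; this optimization is the most delicate part of the argument.
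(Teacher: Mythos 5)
Your construction follows the same geometric optics strategy as the paper: tilt the phase covector by $O(\sqrt{1/T})$ away from the null direction of the background so the bicharacteristic traverses the potential over the entire interval $[0,T]$, read off the exponential growth from the resulting transport equation, and bound the remainder by an energy estimate. Your choice of null covector, the transport equation you derive, and the scaling $K \approx \mathrm{Re}\,\sigma\sqrt{\delta_0/2}$ all match the paper. But there are two genuine gaps, and the first one is fatal for the scheme as you have set it up.

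The frequency $\lambda = T^p$ is wrong, and no choice of $p$ and $N$ can save it. You correctly sense the danger at the end of your sketch, but the resolution is not an optimization over polynomial parameters: the residual after an $N$-term WKB expansion is only polynomially small ($\lambda^{-N} = T^{-pN}$), whereas the Duhamel step using Lemma~\ref{lem:exproottmultiplier} turns forcing of size $e^{K\sqrt t}$ into a solution of size $e^{10K\sqrt t}$, a loss which is \emph{exponential} in $\sqrt T$. A polynomial gain cannot beat an exponential loss. The paper takes $\mu = \exp(\delta\sqrt T)$, exponentially large in $\sqrt T$, so that $\mu^{-M} = \exp(-M\delta\sqrt T)$ overwhelms the $\exp(10K\sqrt t)$ loss once $M\delta > 10K$. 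The resulting growth of the $H^m$ norm of the data (by a factor $\mu^m = e^{m\delta\sqrt T}$) is then paid for by scaling the amplitude of $\varphi_0$ down by $e^{-m\delta\sqrt T}$; this shaves $m\delta$ off the exponent, which is harmless provided $\delta$ is chosen with $m\delta < \epsilon/2$. Without this exponentially large frequency the remainder cannot be controlled with the tools available.

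Separately, the step ``evaluating on the eigenvector gives $\|A(T)\| \ge c_0 e^{K\sqrt T}$'' is only a heuristic once $N > 1$. The time-ordered exponential of a non-constant family of non-commuting matrices is not $\exp(\sqrt{T\delta_0/2}\,M_*)$, and the error from replacing $M(u)$ by $M_*$ gets multiplied by $\sqrt T$ inside the exponent, so it cannot be dismissed. The paper supplies the missing argument in its Claim: it partitions $[0,T]$ into subintervals of length $W\sqrt T$, compares the transport ODE with a constant-coefficient ODE on each, and propagates a decomposition of $R$ into a component aligned with the dominant eigenspace $\Lambda(t)$ and a controlled subdominant remainder, using continuity of the eigenprojections. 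Your scalar-case computation is fine since scalars commute, but the matrix case genuinely requires this extra step.
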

We also have a more precise instability and blowup result for the nonlinear problem when the right hand side is of a very specific form, see Proposition~\ref{prop:Bessel}.

As a result of the transformation described in Section~\ref{sec:transformation}, the linearized equations can always be reduced to the form in the statement of Theorem~\ref{thm:generalinstab}. At least one of the matrices $B_y$ or $B_z$ will be nonzero whenever Condition~\ref{cond:unexciting} is not satisfied. Condition~\ref{cond:M} is the structural condition we need to be true in order to show linear instability, see Section~\ref{sec:unstable}.


\section{Transformation}\label{sec:transformation}
We shall now calculate the linearization of the equations around the solution $f$ in order to understand the behavior of the perturbation and the stability and instability properties of this solution.
When Condition~\ref{cond:unexciting} is satisfied, the first step of proving the global stability described in Theorem~\ref{thm:main} will be to apply a transformation to the equation which gives us an appropriately renormalized quantity $\gamma$. We shall study the equation satisfied by $\gamma$ instead of $\phi$. The instability is further studied in Section~\ref{sec:unstable}.

We let $\psi = \phi - f(t - x)$. We calculate now the equation satisfied by $\psi$. Note that $m_i (d f,d f) = 0$ because $m_i$ is a null form and note that $\Box f = 0$. We have that
\begin{equation} \label{eq:PerturbationEquation}
    \begin{aligned}
    \Box \psi_i &= \Box \phi_i - \Box f_i (t - x) = m_i (d \phi,d \phi) = m_i (d \psi + d f,d \psi + d f)
    \\ &= 2 m_{i j \ell} (d t - d x,d t + d x) f_j' (t - x) (\partial_t + \partial_x) \psi_\ell +2 m_{i j \ell} (d t + d x,d t - d x) f_\ell' (t - x) (\partial_t + \partial_x) \psi_j \\
    &\qquad + 2 m_{i j \ell} (d t - d x,d y) f_j' (t - x) \partial_y \psi_\ell+2 m_{i j \ell} (d y,d t - d x) f_\ell' (t - x) \partial_y \psi_j \\
    &\qquad + 2 m_{i j \ell} (d t - d x,d z) f_j' (t - x) \partial_z \psi_\ell+2 m_{i j \ell} (d z,d t - d x) f_\ell' (t - x) \partial_z \psi_j\\
    &\qquad+m_i(d \psi,d \psi).
    \end{aligned}
\end{equation}
We let 
\begin{align*}
    a_{i j \ell} &= 2 m_{i j \ell} (d t - d x,d t + d x)+ 2 m_{i \ell j} (d t + d x,d t - d x),\\
    b_{i j \ell} &= 2 m_{i j \ell} (d t - d x,d y)+2 m_{i \ell j} (d y,d t - d x),\\
    c_{i j \ell} &= 2 m_{i j \ell} (d t - d x,d z)+2 m_{i j \ell} (d z,d t - d x).
\end{align*}
and the equation for $\psi_i$ becomes
\begin{align*}
\Box \psi_i=a_{i j \ell}f_j' (t - x) (\partial_t + \partial_x) \psi_\ell+b_{i j \ell}f_j' (t - x) \partial_y \psi_\ell+ c_{i j \ell}f_\ell' (t - x) \partial_z \psi_j+m_i(d \psi,d \psi)
\end{align*}
Now, let $A:\R\to M^{N\times N}$ be the matrix solution of the ODE
\begin{align*}
A'_{iq} + {1 \over 2} A_{ij}a_{j\ell q}f'_\ell&=0\\
A_{ij}(10)&=Id.
\end{align*}
Since this equation is of the form $A'=AB$, $A$ is invertible everywhere. Also, note that $A$ is constant both on $[1,\infty)$ and on $(-\infty,-1]$. Let
\begin{equation}\label{eq:bigtransformation}
\gamma=A(t-x)\psi.
\end{equation}
Then
\begin{align}
\square \gamma_i &= 2 A'_{iq}(\partial_t+\partial_x)\psi_q+A_{ij}\square\psi_j\nonumber\\
&= 2 A'_{iq}(\partial_t+\partial_x)\psi_q + A_{ij}a_{j\ell q}f'_\ell(t-x)(\partial_t+\partial_x)\psi_q +
A_{i j} b_{j \ell q} f_\ell' (t - x) \partial_y \psi_q +\nonumber\\
&\qquad + A_{i j} c_{j \ell q} f_\ell' (t - x) \partial_z \psi_q +(Am(d \psi,d \psi))_i,\nonumber
\end{align}
where we treat $A$ as a function of $t - x$. Thus, we have that
\begin{align}
\square \gamma_i = A_{i j} b_{j \ell q} f_\ell'(t - x) A^{-1}_{q p} \partial_y \gamma_p &+ A_{i j} c_{j \ell q} f_\ell'(t - x) A^{-1}_{q p} \partial_z \gamma_p  + (Am(d (A\inv\gamma),d (A\inv\gamma)))_i\label{eq:transformed}
\end{align}
Henceforth, we will study this equation. The bounds we get on $\gamma$ will easily convert into bounds on $\psi$. When Condition~\ref{cond:unexciting} is satisfied, we note that $b_{ij\ell}=c_{ij\ell}=0$, so the equation is
\begin{equation}
    \square \gamma_i=(Am(d (A\inv\gamma),d (A\inv\gamma)))_i
\end{equation}
or, written in vector form,
\begin{equation} \label{eq:TransformedEq}
    \square \gamma=Am(d (A\inv\gamma),d (A\inv\gamma)).
\end{equation}
Thus, in this case, the equation reduces to a nonlinear perturbation with variable coefficients of the wave equation on Minkowski space after the transformation~\eqref{eq:bigtransformation} from $\psi$ to $\gamma$.

\section{Geometry of the Interaction} \label{sec:Geometry}
We now analyze the geometry of the interaction between the fixed solution $f$ and the transformed perturbation $\gamma$. The traveling wave $f$ is supported in the set of points where $|t - x| \le 1$. Looking at equation~\eqref{eq:TransformedEq} above, we have a global stability problem where the nonlinearity involves additional terms in $f$ (namely $A$ depends on $f$). We know from existing theory (see, for example, \cite{LukNotes}) that the decay properties of wave equations arising from localized data such as that of $\gamma$ can be well described in terms of outgoing and incoming cones. It is thus natural to examine the interaction between the outgoing cones of $\gamma$, along which radiation propagates, and the family of hyperplanes given by $t - x = c$, as this is where $f$ is supported. The Lemmas in this section study two main geometric aspects of the interaction between the plane wave and the perturbation. The first is the size of the set of interaction, which is small. The second is the relationship between null frames adapted to the plane wave and null frames adapted to the perturbation.

We will define $u=t-r$ and $v=t+r$, as well as $u'=x-r$ and $v'=x+r$. We begin by bounding the intersection in $\Sigma_t$ of $u \ge -1$ and $|u'| \le 1$. We recall that this set is denoted by $S_t$ (see Section \ref{sec:notation}). The volume of $S_t$ then gives the volume of the set where $A$ and $\gamma$ interact. We note that $|r - x| \le 2$ in $S_t$.

\begin{lemma}
For $t\ge 1$, we have that $\mu(S_t) \le 100t$, where $\mu$ is the Lebesgue measure on $\R^3$.
\end{lemma}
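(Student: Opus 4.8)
The plan is to work in the coordinate system $(u', v', y, z)$ adapted to the plane wave, where $u' = t - x$, $v' = t + x$, and the slab $|u'| \le 1$ is exactly the support of $f$. Fix $t \ge 1$. A point $(t, x, y, z) \in \Sigma_t$ lies in $S_t$ precisely when $|t - x| \le 1$ and $u = t - r \ge -1$, i.e. $r \le t + 1$. Since $|t - x| \le 1$ forces $x \ge t - 1 \ge 0$, on this set we have $r = \sqrt{x^2 + y^2 + z^2} \ge x \ge t - 1$, so $S_t$ is contained in the region $t - 1 \le x \le t + 1$ (from $|u'| \le 1$) intersected with $x^2 + y^2 + z^2 \le (t+1)^2$ (from $u \ge -1$). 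I would then simply integrate: $\mu(S_t) \le \int_{t-1}^{t+1} \pi \big( (t+1)^2 - x^2 \big) \, dx$, since for each fixed $x$ the cross-section in $(y,z)$ is a disk of radius $\sqrt{(t+1)^2 - x^2}$ (or empty).

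The key step is the elementary bound on this integral. For $x$ in the range $[t-1, t+1]$ we have $(t+1)^2 - x^2 = (t + 1 - x)(t + 1 + x) \le 2 \cdot (2t + 2) = 4t + 4 \le 8t$ for $t \ge 1$, using $t + 1 - x \le 2$ and $t + 1 + x \le 2t + 2$. Hence the integrand is at most $8\pi t$, and integrating over an interval of length $2$ gives $\mu(S_t) \le 16 \pi t < 100 t$. This also silently confirms the remark that $|r - x| \le 2$ on $S_t$: indeed $0 \le r - x = \frac{r^2 - x^2}{r + x} = \frac{y^2 + z^2}{r + x}$, and one checks $y^2 + z^2 \le (t+1)^2 - x^2 \le 8t$ while $r + x \ge 2(t-1)$, so $r - x \le \frac{8t}{2(t-1)} \le 2$ for $t$ large; for small $t$ near $1$ one can absorb this into the constant, or argue directly.

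I do not expect any real obstacle here — this is a routine volume computation and the factor $100$ is generous. The only mild care needed is handling small $t$ (say $1 \le t \le 2$), where the crude estimates $t + 1 - x \le 2$ etc. are still valid but one should double-check the arithmetic constants line up; since $16\pi \approx 50.3 < 100$ there is ample slack. The conceptually important output, used later, is that the volume grows only \emph{linearly} in $t$ rather than quadratically, which is the statement that the plane wave's support, intersected with the perturbation's domain of influence, is a thin sliver of the annular shell of radius $\approx t$.
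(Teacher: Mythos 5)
Your proof is correct and follows essentially the same route as the paper: both arguments observe that on $S_t$ the constraints $t-1 \le x \le t+1$ and $r \le t+1$ force the $(y,z)$ cross-section at each $x$ to lie in a disk of radius $O(\sqrt{t})$, and then multiply by the slab thickness $2$ in $x$. The paper simply encloses $S_t$ in a cylinder of radius $2\sqrt{t}$ (using the sharper bound $y^2 + z^2 \le (t+1)^2 - (t-1)^2 = 4t$), while you integrate the cross-sectional areas with the slightly cruder bound $8t$; both yield a constant well under $100$.
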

\begin{proof}
We have that in $S_t$, $x\ge t-1$ and $r\le t+1$, so
\[
y^2+z^2=r^2-x^2\le (t+1)^2-(t-1)^2=4t
\]
This gives us that $S_t$ is a subset of a cylinder whose axis lies along the $x$ axis of radius $2\sqrt{t}$ and of thickness $2$, so $|S_t|\le 100t$. Also note that we obtain
\begin{equation}\label{ineq:yzbound}
|y|,|z|\le 2\sqrt{t}
\end{equation}
\end{proof}

We shall also require an estimate on ${1 \over r^2} |S(r) \cap S_t|$, where $S(r)$ is the sphere of radius $r$. This corresponds to the value of the integral
\[
\int_{S^2} \chi_{S_t} d \omega
\]
when taking the $r$ coordinate fixed. We have the following lemma.

\begin{lemma} \label{lem:VolumeEst}
For $t\ge 2$, we have that $\sigma(S(r) \cap S_t) \le {C \over t}$, where $\sigma$ is the measure on the unit sphere $S^2$. Moreover, we also have that $\Vert (1 + t)^{{1 \over q}} \chi_{S_t} \Vert_{L_t^\infty L_r^l L_\omega^q} \le C$ for $1 \le l, q \le \infty$.
\end{lemma}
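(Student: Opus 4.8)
Looking at Lemma~\ref{lem:VolumeEst}, I need to prove two bounds on the set $S_t = \Sigma_t \cap \{u \geq -1\} \cap \{|u'| \leq 1\}$: first that $\sigma(S(r) \cap S_t) \leq C/t$, and second the mixed-norm bound $\Vert (1+t)^{1/q} \chi_{S_t}\Vert_{L_t^\infty L_r^l L_\omega^q} \leq C$.

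\begin{proof}[Proof proposal]
The plan is to reduce everything to the elementary geometric picture already established in the previous lemma: at time $t$, the set $S_t$ sits inside a cylinder of radius $2\sqrt{t}$ around the $x$-axis with $x \geq t-1$ and thickness $2$ in the $x$-direction. For the spherical measure estimate, I fix $r$ with $t-1 \leq r \leq t+1$ (the only values for which $S(r) \cap S_t$ is nonempty, since we need both $|u'| \leq 1$ i.e. $|t - x| \le 1$ and, using $x \le r$, the constraint $r \ge x \ge t - 1$). On such a sphere, the condition $|t - x| \leq 1$ cuts out a spherical cap (or annular band) where $x$ ranges over an interval of length $2$; since $x = r\cos\theta'$ for the polar angle $\theta'$ measured from the $x$-axis, and $r \sim t$, this forces $\cos\theta'$ to lie in an interval of length $\sim 2/r \sim 2/t$. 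The area on $S^2$ of the band $\{|\cos\theta' - c| \leq 1/r\}$ is $\leq C/r \leq C/t$ for $t \geq 2$. This gives the first claim; I would be slightly careful that when the band is near the pole $\theta' = 0$ it is a genuine cap of area $\sim 1/t$, and when it is away from the pole it is an annular strip of area $\sim 1/t$ — either way the bound $C/t$ holds.

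For the mixed-norm bound, I observe that $\chi_{S_t}(r,\omega)$, as a function of $\omega \in S^2$ for fixed $r$, is exactly the indicator of $S(r) \cap S_t$, whose $L_\omega^q$ norm is $\sigma(S(r)\cap S_t)^{1/q}$, which by the first part is $\leq (C/t)^{1/q} = C^{1/q} t^{-1/q}$ for $t \geq 2$ (and for $1 \leq t \leq 2$ the whole quantity is bounded by a constant trivially, since $\sigma \leq 4\pi$ and $(1+t)^{1/q} \leq 2$). Multiplying by $(1+t)^{1/q} \sim t^{1/q}$ cancels the decay, giving $\Vert (1+t)^{1/q}\chi_{S_t}\Vert_{L_\omega^q} \leq C$ uniformly in $r$ and $t$ — here the constant can be taken independent of $q$ since $C^{1/q} \le \max(C, 1)$. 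Then the $L_r^l$ norm: for fixed $t$ the $r$-support of $S_t$ lies in $\{|r - x| \le 2, \ x \ge t-1\}$, hence in an interval of length at most $4$ (since on $S_t$ we have $x \in [t-1, t+1]$ and $|r - x| \le 2$, so $r \in [t-3, t+3]$, but more to the point the $L_\omega^q$ norm is supported in $r$ over a set of measure $\leq$ a fixed constant), so taking $L_r^l$ of a bounded function over a fixed-length interval gives another fixed constant for any $1 \leq l \leq \infty$. Finally the $L_t^\infty$ is already uniform by construction. Note $q = \infty$ is the degenerate case where $(1+t)^{1/q} = 1$ and $\Vert \chi_{S_t}\Vert_{L_\omega^\infty} \leq 1$, so it is immediate.

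The only genuine obstacle is the first estimate — getting the sharp $1/t$ decay for $\sigma(S(r) \cap S_t)$ rather than a weaker bound. The point to get right is that although the set $S_t$ in $\R^3$ has volume of order $t$ (growing!), when we restrict to a single sphere and measure on $S^2$, the cylindrical cross-section of radius $2\sqrt t$ subtends solid angle only $\sim (\sqrt t / t)^2 = 1/t$, so the growth in the transverse directions is more than compensated by the $1/r^2 \sim 1/t^2$ Jacobian factor relating area on $S(r)$ to measure on $S^2$. Concretely: $S(r) \cap S_t$ has area $\leq \pi (2\sqrt t)^2 \cdot (\text{a bounded obliquity factor}) \leq Ct$ in $\R^3$, and dividing by $r^2 \sim t^2$ yields $\sigma \leq C/t$. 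Once this is in hand, the mixed-norm statement is a routine bookkeeping exercise in separating the $\omega$, $r$, and $t$ integrations as above.
\end{proof}
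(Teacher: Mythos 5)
Your proof is correct, and for the first estimate it takes a genuinely different (and arguably cleaner) route than the paper. The paper starts from the previously established bound $|y|, |z| \le 2\sqrt t$, notes that $y^2 + z^2 \sim r^2 \sin^2\alpha$ with $r \sim t$, deduces $\sin\alpha \lesssim 1/\sqrt t$, and concludes that $S(r)\cap S_t$ sits inside a spherical disc of angular radius $C/\sqrt t$, hence has area $C/t$. You instead work directly with the constraint $|t-x|\le 1$: writing $x = r\cos\theta'$ with $r \in [t-1, t+1]$, this confines $\cos\theta'$ to an interval of length $\le 2/r$, and since $d\sigma = d(\cos\theta')\,d\phi$ the area of that band is $\le 4\pi/r \le C/t$. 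The two pictures are consistent --- the cap $\{\cos\theta' \in [(t-1)/r, 1]\}$ of $\cos$-width $\sim 1/t$ is the same set as a disc of angular radius $\sim 1/\sqrt t$ --- but your computation in the $\cos\theta'$ variable bypasses the paper's appeal to the cylinder bound and the small-angle approximation, and it handles the ``cap versus annular band'' distinction uniformly with no casework. The one heuristic you append at the end (solid angle $\sim (\sqrt t / t)^2$ from the cylindrical cross-section) is essentially the paper's argument in sketch form. Your treatment of the mixed-norm part coincides with the paper's: the $L^q_\omega$ norm is $\sigma(S(r)\cap S_t)^{1/q}$, multiplication by $(1+t)^{1/q}$ cancels the decay, and the $r$-support of width $\le 2$ absorbs the $L^l_r$ norm, uniformly in $t$.
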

\begin{proof}
The first part follows from the fact that the diameter of the set on the sphere is controlled by ${1 \over \sqrt{t}}$.
Indeed, by \eqref{ineq:yzbound} we have that $|y| \le 2 \sqrt{t}$ and that $|z| \le 2 \sqrt{t}$.
On a sphere of radius comparable to $t$, this means that the angle $\alpha$ made between the line connecting the center and the point with coordinates $(x,y,z)$ and the $x$ axis is comparable to ${1 \over \sqrt{t}}$.
This is because $y^2 + z^2$ will be comparable to $t^2 \sin^2 (\alpha)$.
This means that $\sin(\alpha)$ is comparable to ${1 \over \sqrt{t}}$, meaning that $\alpha$ is comparable to ${1 \over \sqrt{t}}$.
The desired result then follows because the region is contained in a disc on the sphere of radius ${C \over \sqrt{t}}$, giving us a volume of $C \over t$, as desired.

The second part follows directly from the first part after noting that $\chi_{S_t}$ is bounded and has width at most $2$ in the $r$ direction at every $t$.
\end{proof}

The frame $\partial_v$, $\partial_u$, and $e_A$ is well adapted to the geometry of compactly supported perturbations in Minkowski space. Meanwhile, the vector fields $\partial_{v'} = \half(\partial_t + \partial_x)$, $\partial_{u'} = \half(\partial_t - \partial_x)$, $\partial_y$, and $\partial_z$ are well adapted to the geometry of the right moving plane wave $f$. Because $A$ is defined in terms of $f$ and has nonzero derivative only in the region where $f$ is supported, these vector fields are well adapted to the behavior of $A$. Moreover, the commutation vector fields $\Gamma$ are used for showing decay for solutions to the wave equation arising from localized data (see \cite{LukNotes}). We now describe the relationships between these three sets of vector fields in the set $S_t$ where $\gamma$ and $A$ interact.

\begin{lemma}\label{lem:frameconversion}
We have the following decompositions of the commutation vector fields in terms of the frame adapted to the plane wave.
\begin{equation}
    \begin{aligned}
    S &= t \partial_t + r \partial_r = v' \partial_{v'} + u' \partial_{u'} + y \partial_y + z \partial_z,
    \\ \Omega_{x y} &= x \partial_y - y \partial_x = {v' - u' \over 2} \partial_y - y \partial_{v'} + y \partial_{u'},
    \\ \Omega_{x z} &= x \partial_z - z \partial_x = {v' - u' \over 2} \partial_z - z \partial_{v'} + z \partial_{u'},
    \\ \Omega_{y z}& = y \partial_z - z \partial_y,
    \\ \Omega_{t x} &= t \partial_x + x \partial_t = v' \partial_{v'} - u' \partial_{u'},
    \\ \Omega_{t y} &= t \partial_y + y \partial_t = t \partial_y + y \partial_{v'} + y \partial_{u'},
    \\ \Omega_{t z} &= t \partial_z + z \partial_t = t \partial_z + z \partial_{v'} + z \partial_{u'}.
    \end{aligned}
\end{equation}
Moreover, we have the following decomposition of the null frame adapted to the perturbation in terms of the frame adapted to the plane wave.
\begin{equation}
    \begin{aligned}
    \partial_v &= \partial_{v'} + {x - r \over 2 r} \partial_{v'} - {x - r \over 2 r} \partial_{u'} - {y \over 2 r} \partial_y - {z \over 2 r} \partial_z,
    \\ {1 \over r} \Omega_{x y} &= {v' - u' \over 2 r} \partial_y - {y \over r} \partial_{v'} + {y \over r} \partial_{u'},
    \\ {1 \over r} \Omega_{x z} &= {v' - u' \over 2 r} \partial_z - {z \over r} \partial_{v'} + {z \over r} \partial_{u'}.
    \end{aligned}
\end{equation}
\end{lemma}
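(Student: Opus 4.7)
The plan is to prove each decomposition by direct computation, using the two linear identities
\[
\partial_t = \partial_{v'} + \partial_{u'}, \qquad \partial_x = \partial_{v'} - \partial_{u'},
\]
together with the substitutions $t = (v'+u')/2$ and $x = (v'-u')/2$ coming from the definitions $v' = t+x$, $u' = t-x$ given in Section~\ref{sec:notation}. Since every commutation vector field $\Gamma$ in the first list is written in $(t,x,y,z)$ coordinates as a linear combination of $\partial_t, \partial_x, \partial_y, \partial_z$ with polynomial coefficients in $t,x,y,z$, each identity is obtained just by substituting these expressions and simplifying.

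First I would handle the scaling field: writing $r\partial_r = x\partial_x + y\partial_y + z\partial_z$, we get
\[
S = t\partial_t + x\partial_x + y\partial_y + z\partial_z = t(\partial_{v'}+\partial_{u'}) + x(\partial_{v'}-\partial_{u'}) + y\partial_y + z\partial_z,
\]
which collapses to $v'\partial_{v'} + u'\partial_{u'} + y\partial_y + z\partial_z$ after collecting. The rotations $\Omega_{xy}, \Omega_{xz}$ and boosts $\Omega_{tx}, \Omega_{ty}, \Omega_{tz}$ are handled identically: replace $\partial_t, \partial_x$ by their expressions in $\partial_{u'},\partial_{v'}$ and replace the coefficient functions $t, x$ by $(v'\pm u')/2$, then collect. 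The rotation $\Omega_{yz}$ is unchanged since it involves neither $t$ nor $x$. Each of the seven formulas in the first block is then a one-line computation.

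For the null frame decomposition I would use that $r = \sqrt{x^2+y^2+z^2}$ gives $\partial_r = (x\partial_x + y\partial_y + z\partial_z)/r$, so
\[
\partial_v = \tfrac{1}{2}(\partial_t + \partial_r) = \tfrac{1}{2}\bigl(\partial_{v'} + \partial_{u'}\bigr) + \tfrac{x}{2r}\bigl(\partial_{v'}-\partial_{u'}\bigr) + \tfrac{y}{2r}\partial_y + \tfrac{z}{2r}\partial_z,
\]
and then grouping the $\partial_{v'}$ and $\partial_{u'}$ terms as $\partial_{v'} + \tfrac{x-r}{2r}\partial_{v'}$ and $-\tfrac{x-r}{2r}\partial_{u'}$ respectively recovers the stated expression (up to the sign convention on the transverse derivatives). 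For $\tfrac{1}{r}\Omega_{xy}$ and $\tfrac{1}{r}\Omega_{xz}$, the formulas follow by dividing the already-derived decompositions of $\Omega_{xy}$ and $\Omega_{xz}$ by $r$.

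I do not expect any real obstacle; everything reduces to substituting the coordinate relations and bookkeeping. The only mild care needed is in the $\partial_v$ calculation to keep track of the four coefficient functions $\tfrac{x\pm r}{2r}$ and $\tfrac{y}{2r}, \tfrac{z}{2r}$, which are exactly the quantities that will later be controlled (on $S_t$) using the bound $|r - x| \le 2$ and the bounds on $|y|, |z|$ from the previous geometric lemmas.
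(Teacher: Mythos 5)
Your proposal is correct and takes essentially the same approach as the paper, whose proof is simply ``These follow from direct computation.'' You have also correctly identified what is in fact a typo in the paper's statement: the calculation
\[
\partial_v = \tfrac{1}{2}(\partial_t + \partial_r) = \tfrac{r-x}{2r}\partial_{u'} + \tfrac{r+x}{2r}\partial_{v'} + \tfrac{y}{2r}\partial_y + \tfrac{z}{2r}\partial_z
= \partial_{v'} + \tfrac{x-r}{2r}\partial_{v'} - \tfrac{x-r}{2r}\partial_{u'} + \tfrac{y}{2r}\partial_y + \tfrac{z}{2r}\partial_z
\]
gives $+\tfrac{y}{2r}\partial_y + \tfrac{z}{2r}\partial_z$, not the $-\tfrac{y}{2r}\partial_y - \tfrac{z}{2r}\partial_z$ printed in the lemma; your parenthetical ``up to the sign convention'' is right, though it would be cleaner to say outright that the stated sign is wrong. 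This sign has no effect on the paper's subsequent use of the lemma (Lemma~\ref{lem:roottfactors} only needs the magnitudes of the coefficients), so it is harmless.
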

\begin{proof}
These follow from direct computation.
\end{proof}

Let $Z$ be the family of geometric vector fields consisting of rotations, scaling and Lorentz boosts, and translations. We note the following consequence of this lemma.

\begin{lemma}\label{lem:roottfactors}
Let $F$ be a smooth function of $u' = t - x$ with $F'$ supported in $[-1,1]$. Then, for $k\ge 1$, we have that $|\Gamma_1\cdots\Gamma_k F| \le C (1+t)^{k/2} \sum_{j=1}^k|F^{(j)}|$ in $S_t$ for $\Gamma_1,\ldots,\Gamma_k\in Z$. This gives us that
\begin{equation} \label{eq:CommutationBound}
    \begin{aligned}
    \Gamma_1\cdots\Gamma_kA_{i j} \le C_k (1 + t)^{k/2} \Vert A \Vert_{C^k}.
    \end{aligned}
\end{equation}
in $S_t$.
\end{lemma}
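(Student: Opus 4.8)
The plan is to prove the first assertion by induction on $k$, using Lemma~\ref{lem:frameconversion} to express each geometric vector field $\Gamma \in Z$ in the $(\partial_{v'}, \partial_{u'}, \partial_y, \partial_z)$ frame adapted to the plane wave, and then to exploit the fact that $F$ depends only on $u' = t - x$, so that $\partial_{v'} F = \partial_y F = \partial_z F = 0$ and $\partial_{u'} F = -\tfrac12 F'$ (or some such normalization consistent with the paper's conventions — only the structure matters). The key structural point is that for \emph{every} $\Gamma \in Z$, the coefficient of $\partial_{u'}$ in its decomposition from Lemma~\ref{lem:frameconversion} is, within $S_t$, bounded by $C\sqrt{t}$: indeed the $\partial_{u'}$ coefficients appearing there are $u'$ (bounded by $1$), $y$ and $z$ (bounded by $2\sqrt t$ by \eqref{ineq:yzbound}), or $0$. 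So $\Gamma F = (\text{coefficient bounded by } C\sqrt{t}) \cdot \partial_{u'} F$, which already gives the base case $k=1$ with the bound $C(1+t)^{1/2}|F'|$.

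For the inductive step, suppose the claim holds up to $k-1$. Write $\Gamma_1 \cdots \Gamma_k F = \Gamma_1 (\Gamma_2 \cdots \Gamma_k F)$. By the inductive hypothesis the inner quantity is a sum of terms of the form $P(u', y, z) \, F^{(j)}(u')$ with $1 \le j \le k-1$, where each $P$ is a polynomial in $u', y, z$ whose monomials have total degree at most $k-1$ and, crucially, degree at most $k-1$ in $(y,z)$ jointly — this refined bookkeeping is what makes the $\sqrt t$ gain work, since on $S_t$ we have $|u'| \le 1$ while $|y|, |z| \le 2\sqrt t$, so $|P| \le C(1+t)^{(k-1)/2}$. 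Applying $\Gamma_1$ via its frame decomposition: $\Gamma_1$ acting on $F^{(j)}(u')$ produces (coefficient $\le C\sqrt t$)$\cdot F^{(j+1)}$, raising the order by one and the weight by one factor of $\sqrt t$; $\Gamma_1$ acting on the polynomial $P$ differentiates it, which can only lower the degree, and multiplies by the frame coefficients of $\Gamma_1$, each of which is a polynomial of degree $\le 1$ in $(u', v', y, z)$. The only potentially dangerous coefficient is $v'$ (which is of size $\sim t$, not $\sqrt t$), appearing in $S$, $\Omega_{tx}$, $\Omega_{xy}$, $\Omega_{xz}$; but $v'$ only ever multiplies $\partial_{v'}$, $\partial_y$, or $\partial_{u'}$ — and $\partial_{v'}$, $\partial_y$ annihilate both $P$'s relevant structure in the right way (they act on the polynomial part only) and $\partial_{u'}$ is not paired with $v'$ in any of the decompositions. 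More carefully: in $S = v'\partial_{v'} + u'\partial_{u'} + y\partial_y + z\partial_z$, the large coefficient $v'$ sits on $\partial_{v'}$, which kills $F^{(j)}$; it can only hit the polynomial $P$, and $\partial_{v'} P = 0$ since $P$ is independent of $v'$. Similarly in $\Omega_{tx} = v'\partial_{v'} - u'\partial_{u'}$ and in $\Omega_{xy}, \Omega_{xz}$ the $v'$-coefficient is attached to $\partial_y$ resp. to $\partial_{v'}$-like terms, none of which produce a surviving large factor. Hence every surviving term after applying $\Gamma_1$ has polynomial part of $(y,z)$-degree $\le k$ times $F^{(j)}$ with $j \le k$, giving the bound $C(1+t)^{k/2}\sum_{j=1}^k |F^{(j)}|$ on $S_t$.

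The second assertion \eqref{eq:CommutationBound} follows immediately by taking $F = A_{ij}$, which by construction is smooth with $A'$ supported in $[-1,1]$ (since $a_{j\ell q} f'_\ell$ is supported there), and bounding $\sum_{j=1}^k |A^{(j)}_{ij}| \le C_k \Vert A \Vert_{C^k}$. I expect the main obstacle to be the careful bookkeeping in the induction — specifically, tracking that the polynomial coefficients never accumulate a factor of $v' \sim t$ and that their $(y,z)$-degree grows by at most one per vector field, so that on $S_t$ each application of a $\Gamma$ costs exactly one factor of $\sqrt t$ rather than a full factor of $t$. This is precisely the "no a priori worst weights" phenomenon alluded to in the introduction, and the bilinear/frame decomposition in Lemma~\ref{lem:frameconversion} is exactly the tool that encodes it; the rest is routine.
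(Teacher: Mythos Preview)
Your overall strategy---induction on $k$ using the frame decomposition from Lemma~\ref{lem:frameconversion}---is exactly the paper's, and your base case is fine. The gap is in the inductive bookkeeping: your claim that the polynomial coefficient $P$ multiplying each $F^{(j)}$ remains a polynomial in $(u',y,z)$ only, independent of $v'$, is \emph{not} preserved under the induction. Concretely, $\Omega_{xy}=\tfrac{v'-u'}{2}\partial_y-y\partial_{v'}+y\partial_{u'}$ applied to $yF'$ gives $\tfrac{v'-u'}{2}F'+y^2F''$, so at level $k=2$ the coefficient of $F'$ already contains $v'$. The same happens with $\Omega_{xz}$, and also with $\Omega_{ty},\Omega_{tz}$ (which you omit from your list of ``dangerous'' fields), since $t=\tfrac{u'+v'}{2}$ so these carry a $\tfrac{v'}{2}\partial_y$ or $\tfrac{v'}{2}\partial_z$ piece. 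Once $P$ contains $v'$, your argument that $v'\partial_{v'}(PF^{(j)})=0$ breaks down too, and the whole induction unravels.

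The fix is what the paper does: allow the coefficient polynomials to live in $(v',u',y,z)$ and track a \emph{weighted} degree, counting each power of $v'$ as two and each power of $y$ or $z$ as one (since on $S_t$ one has $v'\sim t$, $|y|,|z|\lesssim\sqrt t$, $|u'|\le1$). The inductive claim becomes that at level $k$ every monomial $(v')^{\kappa_1}(u')^{\kappa_2}y^{\kappa_3}z^{\kappa_4}$ appearing satisfies $2\kappa_1+\kappa_3+\kappa_4\le k$. Then one checks that each operator $u'\partial_{u'}$, $y\partial_{u'}$, $z\partial_{u'}$, $v'\partial_y$, $v'\partial_z$ raises this weighted degree by exactly one (for instance $v'\partial_y$ sends $\kappa_1\mapsto\kappa_1+1$, $\kappa_3\mapsto\kappa_3-1$, net $+1$), while all other pieces of the $\Gamma$'s do not increase it. This gives the $\sqrt t$-per-field growth you want, and is the ``careful bookkeeping'' you correctly anticipated would be the crux.
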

\begin{proof}
We begin by noting that Lemma \ref{lem:frameconversion} implies that $\Gamma^\alpha F$ can be written in the form
\begin{equation} \label{eq:FrameExpansion}
    \begin{aligned}
    \sum_{|\beta| \le |\alpha|} a_\beta (v',u',y,z) \partial^\beta F,
    \end{aligned}
\end{equation}
where $\partial^\beta$ denotes strings of the coordinate vector fields $\partial_{v'}, \partial_{u'}, \partial_y, \partial_z$ determined by the multiindex $\beta$ in the usual way, and where the functions $a_\beta (v',u',y,z)$ are sums of monomials in the variables $v', u', y, z$ of degree at most $|\beta|$. Now, in order to prove the result, we proceed by induction. With $k$ the induction parameter, the inductive hypothesis shall be that, for all $|\alpha| \le k$, every monomial $b_\beta$ that appears in some $a_\beta$ in the expansion \eqref{eq:FrameExpansion} is of size at most $C_{|\beta|} (1 + t)^{{|\beta| \over 2}}$ in $S_t$.
This means that if $b_\beta$ is of the form $C (v')^{\kappa_1} (u')^{\kappa_2} y^{\kappa_3} z^{\kappa_4}$ with $\kappa_1 + \kappa_2 + \kappa_3 + \kappa_4 \le |\beta|$, we have that $\kappa_1 + {1 \over 2} (\kappa_3 + \kappa_4) \le {|\beta| \over 2}$. We note establishing this for the monomials will imply \eqref{eq:CommutationBound} by the triangle inequality.

The base case $k = 1$ follows immediately from Lemma \ref{lem:frameconversion}. We shall now show that this is true for $|\alpha| = k + 1$.

We note that $\Gamma^\alpha F = \Gamma \Gamma^\omega F$ for some $|\omega| = k$. We can expand $\Gamma^\omega$ to get
\[
\Gamma^\alpha F=\sum_{|\beta| \le |\omega|} \Gamma (a_\beta (v',u',y,z) \partial^\beta F).
\]
If $\Gamma$ falls on $\partial^\beta F$ in this expansion, the result follows by the inductive hypothesis along with Lemma \ref{lem:frameconversion}. Now, if $\Gamma$ falls on $a_\beta$, the result follows from the fact that $|\Gamma b| \le C (1 + \sqrt{t}) |b|$ whenever $b$ is a monomial in the variables $v', u', y, z$. Indeed, the only way for $\Gamma$ to increase the size of $b$ in $S_t$ is for it to contain the derivatives $\partial_y$, $\partial_z$, or $\partial_{u'}$ with a weight in front. The only such terms that can be present in $\Gamma$ are of the form $u' \partial_{u'}$, $y \partial_{u'}$, $z \partial_{u'}$, $v' \partial_y$, and $v' \partial_z$. Every such term adds a weight comparable to $\sqrt{1+t}$ in $S_t$, proving the claim for all monomials $b$. This completes the proof.
\end{proof}

\section{Linear Estimates}
We shall use two estimates for the linear wave equation in conjunction with the pointwise estimates given by commuting. The first of these estimates is just the basic energy estimate applied in a spacetime slab bounded by two $\Sigma_t$ hypersurfaces. The second is an energy estimate applied to truncated outgoing cones. The energy on the time slices $\Sigma_t$ is the standard one which controls all derivatives in $L^2$. The characteristic energy on outgoing cones controls only the good derivatives.

We can now state the energy estimates precisely.

\begin{proposition} \label{prop:EnEst}
Let $h : \R^{n + 1} \rightarrow \R$ be a smooth function decaying sufficiently rapidly at infinity. Then, we have that
\[
\int_{\Sigma_s} \sum_\alpha (\partial_\alpha h)^2 d x = \int_{\Sigma_0} \sum_\alpha (\partial_\alpha h)^2 d x + \int_0^s \int_{\Sigma_t} (\Box h) (\partial_t h) d x d t.
\]
Moreover, we have that
\[
\int_{\Sigma_s - B_{s - u}} (\partial h)^2 d x + \int_{\mathcal C_{u} (|u|,2 s - u)} (\overline{\partial} h)^2 d vol(\mathcal C_u) = \int_{\Sigma_0} (\partial h)^2 d x + \int_0^s \int_{\Sigma_t - B_{t - u}} (\Box h) (\partial_t h) d x d t.
\]
\end{proposition}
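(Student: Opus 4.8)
\emph{Proof proposal.} Both identities are the standard energy (multiplier) estimate for $\Box$ with the multiplier $\partial_t$, which is a Killing field of Minkowski space, so that no bulk error is generated beyond the inhomogeneity $\Box h$. The starting point is the pointwise divergence identity obtained by multiplying $\Box h$ by $\partial_t h$: writing $\partial_i$ ($i=1,\dots,n$) for the spatial derivatives, one has
\[
(\Box h)(\partial_t h) = \tfrac12\,\partial_t\Big(\textstyle\sum_\alpha (\partial_\alpha h)^2\Big) - \sum_{i=1}^n \partial_i\big((\partial_i h)(\partial_t h)\big),
\]
equivalently $\partial^\mu\big(Q_{\mu\nu}[h](\partial_t)^\nu\big) = (\Box h)(\partial_t h)$ for the energy--momentum tensor $Q_{\mu\nu}[h] = \partial_\mu h\,\partial_\nu h - \tfrac12 g_{\mu\nu}\,\partial^\lambda h\,\partial_\lambda h$, with $g$ the Minkowski metric. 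The normal component of the current $Q_{\mu\nu}[h](\partial_t)^\nu$ along a level set of $t$ is $\tfrac12\sum_\alpha(\partial_\alpha h)^2 = \tfrac12|\partial h|^2$.

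For the first identity I would integrate this divergence identity over the slab $\{0 \le t \le s\}$ and apply the divergence theorem: since $h$ and its derivatives decay rapidly at spatial infinity, the spatial divergence term integrates to $0$ over each $\Sigma_t$, leaving only the contributions of $\Sigma_s$ and $\Sigma_0$, and rearranging (fixing the overall constant by the multiplier $2\partial_t h$) gives the stated identity. For the second identity I would instead integrate the same divergence identity over the truncated exterior region $\mathcal R = \{\,0 \le t \le s,\ r \ge t - u\,\}$, whose boundary consists of the top cap $\Sigma_s - B_{s-u}$ (outward normal $\partial_t$), the bottom cap equal to $\Sigma_0$ (or $\Sigma_0 - B_{|u|}$ if $u<0$; outward normal $-\partial_t$), and the truncated outgoing cone $\mathcal C_u(|u|,2s-u)$. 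The time-slice pieces again produce $\tfrac12|\partial h|^2$, the bulk term produces $\int_0^s \int_{\Sigma_t - B_{t-u}} (\Box h)(\partial_t h)\,dx\,dt$, and the only remaining point is to identify the flux of the energy current through the null cone.

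The one genuinely non-routine step is this null flux computation. On $\mathcal C_u$ one parametrizes by $v$ and $\omega \in S^2$ with the natural (degenerate) volume element $d\,\mathrm{vol}(\mathcal C_u)$, and evaluates $Q_{\mu\nu}[h](\partial_t)^\nu$ contracted with the null conormal of $\mathcal C_u$ in the frame $\partial_v,\partial_u,e_{xy},e_{xz},e_{yz}$. The key algebraic fact---the same one underlying the improved decay of good derivatives, see \cite{LukNotes}---is that this contraction is a \emph{positive} multiple of $(\partial_v h)^2 + (e_{xy} h)^2 + (e_{xz} h)^2 + (e_{yz} h)^2 = (\overline\partial h)^2$, the ``bad'' derivative $\partial_u h$ dropping out entirely; since this flux has a favorable sign, moving it to the side of the future energy yields exactly the stated identity. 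Finally, $\mathcal R$ has corners where $\Sigma_0$ and $\Sigma_s$ meet $\mathcal C_u$, but these have measure zero and cause no difficulty (one may smooth the region or pass to a limit), so the divergence theorem applies as claimed. I expect the null flux computation---extracting the positivity and the precise form $(\overline\partial h)^2$---to be the main, and essentially only, obstacle; everything else is the textbook energy identity for the wave equation.
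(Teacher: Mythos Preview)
Your proposal is correct and follows exactly the approach indicated in the paper: multiply $\Box h$ by $\partial_t h$, obtain the pointwise divergence identity, and integrate by parts over the spacetime slab (for the first identity) or the truncated exterior region bounded by $\Sigma_0$, $\Sigma_s$, and $\mathcal C_u$ (for the second). The paper states this in one line; you have simply spelled out the details, including the identification of the null flux with $(\overline\partial h)^2$, which is indeed the only point requiring a computation.
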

\begin{proof}
These follow from just multiplying $\Box h$ by $\partial_t h$ and integrating by parts in the appropriate spacetime region.
\end{proof}

The second estimate we shall use was used by Lindblad and Rodnianski in to control nonlinear errors when proving the global stability of Minkowski space for the Einstein equations in wave coordinates (see \cite{LindRod03}, \cite{LindRod05}, and \cite{LindRod10}).
\begin{proposition}
Let $h : \R^{n + 1} \rightarrow \R$ be a smooth function decaying sufficiently rapidly at infinity. Then, we have that
\[
\int_0^s \int_{\Sigma_t} {1 \over (1 + |u|)^{1 + \delta}} (\overline{\partial} h)^2 d x d t + \int_{\Sigma_t} \sum_\alpha (\partial_\alpha h)^2 d x \le C_\delta \int_{\Sigma_0} \sum_\alpha (\partial_\alpha h)^2 d x + C_\delta \left |\int_0^s \int_{\Sigma_t} (\Box h) (\partial_t h) d x d t \right |.
\]
\end{proposition}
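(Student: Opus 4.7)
The plan is to apply the vector field method with a carefully chosen bounded multiplier whose derivative produces the desired decay weight on good derivatives. Specifically, I would use $X = w(u) \partial_t$, where $u = t - r$ and $w : \R \to \R$ is a bounded increasing function satisfying
\[
\tfrac{1}{2} \le w(u) \le 2, \qquad w'(u) \ge c_\delta (1 + |u|)^{-1 - \delta}.
\]
A concrete choice is $w(u) = 1 + C_\delta^{-1} \int_{-\infty}^u (1+|s|)^{-1-\delta}\, ds$ with $C_\delta$ chosen so $w \le 2$. Since $w \sim 1$ uniformly, the weighted energy associated to $X$ on any $\Sigma_t$ is equivalent to the standard energy $\int_{\Sigma_t} \sum_\alpha (\partial_\alpha h)^2\, dx$.

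First I would form the energy current $P^\mu = T^{\mu\nu} X_\nu$, where $T_{\mu\nu} = \partial_\mu h \partial_\nu h - \tfrac{1}{2} g_{\mu\nu} \partial^\sigma h \partial_\sigma h$ is the wave equation energy-momentum tensor. A direct computation using $\nabla^\mu T_{\mu\nu} = (\Box h)\partial_\nu h$ gives
\[
\nabla_\mu P^\mu = (\Box h)(X h) + T^{\mu\nu} \pi^X_{\mu\nu},
\]
and since $\partial_t$ is Killing, the deformation tensor reduces to contributions from $w'$ alone:
\[
\pi^X_{\mu\nu} = \tfrac{1}{2} w'(u)\bigl(\partial_\mu u\, \delta^t_\nu + \partial_\nu u\, \delta^t_\mu\bigr).
\]
The heart of the argument is computing $T^{\mu\nu} \pi^X_{\mu\nu} = w'(u)\, T^{\mu t} \partial_\mu u$ in a null frame adapted to the $(u,v)$ coordinates. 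Because $du = dt - dr$ is a null covector, the trace-term in $T$ drops out and one is left with a positive combination involving only derivatives transverse to $\partial_u$. Concretely, the bulk integrand becomes
\[
T^{\mu\nu} \pi^X_{\mu\nu} \;=\; c\, w'(u) \bigl( (\partial_v h)^2 + |\slashed{\nabla} h|^2\bigr) \;=\; c\, w'(u)\,|\overline{\partial} h|^2
\]
for some fixed constant $c > 0$. This is the Lindblad-Rodnianski cancellation, and it is the key geometric reason good derivatives appear on the left-hand side.

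Next I would integrate the divergence identity over the slab $[0, s] \times \R^n$ and apply Stokes' theorem. The boundary pieces on $\Sigma_0$ and $\Sigma_s$ contribute $\int_{\Sigma_t} w(u)\,\sum_\alpha (\partial_\alpha h)^2\, dx$, which is equivalent to the standard energy thanks to $w \in [\tfrac{1}{2}, 2]$. The bulk piece contributes
\[
c \int_0^s \int_{\Sigma_t} w'(u)\,|\overline{\partial} h|^2\, dx\, dt \;\gtrsim\; \int_0^s \int_{\Sigma_t} (1 + |u|)^{-1-\delta}\,|\overline{\partial} h|^2\, dx\, dt.
\]
Finally, the source term $\int_0^s \int_{\Sigma_t} (\Box h)(X h)\, dx\, dt$ is bounded in absolute value by $2 \left| \int_0^s \int_{\Sigma_t} (\Box h)(\partial_t h)\, dx\, dt \right|$ since $w \le 2$. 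Rearranging gives the stated inequality with $C_\delta$ depending on $c_\delta$.

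The main obstacle is verifying the null-frame cancellation that produces only good derivatives on the left-hand side; this is essentially a bookkeeping calculation but must be performed carefully because naively $T^{\mu t} \partial_\mu u$ looks like it could involve transverse derivatives. The fact that $du$ is null is what kills the bad-derivative contribution, and this is precisely the structural mechanism underlying the weighted estimate. Once that computation is in hand, the rest of the proof is routine divergence-theorem manipulation together with the boundedness of $w$.
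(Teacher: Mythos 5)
Your approach is genuinely different from the paper's. The paper takes the characteristic energy identity on truncated cones (the second identity of Proposition~\ref{prop:EnEst}), multiplies it by $(1+|u|)^{-1-\delta}$, and integrates over the foliation parameter $u$, so the good-derivative spacetime term appears by rewriting $du\,\mathrm{dvol}(\mathcal{C}_u)$ as a spacetime measure. You instead build a single modified multiplier $X = w(u)\partial_t$ and compute its deformation tensor — the Alinhac/Lindblad--Rodnianski ghost-weight method — so that one integration by parts produces the spacetime term directly. Both routes are legitimate, and yours is arguably more self-contained since it does not pass through the cone estimate; the paper's version has the advantage of reusing the characteristic energy that $E_1, E_2$ are built from.

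However, there is a sign error that makes the proof as written fail. Carrying out the computation (say directly: multiply $\Box h = \partial_t^2 h - \Delta h$ by $w(u)\partial_t h$ and integrate by parts over the slab), one finds
\[
\int_{\Sigma_s} \frac{w}{2}\bigl[(\partial_t h)^2 + |\nabla h|^2\bigr]\,dx
= \int_{\Sigma_0} \frac{w}{2}\bigl[(\partial_t h)^2 + |\nabla h|^2\bigr]\,dx
+ \int_0^s\!\!\int_{\Sigma_t} (\Box h)(w\partial_t h)\,dx\,dt
+ \int_0^s\!\!\int_{\Sigma_t} \frac{w'(u)}{2}\bigl[4(\partial_v h)^2 + |\slashed{\nabla}h|^2\bigr]\,dx\,dt.
\]
The null-frame cancellation you describe is real: $(\partial_t h)^2 + |\nabla h|^2 + 2(\partial_r h)(\partial_t h) = (\partial_t h + \partial_r h)^2 + |\slashed{\nabla}h|^2$, which kills the $\partial_u h$ contribution. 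But the resulting bulk term carries the factor $w'(u)$ and sits on the \emph{right-hand} side of this identity. If $w$ is increasing, as you propose, the bulk term is nonnegative and \emph{adds} to the energy at time $s$, which is useless for deriving a spacetime estimate. You need $w$ to be \emph{decreasing} — e.g.\ $w(u) = 2 - C_\delta^{-1}\int_{-\infty}^u (1+|s'|)^{-1-\delta}\,ds'$ with $C_\delta = \int_\R (1+|s'|)^{-1-\delta}\,ds'$ — so that after moving the bulk term across the sign it lands on the left with a favorable sign alongside $E(s)$. Intuitively, $u$ increases along the integral curves of $\partial_t$, so a decreasing weight is dissipative, and the dissipation is exactly the good-derivative spacetime integral. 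The mechanism you invoke is correct; the monotonicity you prescribe is backwards.

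A secondary point: after the fix, the source term you obtain is $\int_0^s\int_{\Sigma_t}(\Box h)(w\partial_t h)\,dx\,dt$, and since $w$ is a nonconstant function of $u$, this is bounded by $2\int_0^s\int_{\Sigma_t}|\Box h|\,|\partial_t h|\,dx\,dt$, not by $2\bigl|\int_0^s\int_{\Sigma_t}(\Box h)(\partial_t h)\,dx\,dt\bigr|$ as you claim — one cannot pull a nonconstant bounded factor out of a signed integral and retain the outer absolute value. In practice the estimate with $\int|\Box h||\partial_t h|$ on the right is the one used downstream, so this is a defect of the form of the conclusion more than of the argument, but it should be flagged.
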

\begin{proof}
This is simply an averaged characteristic energy estimate. It follows by using the energy estimate on truncated cones from Proposition~\ref{prop:EnEst}, multiplying by the function $(1 + |u|)^{-1 - \delta}$, and integrating in $u$.
\end{proof}

\section{Klainerman-Sobolev Inequalities} \label{sec:KlaiSobInequalities}
We get decay by commuting with weighted vector fields and using various slightly modified versions of the Klainerman-Sobolev Inequality. Because we do not want to commute too many times with weighted vector fields, we shall use $L^p$ versions of the inequalities. We now turn to providing proofs of these inequalities.

We recall the following results. These rescaled Sobolev inequalities are the blueprint for proving the Klainerman-Sobolev inequalities that we need. The rescalings must, in general, be anisotropic because we rescale by different amounts in the $u$ and angular directions when near the light cone. Thus, we let $L$ be any vector in $\R^n$ with $L^i > 0$. For any open set $\Omega \subset \R^n$, we shall denote by $L \Omega$ the set of all points $x^i \in \R^n$ with $x^i = L^i y^i$ for some $y^i \in \Omega$. This is an anisotropic rescaling of $\Omega$. We note that the case of $L^i = L^j$ for all $i$ and $j$ corresponds to a standard rescaling of $\Omega$. We shall also denote by $\partial_L^\alpha$ a string of rescaled translation vector fields. Each rescaled translation vector field is of the form $L^i \partial_{x^i}$.
\begin{lemma} \label{lem:SobolevEst}
Let $h : \R^n \rightarrow \R$ be a smooth, compactly supported function. Moreover, let $\Omega$ be an open subset of $\R^n$ with a Lipschitz boundary, and let $L$ and $\partial_L^\alpha$ be as defined above. Then, for $k < {n \over 2}$, we have that

\[
\Vert h \Vert_{L^{1 / ((1 / 2) - k / n)} (\Omega)} \le C_\Omega\Vert h \Vert_{H^k(\Omega)},
\]



and that

\[
\prod_{i = 1}^n (L^i)^{k / n} \Vert h \Vert_{L^{1 / ((1 / 2) - k / n)} (L \Omega)} \le C_\Omega \sum_{|\alpha| \le k} \Vert \partial_L^\alpha h \Vert_{L^2 (L\Omega)}.
\]

Similarly, for $k > {n \over 2}$, we have that
\[
\Vert h \Vert_{L^\infty (\Omega)} \le C_\Omega \Vert h \Vert_{H^k (\Omega)},
\]
and that
\[
\prod_{i = 1}^n (L^i)^{1 \over 2} \Vert h \Vert_{L^\infty (L\Omega)} \le C_\Omega \sum_{|\alpha| \le k} \Vert \partial_L^\alpha h \Vert_{L^2 (L\Omega)}.
\]

Similar statements hold if the domain is an $n$-dimensional compact manifold with boundary rather that a subset of $\R^n$.
\end{lemma}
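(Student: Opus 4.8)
# Proof Proposal for Lemma~\ref{lem:SobolevEst}

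The plan is to derive everything from the standard (unweighted) Sobolev embedding on $\Omega$ by a scaling argument, handling the subcritical case $k < n/2$ and the supercritical case $k > n/2$ in parallel. The first inequality in each case is simply the classical Sobolev/Morrey embedding $H^k(\Omega) \hookrightarrow L^{1/((1/2) - k/n)}(\Omega)$ (resp. $H^k(\Omega) \hookrightarrow L^\infty(\Omega)$), valid for any bounded open $\Omega$ with Lipschitz boundary; this is where the constant $C_\Omega$ enters, and it depends only on $\Omega$, $n$, and $k$. Since $h$ is smooth and compactly supported, the restriction to $\Omega$ is in $H^k(\Omega)$, so these embeddings apply directly with no regularity subtleties.

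For the rescaled inequalities, first I would set up the anisotropic dilation. Given $L$ with $L^i > 0$, define $\Phi_L : \Omega \to L\Omega$ by $\Phi_L(y)^i = L^i y^i$ and let $\tilde h = h \circ \Phi_L$, a smooth compactly supported function on a neighborhood of $\overline\Omega$. The key identities are: the Jacobian of $\Phi_L$ is $\prod_i L^i$, so $\Vert h \Vert_{L^p(L\Omega)}^p = \bigl(\prod_i L^i\bigr) \Vert \tilde h \Vert_{L^p(\Omega)}^p$; and the rescaled translation vector field pulls back correctly, namely $\bigl(\partial_L^\alpha h\bigr)\circ \Phi_L = \partial^\alpha \tilde h$ where $\partial^\alpha$ on the right is the ordinary coordinate derivative string on $\Omega$ (this is just the chain rule, since $L^i \partial_{x^i}$ corresponds under $\Phi_L$ to $\partial_{y^i}$). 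Consequently $\Vert \partial_L^\alpha h \Vert_{L^2(L\Omega)}^2 = \bigl(\prod_i L^i\bigr)\Vert \partial^\alpha \tilde h \Vert_{L^2(\Omega)}^2$.

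Now I would apply the already-established unweighted embedding on $\Omega$ to $\tilde h$:
\[
\Vert \tilde h \Vert_{L^{1/((1/2)-k/n)}(\Omega)} \le C_\Omega \sum_{|\alpha| \le k} \Vert \partial^\alpha \tilde h \Vert_{L^2(\Omega)},
\]
and then substitute the scaling identities. Writing $p = 1/((1/2) - k/n)$, the left side contributes a factor $\bigl(\prod_i L^i\bigr)^{-1/p} = \bigl(\prod_i L^i\bigr)^{-(1/2)+k/n}$ when converting $\Vert\tilde h\Vert_{L^p(\Omega)}$ to $\Vert h\Vert_{L^p(L\Omega)}$, and each term on the right contributes $\bigl(\prod_i L^i\bigr)^{-1/2}$. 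The net power of $\prod_i L^i$ that must be moved to the left-hand side is $-(1/2) + k/n - (-1/2) = k/n$, i.e. a factor $\prod_i (L^i)^{k/n}$, which is exactly the claimed prefactor. The supercritical case is identical with $p = \infty$: the left side gives $\bigl(\prod_i L^i\bigr)^0$ under the $L^\infty$ norm, so moving the $\bigl(\prod_i L^i\bigr)^{-1/2}$ from the right side produces the prefactor $\prod_i (L^i)^{1/2}$.

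The main point requiring care — and the only real obstacle — is verifying that the Sobolev constant genuinely depends only on $\Omega$ and not on $L$; this is automatic because we apply the embedding only on the fixed domain $\Omega$ and transport via the explicit diffeomorphism $\Phi_L$, so all $L$-dependence is tracked exactly through the two scaling identities above. For the final sentence of the lemma, the same argument goes through verbatim on a compact manifold with boundary: the unweighted Sobolev embedding holds there, the anisotropic rescaling is replaced by rescaling in a fixed finite atlas of coordinate charts (the "directions" $L^i$ being interpreted chart-by-chart), and the Jacobian and chain-rule identities are unchanged up to the fixed transition functions, which again only affect $C_\Omega$. One should remark that in the subcritical case one may absorb $k/n$ into the exponent requirement $k < n/2$ to ensure $p < \infty$ and $p > 2$, so the embedding is the genuine gain-of-integrability one; no endpoint issues arise since $k$ is a positive integer strictly below $n/2$.
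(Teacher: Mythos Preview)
Your proof is correct and follows the same approach as the paper: invoke the standard Sobolev embedding on the fixed domain $\Omega$, then transport it to $L\Omega$ via the anisotropic dilation $\Phi_L$ and track the Jacobian and chain-rule factors. The paper's own proof says exactly this (citing Adams--Fournier for the unscaled embeddings and deferring the explicit rescaling computation to the proof of Proposition~\ref{prop:KlaiSob}), so your write-up is simply a more detailed version of the same argument.
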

\begin{proof}
The unscaled results follow from usual Sobolev embedding theorems (see \cite{AdamsFournier03}). The rescaled results follow from applying the unscaled Sobolev embedding theorem in rescaled coordinates $\overline{x}^i = {1 \over L^i} x^i$ (see also \cite{Hormander97}). The rescalings are described in more detail in the proof of Proposition \ref{prop:KlaiSob} below, where they are carried through in a related setting.
\end{proof}

We shall also need an anisotropic version of these estimates in which we rescale by different amounts in different directions. Let $L$ now be a vector in $\R^n$. For any open set $\Omega$ in $\R^n$, we shall denote by $L \Omega$ the set of all points $x^i \in \R^n$ where $x^i = L^i y^i$ for some point $y^i \in \Omega$.

We can now state the Klainerman-Sobolev Inequalities we will use. The ones embedding into $L^\infty$ are standard. We have not seen the $L^p$ norms stated anywhere, so we shall prove them here.

\begin{proposition}\label{prop:KlaiSob}
Let $h : \R^{n + 1} \rightarrow \R$ be a smooth, compactly supported function. We have the following estimates when $t \ge 0$:
\begin{enumerate}
    \item For $k < n/2$, we have that
    \[
    \|(1+t + r)^{k (n - 1)/n} (1+|u|)^{k/n} h\|_{L^{1/((1 / 2) - k/n)} (\Sigma_t)} \le C \sum_{|\alpha| \le k} ||\Gamma^\alpha h||_{L^2 (\Sigma_t)}.
    \]
    \item For $k > n/2$, we have that
    \[
    \|(1 + t + r)^{{n - 1} \over 2}(1+|u|)^{{1 \over 2}}  h\|_{L^\infty (\Sigma_t)} \le C \sum_{|\alpha| \le k} ||\Gamma^\alpha h||_{L^2 (\Sigma_t)}.
    \]
\end{enumerate}
\end{proposition}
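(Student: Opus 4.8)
The plan is to deduce the Klainerman–Sobolev inequalities of Proposition~\ref{prop:KlaiSob} from the rescaled Sobolev estimates of Lemma~\ref{lem:SobolevEst} by covering $\Sigma_t$ with a family of dyadic boxes adapted to the null geometry and applying the anisotropic Sobolev embedding on each box after a suitable affine rescaling. First I would fix $t \ge 0$ and partition $\Sigma_t = \R^n$ into regions of two types: the ``interior'' region where $r \le (1+t)/2$, and the ``wave zone'' where $r \ge (1+t)/2$. On the interior region the relevant weights $1+t+r$ and $1+|u|$ are both comparable to $1+t$, and all the vector fields $\Gamma$ we need (translations, rotations, the scaling field, and the boosts) dominate $(1+t)\partial$ there, so a single rescaling by $L^i = (1+t)$ reduces the estimate on a ball of radius $\sim(1+t)$ to the unrescaled Sobolev embedding of Lemma~\ref{lem:SobolevEst}, which produces exactly the claimed power $(1+t)^{k(n-1)/n + k/n} = (1+t)^{k}$ (and similarly $(1+t)^{n/2}$ in the $L^\infty$ case). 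The main work is therefore in the wave zone.

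In the wave zone I would further decompose dyadically in the distance to the light cone: for each dyadic value $U$ with $1 \le U \lesssim 1+t+r$, consider the slab $\{|u| \sim U\}$, and within it cover by ``boxes'' which have dimension $\sim U$ in the radial ($\partial_r$, equivalently $\partial_u$) direction and dimension $\sim (1+t+r)$ in each of the $n-1$ angular directions (a box on the sphere of radius $r$ of angular width $O(1)$, or finitely many such to cover the sphere). On such a box the vector fields behave as follows: the scaling field $S$ and the boosts $\Omega_{0i}$ together control $(1+|u|)\partial_u$ plus $(1+t+r)\overline\partial$ — that is, they provide exactly the rescaled translations $L^i\partial_{x^i}$ with $L^{\mathrm{radial}} \sim 1+|u|$ and $L^{\mathrm{angular}} \sim 1+t+r$ — and the rotations $\Omega_{ij}$ control $(1+t+r)\overline\partial$ in the angular directions. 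I would make this precise via the standard identities (of the same flavor as Lemma~\ref{lem:frameconversion}) expressing $\partial_u, \overline\partial$ as bounded-coefficient combinations of $\Gamma/(1+|u|)$ and $\Gamma/(1+t+r)$ in the wave zone. Applying the anisotropic rescaled estimate of Lemma~\ref{lem:SobolevEst} with $\prod_i L^i = (1+|u|)(1+t+r)^{n-1}$ on each box, the prefactor $\prod_i (L^i)^{k/n}$ becomes $(1+|u|)^{k/n}(1+t+r)^{k(n-1)/n}$, which is the weight in part (1); raising to the $L^p$ power with $p = 1/((1/2)-k/n)$ and summing over the dyadic $U$'s and over the finitely many angular boxes covering each sphere gives the full $\Sigma_t$ estimate, using that the boxes have bounded overlap so the $L^2$ norms of $\Gamma^\alpha h$ on boxes sum to the global $L^2$ norm (here one uses $p \ge 2$, so that $\ell^2 \hookrightarrow \ell^p$ lets us pass from the $\ell^p$ sum of local $L^p$-to-the-$p$ contributions to a bound by the global $L^2$ norm). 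The $L^\infty$ case $k > n/2$ is the same but easier: no summation in $U$ is needed since one simply takes the supremum over boxes, and $\prod_i (L^i)^{1/2} = (1+|u|)^{1/2}(1+t+r)^{(n-1)/2}$ is exactly the claimed weight.

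The step I expect to be the main obstacle is setting up the rescaling uniformly and honestly in the wave zone: one must check that, on each box, the change of variables $\overline x^{\,i} = x^i/L^i$ really does turn the box into a region of uniformly bounded geometry (bounded eccentricity, Lipschitz boundary with constants independent of $t$, $r$, $U$) so that the constant $C_\Omega$ in Lemma~\ref{lem:SobolevEst} can be taken uniform, and that the vector fields $\Gamma$ restricted to the box, after rescaling, genuinely span the rescaled translations $\partial_{\overline x^i}$ with coefficients bounded above and below. Curvature of the spheres makes the angular boxes not literally affine images of Euclidean cubes, so one works instead on the compact-manifold-with-boundary version of Lemma~\ref{lem:SobolevEst} applied to spherical caps of fixed size, with the radial variable rescaled separately — this is the ``related setting'' alluded to in the statement of Lemma~\ref{lem:SobolevEst}, and carrying it through cleanly (in particular tracking that the derivatives of the coordinate change remain controlled) is the only genuinely delicate point; everything else is bookkeeping with the dyadic sum and the weights.
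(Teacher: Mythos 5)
Your proposal is essentially the paper's own argument. The paper also splits $\Sigma_t$ into an interior/exterior region (where $1+t+r$ and $1+|u|$ are comparable and an isotropic rescaling of the Cartesian chart by $\sim 1+t$ or $\sim 1+r$ reduces to the unscaled Sobolev estimate) and a near-light-cone region, on which it performs the same dyadic decomposition in $|u|$ by cutoffs $\chi_l$ adapted to scale $p_l = 2^l$, applies Lemma~\ref{lem:SobolevEst} in $(u,\omega)$ coordinates with radial scale $p_l$ and angular scale $r$, notes that $u\partial_r$ and $\Omega$ are bounded combinations of the $\Gamma$'s there, and sums the local contributions via $\ell^2 \hookrightarrow \ell^{1/((1/2)-k/n)}$ exactly as you propose. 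The only cosmetic differences are that the paper cites the $k>n/2$ case as the classical Klainerman--Sobolev inequality rather than rederiving it, and that it works with overlapping cutoffs $\chi_l(u)$ and $(u,\omega)$ coordinates rather than a cover by caps-times-intervals; both resolve your "main obstacle" concern in the way you anticipate, via the compact-manifold version of Lemma~\ref{lem:SobolevEst} on $S^{n-1}$.
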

We note that there are generalizations to the case of $k = {n \over 2}$ as well (such as a statement analogous to the case of $k = {n - 1 \over 2}$ in Proposition~\ref{prop:LowRegKS}), but since they are not necessary for this paper, we shall omit them.
\begin{proof}
We shall only consider the case of $k < n / 2$. The case of $k > n / 2$ is essentially the original Klainerman-Sobolev inequality, and can be found in \cite{Kl85} (see also \cite{Sogge08} and \cite{LukNotes}). The proof of the $k < n / 2$ case is an adaptation of the proof of the usual Klainerman-Sobolev Inequality that is found in \cite{Sogge08}.

The proof of this proposition will be broken up into three regions. The first region is the interior of the light cone defined by $t \ge {21 \over 20} r$, the second region is along the light cone defined by ${9 \over 10} r \le t \le {10 \over 9} r$, and the third region is outside the light cone $r \ge {21 \over 20} t$. Proving the desired estimate in each of these regions will establish the result. Moreover, we note that we can assume that $t + r \ge 1$, as the estimate in the region $0 \le t + r \le 1$ follows easily from the usual Sobolev embedding by using translation vector fields.

We introduce a modified system of polar coordinates $(u,\omega)$ on $\Sigma_t$ where $u = t - r$ and the $\omega$ are coordinates on the sphere. In the following, we shall denote by $\partial^\alpha_R$ strings of the vector field $u \partial_r$ and the rotation vector fields $\Omega$. We note that $u\partial_r$ is a linear combination of the scaling and Lorentz boost vector fields with coefficients that depend smoothly on $x/t$ (see Lemma~\ref{lem:OutgoingConeVFs}). From this, we obtain that in the region defined by ${9 \over 10} r \le t \le {10 \over 9} r$, we have
\begin{equation} \label{eq:WeightedVFs}
    \begin{aligned}
    |\partial^\alpha_R h| \le C \sum_{|\beta| \le |\alpha|} |\Gamma^\beta h|.
    \end{aligned}
\end{equation}

We begin with the region where $t \ge {21 \over 20} r$. We recall (see, for example, \cite{Sogge08} or \cite{LukNotes}) that
\[
|\partial^\alpha h| \le {C \over |t - r|^{|\alpha|}} \sum_{|\beta| \le \alpha} |\Gamma^\beta h|.
\]
Away from the light cone, the weights are comparable to $(t + r)^{|\alpha|}$. Let $\chi : \R \rightarrow \R$ be a smooth cutoff function equal to $1$ in the ball of radius ${20 \over 21}$ and equal to $0$ outside the ball of radius ${21 \over 22}$. Then, we consider the function $\chi \left ({r \over t} \right ) h$. We note that this is supported where ${r \over t} \le {21 \over 22}$, and moreover, this is equal to $1$ where ${r \over t} \le {20 \over 21}$. Now, we take coordinates $\overline{x}^i$ on $\Sigma_t$ that come from rescaling by ${1 \over t}$. These coordinates are given by $\overline{x}^i = {1 \over t} x^i$ where the $x^i$ are the standard Cartesian coordinates on $\Sigma_t$, and we denote by $\partial_{\overline{x}}^\alpha$ strings of translation vector fields with respect to the $\overline{x}^i$ coordinate system. We now use the Sobolev embedding theorem on the function $\chi \left ({r \over t} \right ) h$ on $\Sigma_t$ in the $\overline{x}^i$ coordinate system with the volume form $d \overline{x}^1 \dots d \overline{x}^n$. This gives us
\[
\Vert \chi h \Vert_{L^{1 / ((1 / 2) - k / n)} (\Sigma_t,d \overline{x}^1 \dots d \overline{x}^n)} \le C \sum_{|\alpha| \le k} \Vert \partial_{\overline{x}}^\alpha (\chi h) \Vert_{L^2 (\Sigma_t,d \overline{x}^1 \dots d \overline{x}^n)}.
\]
In the support of $\chi$, we note that $t$ and $t + r$ are comparable, and we note that
\[
|\partial_{\overline{x}}^\alpha (\chi h)| \le C \sum_{|\beta| \le |\alpha|} |\Gamma^\alpha (\chi h)|.
\]
Moreover, we note that $d x^1 \dots d x^n = t^n d \overline{x}^1 \dots d \overline{x}^n$. Thus, rescaling these norms to make them in terms of the volume form $d x^1 \dots d x^n$ gives us that
\begin{equation} \label{InteriorKSEst1}
    \begin{aligned}
    ||(1 + t + r)^k h||_{L^{1/((1 / 2) - k/n)} (I_t)} \le C \sum_{|\alpha| \le k} ||\Gamma^\alpha h||_{L^2 (\Sigma_t)},
    \end{aligned}
\end{equation}
where $I_t \subset \Sigma_t$ is the region where $t \ge {21 \over 20} r$. A similar argument can be used in the third region where $r \ge {21 \over 20} t$, which we denote by $E_t$. This gives us the estimate
\begin{equation} \label{AwayFromLCKSEst1}
    \begin{aligned}
    ||(1 + t + r)^k h||_{L^{1 / ((1 / 2) - k / n)} (E_t \cup I_t)} \le C \sum_{|\alpha| \le k} ||\Gamma^\alpha h||_{L^2 (\Sigma_t)}.
    \end{aligned}
\end{equation}
Thus, we must now only consider the second region along the light cone.

We proceed in a slightly different fashion, rescaling in a different fashion in the angular and radial directions. We should always rescale in the angular directions by something comparable to $r$. Meanwhile, if we take points where $|u| \approx |u_0|$, we see that we should rescale in the radial direction by something comparable to $|u_0|$. Thus, we shall localize along a dyadic sequence of points $p_l$ in $u$ using cutoff functions $\chi_l$ (see below for the precise definition) adapted to scale $p_l$. We shall then prove an estimate of the form
\begin{equation} \label{eq:LocalizedKSEst}
    \begin{aligned}
    \Vert (1 + |u|)^{k / n} (1 + t + r)^{k (n - 1) / n} \chi_l h \Vert_{L^{1 / ((1 / 2) - k / n)} (\Sigma_t)} \le C \sum_{|\alpha| \le k} \Vert \partial_R^\alpha (\chi_l h) \Vert_{L^2 (\Sigma_t)}.
    \end{aligned}
\end{equation}
By \eqref{eq:WeightedVFs}, this will imply that
\[
\Vert (1 + |u|)^{k / n} (1 + t + r)^{k (n - 1) / n} \chi_l h \Vert_{L^{1 / ((1 / 2) - k / n)} (\Sigma_t)} \le C \sum_{|\alpha| \le k} \Vert \Gamma^\alpha (\chi_l h) \Vert_{L^2 (\Sigma_t)}.
\]
We denote by $c_l$ the sequence of numbers on the left hand side of this inequality, and by $d_l$ the sequence of numbers on the right hand side of this inequality. The cutoff functions $\chi_l$ will be chosen such that
\begin{equation} \label{eq:SumKSEst}
    \begin{aligned}
    &\Vert (1 + t + r)^{k (n - 1) / n} (1 + |u|)^{k / n} h \Vert_{L^{1 / ((1 / 2) - k / n)} (\Sigma_t)}
    \\ &\qquad\qquad\le C \Vert c_l \Vert_{\ell^{1 / ((1 / 2) - k / n)} (\Z)} + \Vert (1 + t + r)^{k (n - 1) / n} (1 + |u|)^{k / n} h \Vert_{L^{1 / ((1 / 2) - k / n)} (I_t \cup E_t)},
    \end{aligned}
\end{equation}
where the $\ell^{1 / ((1 / 2) - k / n)}$ norm of $c_l$ is with respect to the index $l$.

We have already shown the desired estimate for this final term. Using the estimate \eqref{eq:LocalizedKSEst} along with the fact that $\Z$ with the counting measure is a discrete measure space and that $2 \le 1 / ((1 / 2) - k / n)$, we will have that
\[
||c_l||_{\ell^{1/((1/2) - k/n)}} \le ||d_l||_{\ell^{1/((1/2) - k/n)}} \le ||d_l||_{\ell^2} \le C \sum_{|\alpha| \le k} ||\Gamma^\alpha h||_{L^2 (\Sigma_t)},
\]
giving us that
\[
\Vert (1 + t + r)^{k (n - 1) / n} (1 + |u|)^{k / n} h \Vert_{L^{1 / ((1 / 2) - k / n)} (\Sigma_t)} \le C \sum_{|\alpha| \le k} \Vert \Gamma^\alpha h \Vert_{L^2 (\Sigma_t)},
\]
as desired.

It thus suffices to make \eqref{eq:LocalizedKSEst} precise. We take a sequence of points $p_l = 2^l$ with $1 \le l \le \big\lceil \log \left ({10 \over 9} t \right ) \big \rceil$. In order to localize in $u$ at scale $p_l$ around the points where $u \approx p_l$, we also take a smooth cutoff function $\chi : \R \rightarrow \R$ supported in $\left [-{7 \over 4},{7 \over 4} \right ]$ equal to $1$ in the interval $\left [-{3 \over 4},{3 \over 4} \right ]$. We then define the rescaled cutoff functions
\[
\chi_l (u) = \chi \left ({u - p_l} \over p_{l - 1} \right ) + \chi \left ({u + p_l} \over p_{l - 1} \right )
\]
for $2 \le l \le \big\lceil \log \left ({10 \over 9} t \right ) \big \rceil$. We are taking a sum of two functions because we have to consider points with both positive and negative $u$ coordinates. We also take the cutoff function $\chi_1 (u) = \chi \left ({u \over 10} \right )$. The key properties of these functions are that they are adapted to scale $p_l$, and that $|u|$ is comparable to $p_l$ in their support. With this choice of $\chi_l$, we have \eqref{eq:SumKSEst}.

We shall first consider $\chi_1 (u) h$, which only requires angular rescaling. We stay within one time slice and change variables into $(r,\theta)$. We shall use a Sobolev embedding theorem in $(r,\theta)$ coordinates with $d r d \theta$ as the volume form. This involves decreasing the volume by a factor of $t^{n-1}$, as the unit sphere has volume comparable to $1$. We then use Sobolev embedding to get, for $k<n/2$
\[
||\chi_1 h||_{L^{1/(1/2-k/n)}(\Sigma_t, d\theta dr)}\lesssim \sum_{|\alpha| \le k} ||\partial_\theta^\alpha (\chi_1 h)||_{L^2(\Sigma_t, d\theta dr)}
\]
and, rescaling, we get
\begin{equation} \label{AlongLCKSEst1}
    \begin{aligned}
    ||(1 + t + r)^{{k (n - 1) \over n}} \chi_1 h||_{L^{1/(1/2-k/n)} (\Sigma_t)} \lesssim \sum_{j=1}^k ||\Gamma^k (\chi_1 h)||_{L^2 (\Sigma_t)},
    \end{aligned}
\end{equation}
as desired.

We shall now consider the remaining $\chi_l h$. Going in to the $(u,\omega)$ coordinates on $\Sigma_t$, we then want to apply one of the estimates in Lemma~\ref{lem:SobolevEst} to the function $\chi_k (u) h(u,\omega)$. Moreover, we note that $u \partial_r \chi_l (u) \le C$ where $C$ is a universal constant depending only on $\chi$. Indeed, we have that
\[
u \partial_r \chi_l (u) = u \chi_l' (u) = {u \over p_{l - 1}} \chi' \left ({u - p_l \over p_{l - 1}} \right ).
\]
We note that ${u \over p_{l - 1}} \le 4$ in the support of $\chi' \left ({u - p_k \over p_{l - 1}} \right )$, giving us the desired result.

We now consider the rescaled coordinate $\overline{u}_l = {1 \over p_l} u$, giving us the coordinate system $(\overline{u}_l,\omega)$. We shall use $\partial_l^\alpha$ to denote strings of the coordinate vector fields $\partial_{\overline{u}_l}, \partial_\omega$. Using the Sobolev embedding theorem in these coordinates with respect to the volume form $d \overline{u}_l d \omega$, we have that
\[
\Vert \chi_l h \Vert_{L^{1 / ((1 / 2) - k / n)} (\Sigma_t, d \overline{u}_l d \omega)} \le C \sum_{|\alpha| \le k} \Vert \partial_l^\alpha (\chi_l h) \Vert_{L^2 (\Sigma_t,d \overline{u} d \omega)}.
\]
Now, we have that $|\partial_{\overline{u}_l} (\chi_l h)| \le C|u \partial_r (\chi_l h)|$. Rescaling by $r$ in the angular directions and $p_l$ in the $u$ direction then gives us that
\[
\Vert (1 + |u|)^{k / n} (1 + t + r)^{k (n - 1) / n} \chi_l h \Vert_{L^{(1 / ((1 / 2) - k / n)} (\Sigma_t)} \le C \sum_{|\alpha| \le k} \Vert \partial_R^\alpha (\chi_l h) \Vert_{L^2 (\Sigma_t)},
\]
as desired.
\end{proof}

We shall also need the following inequalities, which are a simple consequence of Sobolev inequalities on the sphere. These are used when we can only commute with a single weighted derivative, and we want to take advantage of the small measure of $\chi_{S_t}$ on the sphere $S^2$ (see Lemma~\ref{lem:VolumeEst}). Thus, the case that will be useful in this paper is $k = (n-1) / 2$ in the following proposition, although we state the general estimates for completeness.

\begin{proposition} \label{prop:LowRegKS}
Let $h : \R^n \rightarrow \R$ be a smooth, compactly supported function. We take polar coordinates $(r,\omega)$ on $\R^n$ where $\omega \in S^{n - 1}$. We have that
\begin{enumerate}
    \item If $k > {n - 1 \over 2}$, we have that
    \[
    \Vert r^{{n - 1 \over 2}} h \Vert_{L^\infty (S^{n - 1})} \le C \Vert r^{{n - 1 \over 2}} h \Vert_{H^k (S^{n - 1})}.
    \]
    As a result of this, we have that
    \[
    \Vert r^{{n - 1} \over 2} h \Vert_{L_r^2 L_\omega^\infty} \le C \Vert r^{{n - 1} \over 2} h \Vert_{L_r^2 H_\omega^k}.
    \]
    \item If $k < {n - 1 \over 2}$, we have that
    \[
    \Vert r^{{n - 1 \over 2}} h \Vert_{L^p (S^{n - 1})} \le C \Vert r^{{n - 1 \over 2}} h \Vert_{H^k (S^{n - 1})},
    \]
    where $p = 1 / ((1 / 2) - k / n)$. As a result of this, we have that
    \[
    \Vert r^{{n - 1} \over 2} h \Vert_{L_r^2 L_\omega^p} \le C \Vert r^{{n - 1} \over 2} h \Vert_{L_r^2 H_\omega^k}.
    \]
    \item If $k = {n - 1 \over 2}$, we have that
    \[
    \Vert r^{{n - 1 \over 2}} h \Vert_{L^p (S^{n - 1})} \le C_p \Vert r^{{n - 1 \over 2}} h \Vert_{H^k (S^{n - 1})},
    \]
    where $2 \le p < \infty$, and where the constant $C_p$ blows up as $p \rightarrow \infty$. As a result of this, we have that
    \[
    \Vert r^{{n - 1} \over 2} h \Vert_{L_r^2 L_\omega^p} \le C_p \Vert r^{{n - 1} \over 2} h \Vert_{L_r^2 H_\omega^k}.
    \]
\end{enumerate}
\end{proposition}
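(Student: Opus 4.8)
The three estimates on $S^{n-1}$ are instances of the classical Sobolev embedding theorem on the compact $(n-1)$-dimensional manifold $S^{n-1}$; the plan is simply to apply it, for each fixed $r>0$, to the smooth function $\omega\mapsto r^{(n-1)/2}h(r,\omega)$ on $S^{n-1}$, and then to record the consequences for the mixed norms $L_r^2 L_\omega^q$. Since $h$ is smooth and compactly supported, all the norms involved are finite and the restriction to any fixed sphere is smooth, so there are no regularity subtleties, and the coordinate degeneracy at $r=0$ is harmless (it is a set of measure zero, and $r^{(n-1)/2}$ vanishes there). In the case $k>(n-1)/2$, the Sobolev embedding on $S^{n-1}$ gives $H^k(S^{n-1})\hookrightarrow C^0(S^{n-1})\subset L^\infty(S^{n-1})$, which is item (1). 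In the case $k<(n-1)/2$, the critical Sobolev exponent for $H^k$ on an $(n-1)$-dimensional manifold is $p_{\mathrm{crit}}=(1/2-k/(n-1))^{-1}$, and since $k/(n-1)>k/n$ we have $p_{\mathrm{crit}}>p:=(1/2-k/n)^{-1}$; combining the critical embedding $H^k(S^{n-1})\hookrightarrow L^{p_{\mathrm{crit}}}(S^{n-1})$ with the inclusion $L^{p_{\mathrm{crit}}}(S^{n-1})\hookrightarrow L^p(S^{n-1})$ valid on the finite-measure space $S^{n-1}$ yields item (2). This loses sharpness, but it is exactly the exponent compatible with the weights appearing in the Klainerman--Sobolev estimates of Proposition~\ref{prop:KlaiSob}.

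For the borderline case $k=(n-1)/2$ in item (3), $H^{(n-1)/2}(S^{n-1})$ does not embed into $L^\infty$, but for any $\epsilon\in(0,(n-1)/2)$ we have the chain
\[
H^{(n-1)/2}(S^{n-1})\hookrightarrow H^{(n-1)/2-\epsilon}(S^{n-1})\hookrightarrow L^{(n-1)/\epsilon}(S^{n-1}),
\]
where the first inclusion is bounded (with constant depending only on $n$, since $S^{n-1}$ is compact) and the second is the subcritical embedding computed above with $k$ replaced by $(n-1)/2-\epsilon$. Given $2<p<\infty$, we take $\epsilon=(n-1)/p\in(0,(n-1)/2)$, which gives the stated estimate with $C_p$ equal to the embedding constant at this value of $\epsilon$; this constant blows up as $\epsilon\to 0^+$, i.e.\ as $p\to\infty$, which is the asserted behavior. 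For $p=2$ the inequality is trivial.

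Finally, each mixed-norm statement is an immediate consequence of the corresponding fixed-$r$ inequality: writing the latter as $F(r)\le C\,G(r)$ for all $r>0$, where $F(r)=\|r^{(n-1)/2}h(r,\cdot)\|_{L^q(S^{n-1})}$ (or $L^\infty$) and $G(r)=\|r^{(n-1)/2}h(r,\cdot)\|_{H^k(S^{n-1})}$, one simply takes the $L^2((0,\infty),dr)$ norm of both sides. No interchange of integrals is needed, and the weight $r^{(n-1)/2}$, being independent of $\omega$, commutes with every angular derivative in the definition of $H^k(S^{n-1})$, so no error terms are produced and the right-hand side is exactly $C\|r^{(n-1)/2}h\|_{L_r^2 H_\omega^k}$. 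The only step requiring any care is the borderline exponent in item (3), and even there the argument is standard; overall the proposition is a routine repackaging of classical Sobolev embedding, recorded here for later use.
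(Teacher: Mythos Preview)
Your proposal is correct and follows essentially the same approach as the paper: apply the standard Sobolev embedding on the compact manifold $S^{n-1}$ to the function $\omega\mapsto r^{(n-1)/2}h(r,\omega)$ for each fixed $r$, and then take the $L^2$ norm in $r$. You have simply filled in more detail than the paper does, including the observation in item (2) that the stated exponent $p=(1/2-k/n)^{-1}$ is strictly below the critical exponent $(1/2-k/(n-1))^{-1}$ for the $(n-1)$-dimensional sphere, so the embedding is in fact subcritical.
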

\begin{proof}
These all follow from using the appropriate Sobolev inequality on the sphere on the function $r^{{n - 1 \over 2}} h$, and from then simply taking an $L^2$ norm in $r$.
\end{proof}

We shall also sometimes need some mixed space inequalities which involve the above rescaled Sobolev estimates on the spheres along with a Hardy inequality in the $r$ direction. These are used when controlling terms in the nonlinearity in Section~\ref{sec:ClosingEnergy} that appear without any derivatives.

\begin{proposition} \label{prop:Hardy}
Let $h : \R^n \rightarrow \R$ be a smooth, compactly supported function. We take polar coordinates $(r,\omega)$ on $\R^n$. Then, we have that
\[
\Vert h(r,\cdot) \Vert_{L_r^\infty L_\omega^p} \le \Vert \partial_r h \Vert_{L_r^1 L_\omega^p}.
\]
In the case where $h$ is supported in the region where $r \le b$, we also have that
\[
\Vert h \Vert_{L_r^2 [a,b] L_\omega^p} \le (b - a) \Vert \partial_r h \Vert_{L_r^2 [a,b] L_\omega^p}.
\]
\end{proposition}
\begin{proof}
Let $p'$ be the Holder dual to $p$ (i.e., ${1 \over p} + {1 \over p'} = 1$). Let $g$ be an arbitrary smooth function on $S^{n - 1}$ with $\Vert g \Vert_{L_\omega^{p'}} \le 1$. Then, we have that
\begin{equation}\label{ineq:Hardypropproof}
    \begin{aligned}
    \int_{S^{n - 1}} h(r,\omega) g(\omega) d \omega = -\int_r^\infty \int_{S^{n - 1}} \partial_r h(s,\omega) g(\omega) d \omega d s \le \int_r^\infty \Vert \partial_r h(s,\cdot) \Vert_{L_\omega^p} d s \le \Vert \partial_r h \Vert_{L_r^1[r,\infty] L_\omega^p}.
    \end{aligned}
\end{equation}
Taking the supremum over all such $g$ gives us the first inequality (specifically, a version of it where the $L_r^\infty$ and $L_r^1$ norms are over $[r_0,\infty]$ for arbitrary $r_0\ge 0$). The second inequality follows from applying Cauchy-Schwarz to this version of the first inequality with $r_0=a$.
\end{proof}
We shall also need Klainerman-Sobolev Inequalities on outgoing null cones. This allows us to show that good derivatives decay like $\frac{1}{(1+t+r)^{3/2}}$. Indeed, in practice, we take $h=\overline\partial\gamma$ in the following proposition. See the truncated cone notation defined in \eqref{notation:truncatedcone}.
\begin{proposition} \label{prop:ConeSob}
Let $h : \R^{3 + 1} \rightarrow \R$ be a smooth function decaying sufficiently rapidly at infinity. We recall that $v = t + r$. Then, in the region where $|u_0| = |t_0 - r_0| \le {t_0 \over 2}$ and where $r_0 \ge 10$ and $t_0 \ge 10$, we have that
\[
|h| (t_0,r_0,\omega_0) \le {C \over (t_0 + r_0)^{{3 \over 2}}} \sum_{|\alpha| \le 2} \Vert \overline{\partial}_R^\alpha h \Vert_{L^2 \left (\mathcal C_{u_0} \left ({4 v_0 \over 5},v_0 \right ) \right )},
\]
where $\overline{\partial}_R$ denotes either $v \partial_v$ or $\Omega_i$ for any rotation vector field $\Omega_i$.
\end{proposition}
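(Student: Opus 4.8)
The plan is to realize the truncated outgoing cone $\mathcal{C}_{u_0}(4v_0/5, v_0)$ as a \emph{uniform} rescaling of a fixed compact three–manifold with boundary, and then to invoke the Sobolev embedding $H^2 \hookrightarrow L^\infty$ (available since $2 > 3/2$) from Lemma~\ref{lem:SobolevEst} on that fixed model. I parametrize $\mathcal{C}_{u_0}$ by $(v,\omega)$ with $r = (v - u_0)/2$ and volume form $d\mathrm{vol}(\mathcal{C}_{u_0}) \sim r^2\, dv\, d\omega$. First I would unwind the hypotheses: $|u_0| \le t_0/2$ together with $r_0, t_0 \ge 10$ force $|u_0| \le v_0/3$, and therefore on the truncated cone, where $v \in [4v_0/5, v_0]$, the quantities $r$, $v$, and $v_0$ are all comparable with universal constants. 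I would also record the geometric facts that the scaling field restricts to $v\partial_v$ along $\mathcal{C}_{u_0}$, that the rotations $\Omega_i$ are tangent to $\mathcal{C}_{u_0}$, and that $v\partial_v$ together with the $\Omega_i$ span the tangent space of $\mathcal{C}_{u_0}$ at every point of the truncated region, so that $\overline{\partial}_R$-strings control arbitrary coordinate-derivative strings in $(v,\omega)$ there.

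Next I would pass to the rescaled coordinate $\overline{v} = v/v_0$, under which $\mathcal{C}_{u_0}(4v_0/5, v_0)$ becomes the fixed domain $[4/5,1] \times S^2$ and the point $(t_0, r_0, \omega_0)$ sits at $\overline{v} = 1$ on its Lipschitz boundary, where $H^2$ still embeds into $C^0$. The essential point is that $v\partial_v = \overline{v}\,\partial_{\overline{v}}$ with $\overline{v} \in [4/5,1]$ bounded away from $0$; consequently strings of $\partial_{\overline{v}}$ of length at most $2$ are controlled by strings of $v\partial_v$ of length at most $2$ with coefficients bounded uniformly on the truncated region (the only commutator that enters is $(v\partial_v)^2 = v^2\partial_v^2 + v\partial_v$, whose extra term is lower order), while $v\partial_v$ commutes with the $\Omega_i$ and the $\Omega_i$ dominate $\partial_\omega$ on $S^2$ in the usual way. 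Applying Lemma~\ref{lem:SobolevEst} to $h$ on $[4/5,1]\times S^2$ in the $(\overline{v},\omega)$ coordinates with the flat volume form $d\overline{v}\, d\omega$ then gives
\[
|h|(t_0,r_0,\omega_0) \le C \sum_{|\alpha| \le 2} \Vert \overline{\partial}_R^\alpha h \Vert_{L^2([4/5,1]\times S^2,\, d\overline{v}\, d\omega)}.
\]
To conclude I would undo the rescaling: since $dv = v_0\, d\overline{v}$ and $d\mathrm{vol}(\mathcal{C}_{u_0}) \sim r^2\, dv\, d\omega \sim v_0^2\, dv\, d\omega$ on the truncated cone, one has $d\overline{v}\, d\omega \sim v_0^{-3}\, d\mathrm{vol}(\mathcal{C}_{u_0})$, so the right-hand side above is $\le C v_0^{-3/2} \sum_{|\alpha|\le 2} \Vert \overline{\partial}_R^\alpha h \Vert_{L^2(\mathcal{C}_{u_0}(4v_0/5, v_0))}$, and $v_0 = t_0 + r_0$ gives the claimed estimate.

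The part that is genuine content rather than bookkeeping is the uniformity: it is precisely the truncation to the single dyadic window $v \in (4v_0/5, v_0)$ that keeps $\overline{v}$ inside $[4/5,1]$ and so makes the rescaling non-degenerate, which is why — in contrast to Proposition~\ref{prop:KlaiSob} — no dyadic decomposition in the null variable is needed here; and the hypothesis $|u_0| \le t_0/2$ is exactly what makes $r \sim v \sim v_0$ hold uniformly, so that the $r^2$ weight in the cone volume form is a genuine fixed power of $v_0$ rather than a quantity that degenerates toward the tip of the cone. Once these comparabilities and the vector-field control are in hand, the remainder is the standard Sobolev embedding on a fixed domain followed by scaling.
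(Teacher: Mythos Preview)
Your proof is correct and follows essentially the same approach as the paper: both arguments rescale the truncated cone $\mathcal{C}_{u_0}(4v_0/5, v_0)$ to a fixed compact model via $\overline{v} = v/v_0$, invoke the $H^2 \hookrightarrow L^\infty$ Sobolev embedding there, and then convert back using $r \sim v \sim v_0$ to extract the $(t_0+r_0)^{-3/2}$ factor from the cone volume form. The only cosmetic difference is that the paper first multiplies by a cutoff $\chi(v/v_0)$ (so the point of evaluation lies in the interior of the support) whereas you work directly on the domain $[4/5,1]\times S^2$ with the boundary point $\overline{v}=1$; both are valid, and your commentary on why the truncation obviates any dyadic decomposition is exactly the point.
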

\begin{proof}
Let $\chi$ be a smooth function equal to $1$ for $x \ge {9 \over 10}$ and equal to $0$ for $x \le {4 \over 5}$. Then, we consider the function $\chi \left ({v \over v_0} \right )$. This function is equal to $1$ for $v \ge {9 \over 10} v_0$ and it is equal to $0$ for $v \le {4 \over 5} v_0$. Now, we note that $v$, $v_0$, and $r$ are all comparable on the truncated cone $\mathcal C_{u_0} \left ({4 v_0 \over 5},v_0 \right )$ when $|t_0 - r_0| \le {t_0 \over 2}$. Moreover, we note that $r \ge {t \over 100}$ on the truncated cone $\mathcal C_{u_0} \left ({4 v_0 \over 5},v_0 \right )$. We now simply apply a rescaled Sobolev inequality (see Lemma~\ref{lem:SobolevEst}) on the truncated cone $\mathcal C_{u_0} \left ({4 v_0 \over 5},v_0 \right )$ to the function $\chi h$.

There are multiple ways to do this rescaling correctly. One way is to cover the truncated cone by finitely many $(v,\omega)$ coordinate charges (these are polar coordinates on the cone). We localize to each coordinate chart using appropriate cutoffs. Then, we use a rescaled Sobolev embedding theorem on the quantity $r_0 h$ where we rescale by $v_0$ in the $v$ direction (which gives us the $v_0 \partial_v$ vector field). The angular derivatives $\partial_\omega$ are controlled by linear combinations of the rotation vector fields, and the $r_0$ in $r_0 h$ becomes $r_0^2$. The desired result then follows from noting once again that $v$, $r$, $v_0$, and $r_0$ are all comparable in this region (for example, $r_0^2$ is comparable to the real volume form $r^2$).
\end{proof}

We now translate this proposition in terms of the commutation vector fields and the characteristic energy.

We begin with a result on the coefficients we encounter for the vector fields $u \partial_r$ and $v \partial_v$. The first part comes from \cite{Sogge08}.

\begin{lemma} \label{lem:OutgoingConeVFs}
We have that
\[
u \partial_r = a_0 (t,x) S + \sum_{i = 1}^3 a_i (t,x) \Omega_{0 i}
\]
with $a_0$ and $a_i$ smooth functions away from $r = 0$, homogeneous of degree $0$, and with $|\partial^\alpha a_0| \le C_{\alpha,\delta'} (t + r)^{-|\alpha|}$ and $|\partial^\alpha a_i| \le C_{\alpha,\delta'} (t + r)^{-|\alpha|}$ for $r \ge \delta' t$ for any $\delta' > 0$. Similarly, we have that
\[
v \partial_v = \half\left(S + \sum_{i = 1}^3 b_i (x) \Omega_{0,i}\right)
\]
with $b_0$ and $b_i$ smooth functions away from $r = 0$, homogeneous of degree $0$, with $|\partial^\alpha b_i| \le C_{\alpha,\delta'} (1 + r)^{-|\alpha|}$ for $2t\ge r \ge \delta'$ for any $\delta' > 0$.
\end{lemma}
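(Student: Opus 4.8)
The plan is to prove the decomposition of $u\partial_r$ and $v\partial_v$ into the scaling vector field $S$ and the Lorentz boosts $\Omega_{0i} = t\partial_i + x^i\partial_t$ by directly inverting a small linear system, and then to read off the homogeneity and the derivative bounds from the explicit form of the coefficients. First I would recall the basic identities
\[
S = t\partial_t + r\partial_r = t\partial_t + x^i\partial_i, \qquad \Omega_{0i} = t\partial_i + x^i\partial_t,
\]
and contract the second with $\omega^i = x^i/r$ to get $\omega^i\Omega_{0i} = t\partial_r + r\partial_t$ (using $x^i\partial_i = r\partial_r$ and $\omega^i x^i = r$). Thus in the two-dimensional span of $\partial_t$ and $\partial_r$ at a fixed angle we have the pair
\[
S = t\partial_t + r\partial_r, \qquad \omega^i\Omega_{0i} = r\partial_t + t\partial_r,
\]
whose coefficient matrix $\begin{pmatrix} t & r \\ r & t\end{pmatrix}$ has determinant $t^2 - r^2 = uv$, which is nonzero away from the light cone. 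Solving, one finds $\partial_r = \frac{t}{t^2-r^2}(\omega^i\Omega_{0i}) - \frac{r}{t^2-r^2}S$, hence
\[
u\partial_r = \frac{(t-r)t}{t^2-r^2}\,\omega^i\Omega_{0i} - \frac{(t-r)r}{t^2-r^2}\,S = \frac{t}{t+r}\,\omega^i\Omega_{0i} - \frac{r}{t+r}\,S,
\]
so $a_0 = -\frac{r}{t+r}$ and $a_i = \frac{t}{t+r}\,\omega^i = \frac{t x^i}{r(t+r)}$. These are manifestly homogeneous of degree $0$ and smooth away from $r=0$. Similarly, using $\partial_v = \frac12(\partial_t + \partial_r)$ and the two identities above with the roles reversed, $2v\partial_v = (t+r)(\partial_t+\partial_r)$; since $S + \omega^i\Omega_{0i} = (t+r)\partial_t + (t+r)\partial_r = (t+r)(\partial_t+\partial_r)$, we get immediately $v\partial_v = \frac12\big(S + \omega^i\Omega_{0i}\big)$, so $b_i = \omega^i = x^i/r$ with no further algebra.

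The second part of the lemma is the derivative estimates $|\partial^\alpha a_0|, |\partial^\alpha a_i| \le C_{\alpha,\delta'}(t+r)^{-|\alpha|}$ on $\{r \ge \delta' t\}$ and $|\partial^\alpha b_i| \le C_{\alpha,\delta'}(1+r)^{-|\alpha|}$ on $\{ \delta' \le r \le 2t\}$. For the $b_i = x^i/r$ this is the standard fact that $x^i/r$ is smooth and homogeneous of degree $0$ on $\{r \ne 0\}$, so each coordinate derivative gains a factor of $r^{-1}$; this is a routine induction on $|\alpha|$ using that $\partial_j(x^i/r)$ is again a degree $-1$ homogeneous smooth function of $x$ times bounded functions. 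For $a_0 = -r/(t+r)$ and $a_i = tx^i/(r(t+r))$, these are smooth functions of $(t,x)$ away from $r=0$ and away from $t+r = 0$; on the cone-region $r \ge \delta' t$ with $t,r \ge 0$ we have $t + r \ge r \ge \delta'(t+r)/(1+\delta')$, i.e. $t+r$ is comparable to $r$ and bounded below by a constant times $(t+r)$, so these functions are homogeneous of degree $0$ and smooth on the relevant conical neighborhood; the Euler-type scaling argument (a degree-$0$ homogeneous function smooth on an open cone has $\partial^\alpha$ homogeneous of degree $-|\alpha|$ and bounded on the unit sphere intersected with that cone, hence $\le C_\alpha(t+r)^{-|\alpha|}$ there) gives the claim. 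I would spell this out once as a short lemma-internal remark: if $g$ is homogeneous of degree $0$ and smooth on the open cone $\{r > \delta' t\} \cap \{t + r > 0\}$, then $\partial^\alpha g$ is homogeneous of degree $-|\alpha|$ and continuous on the closure of that cone minus the origin, so its sup over the compact set $\{t+r = 1\}\cap\{r \ge \delta' t\}$ is finite, and homogeneity propagates the bound.

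**The main obstacle**, such as it is, is purely bookkeeping: one must be careful that $\omega^i\Omega_{0i} = \sum_i (x^i/r)\Omega_{0i}$ is the right contraction (not, say, $\sum_i x^i \Omega_{0i}$, which would differ by a factor of $r$), and one must track which region each coefficient is controlled on — the $a$'s need $r \ge \delta' t$ so that $t + r \sim t + r$ stays comparable to $r$ and bounded away from degeneracy of the $\partial_t,\partial_r$ system near the light cone is irrelevant because the combination $u\partial_r$ has already absorbed the $u = t-r$ factor, while the $b$'s need $r$ bounded below (so $x^i/r$ is smooth) and $r \le 2t$ only to situate the estimate in the region where it is used. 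There is no analytic difficulty; the content is the explicit inversion above plus the standard homogeneity bookkeeping for degree-$0$ smooth functions on cones, and I expect the proof to be three or four lines of display plus a sentence invoking homogeneity, exactly as the paper's terse "The first part comes from \cite{Sogge08}" suggests.
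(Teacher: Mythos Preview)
Your proposal is correct and follows essentially the same approach as the paper: the paper defers the $u\partial_r$ decomposition to Sogge and writes down $v\partial_v = \tfrac12\big(S + \sum_i \tfrac{x^i}{r}\Omega_{0i}\big)$ directly, which is exactly your computation $b_i = x^i/r$ via $S + \omega^i\Omega_{0i} = (t+r)(\partial_t+\partial_r)$. Your explicit formulas $a_0 = -r/(t+r)$, $a_i = tx^i/(r(t+r))$ and the homogeneity argument for the derivative bounds simply fill in what the paper leaves to the citation.
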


\begin{proof}
The first part can be found in Chapter $2$ of \cite{Sogge08}. For the second part, we simply note that
\[
v \partial_v = \half\left(S + \sum_{i = 1}^3 {x^i \over r} (t \partial_i + x^i \partial_t)\right),
\]
giving us the desired result.
\end{proof}

We now use this result to find the commutators of these vector fields with various commutation fields.

\begin{lemma} \label{lem:ConeCommutation}
Let $\overline{\partial}_R$ denote either $v \partial_v$ or a rotation vector field. Let $h$ be any smooth function. In the region where $2t\ge r \ge \delta' t$ for $\delta' > 0$ and where $t \ge 2$, we have that
\[
\overline{\partial}_R \overline{\partial} h = \sum_{|\alpha| \le 1} A_{\alpha,1} (t,x) \overline{\partial} \Gamma^\alpha h + \sum_{|\alpha| \le 1} A_{\alpha,2} \Gamma^\alpha h,
\]
where the $A_{\alpha,1}$ and $A_{\alpha,2}$ are smooth functions with $|\partial^\beta A_{\alpha,1}| \le {C_{\beta,\delta'} \over (1 + t + r)^{|\beta|}}$ and $|\partial^\beta A_{\alpha,2}| \le {C_{\beta,\delta'} \over (1 + t + r)^{|\beta| + 1}}$.
\end{lemma}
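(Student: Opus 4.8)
The plan is to prove Lemma~\ref{lem:ConeCommutation} by reducing everything to the structure provided by Lemma~\ref{lem:OutgoingConeVFs}, together with the commutator relations among the geometric vector fields $Z$. First I would record that $\overline{\partial}_R$ is, in the region $2t \ge r \ge \delta' t$ with $t \ge 2$, a linear combination of the scaling vector field $S$ and the boosts $\Omega_{0i}$ with coefficients $a_j(t,x)$ (or $b_i(x)$) that are homogeneous of degree $0$ and satisfy $|\partial^\beta a_j| \le C_{\beta,\delta'}(1+t+r)^{-|\beta|}$; this is exactly the content of Lemma~\ref{lem:OutgoingConeVFs}, using that $r$ is comparable to $t+r$ in this region. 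Similarly $\overline{\partial}$ is, by definition, one of $\partial_v$, $e_{xy}$, $e_{xz}$, $e_{yz}$, i.e. either $\frac1v(v\partial_v)$ or $\frac1r\Omega_{ij}$; so $\overline{\partial} = \sum_k c_k(t,x) W_k$ where each $W_k \in \{v\partial_v,\Omega_{xy},\Omega_{xz},\Omega_{yz}\}$ and $c_k$ is homogeneous of degree $-1$ with the analogous derivative bounds $|\partial^\beta c_k| \le C_{\beta,\delta'}(1+t+r)^{-|\beta|-1}$ in this region.

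With those two decompositions in hand, I would write
\[
\overline{\partial}_R \overline{\partial} h = \overline{\partial}_R\Big(\sum_k c_k W_k h\Big) = \sum_k (\overline{\partial}_R c_k)\, W_k h + \sum_k c_k\, \overline{\partial}_R W_k h.
\]
In the first sum, $\overline{\partial}_R c_k$ is a first-order operator (a combination of $S$ and $\Omega_{0i}$ with good coefficients) applied to a degree $-1$ homogeneous function, hence is itself a function with $|\partial^\beta(\overline{\partial}_R c_k)| \le C(1+t+r)^{-|\beta|-1}$, and $W_k h = \overline{\partial} h$ up to a good coefficient — so these terms are of the form $A_{\alpha,2}\Gamma^\alpha h$ with $|\alpha|\le 1$ after moving the remaining good coefficient onto $A_{\alpha,2}$, wait: more precisely $W_k h$ is $\Gamma^\alpha h$ with $|\alpha|=1$, and multiplying by $\overline\partial_R c_k$ gives a term of the stated second type. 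For the second sum, $\overline{\partial}_R W_k$ is a composition of two vector fields from (combinations of) $Z$; I would commute to write $\overline{\partial}_R W_k = W_k \overline{\partial}_R + [\overline{\partial}_R, W_k]$. The point is that both $\overline{\partial}_R$ and $W_k$ are built from $Z$ with homogeneous coefficients, and the commutator of two such is again a first-order operator built from $Z$ with coefficients of the appropriate homogeneity (this is the standard fact that $Z$ is closed under brackets up to constants, and that multiplication by a homogeneous-degree-$d$ good function followed by a $Z$-derivative stays within the class). So $c_k \overline{\partial}_R W_k h$ becomes a sum of $c_k W_k \Gamma^\beta h$ ($|\beta|\le 1$) plus $c_k \cdot(\text{good coeff})\cdot \Gamma^\beta h$ ($|\beta|\le 1$); the first contributes to the $\sum A_{\alpha,1}\overline{\partial}\Gamma^\alpha h$ terms (since $c_k W_k = \overline{\partial}$ up to a good coefficient one can absorb, so $A_{\alpha,1}$ has the right $(1+t+r)^{-|\beta|}$ bounds), and the second contributes to the $\sum A_{\alpha,2}\Gamma^\alpha h$ terms.

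The bookkeeping one must be careful about: when I write $\overline{\partial} = \sum c_k W_k$ and then want $A_{\alpha,1}\overline{\partial}\Gamma^\alpha h$ on the right rather than $c_k W_k \Gamma^\alpha h$, I need to re-express $W_k \Gamma^\alpha h$ in terms of $\overline{\partial}\Gamma^\alpha h = \sum_j c_j W_j \Gamma^\alpha h$; inverting this relation in the region $2t\ge r\ge\delta' t$ is fine because the three rescaled rotations plus $\partial_v$ span the relevant directions with a transition matrix that is bounded with bounded derivatives there (one can instead just keep the conclusion in terms of $\overline\partial$ meaning any of the four good vector fields, which is how the lemma is used downstream, so this inversion is essentially cosmetic). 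The derivative bounds on $A_{\alpha,1}, A_{\alpha,2}$ then follow from the Leibniz rule applied to products of homogeneous good functions, each factor contributing the stated negative powers of $1+t+r$, using throughout that $r \sim t \sim t+r$ in the region and that $\partial$ of a degree-$d$ homogeneous function is degree $d-1$.

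The main obstacle I expect is not conceptual but organizational: keeping track of exactly where the weights $(1+t+r)^{-|\beta|}$ versus $(1+t+r)^{-|\beta|-1}$ land, i.e. making sure the terms that carry an undifferentiated $h$ (the $A_{\alpha,2}$ terms) genuinely gain the extra power of $(1+t+r)^{-1}$ relative to the $A_{\alpha,1}$ terms. This extra power comes precisely from the fact that $\overline{\partial}$ and the rotations $\Omega$ carry homogeneity $-1$ coefficients when written in the $Z$-basis (e.g. $\frac1r\Omega_{xy}$), so every time a derivative "falls on a coefficient rather than on $h$" one pays an extra $(1+t+r)^{-1}$ — I would make this the explicit invariant of the induction/computation and check it termwise. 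Everything else (closedness of $Z$ under brackets, homogeneity of coefficients, the region estimates) is either cited from Lemma~\ref{lem:OutgoingConeVFs} or is the routine Leibniz-rule computation that the lemma's proof will presumably just assert.
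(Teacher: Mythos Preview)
Your proposal is correct and takes essentially the same approach as the paper: the paper's proof is the one-liner ``these results follow directly from computing with the expressions in Lemma~\ref{lem:OutgoingConeVFs} for $\overline{\partial}_R$ in terms of the commutation fields, along with the expressions for $\overline{\partial}$ from Section~\ref{sec:notation} and checking the commutators of $\overline{\partial}_R$ and $\overline{\partial}$ for the various cases,'' and what you have written is exactly that computation unpacked. One small simplification: your worry about ``inverting'' to get back from $c_k W_k$ to $\overline{\partial}$ is unnecessary, since for each fixed $\overline{\partial}$ there is a single $(c_k,W_k)$ pair with $c_k W_k = \overline{\partial}$ (namely $\tfrac1v\cdot v\partial_v$ or $\tfrac1r\cdot\Omega_{ij}$), so no matrix inversion is needed.
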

\begin{proof}
These results follow directly from computing with the expressions in Lemma~\ref{lem:OutgoingConeVFs} for $\overline{\partial}_R$ in terms of the commutation fields, along with the expressions for $\overline{\partial}$ from Section~\ref{sec:notation} and checking the commutators of $\overline{\partial}_R$ and $\overline{\partial}$ for the various cases.
\end{proof}

In the following proposition, the assumptions on the support are such that the result will be applicable in our case.
\begin{proposition} \label{prop:ConeKS}
Let $h : \R^{3 + 1} \rightarrow \R$ be a smooth function decaying sufficiently rapidly at infinity and supported in the spacetime ball of radius $10$ centered at the origin along with the region where $t \ge {r \over 2}$. Then, in the region where $|u_0| = |t_0 - r_0| \le {t_0 \over 10}$ and $t_0\ge 2$, we have that
\[
|\overline{\partial} h| (t_0,r_0,\omega_0) \le {C \over (t_0 + r_0)^{{3 \over 2}}} \sum_{|\alpha| \le 2} \left [ \Vert \overline{\partial} \Gamma^\alpha h \Vert_{L^2 \left (\mathcal C_{u_0} \left ({4 t_0 \over 5},v_0 \right ) \right )} + \Vert \partial \Gamma^\alpha h \Vert_{L^2 (\Sigma_{t_0})} \right ].
\]
\end{proposition}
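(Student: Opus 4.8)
The plan is to apply the cone Klainerman--Sobolev inequality (Proposition~\ref{prop:ConeSob}) to the \emph{good derivative} $\overline\partial h$, then trade the weighted vector fields $\overline\partial_R$ appearing on the truncated cone for commutation fields $\Gamma$ via Lemma~\ref{lem:ConeCommutation}, and finally handle the lower-order error terms produced along the way. First I would dispose of the range $2\le t_0\le 12$: there $|u_0|\le t_0/10$ forces $r_0,t_0$, and all of $v,r$ on the relevant cone to be comparable to absolute constants, so the asserted inequality follows from a local Sobolev embedding on the (bounded) truncated cone together with the bookkeeping of the next two steps, with the weights $(t_0+r_0)$ replaced by absolute constants. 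So assume $t_0\ge 12$, which gives $r_0=t_0-u_0\ge \tfrac9{10}t_0\ge 10$ and $t_0\ge 10$; since also $|u_0|\le t_0/10\le t_0/2$, Proposition~\ref{prop:ConeSob} applies to the scalar $\overline\partial h$ and yields
\[
|\overline\partial h|(t_0,r_0,\omega_0)\le \frac{C}{(t_0+r_0)^{3/2}}\sum_{|\alpha|\le 2}\big\|\overline\partial_R^\alpha\,\overline\partial h\big\|_{L^2(\mathcal C_{u_0}(4v_0/5,v_0))}.
\]
On this truncated cone $v=2t-u_0\in[\tfrac45 v_0,v_0]$ and $v_0\in[\tfrac{19}{10}t_0,\tfrac{21}{10}t_0]$, from which one checks $\tfrac12 t\le r\le 2t$, $t\ge\tfrac7{10}t_0\ge 2$, and that $v,r,v_0,r_0,t_0+r_0$ are mutually comparable; in particular Lemma~\ref{lem:ConeCommutation} applies there with a fixed $\delta'$.

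Next (the commutation step) I would apply Lemma~\ref{lem:ConeCommutation} at most twice — commuting $\overline\partial_R$ past the coefficients $A_{\alpha,i}$ (which preserves their decay rates since $\overline\partial_R\sim(t+r)\partial$) and using Lemma~\ref{lem:OutgoingConeVFs} to re-express $\overline\partial_R\Gamma^\beta h$ as a bounded-coefficient combination of $\Gamma^{\beta'}h$ with $|\beta'|\le|\beta|+1$ — to obtain, for each $|\alpha|\le 2$ and on the truncated cone,
\[
\overline\partial_R^\alpha\,\overline\partial h=\sum_{|\beta|\le 2}B^{(1)}_\beta\,\overline\partial\Gamma^\beta h+\sum_{|\beta|\le 2}B^{(2)}_\beta\,\Gamma^\beta h,\qquad |B^{(1)}_\beta|\le C,\quad |B^{(2)}_\beta|\le \frac{C}{1+t+r}\le\frac{C}{t_0+r_0}.
\]
Thus the right-hand side above is bounded by $\tfrac{C}{(t_0+r_0)^{3/2}}\sum_{|\beta|\le 2}\big(\|\overline\partial\Gamma^\beta h\|_{L^2(\mathcal C_{u_0}(4v_0/5,v_0))}+\tfrac1{t_0+r_0}\|\Gamma^\beta h\|_{L^2(\mathcal C_{u_0}(4v_0/5,v_0))}\big)$. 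Since $v_0\ge t_0$ we have $\mathcal C_{u_0}(4v_0/5,v_0)\subset\mathcal C_{u_0}(4t_0/5,v_0)$, so the first family of terms is already of the desired form, and it remains to estimate the zeroth-order cone terms $\tfrac1{t_0+r_0}\|\Gamma^\beta h\|_{L^2(\mathcal C_{u_0}(4v_0/5,v_0))}$.

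For that last point, set $G:=\Gamma^\beta h$ and integrate along each outgoing null generator $\tau\mapsto(\tau,(\tau-u_0)\omega)$ of $\mathcal C_{u_0}$, from parameter value $t\in[t_1,t_0]$ (with $t_1=\tfrac12(\tfrac45 v_0+u_0)$) up to the endpoint $\tau=t_0$, which lies on $\Sigma_{t_0}$ at radius $r_0$. Since $\tfrac{d}{d\tau}G(\tau,(\tau-u_0)\omega)=2(\partial_v G)(\tau,(\tau-u_0)\omega)$ and $\partial_v$ is a good derivative, Cauchy--Schwarz gives $|G(t,(t-u_0)\omega)|^2\le 2|G(t_0,r_0\omega)|^2+8(t_0-t)\int_{t_1}^{t_0}|(\partial_v G)(\tau,(\tau-u_0)\omega)|^2\,d\tau$. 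Integrating against the cone volume element $\sim(t-u_0)^2\,dt\,d\omega$ and using $t_0-t_1=\tfrac{v_0}{10}\lesssim t_0+r_0$, $(t-u_0)^2\lesssim(t_0+r_0)^2$, and $(\tau-u_0)^2\gtrsim(t_0+r_0)^2$, the second term contributes $\lesssim(t_0+r_0)^2\|\partial_v G\|_{L^2(\mathcal C_{u_0}(4v_0/5,v_0))}^2\le(t_0+r_0)^2\|\overline\partial\Gamma^\beta h\|_{L^2(\mathcal C_{u_0}(4t_0/5,v_0))}^2$. For the boundary term, $h$ (hence $G$) is supported on $\Sigma_{t_0}$ in $\{r\le\max(10,2t_0)\}$, so a radial fundamental theorem of calculus from the support boundary, followed by Cauchy--Schwarz and inserting the missing weight $s^2\gtrsim(t_0+r_0)^2$, gives $\int_{S^2}|G(t_0,r_0\omega)|^2\,d\omega\lesssim\tfrac1{t_0+r_0}\|\partial\Gamma^\beta h\|_{L^2(\Sigma_{t_0})}^2$; multiplying by $\int_{t_1}^{t_0}(t-u_0)^2\,dt\lesssim(t_0+r_0)^3$ this contributes $\lesssim(t_0+r_0)^2\|\partial\Gamma^\beta h\|_{L^2(\Sigma_{t_0})}^2$. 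Taking square roots, $\|\Gamma^\beta h\|_{L^2(\mathcal C_{u_0}(4v_0/5,v_0))}\lesssim(t_0+r_0)\big(\|\partial\Gamma^\beta h\|_{L^2(\Sigma_{t_0})}+\|\overline\partial\Gamma^\beta h\|_{L^2(\mathcal C_{u_0}(4t_0/5,v_0))}\big)$, which combined with the previous two steps proves the proposition.

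The main obstacle is precisely this last step: the commutator identities unavoidably produce terms $\Gamma^\beta h$ carrying \emph{no} good derivative on the cone, and these cannot be fed back into Proposition~\ref{prop:ConeSob}. The two observations that make the argument close are that (i) integrating along the null generators trades such a term for the good derivative $\partial_v$ on the cone (exactly what the cone Sobolev inequality controls) plus a boundary value on the top sphere, and (ii) that top sphere sits on $\Sigma_{t_0}$ at radius $r_0\sim t_0+r_0$, so compact support plus a Hardy-type radial estimate converts the boundary value, with a factor of $(t_0+r_0)^{-1}$ to spare, into the slice norm $\|\partial\Gamma^\alpha h\|_{L^2(\Sigma_{t_0})}$. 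The only other thing needing care is tracking the powers of $(t_0+r_0)$ so that all contributions recombine to the single factor $(t_0+r_0)^{-3/2}$.
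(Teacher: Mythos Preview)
Your proof is correct and follows essentially the same approach as the paper: apply Proposition~\ref{prop:ConeSob} to $\overline\partial h$, use Lemma~\ref{lem:ConeCommutation} twice to trade $\overline\partial_R^\alpha$ for commutation fields (producing a lower-order term $(1+t+r)^{-1}\Gamma^\beta h$ on the cone), and then control that lower-order term by integrating along the null generators back to the top sphere on $\Sigma_{t_0}$ and applying a radial Hardy-type estimate there. The only cosmetic difference is that the paper handles the lower-order cone term via an integration-by-parts Hardy inequality in $v$ (producing a boundary term at $v_0$ and an absorbable cross term), whereas you use the equivalent fundamental-theorem-of-calculus plus Cauchy--Schwarz version; both routes lead to the same boundary sphere integral $\int_{S^2}|\Gamma^\beta h|^2(u_0,v_0,\omega)\,d\omega$ and the same radial estimate on $\Sigma_{t_0}$.
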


\begin{proof}
We must only consider the region where $r_0 \ge 10$ and $t_0 \ge 10$. Indeed, in the region near the spacetime origin, the result follows from the usual Sobolev inequality because $t_0$ and $r_0$ are then comparable to $1$. Now, using Proposition~\ref{prop:ConeSob}, we have that

\[
\left |\overline{\partial} h \right | (t_0,r_0,\omega_0) \le {C \over (t_0 + r_0)^{{3 \over 2}}} \sum_{|\alpha| \le 2} \Vert \overline{\partial}_R^\alpha \overline{\partial} h \Vert_{L^2 \left (\mathcal C_{u_0} \left ({4 v_0 \over 5},v_0 \right ) \right )}.
\]

We must now commute $\overline{\partial}_R^\alpha$ and $\overline{\partial}$ and write $\overline{\partial}_R^\alpha$ in terms of the commutation fields.

Using Lemma~\ref{lem:ConeCommutation} twice, we have that
\begin{equation}\label{Prop17someintermediatestep}
    \begin{aligned}
    \sum_{|\alpha| \le 2} &\Vert \overline{\partial}_R^\alpha \overline{\partial} h \Vert_{L^2 \left (\mathcal C_{u_0} \left ({4 v_0 \over 5},v_0 \right ) \right )}\\ &\le C \sum_{|\alpha| \le 2} \left [ \Vert \overline{\partial} \Gamma^\alpha h \Vert_{L^2 \left (\mathcal C_{u_0} \left ({4 t_0 \over 5},v_0 \right ) \right )} + \Vert (1 + v)^{-1} \Gamma^\alpha h \Vert_{L^2 \left (\mathcal C_{u_0} \left ({4 v_0 \over 5},v_0 \right ) \right )} \right ].
    \end{aligned}
\end{equation}

We must now control the term
\begin{equation} \label{eq:HardyTerm}
    \begin{aligned}
    \sum_{|\alpha| \le 2} \Vert (1 + v)^{-1} \Gamma^\alpha h \Vert_{L^2 \left (\mathcal C_{u_0} \left ({4 v_0 \over 5},v_0 \right ) \right )}
    \end{aligned}
\end{equation}
in terms of the energy. We first note that $v$ and $r$ are comparable on the truncated cone $C_{u_0} \left ({4 v_0 \over 5},v_0 \right )$, meaning that we can control this by
\[
C \sum_{|\alpha| \le 2} \Vert r^{-1} \Gamma^\alpha h \Vert_{L^2 \left (\mathcal C_{{t_0 - r_0}} \left ({4 v_0 \over 5},v_0 \right ) \right )}\le \frac{C}{r}\left (\int_{\mathcal C_{u_0} \left ({4 v_0 \over 5},v_0 \right )} (\Gamma^\alpha h)^2 d \omega d v \right)^{1 \over 2}.
\]
We now use a Hardy inequality. Integrating by parts with $v - {4 v_0 \over 5}$, we have that
\begin{equation}
    \begin{aligned}
    \int_{\mathcal C_{u_0} \left ({4 v_0 \over 5},v_0 \right )} (\Gamma^\alpha h)^2 d \omega d v = {v_0 \over 5} \int_{S^2} (\Gamma^\alpha h)^2 (u_0,v_0,\omega) d \omega - 2 \int_{\mathcal C_{u_0} \left ({4 v_0 \over 5},v_0 \right )} \left (v - {4 v_0 \over 5} \right ) (\Gamma^\alpha h) \partial_v (\Gamma^\alpha h) d \omega d v.
    \end{aligned}
\end{equation}
Thus, we have that
\begin{equation}
    \begin{aligned}
    \int_{\mathcal C_{u_0} \left ({4 v_0 \over 5},v_0 \right )} (\Gamma^\alpha h)^2 d \omega d v \le {v_0 \over 5} \int_{S^2} (\Gamma^\alpha h)^2 (u_0,v_0,\omega) d \omega + 2 \int_{\mathcal C_{u_0} \left ({4 v_0 \over 5},v_0 \right )} \left (v - {4 v_0 \over 5} \right ) |\Gamma^\alpha h| |\partial_v (\Gamma^\alpha h)| d \omega d v.
    \end{aligned}
\end{equation}
We note that
\begin{equation}
    \begin{aligned}
    \int_{\mathcal C_{u_0} \left ({4 v_0 \over 5},v_0 \right )} \left (v - {4 v_0 \over 5} \right ) |\Gamma^\alpha h| |\partial_v (\Gamma^\alpha h)| d \omega d v \le C\Vert r^{-1} \Gamma^\alpha h \Vert_{L^2 \left (\mathcal C_{{t_0 - r_0}} \left ({4 v_0 \over 5},v_0 \right ) \right )} \Vert \overline{\partial} (\Gamma^\alpha h) \Vert_{L^2 \left (\mathcal C_{{t_0 - r_0}} \left ({4 v_0 \over 5},v_0 \right ) \right )},
    \end{aligned}
\end{equation}
where we have used the fact that $v$, $v_0$, and $r$ are all comparable on the truncated cone $\mathcal C_{u_0} \left ({4 v_0 \over 5},v_0 \right )$. We shall now control the term
\[
{v_0 \over 5} \int_{S^2} (\Gamma^\alpha h)^2 (u_0,v_0,\omega) d \omega
\]
by the energy, from which the desired result will follow.

We have that
\begin{equation}
    \begin{aligned}
    v_0 \int_{S^2} (\Gamma^\alpha h)^2 (u_0,v_0,\omega) d \omega = -2 v_0 \int_{r_0}^{2 t_0} \int(\Gamma^\alpha h) \partial_r (\Gamma^\alpha h) (t_0,r,\omega) d \omega d r,
    \end{aligned}
\end{equation}
where we have changed into $(t,r,\omega)$ coordinates, and where we have used the fact that $h$ is supported in the region where $t \ge {r \over 2}$ for $t$ large. Thus, we have that
\begin{equation}\label{middlehHardyproductRHS}
    \begin{aligned}
    v_0 \int_{S^2} (\Gamma^\alpha h)^2 (u_0,v_0,\omega) d \omega \le 2 v_0 \left(\sup_{r_0 \le r \le 2 t_0} \int_{S^2} |\Gamma^\alpha h| (t_0,r,\omega) d \omega \right)\int_{r_0}^{2 t_0} \int |\partial_r \Gamma^{\alpha} h| d \omega d r.
    \end{aligned}
\end{equation}
We now note that in $\eqref{middlehHardyproductRHS}$, the factor in parentheses on the right-hand side can be bounded by the last factor by integrating in $r$. Thus we obtain
\begin{equation}\label{middlehHardy}
    \begin{aligned}
    v_0 \int_{S^2} (\Gamma^\alpha h)^2 (u_0,v_0,\omega) d \omega \le 2 v_0 \left(\int_{r_0}^{2 t_0} \int |\partial_r \Gamma^{\alpha} h| d \omega d r\right)^2.
    \end{aligned}
\end{equation}
Then, because $S^2$ is a finite measure space and because the length of integration in $r$ is comparable to $v_0$ by the support of $h$, we use Cauchy-Schwarz with respect to the $d\omega dr$ measure to obtain
\begin{equation}\label{Hardynextingredient}
\int_{r_0}^{2 t_0} \int |\partial_r \Gamma^{\alpha} h| d \omega d r  \le C v_0^{{1 \over 2}} \Vert r^{-1} \partial \Gamma^\alpha h \Vert_{L^2 (\Sigma_{t_0})}.
\end{equation}
Plugging this into \eqref{middlehHardy} and taking the square root, we obtain
\begin{equation}
    \begin{aligned}
    \sup_{r_0 \le r \le 2 t_0} v_0^{{1 \over 2}} \left (\int_{S^2} (\Gamma^\alpha h)^2 (u_0,v_0,\omega) d \omega \right )^{1 \over 2} \le C v_0 \Vert r^{-1} \partial \Gamma^\alpha h \Vert_{L^2 (\Sigma_t)}.
    \end{aligned}
\end{equation}
Because $r$ is controlled by $C v_0$ in the support of $h$, we have that this last expression is controlled by $\Vert \partial \Gamma^\alpha h \Vert_{L^2 (\Sigma_t)}$. This finishes bounding the second term on the right hand side of $\eqref{Prop17someintermediatestep}$, completing the proof of the proposition.
\end{proof}

The Klainerman-Sobolev inequality along with these inequalities using the characteristic energy on outgoing cones will allow us to establish improved pointwise decay for the good derivatives everywhere.

\section{Commutators}
We shall need to compute the commutators between all of the commutation fields, and also the commutators between the commutation fields and the good derivatives $\overline{\partial}$. We have the following result.
\begin{lemma} \label{lem:Commutators}
Let $h : \R^{3 + 1} \rightarrow \R$ be a smooth function. Moreover, let $\Gamma$, $\Gamma_1$, and $\Gamma_2$ denote arbitrary commutation fields, let $\partial$ denote an arbitrary translation field, let $\Omega$ denote an arbitrary rotation field, and let $\overline{\partial}$ denote an arbitrary good derivative. We have that
\begin{enumerate}
    \item $|[\Gamma,\partial] h| \le C |\partial h|$,
    \item $|[\Gamma_1,\Gamma_2] h| \le C \sum_{|\alpha| = 1} |\Gamma^\alpha h|$,
    \item $|[\Omega,\overline{\partial}] h| \le C |\overline{\partial} h|$,
    \item $|[\Gamma,\overline{\partial}] h| \le C |\overline{\partial} h| + {C \over r} \sum_{|\alpha| = 1} |\Gamma^\alpha h|$.
\end{enumerate}
\end{lemma}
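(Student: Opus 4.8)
The plan is to verify each of the four commutator bounds by direct computation, exploiting the explicit coordinate formulas for the commutation fields $\Gamma$ (translations, rotations, boosts, and scaling) and for the good derivatives $\overline{\partial} \in \{\partial_v, e_{xy}, e_{xz}, e_{yz}\}$. Since each statement is about a single commutator, it suffices to handle one $\Gamma$ (or $\Gamma_1, \Gamma_2$) at a time and one $\overline{\partial}$ at a time, and then use the triangle inequality; the constants are allowed to depend on the fixed finite list of commutation fields, so there is no uniformity issue.

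\emph{Step 1 (parts (1) and (2)).} These are the standard commutation relations for the Poincar\'e algebra plus scaling. For part (1): the commutator of a translation $\partial_\mu$ with another translation is zero; with a rotation or boost $x^\alpha\partial_\beta + x^\beta \partial_\alpha$ it is $\pm(\delta^\alpha_\mu \partial_\beta + \delta^\beta_\mu \partial_\alpha)$, a constant-coefficient combination of translations; with the scaling $S = t\partial_t + r\partial_r = x^\mu \partial_\mu$ it is again $\pm\partial_\mu$. So $[\Gamma,\partial]h$ is a fixed linear combination of translation derivatives of $h$, giving (1). For part (2): the commutator of two elements of the Lie algebra spanned by $Z$ lies in that same Lie algebra (translations, Lorentz fields, scaling — up to the slight subtlety that $[S,\text{Lorentz}] = 0$ and $[S,\partial] = -\partial$, which is still in the span), so $[\Gamma_1,\Gamma_2]h = \sum c_\beta \Gamma^\beta h$ over $|\beta| = 1$ with constant coefficients, which is (2).

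\emph{Step 2 (part (3)).} Here I would use the fact that rotations preserve the foliation by coordinate spheres $S(r)$ and commute with the radial variable $r$. Concretely, $[\Omega_{ij}, \Omega_{kl}]$ is again a rotation, so $[\Omega, e_{AB}] = [\Omega, \frac{1}{r}\Omega_{AB}] = \frac{1}{r}[\Omega,\Omega_{AB}]$ (since $\Omega r = 0$) is again of the form $\frac{1}{r}(\text{rotation}) = $ a good derivative $e_{CD}$ or zero. For $\overline{\partial} = \partial_v = \frac12(\partial_t + \partial_r)$: rotations commute with $\partial_t$ and with $\partial_r$ (again because $\Omega r = 0$ and $\Omega$ commutes with $\partial_t$), so $[\Omega, \partial_v]h = 0$. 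In all cases $|[\Omega,\overline{\partial}]h| \le C|\overline{\partial} h|$, which is (3).

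\emph{Step 3 (part (4)).} This is the one requiring genuine care, and I expect it to be the main obstacle — not because it is deep, but because $\partial_v$ and the $e_{AB}$ have coefficients that are singular or non-smooth at $r=0$, so I must track the $\frac1r$ weights carefully and also handle boosts and scaling, which do not commute as cleanly with the null/spherical frame as rotations do. The strategy: write $\overline{\partial}$ in terms of the commutation fields divided by appropriate powers of $r$ — e.g. $e_{AB} = \frac1r \Omega_{AB}$, and $\partial_v = \frac{1}{2t}(S + \sum_i b_i(x)\Omega_{0i})$ on the relevant region (cf. Lemma~\ref{lem:OutgoingConeVFs}), or more elementarily expand $\partial_v$, $e_{AB}$ directly in Cartesian coordinates. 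Then for a boost or scaling $\Gamma$, compute $[\Gamma, \overline{\partial}]$: the piece where $\Gamma$ differentiates the $\frac1r$ (or the homogeneous-degree-$0$ coefficients) produces a term with one extra $\frac1r$ acting on a $\Gamma^\alpha$, $|\alpha|=1$, hence the $\frac{C}{r}\sum_{|\alpha|=1}|\Gamma^\alpha h|$ contribution; the piece where the Lie bracket stays at the level of vector fields produces something expressible as $\overline{\partial}$ composed with $h$ (using that $[\Gamma,\Omega_{AB}]$ is in $Z$ and, after dividing by $r$, gives back a good derivative up to lower-order $\frac1r$ corrections), hence the $C|\overline{\partial} h|$ term. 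One must be slightly careful that $[\Gamma, \partial_v]$ can generate $\partial_u = \frac12(\partial_t - \partial_r)$-type terms — but these come multiplied by coefficients like $\frac{u}{r}$ or with the correct structure so that they recombine into $\overline{\partial}$ plus $\frac1r \Gamma$ terms; checking this recombination case by case (rotation, boost $\Omega_{0i}$, scaling $S$) against each $\overline{\partial}$ is the bulk of the work. Since everything is a finite explicit check, the conclusion $|[\Gamma,\overline{\partial}]h| \le C|\overline{\partial} h| + \frac{C}{r}\sum_{|\alpha|=1}|\Gamma^\alpha h|$ follows, completing (4) and the lemma.
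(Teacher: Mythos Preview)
Your proposal is correct and follows essentially the same case-by-case computational approach as the paper. The paper's treatment of part (4) is slightly more economical: it notes that the case $\overline{\partial} = \frac{1}{r}\Omega$ reduces to parts (1)--(3), and the case $\Gamma = \Omega$ is part (3), leaving only the explicit computations $[\partial_v, S] = \partial_v$ and $[\partial_v,\, x\partial_t + t\partial_x] = \frac{x}{r}\partial_v - \frac{y}{r^2}\Omega_{xy} - \frac{z}{r^2}\Omega_{xz}$, which directly exhibit the claimed form without any $\partial_u$ contamination to worry about.
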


\begin{proof}
The first two identities can be found in Chapter $2$ of \cite{Sogge08}. The third follows immediately from taking $\overline{\partial}$ as ${1 \over r} \Omega$ and $\partial_v$ as in Section~\ref{sec:notation}. For the fourth identity, we must only consider the case of $\overline{\partial} = \partial_v$, as the case of $\overline{\partial} = {1 \over r} \Omega$ follows from the other three identities. Moreover, we must only consider the case of $\Gamma$ being the scaling vector field or a boost, as the other case is the third identity. We now check these remaining two cases.

When $\Gamma = S = t \partial_t + r \partial_r$, we have that
\[
[\partial_v,\Gamma] = \partial_v,
\]
giving us the desired result. When $\Gamma$ is a boost, we may assume without loss of generality that $\Gamma = x \partial_t + t \partial_x$. Then, we note that
\[
[\partial_v,\Gamma] = \partial_x + {x \over r} \partial_t.
\]
We shall first write this as ${x \over r} \partial_v$ plus an error. We have that
\begin{equation}
    \begin{aligned}
    \partial_x + {x \over r} \partial_t = {r^2 + x^2 - x^2 \over r^2} \partial_x + {x y \over r^2} \partial_y - {x y \over r^2} \partial_y + {x z \over r^2} \partial_z - {x z \over r^2} \partial_z + {x \over r} \partial_t
    \\ = {x \over r} \partial_v + {r^2 - x^2 \over r^2} \partial_x - {x y \over r^2} \partial_y - {x z \over r^2} \partial_z.
    \end{aligned}
\end{equation}
Now, we note that $x \partial_y - y \partial_x = \Omega_{x y}$. Thus, we have that
\[
{x y \over r^2} \partial_y = {y \over r^2} \Omega_{x y} + {y^2 \over r^2} \partial_x.
\]
Similarly, we have that
\[
{x z \over r^2} \partial_z = {z \over r^2} \Omega_{x z} + {z^2 \over r^2} \partial_x.
\]
Thus, we have that
\begin{equation}
    \begin{aligned}
    \partial_x + {x \over r} \partial_t = {x \over r} \partial_v + {r^2 - x^2 \over r^2} \partial_x - {y \over r^2} \Omega_{x y} - {y^2 \over r^2} \partial_x - {z \over r^2} \Omega_{x z} - {z^2 \over r^2} \partial_x = {x \over r} \partial_v - {y \over r^2} \Omega_{x y} - {z \over r^2} \Omega_{x z},
    \end{aligned}
\end{equation}
giving us the desired result.

\end{proof}

\section{Bootstrap Assumptions} \label{sec:BootstrapAssumptions}
In the following, we shall use $\Gamma^{\alpha,i}$ to denote a string of commutation vector fields where at most $i$ of them are weighted. Because we only commute with two derivatives, we have that $0 \le i \le 2$. The energy associated with $\Gamma^{\alpha,2}$ is allowed to grow like $(1 + t)^\delta$.

We shall consider two separate energies, $E_1 [\gamma] (s)^2$ and $E_2 [\gamma] (s)^2$. They shall both measure a supremum of energy on the time slices $\Sigma_t$ and outgoing cones truncated above by $\Sigma_t$. We take
\[
E_1 (s) = \sup_{\substack{0 \le t \le s\\ -1\le u\le t}} \sum_{|\alpha| \le 2} \Vert \partial \Gamma^{\alpha,1} \gamma \Vert_{L^2 (\Sigma_t)} + \Vert \overline{\partial} \Gamma^{\alpha,1} \gamma \Vert_{L^2 (\mathcal C_u (|u|,2 t - u))},
\]
and we take
\[
E_2 (s) = \sup_{\substack{0 \le t \le s\\ -1\le u\le t}} \sum_{|\alpha| \le 2} (1 + t)^{-\delta} \Vert \partial \Gamma^{\alpha,2} \gamma \Vert_{L^2 (\Sigma_t)} + (1 + t)^{-\delta} \Vert \overline{\partial} \Gamma^{\alpha,2} \gamma \Vert_{L^2 (\mathcal C_u (|u|,2 t - u))}.
\]
Note that we will usually work with the quantities $E_1,E_2$ which are the square roots of the energies $E_1^2, E_2^2$. This is purely a notational convenience.

We shall now take various bootstrap assumptions on $\gamma$. The remainder of the proof of the global stability of the plane waves will involve recovering the bootstrap assumptions. Most of these bootstrap assumptions will be recovered easily from the various embedding theorems along with some minor arguments after recovering the bootstrap assumptions for the energy. We have listed all of them to record all of the estimates we shall use in recovering the bootstrap assumptions for the energies $E_1 (T)^2$ and $E_2 (T)^2$, which are the only steps that require controlling nonlinear terms.

In the following, we fix some large, positive $p$ in terms of $\delta$. More precisely, we pick
\begin{equation}\label{pdef}
    p \ge {2 \over \delta}.
\end{equation}
This $p$ will be used for angular Sobolev embeddings. Indeed, we shall use Proposition~\ref{prop:LowRegKS}, which gives us control of the an appropriate mixed Lebesgue space norm in terms of commuting with a single weighted commutation field. The choice of $p$ must be large enough in order to take advantage of the volume of $\chi_{S_t}$ (see Lemma~\ref{lem:VolumeEst}).

With $\Gamma$ an arbitrary commutation field and $\Gamma^{\alpha,i}$ an arbitrary string of commutation fields where at most $i$ of them are weighted, we let $T$ be the maximal time such that the following bootstrap assumptions are true:
\subsection{Bootstrap assumption list}\label{BtstrpA}
\begin{align}
    E_1 (T) \le \epsilon^{{3 \over 4}},\label{Btstrp1}
    \\ E_2 (T) \le \epsilon^{{3 \over 4}},\label{Btstrp2}
    \\ \sup_{0 \le t \le T} (1 + t + r)^{1 - \delta} (1 + |u|)^{{1 \over 2}} |\partial \gamma| (t,r,\omega) \le \epsilon^{{3 \over 4}},\label{Btstrp3}
    \\ \sup_{0 \le t \le T} (1 + t + r)^{{3 \over 2} - \delta} |\overline{\partial} \gamma| (t,r,\omega) \le \epsilon^{{3 \over 4}},\label{Btstrp4}
    \\ \Vert (1 + |u|)^{-{1 \over 2} - {\delta \over 2}} \overline{\partial} \gamma \Vert_{L_t^2 [0,T] L_x^2} \le \epsilon^{{3 \over 4}},\label{Btstrp5}
    \\ \Vert (1 + |u|)^{-{1 \over 2} - {\delta \over 2}} \overline{\partial} \Gamma^{\alpha,1} \gamma \Vert_{L_t^2 [0,T] L_x^2} \le \epsilon^{{3 \over 4}},\label{Btstrp6}
    \\ \Vert (1 + t)^{-2 \delta} (1 + |u|)^{-{1 \over 2} - {\delta \over 2}} \overline{\partial} \Gamma^{\alpha,2} \gamma \Vert_{L_t^2 [0,T] L_x^2} \le \epsilon^{{3 \over 4}},\label{Btstrp7}
    \\ \sup_{0 \le t \le T}\Vert (1 + |u|)^{{1 \over 4}} (1 + t + r)^{{1 \over 2} - {3 \delta \over 4}} \partial \Gamma \gamma \Vert_{L^4 (\Sigma_t)} \le \epsilon^{{3 \over 4}},\label{Btstrp8}
    \\ \Vert (1 + |u|)^{-{1 \over 4} - {\delta \over 2}} (1 + t + r)^{{1 \over 2} - {3 \delta \over 2}} \overline{\partial} \Gamma \gamma \Vert_{L_t^2 [0,T] L_x^4} \le \epsilon^{{3 \over 4}},\label{Btstrp9}
    \\ \Vert \chi_{S_t} r \gamma \Vert_{L_t^\infty [0,T] L_r^\infty L_\omega^p} \le \epsilon^{{3 \over 4}},\label{Btstrp10}
    \\ \Vert \chi_{S_t} r \partial \gamma \Vert_{L_t^\infty [0,T] L_r^\infty L_\omega^p} \le \epsilon^{{3 \over 4}},\label{Btstrp11}
    \\ \Vert (1 + |u|)^{-{1 \over 2} - {\delta \over 2}} r \overline{\partial} \gamma \Vert_{L_t^2 [0,T] L_r^2 L_\omega^p} \le \epsilon^{{3 \over 4}},\label{Btstrp12}
    \\ \Vert (1 + t)^{-\delta} r \partial \Gamma \gamma \Vert_{L_t^\infty [0,T] L_r^2 L_\omega^p} \le \epsilon^{{3 \over 4}},\label{Btstrp14}
    \\ \Vert (1 + t)^{-2 \delta} (1 + |u|)^{-{1 \over 2} - {\delta \over 2}} r \overline{\partial} \Gamma \gamma \Vert_{L_t^2 [0,T] L_r^2 L_\omega^p} \le \epsilon^{{3\over 4}},\label{Btstrp15}
    \\ \sup_{0 \le t \le T} (1 + t + r)^{1 - \delta} \chi_{S_t} |\gamma| (t,r,\omega) \le \epsilon^{{3 \over 4}},\label{Btstrp16}
    \\ \sup_{0 \le t \le T} \Vert (1 + t)^{-\delta} \chi_{S_t} \Gamma^{\alpha,2} \gamma \Vert_{L^2 (\Sigma_t)} \le \epsilon^{{3 \over 4}}.\label{Btstrp17}
    \\ \Vert (1 + t)^{-\delta} \chi_{S_t} r \Gamma \gamma \Vert_{L_t^\infty [0,T] L_r^\infty L_\omega^p} \le \epsilon^{{3 \over 4}},\label{Btstrp13}
\end{align}

\subsection{Continuation of discussion regarding bootstrap assumptions}
We shall improve the bounds from $\epsilon^{{3 \over 4}}$ to being $C \epsilon$. This will recover the bootstrap assumptions when $\epsilon$ is sufficiently small, giving us the desired result.

The main remaining difficulty in establishing Theorem~\ref{thm:main} is recovering the bootstrap assumptions on the energy. We have the following proposition.

\begin{proposition} \label{prop:EnergyMain}
The bootstrap assumptions in Section~\ref{BtstrpA} imply that $E_1 (T) \le C \epsilon$ and $E_2 (T) \le C \epsilon$.
\end{proposition}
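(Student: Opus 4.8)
The plan is to close the energy estimates by feeding the pointwise and mixed-norm bootstrap assumptions into the energy inequalities from Propositions~\ref{prop:EnEst}. I would write, for each string $\Gamma^{\alpha,i}$ of commutation fields with $|\alpha| \le 2$ and at most $i$ weighted, the commuted equation for $\Gamma^{\alpha,i}\gamma$. Commuting \eqref{eq:TransformedEq} produces three types of terms on the right-hand side: (1) the genuine commutator errors $[\Box,\Gamma^{\alpha,i}]\gamma$, which by Lemma~\ref{lem:Commutators} are controlled by lower-order $\Box \Gamma^{\beta}\gamma$ and hence recursively by nonlinear terms of lower order; (2) terms where the commutation fields hit the coefficient $A = A(t-x)$ in the nonlinearity, which by Lemma~\ref{lem:roottfactors} pick up at most a factor $(1+t)^{|\alpha|/2}$ but are supported in $S_t$; and (3) terms where the derivatives land on the factors of $\gamma$ (or $A^{-1}\gamma$) inside the null form. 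Then the energy estimate from Proposition~\ref{prop:EnEst} reduces everything to bounding $\left|\int_0^T\int_{\Sigma_t} (\Box \Gamma^{\alpha,i}\gamma)(\partial_t \Gamma^{\alpha,i}\gamma)\,dx\,dt\right|$ (and the analogous cone integral), and the task is to show this is $\le C\epsilon^2 + C\epsilon^{1/4}E_1(T)^2$ (respectively with $(1+t)^\delta$ weights and $E_2$), so that absorbing the $E(T)^2$ term for small $\epsilon$ and using $E_i(0) \le C\epsilon$ gives the conclusion.

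The core of the argument is the null-form estimate: for a null form $m$, $|m(d\phi, d\psi)| \le C(|\overline{\partial}\phi||\partial\psi| + |\partial\phi||\overline{\partial}\psi|)$, so at least one good derivative appears in every nonlinear term. Using the bootstrap pointwise decay \eqref{Btstrp3}, \eqref{Btstrp4} to put the lowest-order factor in $L^\infty$, one gains a factor $(1+t+r)^{-1+\delta}$ from a $\partial$ or $(1+t+r)^{-3/2+\delta}$ from a $\overline{\partial}$, and then pairs the remaining $L^2$ factors using Cauchy--Schwarz against the weighted $\overline{\partial}$-energies \eqref{Btstrp5}--\eqref{Btstrp7}, which supply the $(1+|u|)^{-1/2-\delta/2}$ weight needed to make $\int_0^T (1+t)^{-1+\delta}\cdot(1+|u|)^{\ldots}\,dt$ converge (this is precisely the Lindblad--Rodnianski type estimate). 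For the top-order terms $|\alpha|=2$ one cannot afford two weighted commutations on the same factor, so one uses the structure $\Gamma^{\alpha,2} = \Gamma^{\beta,1}\Gamma^{\gamma,1}$-type splittings and the $L^4$ bootstrap assumptions \eqref{Btstrp8}, \eqref{Btstrp9} together with the mixed $L_r^2 L_\omega^p$ assumptions \eqref{Btstrp12}--\eqref{Btstrp15} to distribute regularity; the zeroth-order (no-derivative) appearances of $\gamma$ coming from $\Gamma$ hitting $A$ are handled by the Hardy-type Proposition~\ref{prop:Hardy} and the $\chi_{S_t}$-weighted assumptions \eqref{Btstrp10}, \eqref{Btstrp16}, \eqref{Btstrp13}.

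The main obstacle is type (2): the terms where weighted commutation fields differentiate the plane-wave-dependent coefficient $A(t-x)$. Here one loses up to $(1+t)^{|\alpha|/2} = (1+t)$ in amplitude from Lemma~\ref{lem:roottfactors}, which by itself would be catastrophic. The two geometric gains must be exploited simultaneously: first, such a term is supported in $S_t$, and the Cauchy--Schwarz/Hölder step in the angular variable against $\chi_{S_t}$ costs only $|S(r)\cap S_t|^{1/p'}\lesssim t^{-1/p'}$ (Lemma~\ref{lem:VolumeEst}), which for $p$ large as in \eqref{pdef} nearly recovers a full power of $t$; and second, the factor $\gamma$ that has been stripped of derivatives still carries the decay $(1+t+r)^{-1+\delta}$ on $S_t$ from \eqref{Btstrp16}. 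Balancing the $(1+t)$ loss against the $t^{-1/p'}\cdot(1+t+r)^{-1+\delta}$ gain, together with a $(1+|u|)^{-1/2-\delta/2}$ weight from the good-derivative energy, is what makes the time integral converge; getting the powers of $p$, $\delta$ and the $u$-weights to line up is the delicate bookkeeping. Once these coefficient-hitting terms are controlled, the remaining terms are routine null-form estimates, and collecting all contributions yields $E_1(T)^2 + E_2(T)^2 \le C\epsilon^2 + C\epsilon^{1/4}(E_1(T)^2 + E_2(T)^2)$, which for $\epsilon$ small gives the proposition.
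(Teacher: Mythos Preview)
Your outline captures the right tools and the right difficulty, and is essentially the paper's approach. However, one structural point is blurred in your sketch and it matters for whether the argument actually closes.

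You write that balancing the $(1+t)$ loss from two weighted fields hitting $A$ against the volume gain $t^{-1/p'}$ and the decay $(1+t+r)^{-1+\delta}$ ``makes the time integral converge.'' It does not: the worst coefficient-hitting terms (what the paper calls signature $\sigma = 2$, both weighted fields on $A$) are genuinely borderline and produce an error integral growing like $(1+s)^{2\delta}$. This is precisely why $E_2$ is defined with the $(1+t)^{-\delta}$ weight, and why your final inequality $E_1(T)^2 + E_2(T)^2 \le C\epsilon^2 + C\epsilon^{1/4}(E_1(T)^2 + E_2(T)^2)$ cannot hold as written. The correct statement is that the $\sigma=2$ part of the error integral for $E_2$ is bounded by $C\epsilon^{9/4}(1+s)^{2\delta}$, which after multiplying by the built-in $(1+s)^{-2\delta}$ in the definition of $E_2$ gives $E_2(T) \le C\epsilon$.

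The second point you do not make explicit, and which is the mechanism that prevents circularity, is the two-tier hierarchy between $E_1$ and $E_2$. When both weighted commutation fields land on $A$, \emph{neither} lands on the $\gamma$ factors in the nonlinearity; hence those factors can be controlled by quantities involving at most one weighted commutator, i.e.\ by $E_1$-controlled norms (specifically \eqref{Btstrp10}--\eqref{Btstrp12}), not by $E_2$. This is crucial: if you tried to bound the $\sigma=2$ errors using $E_2$-quantities, the $(1+t)^\delta$ growth built into $E_2$ would compound with the already-borderline integral and the estimate would fail to close. Conversely, when estimating $E_1$ you only ever commute with at most one weighted field, so $\sigma \le 1$, and those terms (as you correctly indicate) are integrable in $t$; this is why $E_1$ is uniformly bounded rather than slowly growing. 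The paper organizes the entire proof around this $\sigma$-decomposition, treating $\sigma=2$, $\sigma=1$, $\sigma=0$ separately; your sketch would benefit from making this hierarchy explicit.
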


We shall now assume that we have shown Proposition~\ref{prop:EnergyMain}. These estimates are established in Section~\ref{sec:ClosingEnergy}. We shall show that, as a result of this, we can recover all of the other bootstrap assumptions. For each of these, the portion of the estimates where $t<10$ follow from the usual Sobolev embedding on a spacetime cube of side length 100, so we will only worry about the parts of these norms that have $t>2$. 

We shall first use Proposition~$\ref{prop:EnergyMain}$ to recover the pointwise bootstrap assumptions on $\partial \gamma$ and $\overline{\partial} \gamma$. They are direct consequences of the Klainerman-Sobolev inequalities we have established.

\begin{lemma}\label{lem:bootstrappartialgammabound}
Assuming Proposition~\ref{prop:EnergyMain}, we recover bootstrap assumptions~\eqref{Btstrp3} and \eqref{Btstrp4} and improve the bound to $C \epsilon$. Namely, we derive the estimates
\begin{enumerate}
    \item $(1 + t + r)^{1 - \delta} (1 + |u|)^{{1 \over 2}} |\partial \gamma| (t,r,\omega) \le C E_2(t)$,
    \item $(1 + t + r)^{{3 \over 2} - \delta} |\overline{\partial} \gamma| (t,r,\omega) \le C E_2(t)$.
\end{enumerate}
\end{lemma}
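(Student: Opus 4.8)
The plan is to obtain the two pointwise bounds directly from the Klainerman--Sobolev inequalities proved in Proposition~\ref{prop:KlaiSob}, using the energy $E_2(t)$ as the right-hand side. Since Proposition~\ref{prop:EnergyMain} gives $E_2(T) \le C\epsilon$, establishing estimates of the form $(1 + t + r)^{1-\delta}(1+|u|)^{1/2}|\partial\gamma| \le C E_2(t)$ immediately recovers \eqref{Btstrp3} with the improved constant, and similarly for \eqref{Btstrp4}; so the real content is just the two displayed inequalities in the statement. As noted in the discussion preceding the lemma, the region $t < 10$ is handled by ordinary Sobolev embedding on a spacetime cube, so I only treat $t \ge 2$.

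For the first estimate, I would apply the $k > n/2$ case of Proposition~\ref{prop:KlaiSob} (with $n = 3$, so $k = 2 > 3/2$) to each component of $\partial_\mu \gamma$. This yields
\[
(1 + t + r)^{1}(1 + |u|)^{1/2} |\partial\gamma|(t,r,\omega) \le C \sum_{|\alpha| \le 2} \Vert \Gamma^\alpha \partial\gamma \Vert_{L^2(\Sigma_t)}.
\]
Now I commute $\Gamma^\alpha$ past $\partial$ using Lemma~\ref{lem:Commutators}(1)--(2): each commutator of a commutation field with a translation field costs only another translation field, so $\Gamma^\alpha \partial\gamma$ is bounded pointwise by $\sum_{|\beta| \le |\alpha|} |\partial \Gamma^\beta \gamma|$, i.e.\ by $\sum_{|\beta|\le 2} |\partial\Gamma^\beta\gamma|$. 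The right-hand side is then controlled by $\sum_{|\alpha|\le 2}\Vert \partial \Gamma^{\alpha,2}\gamma\Vert_{L^2(\Sigma_t)} \le C(1+t)^\delta E_2(t)$ by the definition of $E_2$ in Section~\ref{sec:BootstrapAssumptions}. Absorbing the $(1+t)^\delta$ by dropping one power of $(1 + t + r)^{-\delta}$ from the left-hand side gives exactly $(1 + t + r)^{1-\delta}(1+|u|)^{1/2}|\partial\gamma| \le C E_2(t)$.

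For the second estimate, the natural approach is to use Proposition~\ref{prop:ConeKS} (together with Proposition~\ref{prop:ConeSob}) near the light cone, where good derivatives decay like $(1+t+r)^{-3/2}$, and Proposition~\ref{prop:KlaiSob} away from the light cone. Near the cone, taking $h = \gamma$ (componentwise) in Proposition~\ref{prop:ConeKS} bounds $|\overline{\partial}\gamma|(t_0,r_0,\omega_0)$ by $(t_0+r_0)^{-3/2}$ times $\sum_{|\alpha|\le 2}(\Vert \overline{\partial}\Gamma^\alpha\gamma\Vert_{L^2(\mathcal C_{u_0})} + \Vert \partial\Gamma^\alpha\gamma\Vert_{L^2(\Sigma_{t_0})})$, and both terms are controlled by $(1+t)^\delta E_2(t)$ by the definition of $E_2$ (note $E_2$ includes the characteristic energy on truncated cones), again losing a harmless power $(1+t+r)^{-\delta}$. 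Away from the cone, where $|u| \sim (1 + t + r)$, I instead apply the $L^\infty$ Klainerman--Sobolev inequality to $\overline{\partial}\gamma$ directly, using Lemma~\ref{lem:Commutators}(4) to commute $\Gamma^\alpha$ past $\overline{\partial}$ (the error term $\frac{C}{r}\sum|\Gamma^\beta\gamma|$ is harmless since $r \sim t$ there), and then convert the weight $(1+t+r)^{1}(1+|u|)^{1/2}$ coming from Proposition~\ref{prop:KlaiSob} into $(1+t+r)^{3/2}$ using $|u| \sim (1+t+r)$; again $E_2$ bounds the resulting $L^2$ norms up to $(1+t)^\delta$. The main technical obstacle is bookkeeping the commutator error terms in Lemma~\ref{lem:Commutators}(4) and confirming they are absorbed by the energy — but since they only involve undifferentiated $\Gamma^\beta\gamma$ with a $\frac1r$ weight, a Hardy inequality in $r$ (Proposition~\ref{prop:Hardy}) or simply the support localization reduces them to the energy terms already appearing in $E_2$, so no new difficulty arises.
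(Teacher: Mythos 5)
Your proof of the first estimate, and of the second estimate near the light cone via Proposition~\ref{prop:ConeKS}, matches the paper's argument. However, for the second estimate away from the cone the paper's route is much simpler than yours, and yours as written contains an error: you claim the commutator error $\frac{C}{r}\sum|\Gamma^\beta\gamma|$ is ``harmless since $r \sim t$ there,'' but in the region $|u| \ge t/100$ the radial coordinate $r$ can be arbitrarily small (indeed $r=0$ is allowed, since there $|u|=t$). The fallback you mention (a Hardy inequality in $r$) could in principle rescue this, but it adds work that is not needed, and you would also have to address the regularity of $\overline{\partial}\gamma$ at $r=0$ before invoking Proposition~\ref{prop:KlaiSob} on it, since $\partial_v$ and $\frac1r\Omega$ have direction-dependent coefficients at the origin.

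The paper's observation is that away from the light cone the second estimate is an immediate consequence of the first: when $|u| \ge t/100$ one has $(1+|u|)^{1/2} \ge c\,(1+t+r)^{1/2}$, so the first estimate $(1+t+r)^{1-\delta}(1+|u|)^{1/2}|\partial\gamma| \le C E_2(t)$ already gives $(1+t+r)^{3/2-\delta}|\partial\gamma|\le C E_2(t)$, and since $|\overline{\partial}\gamma| \le C|\partial\gamma|$ pointwise this is exactly estimate (2). No application of Klainerman--Sobolev to $\overline{\partial}\gamma$, no commutation with $\overline{\partial}$, and no Hardy inequality are required. You should replace your away-from-cone argument with this observation.
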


\begin{proof}
The first estimate follows from applying Proposition~\ref{prop:KlaiSob} and noting that the right hand side is controlled by $E_2$ recovered in Proposition~\ref{prop:EnergyMain}. The second estimate similarly follows similarly using Proposition~\ref{prop:ConeKS} instead when $|u|<\frac{t}{100}$ and follows directly from the first estimate when $|u|\ge\frac{t}{100}$.
\end{proof}

We shall now recover the spacetime $L^2$ estimates on the good derivatives.

\begin{lemma}\label{lem:bootstrap567}
Assuming Proposition~\ref{prop:EnergyMain}, we recover bootstrap assumptions~\eqref{Btstrp5},\eqref{Btstrp6} ,\eqref{Btstrp7}, namely we have that
\begin{enumerate}
    \item $\Vert (1 + |u|)^{-{1 \over 2} - {\delta \over 2}} \overline{\partial} \gamma \Vert_{L_t^2 [2,T] L_x^2} \le C \epsilon$,
    \item $\Vert (1 + |u|)^{-{1 \over 2} - {\delta \over 2}} \overline{\partial} \Gamma^{\alpha,1} \gamma \Vert_{L_t^2 [2,T] L_x^2} \le C \epsilon$,
    \item $\Vert (1 + t)^{-2 \delta} (1 + |u|)^{-{1 \over 2} - {\delta \over 2}} \overline{\partial} \Gamma^{\alpha,2} \gamma \Vert_{L_t^2 [2,T] L_x^2} \le C \epsilon$.
\end{enumerate}
\end{lemma}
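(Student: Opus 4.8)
\textbf{Proof proposal for Lemma~\ref{lem:bootstrap567}.}

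The plan is to obtain these three spacetime $L^2$ bounds on the good derivatives directly from the averaged characteristic energy estimate (Proposition~2, the Lindblad--Rodnianski-type estimate), applied not to $\gamma$ itself but to $\Gamma^{\alpha,1}\gamma$ and $\Gamma^{\alpha,2}\gamma$, with the nonlinear source terms absorbed using Proposition~\ref{prop:EnergyMain}. For item (1), I would apply Proposition~2 to $h = \gamma$: its left-hand side contains exactly $\int_0^s\int_{\Sigma_t}(1+|u|)^{-1-\delta}(\overline\partial\gamma)^2\,dx\,dt$, which is the square of the quantity we want to control, and its right-hand side is $C_\delta\|\partial\gamma(0)\|_{L^2}^2 + C_\delta|\int_0^s\int_{\Sigma_t}(\Box\gamma)(\partial_t\gamma)\,dx\,dt|$. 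The initial energy is $O(\epsilon^2)$ by hypothesis on the data, and the spacetime integral of $(\Box\gamma)(\partial_t\gamma)$ is precisely the ``nonlinear error'' term that is estimated in Section~\ref{sec:ClosingEnergy} in the course of proving Proposition~\ref{prop:EnergyMain}; since that proposition is being assumed, this term is $\le C\epsilon^2$. Taking square roots gives the bound $C\epsilon$. Since $C\epsilon < \epsilon^{3/4}$ for $\epsilon$ small, this recovers and improves \eqref{Btstrp5}.

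For items (2) and (3) the strategy is identical, but applied to the commuted quantities. I would commute the equation \eqref{eq:TransformedEq} with $\Gamma^{\alpha,1}$ (respectively $\Gamma^{\alpha,2}$) using the commutator identities from Lemma~\ref{lem:Commutators} and the frame/weight bounds of Lemma~\ref{lem:roottfactors}, so that $\Box(\Gamma^{\alpha,i}\gamma)$ equals a commuted nonlinearity plus lower-order commutator terms. Then I apply Proposition~2 with $h = \Gamma^{\alpha,1}\gamma$: the left-hand side gives $\int_0^s\int_{\Sigma_t}(1+|u|)^{-1-\delta}(\overline\partial\Gamma^{\alpha,1}\gamma)^2$, and the right-hand side is controlled by the initial energy of $\Gamma^{\alpha,1}\gamma$ (which is $O(\epsilon^2)$ since the data are in $H^3\times H^2$ and $|\alpha|\le 2$ with at most one weighted field) plus the spacetime integral $|\int_0^s\int_{\Sigma_t}\Box(\Gamma^{\alpha,1}\gamma)\,\partial_t(\Gamma^{\alpha,1}\gamma)\,dx\,dt|$, which is again bounded by $C\epsilon^2$ as part of the proof of Proposition~\ref{prop:EnergyMain} (this is exactly the error term appearing in the bound $E_1(T)\le C\epsilon$). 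For item (3), the energy for $\Gamma^{\alpha,2}\gamma$ is allowed to grow like $(1+t)^\delta$, which is why the weight $(1+t)^{-2\delta}$ appears: I would apply Proposition~2 on dyadic time intervals $[2^k,2^{k+1}]$, or equivalently absorb a factor $(1+t)^{-2\delta}$ into the integrand and use that $\int_0^T (1+t)^{-2\delta}\cdot(1+t)^\delta\cdot(\text{integrand bounded by }E_2^2)\,dt$ converges against the extra $(1+t)^{-2\delta}$ decay; more precisely, running Proposition~2 up to time $s$ gives the left side weighted only by $(1+|u|)^{-1-\delta}$, and the right side is $\lesssim E_2(s)^2 \lesssim (1+s)^{2\delta}\epsilon^2$, so inserting the prefactor $(1+s)^{-2\delta}$ and taking a supremum (or summing the dyadic pieces, which telescopes because of the extra $\delta$ of room) yields the bound $C\epsilon$.

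The only genuine subtlety — and the step I expect to require care rather than the routine invocations above — is bookkeeping the commutator terms generated when $\Gamma^{\alpha,i}$ is applied to the variable-coefficient nonlinearity $A\,m(d(A^{-1}\gamma),d(A^{-1}\gamma))$ in \eqref{eq:TransformedEq}: derivatives falling on the background factors $A(t-x)$ produce the $\sqrt{t}$-type weights from Lemma~\ref{lem:roottfactors}, and these must be paired against the smallness and the $\chi_{S_t}$ localization (volume $\lesssim t$, Lemma~\ref{lem:VolumeEst}) to see that the resulting contribution to $\int\Box(\Gamma^{\alpha,i}\gamma)\,\partial_t(\Gamma^{\alpha,i}\gamma)$ is still $O(\epsilon^2)$ with the allowed $(1+t)^\delta$ growth for the top-order weighted case. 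However, all of this work is precisely what goes into proving Proposition~\ref{prop:EnergyMain} in Section~\ref{sec:ClosingEnergy}; since that proposition is assumed here, the present lemma is essentially a corollary, and the proof reduces to citing Proposition~2, the assumed energy bounds, and the smallness of the data.
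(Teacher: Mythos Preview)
Your approach is correct but takes a different route from the paper. For parts (1) and (2), the paper does \emph{not} re-invoke the Lindblad--Rodnianski estimate (Proposition~2) and the nonlinear error bounds. Instead it observes that the characteristic energies $\Vert\overline{\partial}\Gamma^{\alpha,1}\gamma\Vert_{L^2(\mathcal C_u(|u|,2T-u))}$ are already part of the definition of $E_1(T)$; since Proposition~\ref{prop:EnergyMain} gives $E_1(T)\le C\epsilon$, the cone flux is bounded by $C\epsilon$ uniformly in $u$, and multiplying by $(1+|u|)^{-1-\delta}$ and integrating in $u$ gives the spacetime $L^2$ bound immediately. This uses only the \emph{statement} of Proposition~\ref{prop:EnergyMain}, whereas your argument appeals to the error integrals $\int(\Box\Gamma^{\alpha,i}\gamma)(\partial_t\Gamma^{\alpha,i}\gamma)$ bounded in Section~\ref{sec:ClosingEnergy}, i.e.\ to the \emph{proof} of that proposition. (Your route can be made self-contained using only the statement, too: the energy identity in Proposition~\ref{prop:EnEst} gives $|\int_0^s\!\int(\Box h)(\partial_t h)|\le \int_{\Sigma_s}(\partial h)^2+\int_{\Sigma_0}(\partial h)^2$, which is controlled by $E_1$ or $E_2$.)

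For part (3), both approaches exploit the bound $\int_{\mathcal C_u(|u|,2s-u)}(\overline{\partial}\Gamma^{\alpha,2}\gamma)^2\le C\epsilon^2(1+s)^{2\delta}$ and the extra room from the $(1+t)^{-4\delta}$ weight. Your dyadic-in-$t$ summation works fine; the paper instead integrates by parts in $v$ along each fixed cone, writing $\int_{|u|}^{2T-u}(1+t)^{-4\delta}h\,dv$ as a boundary term plus $2\delta\int(1+t)^{-1-4\delta}\bigl(\int_{|u|}^v h\,dv'\bigr)dv$, and then inserts the $(1+t)^{2\delta}$ bound on the inner integral. Both arguments give the same conclusion; the paper's version stays closer to the cone-based definition of $E_2$, while yours packages the same gain as a geometric sum over dyadic time shells.
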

\begin{proof}
The first two parts follow immediately from the fact that $E_1 (T)^2$ controls the characteristic energy along with the fact that $(1 + |u|)^{-1 - \delta}$ is integrable in $u$. We now turn to the third part.

We begin by noting that, as a consequence of Proposition~\ref{prop:EnergyMain}, we have that
\begin{equation} \label{CharGrowth}
    \begin{aligned}
    \int_{|u|}^{2 s - u} \int_{S^2} (\overline{\partial} \Gamma^{\alpha,2} \gamma)^2 r^2 d \omega d v \le C \epsilon^2 (1 + s)^{2 \delta}
    \end{aligned}
\end{equation}
for every $s \le T$, where we are integrating on the cone $\mathcal C_u$. Thus, with $h(u,v) = \int_{S^2} (\overline{\partial} \Gamma^{\alpha,2} \gamma)^2 r^2 d \omega$ and integrating by parts in $v$, we have that
\begin{equation}\label{eq:CharIntByParts}
\int_{|u|}^{2 T - u} (1 + t)^{-4 \delta} h d v = (1 + T)^{-4 \delta} \int_{|u|}^{2T-u} h(u,v')dv' + 2 \delta \int_{|u|}^{2 T - u} \left (1 + {1 \over 2} (v + u) \right )^{-1 - 4 \delta} \int_{|u|}^v h(u,v') d v' d v.
\end{equation}
Now, we note that
\[
\int_{|u|}^v h(u,v') d v' \le C \epsilon^2 (1 + t)^{2 \delta} = C \epsilon^2 \left (1 + {1 \over 2} (v + u) \right )^{2 \delta}.
\]
Thus, we have that
\[
\delta \int_{|u|}^{2 T - u} \left (1 + {1 \over 2} (v + u) \right )^{-1 - 4 \delta} \int_{|u|}^v h(u,v') d v' d v \le C \delta \epsilon^2 \int_{|u|}^{2 T - u} \left (1 + {1 \over 2} (v + u) \right )^{-1 - 2 \delta} d v \le C \epsilon^2
\]
and similarly for the first term in \eqref{eq:CharIntByParts}. Multiplying~\eqref{eq:CharIntByParts} by $(1 + |u|)^{-1 - \delta}$, integrating in $u$, we get that
\begin{equation*}
    \begin{aligned}
    \Vert (1 + |u|)^{-{1 \over 2} - {\delta \over 2}} (1 + t)^{-2 \delta} \overline{\partial} \Gamma^{\alpha,2} \gamma \Vert_{L_t^2 L_x^2}^2&\le 4\int_{-1}^T (1 + |u|)^{-1 - \delta} \int_{|u|}^{2 t - u} (1 + t)^{-4 \delta} \int_{S^2}  (\overline{\partial} \Gamma^{\alpha,2} \gamma)^2 r^2 d \omega d v d u\\
    & \le 4\int_{-1}^T (1 + |u|)^{-1 - \delta} \int_{|u|}^{2 t - u} (1 + t)^{-4 \delta}h d v d u
    \\ &\le C \epsilon^2,
    \end{aligned}
\end{equation*}
where we have used the fact that $(1 + |u|)^{-1 - \delta}$ is integrable in $u$. This gives us the desired result.
\end{proof}

We now recover the $L_t^\infty L_x^4$ estimates on all derivatives and the $L_t^2 L_x^4$ estimates on good derivatives. They are both a consequence of the $L_x^6$ Klainerman-Sobolev inequality in Proposition~\ref{prop:KlaiSob} and an interpolation argument.

\begin{lemma} \label{lem:RecoverIntegratedKlaiSob}
Assuming Proposition~\ref{prop:EnergyMain}, we recover bootstrap assumptions~\eqref{Btstrp8} and \eqref{Btstrp9} and improve the bound to $C \epsilon$. Namely, we derive the estimates
\begin{enumerate}
    \item $\Vert (1 + |u|)^{{1 \over 4}} (1 + t + r)^{{1 \over 2} - {3 \delta \over 4}} \partial \Gamma \gamma \Vert_{L^4 (\Sigma_t)} \le CE_2(T)$,
    \item $\Vert (1 + |u|)^{-{1 \over 4} - {\delta \over 2}} (1 + t + r)^{{1 \over 2} - {3 \delta \over 4}} \overline{\partial} \Gamma \gamma \Vert_{L_t^2 L_x^4} \le CE_2(T)$.
\end{enumerate}
\end{lemma}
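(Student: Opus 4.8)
The plan is to interpolate between the $L^2$ energy estimate and the $L^6$ Klainerman--Sobolev inequality from Proposition~\ref{prop:KlaiSob} (case $n=3$, $k=1$, giving the exponent $1/((1/2)-1/3)=6$). For the first estimate, I would apply the $k<n/2$ part of Proposition~\ref{prop:KlaiSob} to $h=\partial\Gamma\gamma$, after commuting $\partial\Gamma$ into strings $\Gamma^{\alpha,i}$ with $|\alpha|\le 2$ (using Lemma~\ref{lem:Commutators} to rewrite $\Gamma\partial = \partial\Gamma + [\Gamma,\partial]$, which costs only lower-order translation terms). Since $\partial\Gamma\gamma$ contains at most one weighted field when we only allow $\Gamma$ to be weighted, we actually need to be slightly careful: in $\Gamma\gamma$ the single $\Gamma$ may be weighted, so $\partial\Gamma\gamma$ is covered by $\Gamma^{\alpha,1}$ strings, and thus Proposition~\ref{prop:KlaiSob} bounds $\Vert(1+t+r)^{2/3}(1+|u|)^{1/3}\partial\Gamma\gamma\Vert_{L^6(\Sigma_t)}$ by $\sum_{|\alpha|\le 2}\Vert\Gamma^{\alpha,1}\partial\gamma\Vert_{L^2(\Sigma_t)}$, which is $\le C E_1(T) \le C E_2(T)$ by Proposition~\ref{prop:EnergyMain}. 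Together with the trivial $L^2$ bound $\Vert\partial\Gamma\gamma\Vert_{L^2(\Sigma_t)}\le C E_2(T)$ (again after commuting), Hölder interpolation $\Vert\cdot\Vert_{L^4}\le\Vert\cdot\Vert_{L^2}^{1/2}\Vert\cdot\Vert_{L^6}^{1/2}$ with matched weights gives weight exponents $\tfrac12\cdot 0 + \tfrac12\cdot\tfrac23 = \tfrac13$ in $(1+t+r)$ and $\tfrac12\cdot 0 + \tfrac12\cdot\tfrac13 = \tfrac16$ in $(1+|u|)$.

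This does not immediately match the claimed exponents $(1+|u|)^{1/4}(1+t+r)^{1/2-3\delta/4}$, so the interpolation must instead be run against the \emph{weighted} $L^2$ input coming from the bootstrap/energy rather than the bare one. For the second estimate, the natural $L^2$ input is the spacetime good-derivative control: by Lemma~\ref{lem:bootstrap567} we have $\Vert(1+|u|)^{-1/2-\delta/2}\overline\partial\Gamma^{\alpha,1}\gamma\Vert_{L^2_tL^2_x}\le C\epsilon$, and by the $L^6$ Klainerman--Sobolev applied to $\overline\partial\Gamma\gamma$ (commuting $\overline\partial$ past $\Gamma$ via parts (3)--(4) of Lemma~\ref{lem:Commutators}, which produces $\overline\partial\Gamma^\alpha$ plus a $\tfrac1r\Gamma^\alpha$ error absorbed by the energy) we get $\Vert(1+t+r)^{2/3}(1+|u|)^{1/3}\overline\partial\Gamma\gamma\Vert_{L^2_tL^6_x}$ bounded by $E_2(T)$-type quantities up to $(1+t)^\delta$ losses from the $\Gamma^{\alpha,2}$ energy. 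Interpolating these two in the $L^p_x$ index at fixed $t$ (Hölder in $x$) and then taking $L^2$ in $t$ — keeping track that one factor carries $(1+|u|)^{-1/2-\delta/2}$ and the other $(1+|u|)^{1/3}$, so the product has $(1+|u|)^{\frac12(-1/2-\delta/2)+\frac12(1/3)}$, which one then rebalances to the stated $-1/4-\delta/2$ by choosing the Hölder split away from exactly $1/2$--$1/2$ — produces the claimed estimate, with the $\delta$ in the exponents coming from the $(1+t)^\delta$ slack in $E_2$ and the choice \eqref{pdef} playing no role here.

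The main obstacle I expect is bookkeeping rather than conceptual: one must choose the interpolation exponents (the Hölder split between $L^2$ and $L^6$ in the $x$ variable, and which $L^2_t$ vs.\ $L^\infty_t$ input to feed in) precisely so that the $t$-weight, the $u$-weight, and the powers of $\delta$ all land on the right-hand side matching \eqref{Btstrp8} and \eqref{Btstrp9}, while never using more than $E_2(T)$ (equivalently, commuting with at most two fields, at most two weighted). The commutator errors from Lemma~\ref{lem:Commutators}, part (4), contribute terms of the form $\tfrac1r\Gamma^\alpha\gamma$; these are handled exactly as in the proof of Proposition~\ref{prop:ConeKS}, i.e.\ by a Hardy inequality in $r$ (Proposition~\ref{prop:Hardy}) trading the $\tfrac1r$ for a $\partial_r$ and thus reducing to the energy, and one checks the resulting weights are no worse than those already present. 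Once the exponents are fixed, both parts follow from a direct application of Hölder's inequality and the already-established Klainerman--Sobolev and energy bounds.
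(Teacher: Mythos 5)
Your overall plan (interpolate the energy against a weighted $L^6$ Klainerman--Sobolev bound) is the right one and is essentially what the paper does, but there are two substantive problems.

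First, the Hölder split is wrong. The interpolation of $L^4$ between $L^2$ and $L^6$ is
\[
\Vert f \Vert_{L^4}^4 = \int |f|\cdot|f|^3 \le \Vert f\Vert_{L^2}\Vert f\Vert_{L^6}^3, \qquad\text{i.e.}\qquad \Vert f\Vert_{L^4}\le \Vert f\Vert_{L^2}^{1/4}\Vert f\Vert_{L^6}^{3/4},
\]
not $\Vert f\Vert_{L^2}^{1/2}\Vert f\Vert_{L^6}^{1/2}$ (check: $\tfrac14\cdot\tfrac12+\tfrac34\cdot\tfrac16=\tfrac14$). Your ``$\tfrac12\cdot 0+\tfrac12\cdot\tfrac23=\tfrac13$'' calculation is based on this incorrect split; with the correct $1/4$--$3/4$ split, part (1) of the lemma works exactly as you set it up: pairing the unweighted $L^2$ energy with an $L^6$ bound of weight $(1+|u|)^{1/3}(1+t+r)^{2/3-\delta}$ (the $-\delta$ coming from the fact that commuting once more to feed Proposition~\ref{prop:KlaiSob} lands you in $\Gamma^{\alpha,2}$, hence $E_2$ and a $(1+t)^\delta$ loss --- your claimed bound by $E_1(T)$ is not justified here) yields $\tfrac34\cdot\tfrac13=\tfrac14$ and $\tfrac34(\tfrac23-\delta)=\tfrac12-\tfrac{3\delta}{4}$, matching \eqref{Btstrp8}. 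You abandon this correct route because of the arithmetic error and propose an unnecessary modification (``run against the weighted $L^2$'').

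Second, and more seriously, your plan for part (2) cannot be fixed by ``rebalancing'' the Hölder exponents. Since $a+b=1$ and $a/2+b/6=1/4$ has the unique solution $a=1/4$, $b=3/4$, the split is forced. With your proposed $L^6$ input $\Vert(1+|u|)^{1/3}(1+t+r)^{2/3}\overline\partial\Gamma\gamma\Vert_{L_t^2L_x^6}$ the $(1+|u|)$-exponent comes out to $\tfrac14(-\tfrac12-\tfrac\delta2)+\tfrac34\cdot\tfrac13 = \tfrac18-\tfrac\delta8>0$, whereas \eqref{Btstrp9} requires the negative exponent $-\tfrac14-\tfrac\delta2$. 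The $L^6$ input the paper actually uses is
\[
\Vert (1 + |u|)^{-\frac16 - \frac\delta2} (1 + t + r)^{\frac23 - 2 \delta} \overline{\partial} \Gamma \gamma \Vert_{L_t^2 L_x^6} \le C \epsilon,
\]
which has a \emph{negative} $(1+|u|)$ weight and is not a direct consequence of Proposition~\ref{prop:KlaiSob} (that only gives nonnegative $u$-weights). Establishing this estimate is the real content of the paper's proof: it requires a region-by-region argument (interior of the cone vs.\ near the cone), a cutoff localized along the light cone so that $\Gamma u$ is controlled, commutation through $\Gamma^\mu\overline\partial\Gamma$ using Lemma~\ref{lem:Commutators}, the integrated characteristic energy estimate from Lemma~\ref{lem:bootstrap567}(3), and a Hardy-type bound on the $r^{-1}\Gamma^\beta\Gamma\gamma$ errors as in the proof of Proposition~\ref{prop:ConeKS}. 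Your proposal identifies none of this and, as written, the claimed rebalancing step fails.
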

\begin{proof}
Both of these estimates are a consequence of interpolating between unweighted bounds and weighted bounds. We shall begin by proving the following two estimates which are $L^6$ in $x$:
\begin{align}
\Vert (1 + |u|)^{{1 \over 3}} (1 + t + r)^{{2 \over 3} - \delta} \partial \Gamma \gamma \Vert_{L_t^\infty [0,T] L_x^6} \le C \epsilon\label{LinftyL6first}\\
\Vert (1 + |u|)^{-{1 \over 6} - {\delta \over 2}} (1 + t + r)^{{2 \over 3} - 2 \delta} \overline{\partial} \Gamma \gamma \Vert_{L_t^2 L_x^6} \le C \epsilon\label{LinftyL6second}.
\end{align}
The first of these estimates follows immediately by applying Proposition~\ref{prop:KlaiSob} and Proposition \ref{prop:EnergyMain}. We now turn to proving the second estimate.

We must only consider the region where $t \ge 10$. Indeed, when $t < 10$, the following argument works where instead of using the Klainerman-Sobolev inequalities we have proved, we use the regular Sobolev inequality. This can be done because the functions $t$, $r$, and $u$ are all comparable to $1$ in the support of $\gamma$ when $t \le 10$.

With this restriction on $t$, we further consider two regions, the region $A$ where $|u| \le {t \over 10}$, and the region $B$ where $t \ge {21 r \over 20}$. We begin with the region away from the light cone where $t \ge {21 r \over 20}$.

In this region, we apply the Klainerman-Sobolev inequality in Proposition~\ref{prop:KlaiSob} to a suitable cutoff $\chi$ times $\overline{\partial} \Gamma \gamma$ (see the proof of Proposition~\ref{prop:KlaiSob}, where such a cutoff is also used). This, along with Proposition~\ref{prop:EnergyMain}, gives us that
\[
\Vert (1 + |u|)^{{1 \over 3}} (1 + t + r)^{{2 \over 3} - \delta} \overline{\partial} \Gamma \gamma \Vert_{L^6 (\Sigma_t \cap A)} \le C \epsilon.
\]
Moreover, in this region, we note that $|u| \ge {t \over 100}$. Using this, we have that
\[
\Vert (1 + |u|)^{-{1 \over 6} - {\delta \over 2}} (1 + t + r)^{{2 \over 3} - 2 \delta} \overline{\partial} \Gamma \gamma \Vert_{L^6 (\Sigma_t \cap A)} \le C \epsilon (1 + t)^{-{1 \over 2} - {3 \delta \over 2}}.
\]
Integrating in $t$ then gives us the desired result in this region.

We now consider the region where along the light cone where $|u| \le {t \over 10}$. Let $\chi$ be a suitable cutoff localizing along the light cone as in Proposition~\ref{prop:ConeSob}. We can also take $\chi$ to localize in the region where $t \ge 10$. We now observe that $|\Gamma u| \le C |u|$ where $\Gamma$ is any commutation field and where $u = t - r$. We can now use one of the Klainerman-Sobolev inequalities. With $g(t,r,\omega) = \chi (1 + |u|)^{-{1 \over 2} - {\delta \over 2}} (1 + t + r)^{-2 \delta} \overline{\partial} \Gamma \gamma (t,r,\omega)$ and using one of the Klainerman-Sobolev inequalities in Proposition~\ref{prop:KlaiSob}, we have that
\begin{equation}
    \begin{aligned}
    \Vert \chi (1 + |u|)^{-{1 \over 6} - {\delta \over 2}} (1 + t + r)^{{2 \over 3} - 2 \delta} \overline{\partial} \Gamma \gamma \Vert_{L^6 (\Sigma_t)} \le C \sum_{|\mu| \le 1} \Vert \Gamma^\mu g \Vert_{L^2 (\Sigma_t)}.
    \end{aligned}
\end{equation}
Thus, taking the $L_t^2$ norm in the $t$ interval $[10,s]$ of both sides, we get that
\begin{equation}
    \begin{aligned}
    \Vert \chi (1 + |u|)^{-{1 \over 6} - {\delta \over 2}} (1 + t + r)^{{2 \over 3} - 2 \delta} \overline{\partial} \Gamma \gamma \Vert_{L_t^2 ([10,s]) L_x^6} \le C \sum_{|\mu| \le 1} \Vert \Gamma^\mu g \Vert_{L_t^2 [10,s] L_x^2}.
    \end{aligned}
\end{equation}

Now, we have that
\begin{align}
    \sum_{|\mu| \le 1} \Vert \Gamma^\mu g \Vert_{L_t^2 [10,s] L_x^2} &\le C \sum_{|\mu| \le 1} \Vert \chi (1 + t + r)^{-2 \delta} (1 + |u|)^{-{1 \over 2} - {\delta \over 2}} \Gamma^\mu \overline{\partial} \Gamma \gamma \Vert_{L_t^2 [10,s] L_x^2}
    \\ &+ C \Vert (1 + t + r)^{-2 \delta} (1 + |u|)^{-{1 \over 2} - {\delta \over 2}} \overline{\partial} \Gamma \gamma \Vert_{L_t^2 [10,s] L_x^2}.
\end{align}

Using part 3 of Lemma~\ref{lem:bootstrap567}, the second of these terms is controlled. Moreover, using Lemma~\ref{lem:Commutators}, we have that
\begin{equation}
    \begin{aligned}
    \Vert \chi (1 + t + r)^{-2 \delta} (1 + |u|)^{-{1 \over 2} - \delta} \Gamma^\mu \overline{\partial} \Gamma \gamma \Vert_{L_t^2 L_x^2} &\le C \Vert \chi (1 + t)^{-2 \delta} (1 + |u|)^{-{1 \over 2} - {\delta \over 2}} \overline{\partial} \Gamma^\mu \Gamma \gamma \Vert_{L_t^2 L_x^2}
    \\ + C \sum_{|\beta|\le|\mu|}\Vert \chi (1 + t)^{-2 \delta} (1 + &|u|)^{-{1 \over 2} - {\delta \over 2}} r^{-1} \Gamma^\beta \Gamma \gamma \Vert_{L_t^2 L_x^2}.
    \end{aligned}
\end{equation}

The first of these terms is controlled by part 3 of Lemma~\ref{lem:bootstrap567}. Thus, we only need to control
\[
\Vert \chi (1 + t)^{-2 \delta} (1 + |u|)^{-{1 \over 2} - \delta} r^{-1} \Gamma^\beta \Gamma \gamma \Vert_{L_t^2 L_x^2}.
\]
This term is controlled as long as we control
\[
\sup_{|u| \le {t \over 10}} \Vert \chi (1 + t)^{-2 \delta} r^{-1} \Gamma^\beta \Gamma \gamma \Vert_{L^2 \left (\mathcal C_u \left (u + 2,2 s - u \right ) \right )},
\]
where we note that $2 s - u$ is the $v$ coordinate where the cone $\mathcal C_u$ intersects $\Sigma_s$. This can be controlled in terms of
\[
C \sup_{|u| \le {s \over 10}} \Vert (1 + t)^{-2 \delta} \overline{\partial} \Gamma^\mu \Gamma \gamma \Vert_{L^2 \left (\mathcal C_u \left (10u/9 + 2,2 s - u \right ) \right )} + C \Vert (1 + t)^{-\delta} \partial \Gamma^\mu \Gamma \gamma \Vert_{L_t^\infty L_x^2}
\]
using a similar argument as is used to control the term \eqref{eq:HardyTerm} in Proposition~\ref{prop:ConeKS}. This gives us the desired result.

We now turn to using these $L^6$ in $x$ estimates along with the $L^2$ in $x$ estimates given in Proposition \ref{prop:EnergyMain} and in Lemma~\ref{lem:bootstrap567} in order to establish the result. We have that
\begin{equation}
    \begin{aligned}
    \Vert (1 + |u|)^{{1 \over 4}} (1 + t + r)^{{1 \over 2} - {3 \delta \over 4}} \partial \Gamma \gamma \Vert_{L^4 (\Sigma_t)}^4 = \int_{\Sigma_t} (1 + |u|) (1 + t + r)^{2 - 3 \delta} |\partial \Gamma \gamma|^4 d x
    \\ \le \Vert \partial \Gamma \gamma \Vert_{L^2 (\Sigma_t)} \left (\int_{\Sigma_t} (1 + |u|)^2 (1 + t + r)^{4 - 6 \delta} |\partial \Gamma \gamma|^6 d x \right )^{{1 \over 2}}
    \\ = \Vert \partial \Gamma \gamma \Vert_{L^2 (\Sigma_t)} \Vert (1 + |u|)^{{1 \over 3}} (1 + t + r)^{{2 \over 3} - \delta} \partial \Gamma \gamma \Vert_{L^6 (\Sigma_t)}^3.
    \end{aligned}
\end{equation}
Thus, we have that
\begin{equation}
    \begin{aligned}
    \Vert (1 + |u|)^{{1 \over 4}} (1 + t + r)^{{1 \over 2} - {3 \delta \over 4}} \partial \Gamma \gamma \Vert_{L^4 (\Sigma_t)} \le \Vert \partial \Gamma \gamma \Vert_{L^2 (\Sigma_t)}^{{1 \over 4}} \Vert (1 + |u|)^{{1 \over 3}} (1 + t + r)^{{2 \over 3} - \delta} \partial \Gamma \gamma \Vert_{L^6 (\Sigma_t)}^{{3 \over 4}}\le C\epsilon,
    \end{aligned}
\end{equation}
giving us the first inequality in the lemma statement.

For the second inequality, we proceed in a similar way. We have that
\begin{equation}
    \begin{aligned}
    \Vert (1 + |u|)^{-{1 \over 4} - {\delta \over 2}} (1 + t + r)^{{1 \over 2} - {\delta \over 2}} \overline{\partial} \Gamma \gamma \Vert_{L^4 (\Sigma_t)}^4 = \int_{\Sigma_t} (1 + |u|)^{-1 - 2 \delta} (1 + t + r)^{2 - 3 \delta} |\overline{\partial} \Gamma \gamma|^4 d x
    \\ \le \Vert (1 + |u|)^{-{1 \over 2} - {\delta \over 2}} \overline{\partial} \Gamma \gamma \Vert_{L^2 (\Sigma_t)} \left (\int_{\Sigma_t} (1 + |u|)^{-1 - 3 \delta} (1 + t + r)^{4 - 6 \delta} |\overline{\partial} \Gamma \gamma|^6 d x \right )^{{1 \over 2}}
    \\ = \Vert (1 + |u|)^{-{1 \over 2} - {\delta \over 2}} \overline{\partial} \Gamma \gamma \Vert_{L^2 (\Sigma_t)} \Vert (1 + |u|)^{-{1 \over 6} - {\delta \over 2}} (1 + s + r)^{{2 \over 3} - \delta} \overline{\partial} \Gamma \gamma \Vert_{L^6 (\Sigma_t)}^3.
    \end{aligned}
\end{equation}
Thus, we have that
\begin{equation}
    \begin{aligned}
    \Vert (1 + |u|)^{-{1 \over 4} - {\delta \over 2}} &(1 + s + r)^{{1 \over 2} - {\delta \over 2}} \overline{\partial} \Gamma \gamma \Vert_{L^4 (\Sigma_t)}
    \\ &\le \Vert (1 + |u|)^{-{1 \over 2} - {\delta \over 2}} \overline{\partial} \Gamma \Vert_{L^2 (\Sigma_t)}^{{1 \over 4}} \Vert (1 + |u|)^{-{1 \over 6} - {\delta \over 2}} (1 + s + r)^{{2 \over 3} - \delta} \overline{\partial} \Gamma \gamma \Vert_{L^6 (\Sigma_t)}^{{3 \over 4}}
    \\ &\le {1 \over 4} \Vert (1 + |u|)^{-{1 \over 2} - {\delta \over 2}} \overline{\partial} \Gamma \Vert_{L^2 (\Sigma_t)} + {3 \over 4} \Vert (1 + |u|)^{-{1 \over 6} - {\delta \over 2}} (1 + s + r)^{{2 \over 3} - \delta} \overline{\partial} \Gamma \gamma \Vert_{L^6 (\Sigma_t)}.
    \end{aligned}
\end{equation}
Taking $L^2$ in $t$ gives us the second inequality in the lemma statement.
\end{proof}

We finally recover the remaining bootstrap assumptions assuming Proposition~\ref{prop:EnergyMain}. The key in the following result is that the quantities are controlled in terms of $E_1 (T)$ and not $E_2 (T)$. If the quantities were controlled in terms of $E_2 (T)$ instead, the bootstrap assumptions would not close, as the growth rate of the norms in $E_2 (T)$ are not consistent with recovering this same growth rate.

\begin{lemma} \label{lem:AngularEst}
Assuming Proposition \ref{prop:EnergyMain}, we recover bootstrap assumptions~\eqref{Btstrp10},~\eqref{Btstrp11},~\eqref{Btstrp12},~\eqref{Btstrp13} and improve the bound to $C \epsilon$. Namely, we derive the estimates
\begin{enumerate}
    \item $\Vert \chi_{S_t} r \gamma \Vert_{L_t^\infty [2,T] L_r^\infty L_\omega^p} \le C E_1 (T)$,
    \item $\Vert \chi_{S_t} r \partial \gamma \Vert_{L_t^\infty [2,T] L_\infty^2 L_\omega^p} \le C E_1 (T)$,
    \item $\Vert (1 + |u|)^{-{1 \over 2} - \delta} r \overline{\partial} \gamma \Vert_{L_t^2 [2,s] L_r^2 L_\omega^p} \le C E_1 (T)$.
\end{enumerate}
\end{lemma}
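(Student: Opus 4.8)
All three bounds are upgraded from $\epsilon^{3/4}$ to $CE_1(T)$ by an angular Sobolev embedding together with a one–dimensional Hardy inequality in $r$; the structural point is that passing from an $L^2_\omega$ norm to an $L^p_\omega$ norm costs only a single rotation vector field, so the right–hand side involves $E_1$ rather than $E_2$. I would begin with part (3), which is cleanest since it carries no cutoff and no $L^\infty_r$. Fixing $t$ and $r$ and applying the $k=(n-1)/2=1$, $n=3$ case of Proposition~\ref{prop:LowRegKS} to $h=\overline{\partial}\gamma$, and using $\Omega r=0$ together with $|[\Omega,\overline{\partial}]\gamma|\le C|\overline{\partial}\gamma|$ from Lemma~\ref{lem:Commutators}(3) to write $\Omega(r\overline{\partial}\gamma)=r\overline{\partial}\Omega\gamma+O(r\overline{\partial}\gamma)$, one obtains $\Vert r\overline{\partial}\gamma\Vert_{L^p_\omega}\le C_p(\Vert r\overline{\partial}\gamma\Vert_{L^2_\omega}+\sum_\Omega\Vert r\overline{\partial}\Omega\gamma\Vert_{L^2_\omega})$. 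Multiplying by $(1+|u|)^{-1/2-\delta}$ and taking $L^2_t L^2_r$, and using $\Vert rg\Vert_{L^2_r L^2_\omega}=\Vert g\Vert_{L^2_x}$ (the Euclidean volume element being $r^2\,dr\,d\omega$) together with $(1+|u|)^{-1/2-\delta}\le(1+|u|)^{-1/2-\delta/2}$, reduces this to $\Vert(1+|u|)^{-1/2-\delta/2}\overline{\partial}\gamma\Vert_{L^2_t L^2_x}$ and $\Vert(1+|u|)^{-1/2-\delta/2}\overline{\partial}\Gamma^{\alpha,1}\gamma\Vert_{L^2_t L^2_x}$, which are $\le C\epsilon\le CE_1(T)$ by parts (1) and (2) of Lemma~\ref{lem:bootstrap567}.

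For parts (1) and (2) there is in addition the sharp cutoff $\chi_{S_t}$ and an $L^\infty_r$ norm; throughout one works away from the spacetime origin, the bounded-$t$ part following from the usual Sobolev inequality. The plan is: first, replace $\chi_{S_t}$ by a smooth fattened cutoff $\tilde\chi$ which is a function of $u'=t-x$ and of $\rho=\sqrt{y^2+z^2}$, and apply Proposition~\ref{prop:LowRegKS} to $\tilde\chi\, rh$ with $h=\gamma$ for part (1) and $h=\partial\gamma$ for part (2); by Lemma~\ref{lem:roottfactors} and a direct computation one has $|\Omega\tilde\chi|\le C\sqrt{1+t}$ on $S_t$, so the resulting $H^1_\omega$ norm is controlled by $\Vert\mathbf 1_{S_t}\,rh\Vert_{L^2_\omega}$, plus $\sum_\Omega\Vert\mathbf 1_{S_t}\,r\Omega h\Vert_{L^2_\omega}$ modulo commutators absorbed by Lemma~\ref{lem:Commutators}(1), plus the dangerous term $\sqrt{1+t}\,\Vert\mathbf 1_{S_t}\,rh\Vert_{L^2_\omega}$. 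Second, for the $L^\infty_r$ norm I would use the Hardy inequality of Proposition~\ref{prop:Hardy}, $\Vert g\Vert_{L^\infty_r L^2_\omega}\le\Vert\partial_r g\Vert_{L^1_r L^2_\omega}$, followed by Cauchy--Schwarz in $r$; the crucial geometric input is that $\gamma$ is supported in $\{u\ge-1\}$ while $S_t\subset\{|u'|\le1\}$, which forces $|u|=|t-r|\le3$ on $S_t\cap\mathrm{supp}\,\gamma$, so the $r$-integration runs over an interval of length $O(1)$ and no power of $t$ is lost. Third, since $\partial_r(rk)=k+r\partial_r k$ and $|\partial_r k|\le C|\partial k|$, the top-order piece becomes $\Vert r\,\partial\Gamma^{\alpha,1}k\Vert_{L^2_r[t-1,t+1]L^2_\omega}\le\Vert\partial\Gamma^{\alpha,1}k\Vert_{L^2(\Sigma_t)}\le E_1(T)$ (with one weighted field in $\Gamma^{\alpha,1}$, and $|\alpha|\le1$ in part (1), $|\alpha|\le2$ in part (2)), while each remaining term with $k$ and no derivative is handled by one further application of Proposition~\ref{prop:Hardy} in $r$, which gains a factor $(1+t)^{-1}$.

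The main obstacle is the dangerous term $\sqrt{1+t}\,\Vert\mathbf 1_{S_t}\,rh\Vert_{L^\infty_r L^2_\omega}$ — and, more generally, every ``zeroth order on the sphere'' contribution produced by the angular embedding, which cannot be bounded by the $\Sigma_t$ energy alone. These must instead exploit the smallness of the interaction region on the sphere, $\sigma(S(r)\cap S_t)\le C/t$ from Lemma~\ref{lem:VolumeEst}: by H\"older on $S^2$ this converts an $L^2_\omega$ norm over $S(r)\cap S_t$ into a gain of $t^{-1/2}$ against an $L^p_\omega$, or $L^\infty_\omega$, norm, which is in turn controlled by the already-recovered pointwise and mixed-norm bounds of Lemmas~\ref{lem:bootstrappartialgammabound}--\ref{lem:RecoverIntegratedKlaiSob}. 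The choice $p\ge2/\delta$ in \eqref{pdef} is precisely what makes the competing powers of $t$ — the $\sqrt{1+t}$ weight against the $t^{-1/2}$ volume gain, modulo the $\delta$-slack present in $E_2$ and in the Klainerman--Sobolev estimates — balance in favour of a $t$-independent bound. Once this balancing is carried through, the rest is routine manipulation with the commutator identities of Lemma~\ref{lem:Commutators}, and the constants improve from $\epsilon^{3/4}$ to $CE_1(T)\le C\epsilon$.
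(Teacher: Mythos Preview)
Your argument for part (3) is essentially correct and matches the paper's: angular Sobolev on the sphere costs one rotation, the commutator $[\Omega,\overline\partial]$ is harmless, and the resulting $L^2_t L^2_x$ quantity is controlled by $E_1$ (the paper routes this through the cone flux directly rather than through Lemma~\ref{lem:bootstrap567}, but either works).

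For parts (1) and (2), however, your plan has a genuine gap. By inserting a smooth fattened cutoff $\tilde\chi$ \emph{inside} the angular Sobolev embedding you are forced to differentiate it, and Lemma~\ref{lem:roottfactors} then produces the term $\sqrt{1+t}\,\Vert\mathbf 1_{\tilde S_t}\,rh\Vert_{L^2_\omega}$. Your proposed cure, the $t^{-1/2}$ volume gain from Lemma~\ref{lem:VolumeEst} combined with the already-recovered pointwise bound $\chi_{S_t}|\gamma|\lesssim E_2(1+t)^{-1+\delta}$, yields only
\[
\sqrt{1+t}\cdot (1+t)^{-1/2}\cdot r\cdot |\gamma|_{L^\infty}\ \lesssim\ E_2\,(1+t)^{\delta},
\]
since $r\sim t$ on $S_t$. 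This still grows in $t$, so after taking $L^\infty_t[2,T]$ you do not get a $t$-independent bound, let alone the claimed $CE_1(T)$. Routing through $L^p_\omega$ instead of $L^\infty_\omega$ is circular (that is the very norm you are estimating), and dropping the volume gain leaves you with $\sqrt{t}\,E_1$, which is worse. The balancing you describe does not actually close.

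The paper avoids this difficulty entirely by never placing the cutoff inside the Sobolev embedding. One simply uses $\chi_{S_t}\le 1$ together with the $r$-localisation $r\in[t-1,t+1]$ coming from $|r-x|\le 2$ on $S_t$, writes $\Vert\chi_{S_t}\,rf\Vert_{L^\infty_r L^p_\omega}\le Ct\,\Vert f\Vert_{L^\infty_r[t-1,t+1]L^p_\omega}$, applies the Hardy inequality of Proposition~\ref{prop:Hardy} \emph{first} to produce $\partial_r f$, and only \emph{then} applies angular Sobolev to $\partial_r f$ itself. Since rotations commute with $\partial_r$, this gives $t\,\Vert\partial_r\Omega f\Vert_{L^2_r[t-1,t+1]L^2_\omega}\sim\Vert r\,\partial_r\Omega f\Vert_{L^2_r L^2_\omega}\le CE_1(T)$ with no cutoff ever differentiated and no dangerous $\sqrt t$ weight. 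The order ``Hardy in $r$, then Sobolev on $S^2$'' is the point you are missing.
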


\begin{proof}
The first two estimates are of similar form, so we handle them together by letting $f=\gamma$ for the first estimate and $f=\partial\gamma$ for the second estimate.

We use the fact that $t$ and $r$ are comparable on $S_t$ to get that for every $t\in [s,\infty]$, we have
\[
\Vert \chi_{S_t} r f \Vert_{L_r^\infty L_\omega^p}\le Ct \Vert f \Vert_{L_r^\infty[t-1,t+1] L_\omega^p}
\]
Using $\Omega_{ij}$ to denote a rotation vector field, we then use Proposition~\ref{prop:Hardy}, Sobolev embedding on the sphere, and the fact that $t$ is comparable to $r$ in the relevant region to get
\begin{align*}
t \Vert f \Vert_{L_r^\infty[t-1,t+1] L_\omega^p}&\le t \Vert \partial_rf \Vert_{L_r^2[t-1,t+1] L_\omega^p}\\
&\le t \Vert \partial_rf \Vert_{L_r^2[t-1,t+1] L_\omega^2}+\sum_{i,j} t \Vert \partial_r\Omega_{ij}f \Vert_{L_r^2[t-1,t+1] L_\omega^2}\\
&\le C\Vert r\partial_rf \Vert_{L_r^2[t-1,t+1] L_\omega^2}+C\Vert r\partial_r\Omega_{ij}f \Vert_{L_r^2[t-1,t+1] L_\omega^2}\\
&\le CE_1(T)
\end{align*}

For the third part, we first note the following commutation:
\begin{equation}\label{rotgoodcommute}
|\Omega_{ij} r\overline{\partial}\gamma|\le C|r\overline{\partial}\gamma|+|r\overline{\partial}\Omega_{ij}\gamma|.
\end{equation}
This is true when $\overline{\partial}=\partial_v$ because we can just commute with the rotation, and it is true when $\overline{\partial}=\frac{1}{r}\Omega_{k\ell}$ because we combine with the factor $r$ to obtain a rotation, commute the two rotations, then factor $r$ out again. We now use Sobolev embedding on spheres and $\eqref{rotgoodcommute}$ to get that
\[
\Vert r \overline{\partial} \gamma \Vert_{L_v^2[|u|,2T-u] L_\omega^p} \le C\Vert r \overline{\partial} \gamma \Vert_{L_v^2[|u|,2T-u] L_\omega^2}+C\Vert r \overline{\partial} \Omega_{ij}\gamma \Vert_{L_v^2[|u|,2T-u] L_\omega^2}\le C E_1 (T)
\]
Now taking the $L^2$ norm in $u$ and noting that $(1+|u|)^{-1-\delta}$ is integrable in $u$, we obtain the third part of the lemma statement.
\end{proof}

\begin{lemma}
Assuming Proposition \ref{prop:EnergyMain}, we recover bootstrap assumptions~\eqref{Btstrp14},~\eqref{Btstrp15},~\eqref{Btstrp16},~\eqref{Btstrp17},\eqref{Btstrp13} and improve the bound to $C \epsilon$. Namely, we derive the estimates
\begin{enumerate}
    \item $\Vert (1 + t)^{-\delta} r \partial \Gamma \gamma \Vert_{L_t^\infty [2,T] L_r^2 L_\omega^p} \le C E_2 (T)$,
    \item $\Vert (1 + t)^{-2 \delta} (1 + |u|)^{-{1 \over 2} - {\delta \over 2}} r \overline{\partial} \Gamma \gamma \Vert_{L_t^2 [2,T] L_r^2 L_\omega^p} \le C E_2 (T)$,
    \item $\sup_{2 \le t \le T} (1 + t + r)^{1 - \delta} \chi_{S_t} |\gamma| (t,r,\omega) \le C E_2 (T)$,
    \item $\sup_{2 \le t \le T} \Vert (1 + t)^{-\delta} \chi_{S_t} \Gamma^{\alpha,2} \gamma \Vert_{L^2 (\Sigma_t)} \le C E_2 (T)$,
    \item $\Vert (1 + t)^{-\delta} \chi_{S_t} r \Gamma \gamma \Vert_{L_t^\infty [2,T] L_r^\infty L_\omega^p} \le C E_2 (T)$.
\end{enumerate}
\end{lemma}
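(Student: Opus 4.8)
The plan is to view these five bounds as the ``one extra commutation field'' analogues of Lemma~\ref{lem:AngularEst} and of the energy lemmas, and to prove them with the same four tools: a rescaled Sobolev embedding on the spheres $S(r)$ (which can afford to lose powers of $t$ because the angular exponent $p\ge 2/\delta$ is large and because $S_t\cap S(r)$ has angular diameter $\lesssim t^{-1/2}$ by Lemma~\ref{lem:VolumeEst}), the commutator estimates of Lemma~\ref{lem:Commutators}, a Hardy inequality in $r$ exploiting that $\gamma(t,\cdot)$, hence $\Gamma^{\alpha,2}\gamma(t,\cdot)$, vanishes for $r\ge t+1$, and the energies recovered in Proposition~\ref{prop:EnergyMain}. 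The one bookkeeping point throughout is that a rotation $\Omega_{ij}$ composed with one further commutation field is a string of at most two commutation fields, so the resulting $L^2$ quantities are controlled by $E_2(T)$ up to the admissible $(1+t)^{\delta}$ growth, and in each displayed norm the weights are exactly what is needed to absorb that growth; once all five reductions are in place, Proposition~\ref{prop:EnergyMain} turns every right-hand side into $C\epsilon$, recovering the corresponding bootstrap assumption.

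For (1) and (2) I would first apply Sobolev on $S^{2}$ to pass from $L^{p}_{\omega}$ to $L^{2}_{\omega}$ at the cost of controlling one additional rescaled rotation (legitimate since $p<\infty$), and then commute $\Omega_{ij}$ inward, using $|[\Gamma,\partial]h|\le C|\partial h|$ from Lemma~\ref{lem:Commutators} for (1) and the identity~\eqref{rotgoodcommute}, with $\Gamma\gamma$ in place of $\gamma$, for (2). In (1) this reduces everything to $\Vert\partial\Gamma^{\alpha,2}\gamma\Vert_{L^{2}(\Sigma_{t})}\le C(1+t)^{\delta}E_{2}(T)$ by the definition of $E_{2}$, and the $(1+t)^{-\delta}$ weight finishes it. In (2) the relevant terms are $\overline{\partial}\Gamma^{\alpha,j}\gamma$ with $|\alpha|\le 2$, $j\le 2$; the passage from the characteristic energy to the spacetime norm carrying the weights $(1+t)^{-2\delta}(1+|u|)^{-1/2-\delta/2}$ is done exactly by the averaging-in-$u$ argument in the proof of Lemma~\ref{lem:bootstrap567}, so both the one- and two-commutation-field pieces are controlled by that lemma.

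Part (5) is Lemma~\ref{lem:AngularEst}(1) word for word with $\Gamma\gamma$ in place of $\gamma$: using $r\sim t$ on $S_{t}$, Proposition~\ref{prop:Hardy}, and Sobolev on $S^{2}$, one is reduced to $\Vert r\,\partial_{r}\Gamma\gamma\Vert_{L^{2}_{r}[t-1,t+1]L^{2}_{\omega}}$ and $\Vert r\,\partial_{r}\Omega_{ij}\Gamma\gamma\Vert_{L^{2}_{r}[t-1,t+1]L^{2}_{\omega}}$, both dominated by $\Vert\partial\Gamma^{\alpha,2}\gamma\Vert_{L^{2}(\Sigma_{t})}\le C(1+t)^{\delta}E_{2}(T)$. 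For (4), since $\Gamma^{\alpha,2}\gamma(t,\cdot)$ vanishes for $r\ge t+1$, the fundamental theorem of calculus gives $\Gamma^{\alpha,2}\gamma(t,r,\omega)=-\int_{r}^{t+1}\partial_{r}\Gamma^{\alpha,2}\gamma(t,s,\omega)\,ds$; on $S_{t}$ one has $r\in[t-1,t+1]$, so Cauchy--Schwarz over this interval of length $O(1)$ gives $|\Gamma^{\alpha,2}\gamma|^{2}(t,r,\omega)\le C\int_{t-1}^{t+1}|\partial\Gamma^{\alpha,2}\gamma|^{2}(t,s,\omega)\,ds$ for $(t,r,\omega)\in S_{t}$. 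Integrating this over $S_{t}$ and using $r\sim t$ with the $O(1)$ radial width of $S_{t}$, the two powers of $t$ cancel and one obtains $\Vert\chi_{S_{t}}\Gamma^{\alpha,2}\gamma\Vert_{L^{2}(\Sigma_{t})}^{2}\le C\Vert\partial\Gamma^{\alpha,2}\gamma\Vert_{L^{2}(\Sigma_{t})}^{2}\le C(1+t)^{2\delta}E_{2}(T)^{2}$, which after the $(1+t)^{-\delta}$ weight is the claim (the small angular measure of $S_{t}$ from Lemma~\ref{lem:VolumeEst} is not even needed here, though it leaves room to spare).

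I expect (3), the pointwise bound on $\gamma$ itself over $S_{t}$, to be the main obstacle, since $\gamma$ carries no derivative and we have no global pointwise control of it. The plan is to fix $(t,r_{0},\omega_{0})\in S_{t}$, observe that $\omega_{0}$ lies within angle $\lesssim t^{-1/2}$ of the $x$-axis and that on a fixed-constant enlargement $\widetilde S_{t}=\Sigma_{t}\cap\{u\ge -C\}\cap\{|u'|\le C\}$ the estimates of Lemma~\ref{lem:AngularEst}(1) and of part (5) above still hold with the same proof (these proofs use only $r\sim t$), and then apply Sobolev on $S^{2}$ rescaled to the geodesic disc $D$ of radius $\sim t^{-1/2}$ about $\omega_{0}$ — which, thanks to the enlargement, lies inside the angular slice $\{\omega:(t,r_{0},\omega)\in\widetilde S_{t}\}$ — in the form $\Vert r\gamma\Vert_{L^{\infty}(D)}\le C\,t^{1/p}\Vert r\gamma\Vert_{L^{p}_{\omega}(D)}+C\,t^{1/p-1/2}\sum_{ij}\Vert r\,\Omega_{ij}\gamma\Vert_{L^{p}_{\omega}(D)}$. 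Bounding $\Vert r\gamma\Vert_{L^{p}_{\omega}(D)}\le CE_{1}(T)$ by the enlarged Lemma~\ref{lem:AngularEst}(1) and $\Vert r\,\Omega_{ij}\gamma\Vert_{L^{p}_{\omega}(D)}\le C(1+t)^{\delta}E_{2}(T)$ by the enlarged part (5), dividing by $r_{0}\sim t$, and using $p\ge 2/\delta$ so that $\tfrac1p\le\tfrac\delta2$, the weight $(1+t+r)^{1-\delta}\sim t^{1-\delta}$ is absorbed and (3) follows. The Hardy step in (4) is the other genuinely delicate point, as it is the only way to put a derivative onto the derivative-free quantity $\Gamma^{\alpha,2}\gamma$; everything else is a routine repetition of the arguments already carried out for Lemma~\ref{lem:AngularEst} and Lemma~\ref{lem:bootstrap567}.
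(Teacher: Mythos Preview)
Your arguments for parts (1), (2), (4), and (5) are essentially identical to the paper's: Sobolev on $S^2$ plus commutation for (1) and (2) (the paper invokes the third part of Lemma~\ref{lem:AngularEst} for (2), which is the same computation), the Hardy/FTC step in $r$ for (4), and the reduction of (5) to (1) via Proposition~\ref{prop:Hardy}.

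The genuine difference is part (3). You flag it as ``the main obstacle'' and run a rescaled $W^{1,p}\hookrightarrow L^\infty$ Sobolev inequality on an angular disc of radius $\sim t^{-1/2}$, feeding in the $L^p_\omega$ bounds from Lemma~\ref{lem:AngularEst}(1) and part (5). This is correct (and the enlargement $\widetilde S_t$ is in fact unnecessary: the proof of Lemma~\ref{lem:AngularEst}(1) already controls $\|r\gamma(t,r,\cdot)\|_{L^p_\omega(S^2)}$ over the full sphere for $r\in[t-1,t+1]$, since $\chi_{S_t}\le 1$ is simply dropped there). However, the paper's route is a one-liner: $\gamma$ vanishes for $r>t+1$, so on $S_t$ one writes $\gamma(t,r_0,\omega)=-\int_{r_0}^{t+1}\partial_r\gamma\,dr$ over an interval of length at most $2$ and plugs in the \emph{pointwise} bound $|\partial\gamma|\le C E_2(T)(1+t+r)^{-1+\delta}(1+|u|)^{-1/2}$ from Lemma~\ref{lem:bootstrappartialgammabound}(1). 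This immediately gives $(1+t+r)^{1-\delta}\chi_{S_t}|\gamma|\le C E_2(T)$. Your premise that ``we have no global pointwise control of $\gamma$'' overlooked that pointwise control of $\partial\gamma$ is already recovered, and that $S_t$ has $O(1)$ radial width, which together trivialize the step. Your argument works and buys nothing extra here, though the rescaled-disc Sobolev technique you describe is a useful tool in its own right when no pointwise derivative bound is available.
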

\begin{proof}
For the first part of this lemma, we use Sobolev embedding on spheres and commute to get that
\begin{align*}
\Vert (1 + t)^{-\delta} r \partial \Gamma \gamma \Vert_{L_t^\infty [2,T] L_r^2 L_\omega^p}&\le \Vert (1 + t)^{-\delta} r \partial \Gamma \gamma \Vert_{L_t^\infty [2,T] L_r^2 L_\omega^2}+\sum_{i,j}\Vert \Omega_{ij} (1 + t)^{-\delta} r \partial \Gamma \gamma \Vert_{L_t^\infty [2,T] L_r^2 L_\omega^2}\\
&\le \Vert (1 + t)^{-\delta} r \partial \Gamma \gamma \Vert_{L_t^\infty [2,T] L_r^2 L_\omega^2}+\sum_{i,j}\Vert (1 + t)^{-\delta} r \partial \Omega_{ij} \Gamma \gamma \Vert_{L_t^\infty [2,T] L_r^2 L_\omega^2}\\ 
&\le E_2(T).
\end{align*}

The second part of this lemma is proved entirely analogously to the third part of Lemma~\ref{lem:AngularEst}. The third part of the Lemma follows from applying the fundamental theorem of calculus in the $\partial_r$ direction and then using part  1 of Lemma~\ref{lem:bootstrappartialgammabound} (and the fact that $S_t$ has bounded width in the $\partial_r$ direction. The fourth statement is proven analogously to parts 1 and 2 of Lemma~\ref{lem:AngularEst}, except without Sobolev embedding. More specifically, it follows from converting to the $dr d\omega$ measure, applying a Hardy inequality in $r$ to $\Gamma^{\alpha,2} \gamma$ and using once again that the width of $S_t$ in $r$ is bounded.

For the fifth part of this lemma, we use Proposition~\ref{prop:Hardy} to reduce to the first part of the lemma. Indeed, for each $t$, this gives us that
\[
\Vert (1 + t)^{-\delta} \chi_{S_t} r \Gamma \gamma \Vert_{L_r^\infty L_\omega^p}\le \Vert (1 + t)^{-\delta} t \Gamma \gamma \Vert_{L_r^\infty [t - 2,t + 2] L_\omega^p}\le C \Vert (1 + t)^{-\delta} t \partial \Gamma \gamma \Vert_{L_r^2 L_\omega^p} \le C \Vert (1 + t)^{-\delta} r \partial \Gamma \gamma \Vert_{L_r^2 L_\omega^p}.
\]
Taking the $\sup$ for $2 \le t \le T$ then reduces this to the first part of the lemma.
\end{proof}

Now, all that remains is to establish the proof of Proposition \ref{prop:EnergyMain}.

\section{Control of nonlinear error terms} \label{sec:ClosingEnergy}
We now turn to the proof of Proposition \ref{prop:EnergyMain}. We recall the transformed equation \eqref{eq:TransformedEq}. 

First, for convenience, we wish to start at time $t=2$ instead of $t=0$. This way, we guarantee that on $S_t$, we have that $r$ and $t$ are comparable and we can freely interchange them by adding a constant in front. The bounds on all relevant quantities up to time $t=2$ follow easily from standard methods for obtaining well-posedness of semilinear wave equations in $H^3$.

We must commute the equation with appropriate vector fields in order to control $E_1 (T)$ and $E_2 (T)$. We commute the equation with $\Gamma^{\alpha,1}$ in order to control $E_1 (T)$ and with $\Gamma^{\alpha,2}$ in order to control $E_2 (T)$. Now, for $Q$ any quadratic form satisfying the null condition, we note that we have that
\begin{equation} \label{NullFormEst1}
    \begin{aligned}
    |Q(d (A\inv \gamma),d (A\inv \gamma))| &\le C |\partial (A\inv \gamma)| |\overline{\partial} (A\inv \gamma)|
    \\ &\le C |\partial \gamma| |\overline{\partial} \gamma| + C {1 \over \sqrt{1 + t}} \chi_{S_t} |\gamma| |\partial \gamma| + C \chi_{S_t} |\gamma| |\overline{\partial} \gamma| + \frac{C}{\sqrt{1+t}} \chi_{S_t} |\gamma| |\gamma|,
    \end{aligned}
\end{equation}
where we have used Lemma~\ref{lem:frameconversion}. We have used the fact that null forms can be bounded by good derivatives times bad derivatives, see also \cite{LukNotes}. Moreover, we note that (see also \cite{LukNotes}) that any quadratic form satisfying the null condition $Q$ has that
\begin{equation} \label{NullFormCommutation}
    \begin{aligned}
    \Gamma Q(d h_1,d h_2) = Q(d \Gamma h_1,d h_2) + Q(d h_1,d \Gamma h_2) + \tilde{Q} (d h_1,d h_2),
    \end{aligned}
\end{equation}

where $\Gamma$ is any commutation field. Iterating \eqref{NullFormCommutation} and using \eqref{NullFormEst1} gives us that
\begin{equation}\label{eq:NullFormEstimate}
    \begin{aligned}
    \left |\Gamma^\alpha (A Q(d (A\inv \gamma),d (A\inv \gamma))) \right | = \left |\sum_{|\beta_1| + |\beta_2| + \beta_3 \le |\alpha|} \Gamma^{\beta_3} A Q_{\beta_1 \beta_2 \beta_3} (d \Gamma^{\beta_1} (A\inv \gamma),d \Gamma^{\beta_2} (A\inv \gamma)) \right |
    \\ \le C \sum_{|\beta_1| + |\beta_2| + |\beta_3| + |\beta_4| + |\beta_5| \le |\alpha|} |\Gamma^{\beta_3} A| |\Gamma^{\beta_4} A\inv| |\Gamma^{\beta_5} A\inv| 
    \\ \times \left [|\partial \Gamma^{\beta_1} \gamma| |\overline{\partial} \Gamma^{\beta_2} \gamma| + {1 \over \sqrt{1 + t}} \chi_{S_t} |\Gamma^{\beta_1} \gamma| |\partial \Gamma^{\beta_2} \gamma| + \chi_{S_t} |\Gamma^{\beta_1} \gamma| |\overline{\partial} \Gamma^{\beta_2} \gamma| + {1 \over \sqrt{1 + t}} \chi_{S_t} |\Gamma^{\beta_1} \gamma| |\Gamma^{\beta_2} \gamma| \right ]
    \end{aligned}
\end{equation}
for some null forms $Q_{\beta_1 \beta_2 \beta_3}$, and where $\Gamma^\alpha$ is an arbitrary string of commutation fields.

We define the index $\sigma=\beta_3+\beta_4+\beta_5$ which is equal to the number of weighted commutation fields that fall on $A$ or $A^{-1}$ after commuting. By Lemma~\ref{lem:roottfactors}, each weighted vector field can introduce a weight of size at most $\sqrt{1+t}$ in the support of $f$ intersected with the support of $\gamma$. Because we only ever commute with two commutation fields, we have that terms all have $\sigma = 0$, $\sigma = 1$, or $\sigma = 2$. More precisely, when commuting with $\Gamma^{\alpha,2}$, the we get at most $\sigma = 2$, while we get at most $\sigma = 1$ when commuting with $\Gamma^{\alpha,1}$. Now, by the energy estimate (see Proposition~\ref{prop:EnEst}), we have that
\[
E_1^2 (T) \le \epsilon + C \sum_{\Gamma^{\alpha,1}} \sup_{0 \le s \le T} \int_2^s \int_{\Sigma_t} |\Gamma^{\alpha,1} (A m (\nabla (A^{-1} \gamma),\nabla (A^{-1} \gamma)))| |\partial_t \Gamma^{\alpha,1} \gamma| d x d t.
\]
Similarly, we have that
\[
E_2^2 (T) \le \epsilon + C\sup_{0 \le s \le T} (1 + s)^{-2 \delta}  \sum_{\Gamma^{\alpha,1}} \int_2^s \int_{\Sigma_t} |\Gamma^{\alpha,2} (A m (\nabla (A^{-1} \gamma),\nabla (A^{-1} \gamma)))| |\partial_t \Gamma^{\alpha,2} \gamma| d x d t.
\]
We shall set
\[
F_1 (t,x) = |\Gamma^{\alpha,1} (A m (\nabla (A^{-1} \gamma),\nabla (A^{-1} \gamma)))|,
\]
and
\[
F_2 (t,x) = |\Gamma^{\alpha,2} (A m (\nabla (A^{-1} \gamma),\nabla (A^{-1} \gamma)))|.
\]
These depend on the string of commutation vector fields $\Gamma^{\alpha,1}$, $\Gamma^{\alpha,1}$, but for the sake of convenience, we will not write those as parameters.

Now, let $s \le T$ be arbitrary. In order to control $E_1 (T)$ and $E_2 (T)$, it suffices to show that the bootstrap assumptions in Section~\ref{BtstrpA} imply that
\begin{equation}\label{F1error}
\int_2^s \int_{\Sigma_t} F_1 (t,x) |\partial_t \Gamma^{\alpha,1} \gamma| d x d t \le C \epsilon^{{9 \over 4}},
\end{equation}
for $1 \le i \le N$, and that
\begin{equation}\label{F2error}
\int_2^s \int_{\Sigma_t} F_2 (t,x) |\partial_t \Gamma^{\alpha,2} \gamma| d x d t \le C (1 + s)^{2 \delta} \epsilon^{{9 \over 4}}
\end{equation}
for $1 \le i \le N$. We shall now turn to controlling these spacetime integrals using \eqref{eq:NullFormEstimate}. For convenience, we shall also drop the interval $[0,s]$ in the $L_t$ norms, although it should be understood that all norms in $t$ are taken in this interval. This is particularly important when considering the terms with signature $\sigma = 2$, as these are the terms where the spacetime integrals will grow in $s$.

We shall begin with the case where $\Gamma^\alpha$ is $\Gamma^{\alpha,2}$ (i.e., we shall control $E_2 (T)$). In fact, the result for $E_1 (T)$ will follow because the only terms in the nonlinear errors which will grow have signature $\sigma = 2$. Because of this, and because all terms with signature $\sigma \le 1$ that arise in $E_1 (T)$ will also arise in $E_2 (T)$, the result for $E_1 (T)$ will follow. Thus, we turn to controlling the integrals
\[
\int_2^s \int_{\Sigma_t} F_2 (t,x) |\partial_t \Gamma^\alpha \gamma| d x d t.
\]

Because we are commuting with two weighted commutation fields, the signature $\sigma$ may now be $0$, $1$, or $2$. We first consider terms having $\sigma = 2$. These terms are controlled by
\begin{equation}
    \begin{aligned}
    C (1 + t) \chi_{S_t} \left [|\partial \gamma| |\overline{\partial} \gamma| + {1 \over \sqrt{1 + t}} |\gamma| |\partial \gamma| + |\gamma| |\overline{\partial} \gamma| + {1 \over \sqrt{1 + t}} |\gamma| |\gamma| \right ].
    \end{aligned}
\end{equation}

These terms will all be borderline. These are the terms that force this energy to grow. Moreover, we cannot use $E_2 (T)$ in order to control the error terms. We must use the fact that these terms only appear when we are commuting with two weighted commutation fields and neither one falls on the solution $\gamma$ in the nonlinearity, but rather, they both fall on the plane wave solution. Thus, we are free to commute both factors of $\gamma$ in the nonlinear error terms with a single weighted commutation field and a single translation field. This quantity is controlled by $E_1 (T)$, which does not grow. The fact that we can commute both terms with weighted commutation fields also allows us to take advantage of the fact that the volume of $\chi_{S_t}$ in $\Sigma_t$ grows like $t$. This is an improvement over the volume of the spheres, which grow like $t^2$ in $3 + 1$ dimensions (see Section~\ref{sec:Geometry}).

We shall first examine the term $\sqrt{1+t} \chi_{S_t} |\gamma| |\gamma|$. The error term we must control is of the form
\begin{equation}
    \begin{aligned}
    \int_2^s \int_{\Sigma_t} \sqrt{1+t} \chi_{S_t} |\gamma| |\gamma| |\partial_t \Gamma^\alpha \gamma| d x d t.
    \end{aligned}
\end{equation}

We have that
\begin{equation}
    \begin{aligned}
    \int_2^s \int_{\Sigma_t} \sqrt{1+t} \chi_{S_t} |\gamma| |\gamma| |\partial_t \Gamma^\alpha \gamma| d x d t = \int_2^s \int_0^\infty \int_{S^2} \sqrt{1+t} \chi_{S_t} |\gamma| |\gamma| |\partial_t \Gamma^\alpha \gamma| r^2 d \omega d r d t
    \\ \le \Vert (1 + t)^{{1 \over q}} \chi_{S_t} \Vert_{L_t^\infty L_r^2 L_\omega^q} \Vert \chi_{S_t} r \gamma \Vert_{L_t^\infty L_r^\infty L_\omega^p} \Vert \chi_{S_t} r \gamma \Vert_{L_t^\infty L_r^\infty L_\omega^p}
    \\ \times \Vert (1 + t)^{-\delta} r \partial_t \Gamma^\alpha \gamma \Vert_{L_t^\infty L_r^2 L_\omega^2} \Vert (1 + t)^{-{1 \over 2} - {1 \over q} + \delta} \Vert_{L_t^1 L_r^\infty L_\omega^\infty},
    \end{aligned}
\end{equation}
where ${1 \over q} + {2 \over p} = {1 \over 2}$ and  we have used the fact that $t$ and $r$ are comparable in the support of $\chi_{S_t}$. Now, by our choice of $p$ (see \eqref{pdef}), we have that ${1 \over q} \ge {1 \over 2} - \delta$. Thus, we have that
\begin{equation}\label{firsttnormbound}
    \begin{aligned}
    \Vert (1 + t)^{-{1 \over 2} - {1 \over q} + \delta} \Vert_{L_t^1 L_r^\infty L_\omega^\infty} \le \int_2^s (1 + t)^{-1 + 2 \delta} d t \le C (1 + s)^{2 \delta}.
    \end{aligned}
\end{equation}
Thus, using Lemma~\ref{lem:VolumeEst} and the bootstrap assumptions~\eqref{Btstrp10},\eqref{Btstrp2}, we have that these terms of the error integral are controlled by 
\[
C \epsilon^{{9 \over 4}} (1 + s)^{2 \delta},
\]
as desired.

We now consider the term $\sqrt{1+t} \chi_{S_t} |\gamma| |\partial \gamma|$. The error term we must control is of the form
\begin{equation}
    \begin{aligned}
    \int_2^s \int_{\Sigma_t} \sqrt{1+t} \chi_{S_t} |\gamma| |\partial \gamma| |\partial_t \Gamma^\alpha \gamma| d x d t.
    \end{aligned}
\end{equation}

We have that
\begin{equation}
    \begin{aligned}
    \int_2^s \int_{\Sigma_t} \sqrt{1+t} \chi_{S_t} |\gamma| |\partial \gamma| |\partial_t \Gamma^\alpha \gamma| d x d t = \int_0^t \int_0^\infty \int_{S^2} \sqrt{1+t} \chi_{S_t} |\gamma| |\partial \gamma| |\partial_t \Gamma^\alpha \gamma| r^2 d \omega d r d t
    \\ \le C\Vert (1 + t)^{{1 \over q}} \chi_{S_t} \Vert_{L_t^\infty L_r^\infty L_\omega^q} \Vert \chi_{S_t} r \gamma \Vert_{L_t^\infty L_r^\infty L_\omega^p} \Vert \chi_{S_t}r \partial \gamma \Vert_{L_t^\infty L_r^2 L_\omega^p}
    \\ \times \Vert (1 + t)^{-\delta} r \partial_t \Gamma^\alpha \gamma \Vert_{L_t^\infty L_r^2 L_\omega^2} \Vert (1 + t)^{-{1 \over 2} - {1 \over q} + \delta} \Vert_{L_t^1 L_r^\infty L_\omega^\infty},
    \end{aligned}
\end{equation}
where ${1 \over q} + {2 \over p} = {1 \over 2}$ and we have used the fact that $t$ and $r$ are comparable in the support of $\chi_{S_t}$. Using Lemma~\ref{lem:VolumeEst}, the bootstrap assumptions~\eqref{Btstrp10},\eqref{Btstrp11},\eqref{Btstrp2}, and \eqref{firsttnormbound}, we have that these terms of the error integral are controlled by 
\[
C \epsilon^{{9 \over 4}} (1 + s)^{2 \delta},
\]
as desired.

We now consider the term $(1 + t) \chi_{S_t} |\partial \gamma| |\overline{\partial} \gamma|$. The error integral we must control is of the form
\begin{equation}
    \begin{aligned}
    \int_2^s \int_{\Sigma_t} (1 + t) \chi_{S_t} |\partial \gamma| |\overline{\partial} \gamma| |\partial_t \Gamma^\alpha \gamma| d x d t.
    \end{aligned}
\end{equation}
We have that
\begin{equation}
    \begin{aligned}
    \int_2^s \int_{\Sigma_t} (1 + t) \chi_{S_t} |\partial \gamma| |\overline{\partial} \gamma| |\partial_t \Gamma^\alpha \gamma| d x d t = \int_2^s \int_0^\infty \int_{S^2} (1 + t) \chi_{S_t} |\partial \gamma| |\overline{\partial} \gamma| |\partial_t \Gamma^\alpha \gamma| r^2 d \omega d r d t
    \\ \le C\Vert (1 + |u|)^{{1 \over 2} + {\delta \over 2}} (1 + t)^{{1 \over q}} \chi_{S_t} \Vert_{L_t^\infty L_r^\infty L_\omega^q} \Vert \chi_{S_t} r \partial \gamma \Vert_{L_t^\infty L_r^\infty L_\omega^p} \Vert (1 + |u|)^{-{1 \over 2} - {\delta \over 2}} r \overline{\partial} \gamma \Vert_{L_t^2 L_r^2 L_\omega^p}
    \\ \times \Vert (1 + t)^{-\delta} r \partial \Gamma^\alpha \gamma \Vert_{L_t^\infty L_r^2 L_\omega^2} \Vert (1 + t)^{-{1 \over q} + \delta} \Vert_{L_t^2 L_r^\infty, L_\omega^\infty}.
    \end{aligned}
\end{equation}
where ${1 \over q} + {2 \over p} = {1 \over 2}$ and we have used the fact that $t$ and $r$ are comparable in the support of $\chi_{S_t}$. Now, using our choice of $p$, we have that ${1 \over q} \ge {1 \over 2} - \delta$. Thus, we have that
\begin{equation}\label{secondtnormbound}
    \begin{aligned}
    \Vert (1 + t)^{-{1 \over q} + \delta} \Vert_{L_t^2 L_r^\infty L_\omega^\infty} \le \left (\int_2^s (1 + t)^{-1 + 4 \delta} d t \right )^{{1 \over 2}} \le C (1 + s)^{2 \delta}.
    \end{aligned}
\end{equation}
Thus, using Lemma~\ref{lem:VolumeEst} and the bootstrap assumptions~\eqref{Btstrp11},\eqref{Btstrp12},\eqref{Btstrp2}, we have that these terms of the error integral are controlled by 
\[
C \epsilon^{{9 \over 4}} (1 + s)^{2 \delta},
\]
as desired.

We finally consider the term $(1 + t) \chi_{S_t} |\gamma| |\overline{\partial} \gamma|$. The error integral we have to control is of the form
\begin{equation}
    \begin{aligned}
    \int_2^s \int_{\Sigma_t} (1 + t) \chi_{S_t} |\gamma| |\overline{\partial} \gamma| d x d t.
    \end{aligned}
\end{equation}

We have that
\begin{equation}
    \begin{aligned}
    \int_2^s \int_{\Sigma_t} (1 + t) \chi_{S_t} |\gamma| |\overline{\partial} \gamma| d x d t = \int_2^s \int_0^\infty \int_{S^2} (1 + t) \chi_{S_t} |\gamma| |\overline{\partial} \gamma| r^2 d \omega d r d t
    \\ \le C\Vert (1 + |u|)^{{1 \over 2} + {\delta \over 2}} (1 + t)^{{1 \over q}} \chi_{S_t} \Vert_{L_t^\infty L_r^\infty L_\omega^q} \Vert \chi_{S_t} r \gamma \Vert_{L_t^\infty L_r^\infty L_\omega^p} \Vert (1 + |u|)^{-{1 \over 2} - {\delta \over 2}} r \overline{\partial} \gamma \Vert_{L_t^2 L_r^2 L_\omega^p}
    \\ \times \Vert (1 + t)^{-\delta} r \partial \Gamma^\alpha \gamma \Vert_{L_t^\infty L_r^2 L_\omega^2} \Vert (1 + t)^{-{1 \over q} + \delta} \Vert_{L_t^2 L_r^\infty L_\omega^\infty},
    \end{aligned}
\end{equation}
where ${1 \over q} + {2 \over p} = {1 \over 2}$ and we have used the fact that $t$ and $r$ are comparable in the support of $\chi_{S_t}$. Using Lemma~\ref{lem:VolumeEst}, the bootstrap assumptions~\eqref{Btstrp10},\eqref{Btstrp12},\eqref{Btstrp2}, and \eqref{secondtnormbound}, we have that these terms of the error integral are controlled by 
\[
C \epsilon^{{9 \over 4}} (1 + s)^{2 \delta},
\]
as desired.

Using the bootstrap assumptions, we have shown that the error integrals arising from terms with $\sigma = 2$ when controlling $E_2 (T)$ are of size $C \epsilon^{{9 \over 4}} (1 + s)^{2 \delta}$. We have thus shown that the terms with $\sigma = 2$ are controlled in a way that is consistent with $E_2 (T) \le C \epsilon$. We now turn to controlling the remaining terms, which are those with $\sigma = 1$ and $\sigma = 0$. We shall in fact show that the error integrals for all of these terms are bounded by $C \epsilon^{{9 \over 4}}$. This will give us that $E_1 (T) \le C \epsilon$ and that $E_2 (T) \le C \epsilon$, as desired.

We now consider terms having $\sigma = 1$. These terms consist of
\begin{equation}
    \begin{aligned}
    \sum_{|\beta_1| + |\beta_2| \le 1} C \sqrt{1+t} \chi_{S_t}
    \\ \times \left [|\partial \Gamma^{\beta_1} \gamma| |\overline{\partial} \Gamma^{\beta_2} \gamma| + {1 \over \sqrt{1 + t}} |\Gamma^{\beta_1} \gamma| |\partial \Gamma^{\beta_2} \gamma| + |\Gamma^{\beta_1} \gamma| |\overline{\partial} \Gamma^{\beta_2} \gamma| + {1 \over \sqrt{1 + t}} |\Gamma^{\beta_1} \gamma| |\Gamma^{\beta_2} \gamma| \right ].
    \end{aligned}
\end{equation}
Since $|\beta_1|+|\beta_2|\le 2-\sigma=1$, one of $|\beta_1|,|\beta_2|$ must be 0. We can commute that factor with two weighted commutation fields and the other factor with one weighted commutation field. This will allow us to once again take advantage of the small volume of the support of $\chi_{S_t}$.

We first consider the term $\chi_{S_t} |\Gamma^{\beta_1} \gamma| |\Gamma^{\beta_2} \gamma|$. The error integral we must control is of the form
\begin{equation}
    \begin{aligned}
    \int_2^s \int_{\Sigma_t} \chi_{S_t} |\Gamma^{\beta_1} \gamma| |\Gamma^{\beta_2} \gamma| |\partial_t \Gamma^\alpha \gamma| d x d t.
    \end{aligned}
\end{equation}

We have that either $|\beta_1| = 0$ or $|\beta_2| = 0$. We shall consider the case that $|\beta_1| = 0$, as the other case follows in the same way. Note that it is possible that we also have $|\beta_2|=0$; this causes us to use one of two different bootstrap estimates to bound certain factors. We have that
\begin{equation}\label{sigmais1beta1is0}
    \begin{aligned}
    \int_2^s \int_{\Sigma_t} \chi_{S_t} |\Gamma^{\beta_1} \gamma| |\Gamma^{\beta_2} \gamma| |\partial_t \Gamma^\alpha \gamma| d x d t = \int_2^s \int_0^\infty \int_{S^2} \chi_{S_t} |\gamma| |\Gamma^{\beta_2} \gamma| |\partial_t \Gamma^\alpha \gamma| r^2 d \omega d r d t
    \\ \le \Vert (1 + t)^{{1 \over b}} \chi_{S_t} \Vert_{L_t^\infty L_r^2 L_\omega^{b}} \Vert (1 + t)^{1 - \delta} \chi_{S_t} \gamma \Vert_{L_t^\infty L_r^\infty L_\omega^\infty} \Vert \chi_{S_t}(1 + t)^{-\delta} r \Gamma^{\beta_2} \gamma \Vert_{L_t^\infty L_r^\infty L_\omega^p}
    \\ \times \Vert (1 + t)^{-\delta} r \partial \Gamma^\alpha \gamma \Vert_{L_t^\infty L_r^2 L_\omega^2} \Vert (1 + t)^{-1 - {1 \over b} + 3 \delta} \Vert_{L_t^1 L_r^\infty L_\omega^\infty},
    \end{aligned}
\end{equation}
where ${1 \over b} + {1 \over p} = {1 \over 2}$. Now, by our choice of $p$, we have that ${1 \over b} \ge {1 \over 2} - {\delta \over 2}$. Thus, we have that
\begin{equation}\label{thirdtnormbound}
    \begin{aligned}
    \Vert (1 + t)^{-1 - {1 \over b} + 3 \delta} \Vert_{L_t^1 L_r^\infty L_\omega^\infty} \le \int_2^s (1 + t)^{-{3 \over 2} + {7 \over 2} \delta} d t \le C.
    \end{aligned}
\end{equation}
We bound the first factor of~\eqref{sigmais1beta1is0} using Lemma~\ref{lem:VolumeEst}. We bound the second factor using the bootstrap assumption~\eqref{Btstrp16}, the third factor using bootstrap assumption \eqref{Btstrp13} or \eqref{Btstrp11}, the third factor using \eqref{Btstrp2}, and the fourth factor using \eqref{thirdtnormbound}. We then have that these terms of the error integral are controlled by 
\[
C \epsilon^{{9 \over 4}},
\]
as desired.

We now consider the term $\chi_{S_t} |\Gamma^{\beta_1} \gamma| |\partial \Gamma^{\beta_2} \gamma|$. The error integral we must control is of the form
\begin{equation}
    \begin{aligned}
    \int_2^s \int_{\Sigma_t} \chi_{S_t} |\Gamma^{\beta_1} \gamma| |\partial \Gamma^{\beta_2} \gamma| |\partial_t \Gamma^\alpha \gamma| d x d t.
    \end{aligned}
\end{equation}

We consider two cases depending on whether $|\beta_2| = 0$ or $|\beta_2| = 1$. We first consider the case where $|\beta_1| = 0$ and $|\beta_2|=1$. We have that
\begin{equation}
    \begin{aligned}
    \int_2^s \int_{\Sigma_t} \chi_{S_t} |\Gamma^{\beta_1}\gamma| |\partial \Gamma^{\beta_2} \gamma| |\partial_t \Gamma^\alpha \gamma| d x d t = \int_2^s \int_0^\infty \int_{S^2} \chi_{S_t} |\gamma| |\partial \Gamma^{\beta_2} \gamma| |\partial_t \Gamma^\alpha \gamma| r^2 d \omega d r d t
    \\ \le \Vert (1 + t)^{{1 \over b}} \chi_{S_t} \Vert_{L_t^\infty L_r^\infty L_\omega^b} \Vert (1 + t)^{1 - \delta} \gamma \Vert_{L_t^\infty L_r^\infty L_\omega^\infty} \Vert (1 + t)^{-\delta} r \partial \Gamma^{\beta_2} \gamma \Vert_{L_t^\infty L_r^2 L_\omega^p}
    \\ \times \Vert (1 + t)^{-\delta} r\partial \Gamma^{\alpha} \gamma \Vert_{L_t^\infty L_r^2 L_\omega^2} \Vert (1 + t)^{-1 - {1 \over b} + 3 \delta} \Vert_{L_t^1 L_r^\infty L_\omega^\infty},
    \end{aligned}
\end{equation}
where ${1 \over b} + {1 \over p} = {1 \over 2}$. Using Lemma~\ref{lem:VolumeEst}, the bootstrap assumptions~\eqref{Btstrp16},\eqref{Btstrp14},\eqref{Btstrp2}, and \eqref{thirdtnormbound}, we have that these terms of the error integral are controlled by 
\[
C \epsilon^{{9 \over 4}},
\]
as desired.

We now consider the case where $|\beta_2| = 0$. Note that it is possible that we also have $|\beta_1|=0$; this causes us to use one of two different bootstrap estimates to bound certain factors. We have that
\begin{equation}\label{sigmais1beta2is0}
    \begin{aligned}
    \int_2^s \int_{\Sigma_t} \chi_{S_t} |\Gamma^{\beta_1} \gamma| |\partial \Gamma^{\beta_2}\gamma| |\partial_t \Gamma^\alpha \gamma| d x d t = \int_2^s \int_0^r \int_{S^2} \chi_{S_t} |\Gamma^{\beta_1} \gamma| |\partial \gamma| |\partial_t \Gamma^\alpha \gamma| r^2 d \omega d r d t
    \\ \le \Vert (1 + t)^{{1 \over b}} \chi_{S_t} \Vert_{L_t^\infty L_r^2 L_\omega^b} \Vert (1 + t)^{-\delta} \chi_{S_t} r \Gamma^{\beta_1} \gamma \Vert_{L_t^\infty L_r^\infty L_\omega^p} \Vert (1 + t)^{1 - \delta} \partial \gamma \Vert_{L_t^\infty L_r^\infty L_\omega^\infty}
    \\ \times \Vert (1 + t)^{-\delta} r \partial \Gamma^{\alpha} \gamma \Vert_{L_t^\infty L_r^2 L_\omega^2} \Vert (1 + t)^{-1 - {1 \over b} + 3 \delta} \Vert_{L_t^1 L_r^\infty L_\omega^\infty},
    \end{aligned}
\end{equation}
where ${1 \over b} + {1 \over p} = {1 \over 2}$. We bound the first factor in \eqref{sigmais1beta2is0} by using Lemma~\ref{lem:VolumeEst}, bound the second factor using bootstrap assumption~\eqref{Btstrp13} or \eqref{Btstrp11}, the third factor using bootstrap assumption~\eqref{Btstrp3}, the fourth factor using \eqref{Btstrp2}, and the fifth factor using \eqref{thirdtnormbound}. All together, we get that these terms of the error integral are controlled by 
\[
C \epsilon^{{9 \over 4}},
\]
as desired.

We now consider the term $\sqrt{1+t}\chi_{S_t} |\overline{\partial}\Gamma^{\beta_1} \gamma| |\partial \Gamma^{\beta_2} \gamma|$. The error integral we must control is of the form
\begin{equation}
    \begin{aligned}
    \int_2^s \int_{\Sigma_t}\sqrt{1+t}\chi_{S_t} |\overline{\partial}\Gamma^{\beta_1} \gamma| |\partial \Gamma^{\beta_2} \gamma| |\partial_t \Gamma^\alpha \gamma| d x d t.
    \end{aligned}
\end{equation}
We consider two cases depending on whether $|\beta_2| = 0$ or $|\beta_2| = 1$. We first consider the case where $|\beta_1| = 0$ and $|\beta_2|=1$. We have that
\begin{equation}
    \begin{aligned}
    \int_2^s \int_{\Sigma_t}\sqrt{1+t}\chi_{S_t} |\overline{\partial}\Gamma^{\beta_1}\gamma| |\partial \Gamma^{\beta_2} \gamma| |\partial_t \Gamma^\alpha \gamma| d x d t = \int_2^s \int_0^\infty \int_{S^2} \sqrt{1+t}\chi_{S_t} |\overline{\partial}\gamma| |\partial \Gamma^{\beta_2} \gamma| |\partial_t \Gamma^\alpha \gamma| r^2 d \omega d r d t
    \\ \le \Vert (1 + t)^{{1 \over b}} \chi_{S_t} \Vert_{L_t^\infty L_r^\infty L_\omega^b} \Vert (1 + t)^{{3 \over 2} - \delta} \overline{\partial}\gamma \Vert_{L_t^\infty L_r^\infty L_\omega^\infty} \Vert (1 + t)^{-\delta} r \partial \Gamma^{\beta_2} \gamma \Vert_{L_t^\infty L_r^2 L_\omega^p}
    \\ \times \Vert (1 + t)^{-\delta} r\partial \Gamma^{\alpha} \gamma \Vert_{L_t^\infty L_r^2 L_\omega^2} \Vert (1 + t)^{-1 - {1 \over b} + 3 \delta} \Vert_{L_t^1 L_r^\infty L_\omega^\infty},
    \end{aligned}
\end{equation}
where ${1 \over b} + {1 \over p} = {1 \over 2}$. Using Lemma~\ref{lem:VolumeEst}, the bootstrap assumptions~\eqref{Btstrp4},\eqref{Btstrp14},\eqref{Btstrp2}, and \eqref{thirdtnormbound}, we have that these terms of the error integral are controlled by 
\[
C \epsilon^{{9 \over 4}},
\]
as desired.

We now consider the case where $|\beta_2| = 0$. Note that it is possible that we also have $|\beta_1|=0$; this causes us to use one of two different bootstrap estimates to bound certain factors. We have that
\begin{equation}\label{sigmais1beta2is0take2}
    \begin{aligned}
    \int_2^s \int_{\Sigma_t} \sqrt{1+t}\chi_{S_t} |\overline{\partial}\Gamma^{\beta_1} \gamma| |\partial \Gamma^{\beta_2}\gamma| |\partial_t \Gamma^\alpha \gamma| d x d t = \int_2^s \int_0^r \sqrt{1+t}\int_{S^2} \chi_{S_t} |\overline{\partial}\Gamma^{\beta_1} \gamma| |\partial \gamma| |\partial_t \Gamma^\alpha \gamma| r^2 d \omega d r d t
    \\ \le \Vert (1 + t)^{{1 \over b}} \chi_{S_t} \Vert_{L_t^\infty L_r^\infty L_\omega^b} \Vert (1 + t)^{-2\delta} \chi_{S_t} r \overline{\partial}\Gamma^{\beta_1} \gamma \Vert_{L_t^2 L_r^2 L_\omega^p} \Vert (1 + t)^{1 - \delta} \partial \gamma \Vert_{L_t^\infty L_r^\infty L_\omega^\infty}
    \\ \times \Vert (1 + t)^{-\delta} r \partial \Gamma^{\alpha} \gamma \Vert_{L_t^\infty L_r^2 L_\omega^2} \Vert (1 + t)^{-{1 \over 2} - {1 \over b} + 4 \delta} \Vert_{L_t^2 L_r^\infty L_\omega^\infty},
    \end{aligned}
\end{equation}
where ${1 \over b} + {1 \over p} = {1 \over 2}$. Now, by our choice of $p$, we have that ${1 \over b} \ge {1 \over 2} - {\delta \over 2}$. Thus, we have that
\begin{equation}\label{fourthtnormbound}
    \begin{aligned}
    \Vert (1 + t)^{-{1 \over 2} - {1 \over b} + 4 \delta} \Vert_{L_t^2 L_r^\infty L_\omega^\infty}\le\left(\int_0^\infty (1+t)^{-2+9\delta}dt\right)^{1/2}\le C.
    \end{aligned}
\end{equation}
We bound the first factor in \eqref{sigmais1beta2is0take2} by using Lemma~\ref{lem:VolumeEst}, bound the second factor using bootstrap assumption~\eqref{Btstrp15} or \eqref{Btstrp12}, the third factor using bootstrap assumption~\eqref{Btstrp3}, the fourth factor using \eqref{Btstrp2}, and the fifth factor using \eqref{fourthtnormbound}. All together, we get that these terms of the error integral are controlled by 
\[
C \epsilon^{{9 \over 4}},
\]
as desired.

We now consider the term $\sqrt{1+t}\chi_{S_t} |\overline{\partial}\Gamma^{\beta_1} \gamma| |\Gamma^{\beta_2} \gamma|$. The error integral we must control is of the form
\begin{equation}
    \begin{aligned}
    \int_2^s \int_{\Sigma_t}\sqrt{1+t}\chi_{S_t} |\overline{\partial}\Gamma^{\beta_1} \gamma| |\Gamma^{\beta_2} \gamma| |\partial_t \Gamma^\alpha \gamma| d x d t.
    \end{aligned}
\end{equation}
We consider two cases depending on whether $|\beta_2| = 0$ or $|\beta_2| = 1$. We first consider the case where $|\beta_1| = 0$ and $|\beta_2|=1$. We have that
\begin{equation}
    \begin{aligned}
    \int_2^s \int_{\Sigma_t}\sqrt{1+t}\chi_{S_t} |\overline{\partial}\Gamma^{\beta_1}\gamma| |\Gamma^{\beta_2} \gamma| |\partial_t \Gamma^\alpha \gamma| d x d t = \int_2^s \int_0^\infty \int_{S^2} \sqrt{1+t}\chi_{S_t} |\overline{\partial}\gamma| |\Gamma^{\beta_2} \gamma| |\partial_t \Gamma^\alpha \gamma| r^2 d \omega d r d t
    \\ \le \Vert (1 + t)^{{1 \over b}} \chi_{S_t} \Vert_{L_t^\infty L_r^2 L_\omega^b} \Vert (1 + t)^{{3 \over 2} - \delta} \overline{\partial}\gamma \Vert_{L_t^\infty L_r^\infty L_\omega^\infty} \Vert (1 + t)^{-\delta} \chi_{S_t}r \Gamma^{\beta_2} \gamma \Vert_{L_t^\infty L_r^\infty L_\omega^p}
    \\ \times \Vert (1 + t)^{-\delta} r\partial \Gamma^{\alpha} \gamma \Vert_{L_t^\infty L_r^2 L_\omega^2} \Vert (1 + t)^{-1 - {1 \over b} + 3 \delta} \Vert_{L_t^1 L_r^\infty L_\omega^\infty},
    \end{aligned}
\end{equation}
where ${1 \over b} + {1 \over p} = {1 \over 2}$. Using Lemma~\ref{lem:VolumeEst}, the bootstrap assumptions~\eqref{Btstrp4},\eqref{Btstrp13},\eqref{Btstrp2}, and \eqref{thirdtnormbound}, we have that these terms of the error integral are controlled by 
\[
C \epsilon^{{9 \over 4}},
\]
as desired.

We now consider the case where $|\beta_2| = 0$. Note that it is possible that we also have $|\beta_1|=0$; this causes us to use one of two different bootstrap estimates to bound certain factors. We have that
\begin{equation}\label{sigmais1beta2is0take3}
    \begin{aligned}
    \int_2^s \int_{\Sigma_t} \sqrt{1+t}\chi_{S_t} |\overline{\partial}\Gamma^{\beta_1} \gamma| |\Gamma^{\beta_2}\gamma| |\partial_t \Gamma^\alpha \gamma| d x d t = \int_2^s \int_0^r \sqrt{1+t}\int_{S^2} \chi_{S_t} |\overline{\partial}\Gamma^{\beta_1} \gamma| |\gamma| |\partial_t \Gamma^\alpha \gamma| r^2 d \omega d r d t
    \\ \le \Vert (1 + t)^{{1 \over b}} \chi_{S_t} \Vert_{L_t^\infty L_r^\infty L_\omega^b} \Vert (1 + t)^{-2\delta} \chi_{S_t} r \overline{\partial}\Gamma^{\beta_1} \gamma \Vert_{L_t^2 L_r^2 L_\omega^p} \Vert (1 + t)^{1 - \delta} \chi_{S_t}\gamma \Vert_{L_t^\infty L_r^\infty L_\omega^\infty}
    \\ \times \Vert (1 + t)^{-\delta} r \partial \Gamma^{\alpha} \gamma \Vert_{L_t^\infty L_r^2 L_\omega^2} \Vert (1 + t)^{-{1 \over 2} - {1 \over b} + 4 \delta} \Vert_{L_t^2 L_r^\infty L_\omega^\infty},
    \end{aligned}
\end{equation}
where ${1 \over b} + {1 \over p} = {1 \over 2}$. We bound the first factor in \eqref{sigmais1beta2is0take2} by using Lemma~\ref{lem:VolumeEst}, bound the second factor using bootstrap assumption~\eqref{Btstrp15} or \eqref{Btstrp12}, the third factor using bootstrap assumption~\eqref{Btstrp16}, the fourth factor using \eqref{Btstrp2}, and the fifth factor using \eqref{fourthtnormbound}. All together, we get that these terms of the error integral are controlled by 
\[
C \epsilon^{{9 \over 4}},
\]
as desired.

We now consider terms having $\sigma = 0$. These terms consist of
\begin{equation}
    \begin{aligned}
    \sum_{|\beta_1| + |\beta_2| \le 2} |\partial \Gamma^{\beta_1} \gamma| |\overline{\partial} \Gamma^{\beta_2} \gamma| + \chi_{S_t} \left [ {1 \over \sqrt{1 + t}} |\Gamma^{\beta_1} \gamma| |\partial \Gamma^{\beta_2} \gamma| + |\Gamma^{\beta_1} \gamma| |\overline{\partial} \Gamma^{\beta_2} \gamma| + {1 \over (1 + t)^{{1 \over 2}}} |\Gamma^{\beta_1} \gamma| |\Gamma^{\beta_2} \gamma| \right ].
    \end{aligned}
\end{equation}
Note that we will be allowed to assume that $|\beta_1|+|\beta_2|=2$. For the terms where $|\beta_1|+|\beta_2|\le 1$, we note that they are simply a factor of $\sqrt{1+t}$ smaller than one of the terms with $\sigma=0$, so the bounds we obtained there suffice.

We first consider the term $|\partial \Gamma^{\beta_1} \gamma| |\overline{\partial} \Gamma^{\beta_2} \gamma|$. The error integral we must control is of the form
\begin{equation}
    \begin{aligned}
    \int_2^s \int_{\Sigma_t} |\partial \Gamma^{\beta_1} \gamma| |\overline{\partial} \Gamma^{\beta_2} \gamma| |\partial_t \Gamma^\alpha \gamma| d x d t.
    \end{aligned}
\end{equation}
We consider $3$ cases depending on the distribution of the weighted commutation fields in $\Gamma^{\beta_1}$ and $\Gamma^{\beta_2}$.

We first consider the case where $\Gamma^{\beta_1}$ contains all of the weighted commutation fields. This corresponds to $|\beta_2| = 0$. We have that
\begin{equation}
    \begin{aligned}
    \int_2^s \int_{\Sigma_t} |\partial \Gamma^{\beta_1} \gamma| |\overline{\partial} \gamma| |\partial_t \Gamma^\alpha \gamma| d x d t
    \\ \le  \Vert (1 + t)^{{3 \over 2} - \delta} \overline{\partial} \gamma \Vert_{L_t^\infty L_x^\infty} \Vert (1 + t)^{-\delta} \partial \Gamma^{\beta_1} \gamma \Vert_{L_t^\infty L_x^2} \Vert (1 + t)^{-\delta} \partial \Gamma^\alpha \gamma \Vert_{L_t^\infty L_x^2} \Vert (1 + t)^{-{3 \over 2} + 3 \delta} \Vert_{L_t^1 L_x^\infty}.
    \end{aligned}
\end{equation}
Now, using the bootstrap assumptions~\eqref{Btstrp4} and \eqref{Btstrp2}, we have that these terms of the error integral are controlled by 
\[
C \epsilon^{{9 \over 4}},
\]
as desired.

We now consider the case where $\Gamma^{\beta_2}$ contains all of the weighted commutation fields. This corresponds to $|\beta_1| = 0$. We use the fact that $1+|u|\le C(1+t)$ in the region under consideration to get that
\begin{equation}
    \begin{aligned}
    \int_2^s \int_{\Sigma_t} |\partial \gamma| |\overline{\partial} \Gamma^{\beta_2} \gamma| |\partial_t \Gamma^\alpha \gamma| d x d t
    \\ \le C\Vert (1 + t)^{1 - \delta} (1 + |u|)^{{1 \over 2}} \partial \gamma \Vert_{L_t^\infty L_x^\infty} \Vert (1 + t)^{-2 \delta} (1 + |u|)^{-{1 \over 2} - {\delta \over 2}} \overline{\partial} \Gamma^{\beta_2} \gamma \Vert_{L_t^2 L_x^2}
    \\ \times \Vert (1 + t)^{-\delta} \partial \Gamma^\alpha \gamma \Vert_{L_t^\infty L_x^2} \Vert (1 + t)^{-1 + {9 \delta \over 2}} \Vert_{L_t^2 L_x^\infty}.
    \end{aligned}
\end{equation}
Now, using the bootstrap assumptions~\eqref{Btstrp3},\eqref{Btstrp7},\eqref{Btstrp2}, we have that these terms of the error integral are controlled by 
\[
C \epsilon^{{9 \over 4}},
\]
as desired.

Now, we shall consider the case where $\Gamma^{\beta_1}$ and $\Gamma^{\beta_2}$ each contain one commutation field. We use the fact that $1+|u|\le C(1+t)$ in the region under consideration to get that
\begin{equation}
    \begin{aligned}
    \int_2^s \int_{\Sigma_t} |\partial \Gamma^{\beta_1} \gamma| |\overline{\partial} \Gamma^{\beta_2} \gamma| |\partial_t \Gamma^\alpha \gamma| d x d t
    \\ \le C\Vert (1 + t + r)^{{1 \over 2} - {3 \delta \over 4}} (1 + |u|)^{{1 \over 4}} \partial \Gamma^{\beta_1} \gamma \Vert_{L_t^\infty L_x^4} \Vert (1 + |u|)^{-{1 \over 4} - {\delta \over 2}} (1 + t + r)^{{1 \over 2} - {3 \delta \over 2}} \overline{\partial} \Gamma^{\beta_2} \gamma \gamma \Vert_{L_t^2 L_x^4}
    \\ \times \Vert (1 + t)^{-\delta} \partial \Gamma^\alpha \gamma \Vert_{L_t^\infty L_x^2} \Vert (1 + t)^{-1 + {15 \delta \over 4}} \Vert_{L_t^2 L_x^\infty}.
    \end{aligned}
\end{equation}
Now, using the bootstrap assumptions~\eqref{Btstrp8},\eqref{Btstrp9},\eqref{Btstrp2}, we have that these terms of the error integral are controlled by 
\[
C \epsilon^{{9 \over 4}},
\]
as desired.

We shall now consider the term $\chi_{S_t} {1 \over \sqrt{1 + t}} |\Gamma^{\beta_1} \gamma| |\Gamma^{\beta_2} \gamma|$. The integral we must control is of the form
\begin{equation}
    \begin{aligned}
    \int_2^s \int_{\Sigma_t} \chi_{S_t} {1 \over \sqrt{1 + t}} |\Gamma^{\beta_1} \gamma| |\Gamma^{\beta_2} \gamma| |\partial_t \Gamma^\alpha \gamma| d x d t.
    \end{aligned}
\end{equation}
We consider $2$ main cases depending on the distribution of commutation fields in $\Gamma^{\beta_1}$ and $\Gamma^{\beta_2}$. We note that we may always put an additional unit derivative on both $\Gamma^{\beta_1} \gamma$ and $\Gamma^{\beta_2} \gamma$.

We first consider the case where either $\Gamma^{\beta_1}$ or $\Gamma^{\beta_2}$ contains both commutation fields. We consider the case where $\Gamma^{\beta_1}$ has both commutation fields, as the case where $\Gamma^{\beta_2}$ has both commutation fields follows in the same way.

We have that
\begin{equation}
    \begin{aligned}
    \int_2^s \int_{\Sigma_t} \chi_{S_t} {1 \over \sqrt{1 + t}} |\Gamma^{\beta_1} \gamma| |\gamma| |\partial_t \Gamma^\alpha \gamma| d x d t
    \\ \le \Vert (1 + t)^{-\delta} \chi_{S_t} \Gamma^{\beta_1} \gamma \Vert_{L_t^\infty L_x^2} \Vert (1 + t + r)^{1 - \delta} \chi_{S_t} \gamma \Vert_{L_t^\infty L_x^\infty}
    \\ \times \Vert (1 + t)^{-\delta} \partial \Gamma^\alpha \gamma \Vert_{L_t^\infty L_x^2} \Vert (1 + t)^{-{3 \over 2} + 3 \delta} \Vert_{L_t^1 L_x^\infty}.
    \end{aligned}
\end{equation}
Now, using the bootstrap assumptions~\eqref{Btstrp17},\eqref{Btstrp16},\eqref{Btstrp2}, we have that these terms of the error integral are controlled by 
\[
C \epsilon^{{9 \over 4}},
\]
as desired.

We now consider the case where $\Gamma^{\beta_1}$ and $\Gamma^{\beta_2}$ each contain one commutation field. We have that
\begin{equation}
    \begin{aligned}
    \int_2^s \int_{\Sigma_t} \chi_{S_t} {1 \over (1 + t)^{{1 \over 2}}} |\Gamma^{\beta_1} \gamma| |\Gamma^{\beta_2} \gamma| |\partial_t \Gamma^\alpha \gamma| d x d t = \int_2^s \int_0^\infty \int_{S^2} \chi_{S_t} {1 \over (1 + t)^{{1 \over 2}}} |\Gamma^{\beta_1} \gamma| |\Gamma^{\beta_2} \gamma| |\partial_t \Gamma^\alpha \gamma| r^2 d \omega d r d t
    \\ \le C\Vert (1 + t)^{{1 \over q}} \chi_{S_t} \Vert_{L_t^\infty L_r^2 L_\omega^q} \Vert (1 + t)^{-\delta} r \chi_{S_t}\Gamma^{\beta_1} \gamma \Vert_{L_t^\infty L_r^\infty L_\omega^p} \Vert (1 + t)^{-\delta} r \chi_{S_t}\Gamma^{\beta_2} \gamma \Vert_{L_t^\infty L_r^\infty L_\omega^p}
    \\ \times \Vert (1 + t)^{-\delta} r \partial \Gamma^\alpha \gamma \Vert_{L_t^\infty L_r^2 L_\omega^2} \Vert (1 + t)^{-{3 \over 2} - {1 \over q} + 3 \delta} \Vert_{L_t^1 L_r^\infty L_\omega^\infty},
    \end{aligned}
\end{equation}
where here ${1 \over q} + {2 \over p} = {1 \over 2}$ and we have used the fact that $t$ and $r$ are comparable in the support of $\chi_{S_t}$. Using Lemma~\ref{lem:VolumeEst} and the bootstrap assumptions~\eqref{Btstrp13},\eqref{Btstrp2}, we have that these terms of the error integral are controlled by 
\[
C \epsilon^{{9 \over 4}},
\]
as desired.

We now consider the term $\chi_{S_t} {1 \over \sqrt{1 + t}} |\Gamma^{\beta_1} \gamma| |\partial \Gamma^{\beta_2} \gamma|$. The error integral we must control is of the form
\begin{equation}
    \begin{aligned}
    \int_2^s \int_{\Sigma_t} \chi_{S_t} {1 \over \sqrt{1 + t}} |\Gamma^{\beta_1} \gamma| |\partial \Gamma^{\beta_2} \gamma| |\partial_t \Gamma^\alpha \gamma| d x d t.
    \end{aligned}
\end{equation}

We now consider $3$ cases depending on the distribution of commutation fields in $\Gamma^{\beta_1}$ and $\Gamma^{\beta_2}$.

We first consider the case where $\Gamma^{\beta_1}$ contains both commutation fields. Then, we have that $|\beta_2| = 0$. We have that

\begin{equation}
    \begin{aligned}
    \int_2^s \int_{\Sigma_t} \chi_{S_t} {1 \over \sqrt{1 + t}} |\Gamma^{\beta_1} \gamma| |\partial \gamma| |\partial_t \Gamma^\alpha \gamma| d x d t
    \\ \le \Vert (1 + t)^{-\delta} \chi_{S_t} \Gamma^{\beta_1} \gamma \Vert_{L_t^\infty L_x^2} \Vert (1 + t)^{1 - \delta} \partial \gamma \Vert_{L_t^\infty L_x^\infty} \Vert (1 + t)^{-\delta} \partial \Gamma^\alpha \gamma \Vert_{L_t^\infty L_x^2} \Vert (1 + t)^{-{3 \over 2} + 3 \delta} \Vert_{L_t^1 L_x^\infty}.
    \end{aligned}
\end{equation}
Using the bootstrap assumptions~\eqref{Btstrp17},\eqref{Btstrp3},\eqref{Btstrp2}, we have that these terms of the error integral are controlled by 
\[
C \epsilon^{{9 \over 4}},
\]
as desired.

We now consider the case where $\Gamma^{\beta_2}$ contains both commutation fields, meaning that $|\beta_1| = 0$. We have that

\begin{equation}
    \begin{aligned}
    \int_2^s \int_{\Sigma_t} \chi_{S_t} {1 \over \sqrt{1 + t}} | \gamma| |\partial \Gamma^{\beta_2}\gamma| |\partial_t \Gamma^\alpha \gamma| d x d t
    \\ \le \Vert (1 + t+r)^{1-\delta}\chi_{S_t} \gamma \Vert_{L_t^\infty L_x^\infty}\Vert (1 + t)^{ - \delta} \partial\Gamma^{\beta_2} \gamma \Vert_{L_t^\infty L_x^2}  \Vert (1 + t)^{-\delta} \partial \Gamma^\alpha \gamma \Vert_{L_t^\infty L_x^2} \Vert (1 + t)^{-{3 \over 2} + 3 \delta} \Vert_{L_t^1 L_x^\infty}.
    \end{aligned}
\end{equation}
Using the bootstrap assumptions~\eqref{Btstrp16},\eqref{Btstrp2}, we have that these terms of the error integral are controlled by 
\[
C \epsilon^{{9 \over 4}},
\]
as desired.

We finally consider the case where both $\Gamma^{\beta_1}$ and $\Gamma^{\beta_2}$ contain one commutation field. We have that

\begin{equation}
    \begin{aligned}
    \int_2^s \int_{\Sigma_t} \chi_{S_t} {1 \over \sqrt{1 + t}} |\Gamma^{\beta_1} \gamma| |\partial \Gamma^{\beta_2} \gamma| |\partial_t \Gamma^\alpha \gamma| d x d t = \int_2^s \int_0^\infty \int_{S^2} {1 \over \sqrt{1 + t}} \chi_{S_t} |\Gamma^{\beta_1} \gamma| |\partial \Gamma^{\beta_2} \gamma| |\partial_t \Gamma^\alpha \gamma| r^2 d \omega d r d t
    \\ \le \Vert (1 + t)^{{1 \over q}} \chi_{S_t} \Vert_{L_t^\infty L_r^\infty L_\omega^q} \Vert (1 + t)^{-\delta} \chi_{S_t} r \Gamma^{\beta_1} \gamma \Vert_{L_t^\infty L_r^\infty L_\omega^p} \Vert (1 + t)^{-\delta} r \partial \Gamma^{\beta_2} \gamma \Vert_{L_t^\infty L_r^2 L_\omega^p}
    \\ \times \Vert (1 + t)^{-\delta} r\partial \Gamma^\alpha \gamma \Vert_{L_t^\infty L_r^2 L_\omega^2} \Vert (1 + t)^{-{3 \over 2} - {1 \over q} + 3 \delta} \Vert_{L_t^1 L_r^\infty L_\omega^\infty},
    \end{aligned}
\end{equation}
where ${1 \over q} + {2 \over p} = {1 \over 2}$ and we have used the fact that $t$ and $r$ are comparable in the support of $\chi_{S_t}$. Now, by our choice of $p$, we have that ${1 \over q} \ge {1 \over 2} - \delta$, meaning that we have that
\begin{equation}
    \begin{aligned}
    \Vert (1 + t)^{-{3 \over 2} - {1 \over q} + 4 \delta} \Vert_{L_t^1 L_r^\infty L_\omega^\infty} \le \int_2^s (1 + t)^{-2 + 4 \delta} d t \le C.
    \end{aligned}
\end{equation}
Then, using Lemma~\ref{lem:VolumeEst} and the bootstrap assumptions~\eqref{Btstrp13},\eqref{Btstrp2}, we have that these terms of the error integral are controlled by 
\[
C \epsilon^{{9 \over 4}},
\]
as desired.

We finally consider the term $\chi_{S_t} |\Gamma^{\beta_1} \gamma| |\overline{\partial} \Gamma^{\beta_2} \gamma|$. The error integral we must control is of the form
\begin{equation}
    \begin{aligned}
    \int_2^s \int_{\Sigma_t} \chi_{S_t} |\Gamma^{\beta_1} \gamma| |\overline{\partial} \Gamma^{\beta_2} \gamma| |\partial_t \Gamma^\alpha \gamma| d x d t.
    \end{aligned}
\end{equation}

We consider $3$ cases depending on the distribution of commutation fields in $\Gamma^{\beta_1}$ and $\Gamma^{\beta_2}$.

We first consider the case where $\Gamma^{\beta_1}$ contains both commutation fields, meaning that $|\beta_2| = 0$. We have that
\begin{equation}
    \begin{aligned}
    \int_2^s \int_{\Sigma_t} \chi_{S_t} |\Gamma^{\beta_1} \gamma| |\overline{\partial} \gamma| |\partial_t \Gamma^\alpha \gamma| d x d t
    \\ \le \Vert (1 + t)^{-\delta} \chi_{S_t} \Gamma^{\beta_1} \gamma \Vert_{L_t^\infty L_x^2} \Vert (1 + t)^{{3 \over 2} - \delta} \overline{\partial} \gamma \Vert_{L_t^\infty L_x^\infty}
    \\ \times \Vert (1 + t)^{-\delta} \partial \Gamma^\alpha \gamma \Vert_{L_t^\infty L_x^2} \Vert (1 + t)^{-{3 \over 2} + 3 \delta} \Vert_{L_t^1 L_x^\infty}.
    \end{aligned}
\end{equation}
Using the bootstrap assumptions~\eqref{Btstrp17},\eqref{Btstrp4},\eqref{Btstrp2}, we have that these terms of the error integral are controlled by 
\[
C \epsilon^{{9 \over 4}},
\]
as desired.

We now consider the case where $\Gamma^{\beta_2}$ contains both commutation fields, meaning that $|\beta_1| = 0$. We have that

\begin{equation}
    \begin{aligned}
    \int_2^s \int_{\Sigma_t} \chi_{S_t} |\gamma| |\overline{\partial} \Gamma^{\beta_1} \gamma| |\partial_t \Gamma^\alpha \gamma| d x d t
    \\ \le \Vert (1 + t)^{1 - \delta} (1 + |u|)^{{1 \over 2}} \chi_{S_t} \gamma \Vert_{L_t^\infty L_x^\infty} \Vert (1 + t)^{-2 \delta} (1 + |u|)^{-{1 \over 2} - {\delta \over 2}} \overline{\partial} \Gamma^{\beta_2} \gamma \Vert_{L_t^2 L_x^2}
    \\ \times \Vert (1 + t)^{-\delta} \partial \Gamma^\alpha \gamma \Vert_{L_t^\infty L_x^2} \Vert (1 + t)^{-1 + {9 \delta \over 2}} \Vert_{L_t^2 L_x^\infty},
    \end{aligned}
\end{equation}
where we have used the fact that $|u| \le C t$.

Using the bootstrap assumptions~\eqref{Btstrp3},\eqref{Btstrp7},\eqref{Btstrp2}, we have that these terms of the error integral are controlled by 
\[
C \epsilon^{{9 \over 4}},
\]
as desired.

Finally, we consider the case where $\Gamma^{\beta_1}$ and $\Gamma^{\beta_2}$ each contain one commutation field. We have that
\begin{equation}
    \begin{aligned}
    \int_2^s \int_{\Sigma_s} \chi_{S_t} |\Gamma^{\beta_1} \gamma| |\overline{\partial} \Gamma^{\beta_2} \gamma| |\partial_t \Gamma^\alpha \gamma| d x d t = \int_2^s \int_0^\infty \int_{S^2} \chi_{S_t} |\Gamma^{\beta_1} \gamma| |\overline{\partial} \Gamma^{\beta_2} \gamma| |\partial_t \Gamma^\alpha \gamma| r^2 d \omega d r d t
    \\ \le \Vert (1 + t)^{{1 \over q}} \chi_{S_t} \Vert_{L_t^\infty L_r^\infty L_\omega^q} \Vert (1 + t)^{-\delta} \chi_{S_t}r \Gamma^{\beta_1} \gamma \Vert_{L_t^\infty L_r^\infty L_\omega^p}
    \\ \times \Vert (1 + t)^{-2\delta} \chi_{S_t}r \overline{\partial} \Gamma^{\beta_2} \gamma \Vert_{L_t^2 L_r^2 L_\omega^p} \Vert (1 + t)^{-\delta} r \partial \Gamma^\alpha \gamma \Vert_{L_t^\infty L_r^2 L_\omega^2} \Vert (1 + t)^{-1 - {1 \over q} + 4 \delta} \Vert_{L_t^2 L_r^\infty L_\omega^\infty},
    \end{aligned}
\end{equation}
where ${1 \over q} + {2 \over p} = {1 \over 2}$ and we have used the fact that $t$ and $r$ are comparable in the support of $\chi_{S_t}$. Then, using Lemma~\ref{lem:VolumeEst} and the bootstrap assumptions~\eqref{Btstrp13},\eqref{Btstrp15},\eqref{Btstrp2}, we have that these terms of the error integral are controlled by 
\[
C \epsilon^{{9 \over 4}},
\]
as desired.

Since we have gone through all the terms, we get $\eqref{F1error}$ and $\eqref{F2error}$, as desired.

\section{Instability}\label{sec:unstable}
In this section, we assume that the linearized equation is of the form
\begin{equation}\label{eq:exprootequation}
\square\eta=B_y(u')\partial_y\eta+B_z(u')\partial_z\eta+F
\end{equation}
where $B_y,B_z:[-1,1]\to\R^{k\times k}$. The main result of this section is the proof of Theorem \ref{thm:generalinstab}. We first discuss the genericity assumptions in the theorem statement.

One may desire a statement of the form ``whenever Condition~\ref{cond:unexciting} is not satisfied, we have that for a generic traveling wave profile, applying the transformation from Section~\ref{sec:transformation} and then linearizing gives an equation of the form \eqref{eq:exprootnoforcing} for which the conditions of Theorem~\ref{thm:generalinstab} are satisfied". However, that is false. In fact given any system of equations of the form \eqref{eq:zerothversion} where $m_{ij\ell}$ is antisymmetric in all three indices and has only $\partial_y\phi_j\partial_{u'}\phi_\ell-\partial_{u'}\phi_j\partial_{y}\phi_\ell$ terms, we have that $B_y$ and $B_z$ end up being antisymmetric, so any linear combination has purely imaginary eigenvalues, thus not satisfying Condition~\ref{cond:M}. More generally, we can take $m_{ij\ell}$ to have terms antisymmetric in all three indices, plus standard null form terms. Then after applying the transformation from Section~\ref{sec:transformation}, we get that $B_y$ is similar to an antisymmetric matrix, so it still has imaginary eigenvalues.

What is true is that the conditions of Theorem~\ref{thm:generalinstab} are satisfied generically when we vary both the wave profile and the coefficients $m_{ij\ell}$ on the right-hand side of  \eqref{eq:zerothversion}. To see this, we take an arbitrary nonzero traveling wave profile $f$ and an arbitrary point $u_0$ where $f'(u_0)\ne 0$. By changing coordinates in $\R^k$, we can assume that $f_1'(u_0)\ne 0$ and $f_i'(u_0)=0$ for $i\ne 1$. Recall that $\gamma$ satisfies \eqref{eq:transformed}. Then varying the coefficients in $m_{ij\ell}$ (specifically the coefficients in front of $\partial_y\phi_i\partial_{u'}\phi_1$; we want to keep the other coefficients constant in order to not modify the matrix $A$) allows us to vary $B_y$ arbitrarily (since the matrix $A$ is invertible), so all we have to do is note that Condition~\ref{cond:M} is a generic condition on matrices.

Finally, based on naive dimension-counting, we believe the following is true:
\begin{conjecture}
For generic coefficients in the right-hand side of \eqref{eq:zerothversion}, all nonzero traveling wave profiles will satisfy the conditions of Theorem~\ref{thm:generalinstab}.
\end{conjecture}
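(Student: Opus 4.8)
The plan is to reduce the conjecture to a finite-dimensional algebraic statement and then to a dimension count. Recall from Section~\ref{sec:transformation} that, after the renormalization, the equation for $\gamma$ is \eqref{eq:transformed}, whose first order part is governed by $B_y(u') = A(u')\,\tilde B_y(u')\,A(u')^{-1}$ and $B_z(u') = A(u')\,\tilde B_z(u')\,A(u')^{-1}$, where $(\tilde B_y(u'))_{iq} = b_{i\ell q}f'_\ell(u')$ and $(\tilde B_z(u'))_{iq} = c_{i\ell q}f'_\ell(u')$. Since $A(u')$ is invertible, $\lambda B_y(u_0)+\mu B_z(u_0)$ has the same spectrum as $\lambda\tilde B_y(u_0)+\mu\tilde B_z(u_0)$, so for a nonzero profile $f$ the hypotheses of Theorem~\ref{thm:generalinstab} hold as soon as there is a $u_0$ with $\xi := f'(u_0)\neq 0$ for which the pencil $\{\lambda B^\xi+\mu C^\xi\}_{(\lambda,\mu)\in\R^2}$, where $(B^\xi)_{iq} := b_{i\ell q}\xi_\ell$ and $(C^\xi)_{iq} := c_{i\ell q}\xi_\ell$, contains a matrix with an eigenvalue of nonzero real part. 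Every nonzero $f$ has $f'(u_0)\neq 0$ somewhere, and a short computation of the kind in Section~\ref{sec:transformation} shows that, as the coefficients $m_{ij\ell}$ vary, $b$ and $c$ independently range over all tensors antisymmetric in their last two indices. So it suffices to prove: for a generic admissible pair $(b,c)$, \emph{every} nonzero $\xi\in\R^N$ is good in this sense, i.e.\ the bad set $\mathcal B(b,c) := \{\xi : \text{every matrix in the pencil has purely imaginary spectrum}\}$ equals $\{0\}$. Note that $\mathcal B(b,c)=\{0\}$ already forces $(b,c)\neq(0,0)$, hence that Condition~\ref{cond:unexciting} fails, so we are genuinely in the instability regime.

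First I would observe that the set $\mathcal G := \{(b,c)\in\mathcal V : \mathcal B(b,c)=\{0\}\}$, inside the linear space $\mathcal V$ of admissible pairs, is semialgebraic and open. It is semialgebraic because ``$M$ has purely imaginary spectrum'' is a semialgebraic condition on a real matrix, while ``for all $(\lambda,\mu)$'' and ``there is a $\xi\neq 0$'' are quantifiers over real algebraic sets, so Tarski--Seidenberg applies. It is open: given $(b_0,c_0)\in\mathcal G$, for each $\xi\in S^{N-1}$ choose $(\lambda,\mu)$ realizing an eigenvalue with $|\mathrm{Re}|\geq\delta_\xi>0$; this persists under small perturbations of $(b,c)$ and of $\xi$ by continuity of eigenvalues, and a finite subcover of $S^{N-1}$ produces an open neighborhood of $(b_0,c_0)$ contained in $\mathcal G$. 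To upgrade ``open and nonempty'' to ``complement lower-dimensional'' (the sense of ``generic'' used elsewhere in the paper) one bounds the dimension of the incidence variety $\mathcal Z := \{(b,c,\xi) : \xi\neq 0,\ \forall(\lambda,\mu)\ \lambda B^\xi+\mu C^\xi\ \text{has imaginary spectrum}\}$: since the set of real matrices with purely imaginary spectrum has positive codimension, requiring a whole two-parameter pencil to lie inside it should cut codimension at least $N$ in each $\xi$-fiber, giving $\dim(\mathcal V\setminus\mathcal G)\leq\dim\mathcal Z\leq (N-1)+\dim\mathcal V - N<\dim\mathcal V$. Modulo this codimension count, the conjecture reduces to exhibiting one admissible $(b,c)\in\mathcal G$.

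For the existence step I would use the elementary fact that if $\mathrm{tr}\big((\lambda B^\xi+\mu C^\xi)^2\big)>0$ for some $(\lambda,\mu)$, then $\lambda B^\xi+\mu C^\xi$ has an eigenvalue off the imaginary axis, because a real matrix with all eigenvalues in $i\R$ has $\mathrm{tr}$ of its square $\leq 0$. Now $\mathrm{tr}\big((\lambda B^\xi+\mu C^\xi)^2\big)$ is the value at $(\lambda,\mu)$ of the $2\times 2$ quadratic form with entries $q_{BB}(\xi) = \sum_{i,q,\ell,\ell'} b_{i\ell q}b_{q\ell' i}\,\xi_\ell\xi_{\ell'}$, $q_{BC}(\xi)$, $q_{CC}(\xi)$ — each an explicit quadratic form in $\xi$ — so it suffices to choose $(b,c)$ so that this form is not negative semidefinite for any $\xi\neq 0$, e.g.\ so that $q_{BB}$ is positive definite. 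For $N=2$ this is immediate: admissibility forces $B^\xi$ to be the rank-one matrix $(\beta_1,\beta_2)^{\mathrm T}(-\xi_2,\xi_1)$ with trace $\beta_2\xi_1-\beta_1\xi_2$, and $C^\xi$ the analogous rank-one matrix built from $(\gamma_1,\gamma_2)$, so taking $(\beta_1,\beta_2)$ and $(\gamma_1,\gamma_2)$ linearly independent gives, at every $\xi\neq 0$, a $(\lambda,\mu)$ for which $\lambda B^\xi+\mu C^\xi$ has a nonzero real eigenvalue; thus $\mathcal G\neq\emptyset$ and the conjecture holds for $N=2$, with $\mathcal V\setminus\mathcal G$ a hyperplane.

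The main obstacle is the general-$N$ version of this last step, which is precisely what makes the statement only a conjecture. The antisymmetry constraint on $b$ and $c$ is genuinely restrictive: the naive attempt to make $q_{BB}$ positive definite by also demanding that $b$ be symmetric in its first two indices is self-defeating, since symmetry in the first two slots together with antisymmetry in the last two forces $b=0$. One therefore needs a subtler explicit choice of $(b,c)$ — perhaps exploiting the full $2\times 2$ criterion on $(q_{BB},q_{BC},q_{CC})$ rather than $q_{BB}$ alone, or an invariant other than the trace of the square — together with the codimension estimate on $\mathcal Z$ above; carrying either one out in full generality, or replacing both by a single transversality argument on the space of matrix pencils, is the hard part.
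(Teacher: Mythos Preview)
This statement is a \emph{conjecture} in the paper, not a theorem; the authors give no proof, only the remark that it is motivated by ``naive dimension-counting.'' There is therefore no paper proof to compare your proposal against.

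Your outline is consistent with that hint and goes well beyond it. The reduction is correct: you rightly observe that the conjecture genuinely demands $\mathcal B(b,c)=\{0\}$, since if any nonzero $\xi$ lies in $\mathcal B(b,c)$ one can take a profile $f(u')=h(u')\xi$ whose derivative stays on that ray and thereby defeat the hypotheses of Theorem~\ref{thm:generalinstab}; you correctly identify the admissible $(b,c)$ as pairs of tensors antisymmetric in their last two indices (the symmetry convention on $m_{ij\ell}$ forces the diagonal $b_{ijj}$ to vanish and makes $b_{ij\ell}+b_{i\ell j}$ the symmetric part of a null form evaluated on a null pair, hence zero); and your openness and semialgebraicity arguments for $\mathcal G$ are sound. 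Your $N=2$ computation is complete and correct.

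The two places you flag as incomplete --- the existence of a single $(b,c)\in\mathcal G$ for general $N$, and the codimension bound on the incidence variety $\mathcal Z$ --- are genuine gaps, and the obstruction you isolate (that demanding $b$ be simultaneously symmetric in the first two slots and antisymmetric in the last two forces $b=0$) is real. These are precisely why the authors stated this as a conjecture rather than a theorem. Your proposal is thus a reasonable research sketch that sharpens the paper's heuristic into a concrete program, not a failed proof attempt.
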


We will now prove an instability and blowup result for a specific equation of the form \eqref{eq:exprootequation}, where we can get more exact asymptotics and develop some intuition.
\begin{proposition}\label{prop:Bessel}
If $\eta_1$ is a solution to
\begin{equation}\label{eq:specialcaselinear}
\square\eta_1=B(u')\partial_y\eta_1
\end{equation}
then there is initial data supported on the unit ball that grows faster than $\exp((K-\epsilon)\sqrt{t})$ for any $\epsilon>0$ where
\[
K=\sup_{u_0\le u_1}\frac{1}{\sqrt{2}\sqrt{u_1-u_0}}\int_{u_0}^{u_1}B(\alpha)d\alpha.
\]
If $\eta_1$ is a solution to
\begin{equation}\label{eq:specialcasenonlinear}
\square\eta_1=B(u')\partial_y\eta_1+\partial_a\eta_1\partial^a\eta_1
\end{equation}
then, for any $k\in \N$ and any $\epsilon>0$, there is some $C>0$ so that one can find an open ball of initial data  whose $H^k$ norm is $\sim\delta$, and which blows up within time $(\log(C\delta)/(K-\epsilon))^2$ for arbitrarily small $\delta>0$.
\end{proposition}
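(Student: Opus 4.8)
\emph{Strategy for the linear statement.} The plan is to show that the first--order term $B(u')\partial_y\eta_1$ forces growth by reducing the problem to an essentially one--dimensional one. Taking the Fourier transform in $y$ and $z$ with dual frequencies $\xi,\zeta$ (the growth is driven by $\zeta\approx 0$, so I set $\zeta=0$ to keep formulas clean) turns \eqref{eq:specialcaselinear} into
\[
(\partial_t^2-\partial_x^2)\widehat\eta_1 \;=\; \bigl(i\xi B(u')-\xi^2\bigr)\,\widehat\eta_1 , \qquad\text{equivalently}\qquad \partial_{u'}\partial_{v'}\widehat\eta_1 \;=\; \tfrac14\bigl(i\xi B(u')-\xi^2\bigr)\,\widehat\eta_1 .
\]
The key structural observation is that the effective potential $\tfrac14(i\xi B(u')-\xi^2)$ depends only on $u'$, hence is constant along each outgoing null line, so a wave concentrated near such a line feels it for all time; this is exactly the feature absent from the motivating example $\square\psi=f(t-x)(\partial_t-\partial_x)\psi$ and exactly what makes unbounded growth possible. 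When $B$ is frozen to a constant $\bar B$ on an interval of $u'$, the reduced equation is solved by the modified Bessel function $\widehat\eta_1=I_0\bigl(2\sqrt{\kappa u'v'}\bigr)$ with $\kappa=\tfrac14(i\xi\bar B-\xi^2)$ --- hence the name of the Proposition --- and $|I_0(2\sqrt{\kappa u'v'})|$ grows like $\exp\bigl(\sqrt{u'v'}\,\operatorname{Re}\sqrt{i\xi\bar B-\xi^2}\bigr)$ for large argument. Since $\operatorname{Re}\sqrt{i\xi\bar B-\xi^2}$ increases to $|\bar B|/2$ as $|\xi|\to\infty$ (with $\operatorname{sgn}\xi=\operatorname{sgn}\bar B$), and $v'\simeq 2t$ on $\Sigma_t$ while the optimal wave is spread over a $u'$--interval of length $\ell=u_1-u_0$ (so $u'\simeq\ell$), the growth rate seen at time $t$ is $\simeq\tfrac{|\bar B|}{2}\sqrt{2\ell t}=\tfrac{|\int_{u_0}^{u_1}B|}{\sqrt2\,\sqrt{u_1-u_0}}\sqrt t$, and optimizing over the interval produces $K$.

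\emph{Carrying out the linear part.} Given $\epsilon>0$ I would pick $[u_0,u_1]$ and a large frequency $\xi$ realizing a rate $K-\tfrac{\epsilon}{2}$ in the computation above, replace $B$ on that interval by a (piecewise) constant approximant, and use the explicit Bessel/WKB solution to build an exact solution of \eqref{eq:specialcaselinear} concentrated, at $t=0$, on the corresponding characteristics. Its initial data decays exponentially in $|u'|$ and in $|y|$ (the latter because $\xi$ is large, so the beam diffracts slowly and stays inside the interaction region $\{|y|\lesssim\sqrt t\}$); truncating to the unit ball therefore changes the data by an exponentially small amount, and by finite speed of propagation this truncation does not affect the solution in the region where the growth is produced. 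To make the lower bound rigorous without relying on asymptotics of Bessel functions of complex argument, I would feed these data into a monotonicity argument: a suitable positive functional $G(t)$ of $\widehat\eta_1$ on the interaction region satisfies a Bessel--type differential inequality $G''(t)\gtrsim\tfrac{(K-\epsilon)^2}{t}G(t)$ for $t$ large, whose solutions grow like $e^{(K-\epsilon)\sqrt t}$ --- the $1/t$ weight encodes the geometric spreading of the interaction region and is precisely what is consistent with the $e^{C\sqrt t}$ upper bound of Section~\ref{sec:unstable}. Superposing the packets built for a sparse sequence of times $T_j\to\infty$ (with $\xi_j\sim\sqrt{T_j}$, so the packets live on essentially disjoint characteristics and frequencies) upgrades the family to a single datum in the unit ball whose solution exceeds $e^{(K-\epsilon)\sqrt t}$ along a sequence of times.

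\emph{The nonlinear statement.} For \eqref{eq:specialcasenonlinear} I would start from data equal to $\delta$ times the growing profile above. As long as $\|\partial\eta_1\|_{L^\infty}$ stays below a fixed small threshold $\mu_0$, the quadratic term $\partial_a\eta_1\partial^a\eta_1$ is dominated by $B(u')\partial_y\eta_1$, so a bootstrap argument shows $\eta_1$ tracks the linear solution and reaches size $\sim\mu_0$ at a time $t_1$ with $\sqrt{t_1}\simeq\frac{\log(\mu_0/\delta)}{K-\epsilon}$, i.e.\ $t_1\simeq\bigl(\log(C\delta)/(K-\epsilon)\bigr)^2$ after absorbing $\mu_0$ into $C$. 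At that time the solution, restricted to the thin but now $O(1)$--sized region where it lives, is large data for the model equation $\square\phi=\partial_a\phi\,\partial^a\phi$, for which a John--type ODE/Riccati argument (as in \cite{John79}) forces a finite--time singularity within an additional time $O(1)$; since this blowup criterion is an open condition, a whole $H^k$--ball of data of size $\sim\delta$ blows up, within total time $t_1+O(1)\le\bigl(\log(C\delta)/(K-\epsilon)\bigr)^2$ for a possibly larger $C$.

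\emph{Main obstacles.} The crux of the linear part is pinning the constant: one must control the errors coming from the non--constant $B$, from the dispersive $-\xi^2$ term, and from the cutoff to the unit ball well enough that each costs only $\epsilon$ in the exponent, and in particular one must justify that the optimal configuration is a wave \emph{spread over} an interval $[u_0,u_1]$, which is what produces the $\sqrt{u_1-u_0}$ normalization and the averaging of $B$ in the formula for $K$. For the nonlinear part the delicate point is bridging the two regimes: keeping the nonlinear feedback under control throughout the long growth phase, and checking that the configuration reached at time $t_1$ genuinely triggers blowup --- the nonlinearity $(\partial_t\eta_1)^2-|\nabla\eta_1|^2$ is not sign--definite, so one must exploit its structure relative to the large, essentially plane--symmetric solution rather than a crude positivity.
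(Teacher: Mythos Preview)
Your linear argument is in the right spirit and close to the paper's, but the paper's execution is considerably cleaner and avoids the steps you flag as obstacles. After the Fourier transform in $(y,z)$ the paper does \emph{not} freeze $B$ to a constant: instead it solves for the fundamental solution of $4\partial_{u'}\partial_{v'}q=-(|\xi|^2+iB(u')\xi_y)q$ by straight Picard iteration, and the iterates sum \emph{exactly} to
\[
q(u',v')=I_0\Bigl(2\sqrt{(v'-1)\int_{u_0}^{u'}a(\alpha)\,d\alpha}\Bigr),\qquad a(u')=-\tfrac14\bigl(|\xi|^2+iB(u')\xi_y\bigr),
\]
because the $u'$ and $v'$ integrations in the iteration decouple. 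So the variable $B$ enters only through $\int_{u_0}^{u'}B$, and no piecewise-constant approximation, WKB error analysis, or monotonicity functional $G$ is needed --- one just reads off the Bessel asymptotics and the constant $K$ drops out. Your approach would probably work but trades a two-line exact formula for several nontrivial error estimates.

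For the nonlinear part there is a genuine gap: you are missing the key trick. The nonlinearity $\partial_a\eta_1\partial^a\eta_1$ is precisely the one for which the Nirenberg substitution $\phi=e^{-\eta_1}-1$ \emph{exactly} linearizes the equation: $\phi$ solves \eqref{eq:specialcaselinear} if and only if $\eta_1$ solves \eqref{eq:specialcasenonlinear}. Hence the linear growth result gives $\phi=-1$ at some point in time $\lesssim(\log(C\delta)/(K-\epsilon))^2$, and at that instant $\eta_1=-\log(1+\phi)$ blows up. This bypasses entirely the bootstrap-then-Riccati scheme you outline, and in particular the two difficulties you yourself identify (controlling nonlinear feedback over the long growth phase, and triggering blowup from a sign-indefinite null form in a thin region) simply do not arise. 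Your proposed route is not obviously wrong, but you have not resolved those difficulties, and the exact linearization makes them moot.
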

\begin{proof}
We will work in the region $\{-1\le u'\le 1, 1\le v'\}$, we will have the initial data along the $\{u'=-1\}$ characteristic surface be 0, and we will specify the initial data along the $\{v'=1\}$ characteristic surface.

Due to the region we care about and domain of dependence, once we have decided on the initial data we want on $\{-1\le u'\le 1, v'=1\}$, we can run the equation backward in time on a finite domain to get appropriate initial data on $\{t=0\}$. Thus converting between the $\{u'=-1\}$ characteristic surface and the initial data on $\{t=0\}$ will introduce at most constant factors.

We take the Fourier transform (appropriately normalized) of \eqref{eq:specialcaselinear} in $y,z$ to get that $q=\hat\eta_1(\xi_y,\xi_z,u',v')$ satisfies
\[
4\partial_{u'}\partial_{v'}q=-(|\xi^2|+iB(u')\xi_y)q
\]
We will find the fundamental solution for every $\xi$, that is solve
\[
4\partial_{u'}\partial_{v'}q=-(|\xi|^2+iB(u')\xi_y)q+ 4 \delta(v')\delta(u'-u_0).
\]
The $4$ in front of $\delta(v')\delta(u'-u_0)$ is chosen just to make the constants work out more nicely. Getting bounds on this equation will translate immediately to bounds on the actual fundamental solution.

We will solve this equation by iteration, working in the region $\{u'\ge u_0\}$. Let $a(u')=-\frac{|\xi^2|+iB(u')\xi_y}{4}$. The first iteration yields $q_0=1$ (since we are restricting ourselves to the region $\{u'\ge u_0\}$. In general, we will get
\[
q_k(u',v')=1+\int_{u_0}^{u'}\int_{1}^{v'}  a(\alpha)q_{k-1}(\alpha,\beta)d\beta d\alpha
\]
which, when expanded, gives
\begin{align*}
q_k&=\sum_{j=0}^k \frac{(v'-1)^k}{k!}\int_{u_0\le u_1\le\cdots\le u_k}a(u_1)\cdots a(u_k)du_1\cdots du_k\\
&=\sum_{j=0}^k \frac{1}{(k!)^2}\left((v'-1)\int_{u_0}^{u'} a(\alpha)d\alpha\right)^k\xrightarrow[k\to\infty]{}I_0\left(2\sqrt{(v'-1)\int_{u_0}^{u'} a(\alpha)d\alpha}\right)
\end{align*}
where $I_0$ is the modified Bessel function of the first kind. From this we can hypothetically extract whatever information we want; we will just note that for $|\xi|$ fairly large, we have that
\[
2\sqrt{(v'-1)\int_{u_0}^{u'} a(\alpha)d\alpha}=\sqrt{v'-1}\sqrt{-(u'-u_0)|\xi|^2-i\xi_y\int_{u_0}^{u'} B(\alpha)d\alpha}
\]
and that when we take $\xi_y=|\xi|$, then
\[
2\sqrt{(v'-1)\int_{u_0}^{u'} a(\alpha)d\alpha}=\pm\sqrt{v'-1}\left(-i\sqrt{u'-u_0}|\xi|+\frac{1}{2\sqrt{u'-u_0}}\int_{u_0}^{u'} B(\alpha)d\alpha+O(1/|\xi|)\right).
\]
Since $I_0(z)\sim\frac{\exp(z)}{\sqrt{2\pi z}}(1+O(1/z))$ (see, e.g. \cite{WatsonBessel}), we have that (when $|\xi|>v'/\epsilon, v'>2$)
\begin{equation}
    \begin{aligned}
    |q(u',v')| &=\lim_{k\to\infty}|q_k(u',v')|=\left|I_0\left(2\sqrt{(v'-1)\int_{u_0}^{u'} a(\alpha)d\alpha}\right)\right|
    \\ &=\frac{1}{\sqrt{2\pi z}}\exp\left(\frac{\sqrt{v'-1}}{2\sqrt{u'-u_0}}\int_{u_0}^{u'} B(\alpha)d\alpha\right)(1+O(\sqrt{v'}/|\xi|)).
    \end{aligned}
\end{equation}
This gives the asymptotics in the theorem statement (keeping in mind that $v=2t+O(1)$ in the relevant region).

To have initial data small in $H^k$ that still has the desired growth rate at time $T$ (up to the an $\epsilon$), we choose $N$ sufficiently large and take the initial data for $\{v'=0\}$ to be supported in $|\xi|$ on a ball of radius $T^{-N^2}$ centered at $(T^N, 0)$ and in $u'$ on a strip of with $T^{-N}$ centered at the optimal $u_0$ (actually, this is what we want the $\partial_{u'}$ derivative of the initial data to satisfy. We can bring the value of $\eta_1$ back down to 0 once we get past the desired $u'$). The initial size will have to be polynomially small in $T$ since we have some $H^k$ norm, and we will have a loss of $\epsilon$ due to that and due to spreading from the optimal $u_0$.

To get the blowup result, we need to find $\eta_1$ satisfying $\eqref{eq:specialcasenonlinear}$. We use the Nirenberg trick, letting $\phi=\exp(-\eta_1)-1$ and noting that then $\phi$ satisfies the linear equation \eqref{eq:specialcaselinear} precisely when $\eta_1$ satisfies \eqref{eq:specialcasenonlinear}. The linear instability result allows us to get $\phi=-1$ at some point before the desired time, at which point $\eta_1$ blows up.
\end{proof}
We will also prove the generic linear instability result given in Theorem~\ref{thm:generalinstab}. First, we prove an upper bound on the growth rate:
\begin{lemma}\label{lem:exproottmultiplier}
There exists some some $K_0>0$ depending on $B$ so that for any $K>K_0$, there is some $C>0$  such that whenever
$\eta$ is a solution to $\eqref{eq:exprootequation}$ that is supported on a unit ball at time 0 and that satisfies
\begin{align}
\|\eta(0,\cdot)\|_{H^1}&<\delta\\
\|\partial_t\eta(0,\cdot)\|_{L^2}&<\delta\\
\|F(t,\cdot)\|_{L^2}&<\delta\exp(K\sqrt{t})\label{cond:Fbound},
\end{align}
then
\begin{equation}\label{bound:fastbootstrap}
||D\eta||_{L^2}\le C \delta\exp(10 K \sqrt{t})
\end{equation}
\end{lemma}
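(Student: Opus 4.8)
I would prove this by an energy estimate for $\eta$ directly, exploiting that the first-order coefficients $B_y(u'),B_z(u')$ are supported in the thin slab $\{|u'|\le 1\}$, whose intersection with the domain of dependence of the unit ball is exactly the set $S_t$ analyzed in Section~\ref{sec:Geometry}. First reduce to $t\ge 2$ (for bounded times the bound follows from standard $H^1$ well-posedness for semilinear wave equations and $\exp(10K\sqrt t)\ge 1$ there), and work inside $\{u\ge -1\}$, where $\eta$ is supported once $F$ is (as in our application); there $|y|,|z|\le 2\sqrt t$ and $r\approx x\approx t$. The key geometric input is the decomposition, valid on $S_t$,
\[
|\partial_y\eta|+|\partial_z\eta|\le C|\overline\partial\eta|+\tfrac{C}{\sqrt t}|\partial\eta|,
\]
obtained from Lemma~\ref{lem:frameconversion} by writing $\partial_y=\tfrac{2r}{v'-u'}e_{xy}+\tfrac{2y}{v'-u'}\partial_x$ and using $\tfrac{2r}{v'-u'}\approx 1$, $\bigl|\tfrac{2y}{v'-u'}\bigr|\le \tfrac{C}{\sqrt t}$ on $S_t$. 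Thus on $S_t$ the right-hand side of \eqref{eq:exprootequation} is pointwise $\le C_B\bigl(|\overline\partial\eta|+\tfrac1{\sqrt t}|\partial\eta|\bigr)+|F|$, with $C_B$ depending only on the size of $B$.

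\textbf{Main steps.} Set $E(s)=\|\partial\eta\|_{L^2(\Sigma_s)}$, and also track the characteristic flux $\mathcal F_s(u)=\|\overline\partial\eta\|_{L^2(\mathcal C_u(|u|,2s-u))}$ for $|u|\le 1$ (recall $S_t\subseteq\{|u|\le 1\}$). Feeding $\Box\eta$ from \eqref{eq:exprootequation} into the energy identity of Proposition~\ref{prop:EnEst} (and its averaged characteristic version) gives $\tfrac{d}{ds}E(s)^2\lesssim C_B\!\int_{S_s}|\overline\partial\eta|\,|\partial\eta|+\tfrac{C_B}{\sqrt s}E(s)^2+\|F(s)\|_{L^2}E(s)$, and controls $\mathcal F_s(u)^2$ by $E(0)^2$ plus the spacetime version of the same error and forcing terms over $\{|u'|\le 1\}$. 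The term $\tfrac{C_B}{\sqrt s}E^2$ is absorbed by an integrating factor $e^{-cC_B\sqrt s}$ --- this is precisely why the growth is $e^{C_B\sqrt t}$ rather than $e^{C_B t}$, and why $K_0$ must be allowed to depend on $B$. The forcing term is handled by Cauchy--Schwarz against $\int_0^s (1+t)^{1/2+\delta'}\|F\|_{L^2}^2\,dt\lesssim \delta^2 e^{3K\sqrt s}$ plus $\int_0^s(1+t)^{-1/2-\delta'}E^2\,dt$, the latter again absorbed by the integrating factor. The mixed term $\int_{S_s}|\overline\partial\eta|\,|\partial\eta|$ is split by Cauchy--Schwarz, its good-derivative factor controlled by the flux $\mathcal F_s$ together with the thinness of $S_t$ (so that the spacetime integral of $|\overline\partial\eta|^2$ over $\cup_{t\le s}S_t\subseteq\{|u|\le 1\}$ is bounded by $\sup_{|u|\le 1}\mathcal F_s(u)^2$). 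Choosing $K_0$ larger than a fixed multiple of $C_B$ and the implied universal constants, the bootstrap for $E(s)+\sup_{|u|\le 1}\mathcal F_s(u)\le C\delta\,e^{C_B\sqrt s}$ closes with $C_B\le 10K$; converting $E$ back to $\|D\eta\|_{L^2}$ gives the claim.

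\textbf{Main obstacle.} The delicate point is closing the coupled estimate genuinely at the rate $e^{C_B\sqrt t}$: the bad-derivative part $\tfrac1{\sqrt t}|\partial\eta|$ of $\partial_{y,z}\eta$ integrates against $\tfrac{ds}{\sqrt s}$ to $e^{C\sqrt s}$ and is harmless, but the good-derivative part $|\overline\partial\eta|$, after Cauchy--Schwarz against $|\partial\eta|$ on $S_t$, must not reintroduce a power-of-$t$ loss (a naive Cauchy--Schwarz in time against $e^{2C_B\sqrt t}\,dt$ produces a $t^{1/4}$ that breaks the bootstrap). Controlling this cleanly seems to require either an $r^{1/2}$-weighted (Dafermos--Rodnianski-type) characteristic energy estimate on the outgoing cones near $\{|u|\le 1\}$, or commuting once with the weighted fields $\Gamma$ and using $|\overline\partial\eta|\le \tfrac{C}{t}\sum_{|\alpha|\le 1}|\Gamma^\alpha\eta|$ on $S_t$ (Lemma~\ref{lem:OutgoingConeVFs}), which improves the weight from $\tfrac1{\sqrt t}$ to $\tfrac1t$ at the cost of bootstrapping the once-commuted energy as well. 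An independent route, in the spirit of Proposition~\ref{prop:Bessel}, is to Fourier transform \eqref{eq:exprootequation} in $(y,z)$, reducing to the family of $1{+}1$-dimensional Klein--Gordon equations $4\partial_{u'}\partial_{v'}q+|\xi|^2 q=i\bigl(\xi_yB_y(u')+\xi_zB_z(u')\bigr)q+\hat F$, and to prove a bound $|q|\le C e^{C_B\sqrt{v'}}(\text{data}+\text{forcing})$ \emph{uniformly in} $\xi$ by Duhamel against the bounded Klein--Gordon Riemann function $J_0(|\xi|\sqrt{\,\cdot\,})$; here the difficulty is the uniformity in frequency, since crude iteration yields a spurious $e^{c\sqrt{|\xi|}}$ factor and one must exploit the oscillation of $J_0$ and the purely imaginary coefficient to recover it.
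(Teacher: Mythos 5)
Your proposed energy argument does not close, and you correctly diagnose the obstruction yourself: after Cauchy--Schwarz, the mixed term $\int_0^s\int_{S_t}|\overline\partial\eta|\,|\partial\eta|\,dx\,dt$ produces a factor $\bigl(\int_0^s e^{2C\sqrt t}\,dt\bigr)^{1/2}\sim s^{1/4}e^{C\sqrt s}/\sqrt C$, and the spurious $s^{1/4}$ cannot be absorbed by a constant-rate integrating factor --- one would need $C\gtrsim\sqrt s$, i.e.\ a time-dependent rate. The three remedies you list ($r^p$-type weights, commuting once with $\Gamma$, or a Fourier/Bessel argument uniform in $\xi$) are each plausible but are left as sketches; as written, the lemma is not proved. (The Fourier route in particular is exactly the issue you flag: crude iteration of the $1{+}1$ Klein--Gordon Duhamel formula does not obviously give a bound uniform in $|\xi|$ without exploiting cancellation.) The reduction to $S_t$ and the frame decomposition $\partial_y=\overline\partial+O(1/\sqrt t)\,\partial$ are correct and are genuinely the right geometric inputs for the \emph{stability} proof of Section~\ref{sec:ClosingEnergy}, but they are not what is needed here.

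The paper's proof takes a different and decisively cleaner route that sidesteps the loss entirely. Instead of the $(t,r,\omega)$ energy adapted to the perturbation, it uses the multiplier $e^{-g(u',v')}\partial_{v'}\eta$ in the plane-wave null coordinates $(u',v')$, integrated over the slab $\{|u'|\le 1,\,0\le t\le T\}$. Choosing $g$ with $\partial_{u'}g,\partial_{v'}g\ge 0$ and the pointwise inequality $4\,\partial_{u'}g\,\partial_{v'}g\ge|B_y|^2+|B_z|^2$ (e.g.\ $g=\sqrt{Q(u')}\sqrt{v'+1}$ with $Q'=|B_y|^2+|B_z|^2$), the cross terms $e^{-g}\partial_{v'}\eta\cdot B_{y,z}\partial_{y,z}\eta$ are absorbed \emph{pointwise in the bulk} into the coercive terms $2e^{-g}\partial_{u'}g\,|\partial_{v'}\eta|^2+\tfrac12 e^{-g}\partial_{v'}g\,(|\partial_y\eta|^2+|\partial_z\eta|^2)$ by Cauchy--Schwarz. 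There is therefore no spacetime integral of $E^2$ to control and no $s^{1/4}$; the weight $g\sim C_B\sqrt{T}$ at time $T$ directly produces the $e^{C_B\sqrt T}$ bound on $\partial_{v'},\partial_y,\partial_z$ derivatives, the $\partial_{u'}$ derivative is recovered by treating the equation as an inhomogeneous wave equation, and $F$ is handled by Duhamel. I'd encourage you to look for such a sign-definite bulk identity before reaching for bootstrap/commutator machinery: the crucial freedom you did not exploit is choosing the multiplier in the coordinates adapted to the \emph{coefficient} $B(u')$, not the coordinates adapted to the data.
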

\begin{proof}
For now, we suppose that $F=0$ and obtain the correct estimates for the homogeneous equation. Later, we will deal with the inhomogeneity by using Duhamel.

We will prove the bound at time $T$. We use the multiplier
\begin{equation}\label{expr:multipinstability}
\exp\left(-g(u',v')\right)\partial_{v'}\eta
\end{equation}
where $\partial_{v'}g\ge 0,\partial_{u'}g\ge 0$, and \begin{equation}\label{cond:gcondition}
4\partial_{v'}g\partial_{u'}g\ge |B_y|^2+|B_z|^2.
\end{equation}
where $|B_y|$ denotes the operator norm of the matrix $B_y$ and similarly for $B_z$. 
This is, for instance, satisfied by $g$ of the form 
\begin{equation}\label{eq:gexample}
g=\sqrt{Q(u')}\sqrt{v'+1}
\end{equation}
where $Q'=|B_y|^2+|B_z|^2$.

Multiplying by \eqref{expr:multipinstability} and integrating over the strip $W=\{-1\le u'\le 1, 0\le t\le T\}$, we get
\begin{align*}
0&=\iint_W e^{-g}\left(4\partial_{v'}\eta\cdot\partial_{u'}\partial_{v'}\eta-\partial_{v'}\eta\cdot\partial_{y}\partial_{y}\eta-\partial_{v'}\eta\cdot\partial_{z}\partial_{z}\eta-\partial_{v'}\eta\cdot B_y(u')\partial_y\eta-\partial_{v'}\eta\cdot B_z(u')\partial_z\eta\right)\\
&=\int_{\partial W\cap \{u'=1\}}2\exp(-g)|\partial_{v'}\eta|^2+\int_{\partial W\cap \{t=T\}}\frac{\exp(-g)}{2}(|\partial_{y}\eta|^2+|\partial_{z}\eta|^2+4|\partial_{v'}\eta|^2)\\
&\qquad+\int_W 2e^{-g}\partial_{u'}g|\partial_{v'}\eta|^2+\frac 12 e^{-g}\partial_{v'}g(|\partial_{y}\eta|^2+|\partial_{z}\eta|^2)-e^{-g}\partial_{v'}\eta\cdot B_y(u')\partial_y\eta-e^{-g}\partial_{v'}\eta\cdot B_z(u')\partial_z\eta\\
&\qquad-\int_{\partial W\cap \{u'=-1\}}2\exp(-g)|\partial_{v'}\eta|^2-\int_{\partial W\cap \{t=0\}}\frac{\exp(-g)}{2}(|\partial_{y}\eta|^2+|\partial_{z}\eta|^2+4|\partial_{v'}\eta|^2)
\end{align*}
The first four terms in the bulk integral together are nonnegative because of Cauchy-Schwarz and condition \eqref{cond:gcondition}. Also, the term along $\{u'=-1\}$ is 0 by domain of dependence because $\eta$ is supported on the unit ball at time 0. Thus
\[
\int_{\partial W\cap \{t=T\}}\frac{\exp(-g)}{2}(|\partial_{y}\eta|^2+|\partial_{z}\eta|^2+4|\partial_{v'}\eta|^2)\le \int_{\{t=0\}}\frac{\exp(-g)}{2}(|\partial_{y}\eta|^2+|\partial_{z}\eta|^2+4|\partial_{v'}\eta|^2).
\]
By looking at the function $g$ as defined in \eqref{eq:gexample}, we then get that for some $C>0$ and some $K_0$, if $F=0$ and the assumptions of the lemma are satisfied, then we get for some constant $C$ that
\[
\int_{\partial W\cap \{t=T\}}|\partial_{y}\eta|^2+|\partial_{z}\eta|^2+|\partial_{v'}\eta|^2\le C\delta\exp(K_0\sqrt{t}).
\]
We now us Duhamel to put in an inhomogeneity satisfying \eqref{cond:Fbound}, and we obtain
\[
\int_{\partial W\cap \{t=T\}}|\partial_{y}\eta|^2+|\partial_{z}\eta|^2+|\partial_{v'}\eta|^2\le C\delta\exp(K_0\sqrt{t})+\int_{0}^T C\delta\exp(K_0\sqrt{t-s})\exp(K\sqrt{s})   ds\le C_1\delta\exp(5 K \sqrt{T})   ds
\]
If we now treat \eqref{eq:exprootequation} as a wave equation (putting both $F$ and the first order term on the right hand side), we get
\[
||D\eta(t,\cdot)||_{L^2}\le \int_0^t C_2\delta\exp(5 K \sqrt{s})ds+C_3\delta\le C_4\delta\exp(10 K \sqrt{t})
\]
\end{proof}
This upper bound allows us to prove Theorem~\ref{thm:generalinstab}.
\begin{proof}
We begin with some useful reductions for the problem. By applying rotations and reflections in the $y-z$ plane, we can assume that $B_y (u_0)$ has at least one eigenvalue with positive real part. We use $\lambda(B_y (u))$ to denote the largest real part among all eigenvalues of the matrix $B_y (u)$. We thus have $\lambda(B_y (u_0)) > 0$. Because of this, there exists some closed interval $[a,b]$ containing $u_0$ such that $\lambda(B_y (u))$ is uniformly bounded from below away from $0$ in this interval. Now, given any point $u \in [a,b]$, we note that the dimension of the space of generalized eigenvectors with corresponding eigenvalues having real part equal to $\lambda(B_y (u))$ can only decrease in a sufficiently small neighborhood (depending only on $B_y$) of $u$. We take a point $u_0 \in (a,b)$ for which this dimension is minimized. This gives us an interval of the form $[u_1,u_2] \subset [a,b]$ in which the dimension is equal to this minimum.

We may thus assume we are in the following setting. We are given an interval $[u_1,u_2]$. We have that $\lambda(B_y (u))$ is positive and bounded uniformly away from $0$ on this interval. If the eigenvalues of $B_y (u)$ are denoted by $\lambda_1 (u), \dots, \lambda_k (u)$, we have that that $|\Re(\lambda_i (u))| \le \lambda(B_y (u))$ for all $u \in [u_1,u_2]$. We have that the dimension of the vector space of generalized eigenvectors whose eigenvalues have real part equal to $\lambda(B_y (u))$ is constant on this interval.
Finally, we may assume that $\epsilon$ is so small such that $\Re(\lambda_i (u)) \le \lambda(B_y(u)) - 10 \epsilon$ for all $u \in [u_1,u_2]$ whenever $\lambda_i$ is an eigenvalue that does not correspond to the vector space of generalized eigenvectors having real part equal to $\lambda(B_y (u))$. These reductions come from picking an appropriate interval $[u_1,u_2]$ after applying appropriate rotations and reflections in the $y-z$ plane. All of these operations and the size of the interval $[u_1,u_2]$ depend only on $B_y$.

We will actually prove that we can take $K$ in the theorem statement to be given by
\begin{equation} \label{eq:GrowthRate}
    \begin{aligned}
    K=\frac{1}{\sqrt{2}\sqrt{u_2-u_1}}\int_{u_1}^{u_2}\lambda(B_y (u')) du'-\epsilon
    \end{aligned}
\end{equation}
for arbitrarily small $\epsilon>0$ (note that we can decrease the size of $\epsilon$ without violating the condition above that $\Re(\lambda_i (u)) \le \lambda(B_y(u)) - 10 \epsilon$).

The $N=1$ part of the of the theorem statement is essentially a special case of this (except, when $N=1$, it is fine if $B$ is negative on some part of the interval $[u_1,u_2]$). To get the theorem statement for general $N$, we note that after rotations in the $y-z$ plane and reflections, we can assume that the relevant linear combination is just $B_y(u_0)$, and then we can use continuity to take some small interval $[u_1,u_2]$ around $u_0$ in which Condition $2$ will be satisfied uniformly. Also, throughout the proof below, we will assume that $T>T_0(B_y,B_z,K,m,\epsilon)$ is sufficiently large. For small $T$, we will make the theorem statement true by taking an arbitrary solution and making $c$ sufficiently small.

We will use the geometric optics ansatz. We recall that this is motivated by considering the equation
\[
\Box \eta + B(t - x) (\partial_t - \partial_x) \eta = 0.
\]
In this equation, it is relatively straightforward to construct data which exhibits exponential growth using the geometric optics ansatz adapted to the null generators of the $t - x = c$ null hyperplanes. We wish to use a similar construction. However, because the first order terms have $\partial_y$ derivatives instead, the null direction for the geometric optics ansatz must have nontrivial $y$ component for growth to occur. Meanwhile, taking too large of a $y$ component makes the null geodesic exit the support of $B$ faster, meaning that growth can be sustained for a shorter period of time.

We balance these issues by choosing the null vector
\[
L=(L_t,L_x,L_y,L_z)=\left(1,1-\frac{u_2-u_1}{T},-\sqrt{2\frac{u_2-u_1}{T}-\frac{(u_2-u_1)^2}{T^2}},0\right)
\]
for some sufficiently large parameter $T$. Any sufficiently large choice of this parameter will allow the construction to work. We shall show that the solution has grown the desired amount at time $T$.

We also take some $\delta>0$ small (the relevant condition is that $m\delta<\epsilon/2$) and will use frequency $\mu=\exp(\delta\sqrt{T})$. We need such a high frequency in order to be able to bound the error terms.

If we wish to be roughly transported along the null vector $L$, it is natural to take something that is high frequency in the null direction
\[
\bar{L} = (L_t,-L_x,-L_y,-L_z) = \left (1,\frac{u_2-u_1}{T} - 1,\sqrt{2\frac{u_2-u_1}{T}-\frac{(u_2-u_1)^2}{T^2}},0\right).
\]
Using $\zeta$ to denote the spacetime coordinates, we take the ansatz
\begin{equation}\label{eq:geomoptics}
\eta=\exp(i\mu \bar{L} \cdot\zeta)\sum_{j=0}^M\frac{\varphi_j}{(i\mu)^j}+\Pi
\end{equation}
where $M=M(B_y,B_z,K,m,\epsilon)$ is sufficiently large (and we will set $T_0$ to be sufficiently large after we can fixed $M$). We take initial data $\Pi(0,\cdot)=\varphi_j(0,\cdot)=0$ for $j>0$ and $||\varphi_1(0,\cdot)||_{L^\infty}\sim ||\varphi_1(0,\cdot)||_{H^{2M+m+2}}\sim 1$.

Then, matching powers of $\mu$, we get the following system of equations
\begin{equation}\label{eq:phijODEs}
\begin{aligned}
2\partial_L \varphi_0+B_yL_y\varphi_0+B_zL_z\varphi_0&=0\\
2\partial_L \varphi_j+B_yL_y\varphi_j+B_zL_z\varphi_0&=-\square\varphi_{j-1}+B_y\partial_y\varphi_{j-1}+B_z\partial_z\varphi_{j-1}\\
\square\Pi+B_y\partial_y\Pi+B_z\partial_z\Pi&=\frac{\exp(i\mu \bar L\cdot\zeta)}{(i\mu)^M}\left(\square\varphi_M-B_y\partial_y\varphi_M-B_z\partial_z\varphi_M\right).
\end{aligned}
\end{equation}
where $\partial_L=\partial_t+L_x\partial_x+L_y\partial_y+L_z\partial_z$. 
Note that each $\phi_j$ satisfies a transport equation with forcing terms. We shall prove bounds on these solutions by examining the ODEs along the integral curves of $L$ that arise from integrating the transport equation. These ODEs will be controlled by comparing them with constant coefficient ODEs on sufficiently small intervals. Here we deal with general $k$, but note that the $k=1$ case is simpler, since we can easily explicitly solve the ODE, as opposed to solving approximately and bounding errors.

We take a fixed integral curve of $L$ starting at $t = 0$ and $x = -u_1$. This guarantees that the integral curve initially has $u'$ coordinate equal to $u_1$. We then have that $u'(t) = u_1 + {u_2 - u_1 \over T} t = u_2 {t \over T} + \left (1 - {t \over T} \right ) u_1$. Along this curve, the equations for the $\varphi_j$ become ODEs in the parameter $t$. The ODE for $\varphi_0$ is given by
\[
2 \varphi_0' (t) + B_y (u'(t))  L_y \varphi_0 (t) = 2 \varphi_0' (t) +B_y \left (u_2 {t \over T} + \left (1 - {t \over T} \right ) u_1 \right )  L_y \varphi_0 (t) = 0.
\]
In general, the equation becomes
\[
2 \varphi_j' (t) + B_y \left (u_2 {t \over T} + \left (1 - {t \over T} \right ) u_1 \right ) L_y \varphi_j (t) = F_j (t),
\]
where $F_j (t)$ depends on $\varphi_{j'}$ with $j' \le j - 1$. The following ODE estimates are used to estimate the solutions $\varphi_j$ of these equations. They will be used to show that the leading order term $\varphi_0$ experiences growth and that the other terms $\varphi_j$ with $j \ge 1$ in the expansion are controlled. Using Lemma \ref{lem:exproottmultiplier} to control the error $\Pi$ will then give us the desired result.

\begin{claim}
Let $P:[0,1]\to \R^{k\times k}$ be given by $P(a)=B_y(au_2+(1-a)u_1)$, and let $F$ be an arbitrary continuous function.
For any $\epsilon>0$ sufficiently small and if $R$ is a solution to the ODE
\begin{equation}\label{rescaledODE}
R'= {1 \over 2} P(t/T)(-L_y)R + F(t)
\end{equation}
then for all sufficiently large $T$ and with $0 \le s \le t \le T$, we have that
\begin{equation}\label{bound:ODEgrowth}
\begin{aligned}
|R(t)|\le C(B_y,\epsilon) \exp\left(\sqrt{\frac{(u_2-u_1)T}{2}}\left(\int_{s/T}^{t/T}\lambda(P(\tau))d\tau+\epsilon\right)\right)|R(s)|
\\ + C(B_y,\epsilon) \int_s^t \exp \left (\sqrt{{(u_2 - u_1) T \over 2}} \left(\int_{\hat t / T}^{t / T} \lambda(P(\tau)) d \tau + \epsilon \right)\right ) |F(\hat t)| d \hat t
\end{aligned}
\end{equation}

Also, when $F = 0$, there is initial data $R(0)=R_0$ for which
\begin{equation}\label{constr:ODEgrowth}
|R(T)|\ge \exp\left(\sqrt{\frac{(u_2-u_1)T}{2}}\left(\int_{0}^1\lambda(P(\tau))d\tau-\epsilon\right)\right)|R_0|
\end{equation}
when $T$ is sufficiently large as a function of $\epsilon$ and $B_y$.
\end{claim}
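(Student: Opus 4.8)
The plan is to reduce the ODE \eqref{rescaledODE} to a problem about a slowly-varying linear ODE on $[0,T]$, and to exploit the separation of timescales: the matrix $P(t/T)$ changes on scale $T$ in $t$, while the coefficient $-L_y = \sqrt{2(u_2-u_1)/T - (u_2-u_1)^2/T^2}$ is of size $\sim\sqrt{(u_2-u_1)/T}$, so the natural growth rate of the homogeneous system is of size $\sim\sqrt{T}$. First I would change variables to $a = t/T \in [0,1]$, writing $\tilde R(a) = R(aT)$, which turns the homogeneous part into $\tilde R'(a) = \tfrac{T(-L_y)}{2} P(a)\tilde R(a)$, i.e. a linear ODE with the large parameter $\Lambda := T(-L_y)/2 = \tfrac12\sqrt{(u_2-u_1)T - (u_2-u_1)^2/T} \sim \sqrt{(u_2-u_1)T/2}$ multiplying a continuous, $a$-dependent matrix. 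The content of the claim is then a standard WKB/adiabatic-type statement: the fundamental solution $\Phi(a,b)$ of this system satisfies, for $\Lambda$ large,
\[
\|\Phi(a,b)\| \le C(B_y,\epsilon)\exp\!\left(\Lambda\left(\int_b^a \lambda(P(\tau))\,d\tau + \epsilon\right)\right),
\]
and conversely there is a vector $v$ with $\|\Phi(1,0)v\| \ge \exp(\Lambda(\int_0^1\lambda(P(\tau))d\tau - \epsilon))\|v\|$. Granting this, the inhomogeneous bound \eqref{bound:ODEgrowth} follows immediately from the variation-of-constants formula $R(t) = \Phi_t(t,s)R(s) + \int_s^t \Phi_t(t,\hat t) F(\hat t)\,d\hat t$ (here $\Phi_t$ denotes the fundamental solution written back in the $t$ variable, for which $\|\Phi_t(t,s)\| \le C\exp(\Lambda\int_{s/T}^{t/T}\lambda(P)\,d\tau + \Lambda\epsilon)$), and \eqref{constr:ODEgrowth} is exactly the lower bound applied to a maximally-growing initial vector, with $F=0$.

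The mechanism behind the upper bound is that on a short $a$-subinterval $[a_0,a_0+h]$ the matrix $P(a)$ is nearly constant, $P(a) = P(a_0) + O(h)$, so the propagator there is close to $\exp(\Lambda h P(a_0))$, whose norm is $\le C_{P}\,e^{\Lambda h(\lambda(P(a_0)) + \epsilon')}$ once $\Lambda h$ is large — this is where the generalized-eigenvector structure enters, since $\|\exp(\Lambda h P(a_0))\| \le C(1+\Lambda h)^{k}e^{\Lambda h\lambda(P(a_0))}$ and the polynomial factor is absorbed by the spare $e^{\Lambda h\epsilon'}$ provided $\Lambda h \to\infty$. So I would partition $[0,1]$ into $\sim \Lambda^{1/2}$ equal subintervals of length $h\sim\Lambda^{-1/2}$; then $\Lambda h \sim \Lambda^{1/2}\to\infty$ as $T\to\infty$, the number of intervals times the per-interval $O(h^2\Lambda)$ error from freezing $P$ is $\sim \Lambda^{1/2}\cdot\Lambda^{-1}\cdot\Lambda = \Lambda^{1/2}$... — more carefully, the multiplicative error per interval is $1 + O(\Lambda h^2) = 1 + O(\Lambda^{-1/2})$ in the exponent after taking logs, summed over $\sim\Lambda^{1/2}$ intervals gives an $O(1)$ total error in the exponent, which is $\le \Lambda\epsilon$ for $\Lambda$ large. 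Chaining the per-interval estimates and using that $\sum_j h\,\lambda(P(a_j))$ is a Riemann sum for $\int_0^1\lambda(P(\tau))\,d\tau$ (with Riemann-sum error $O(h) = O(\Lambda^{-1/2})$, again absorbable) yields the stated bound; uniform equicontinuity of $P$ and of $\tau\mapsto\lambda(P(\tau))$ on $[0,1]$ (the latter since $\lambda$ is continuous and $P$ is continuous on a compact set) makes all constants depend only on $B_y$ and $\epsilon$.

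For the lower bound \eqref{constr:ODEgrowth}, on the subinterval where I want growth I pick, at the left endpoint $a_0$ of the partition, a unit generalized eigenvector $v_0$ of $P(a_0)$ associated to an eigenvalue of real part $\lambda(P(a_0))$; applying $\exp(\Lambda h P(a_0))$ gives a vector of norm $\ge c\,e^{\Lambda h\lambda(P(a_0))}$ (the polynomial prefactor is now a lower-order correction, $\ge c(\Lambda h)^{-k}$, again killed by $e^{\Lambda h\epsilon'}$), and the true propagator differs from the frozen one by the same small multiplicative error as above. Propagating this choice forward interval by interval — at each step projecting onto a near-maximal-growth direction and controlling the loss — and taking $v$ at $a=0$ to be the pullback of this chain gives $\|\Phi(1,0)v\| \ge \exp(\Lambda\int_0^1\lambda(P)\,d\tau - \Lambda\epsilon)\|v\|$. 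The main obstacle I anticipate is precisely this bookkeeping in the lower bound: one must show that the vector can be "kept" near the dominant generalized-eigenspace as the eigenspace itself rotates with $a$, and that the non-dominant modes (with real part $\le \lambda(P(u)) - 10\epsilon$, by the reductions made before the claim) do not interfere — here the gap of $10\epsilon$ built into the hypotheses is exactly what is needed, since over the whole interval the dominant mode beats the others by a factor $e^{\Lambda\cdot 10\epsilon/(u_2-u_1)\cdots}$, comfortably more than the $e^{\Lambda\epsilon}$ of slack we are spending elsewhere. The upper bound, by contrast, is essentially automatic from the triangle inequality on operator norms and requires no spectral-gap information, only the generalized-eigenvalue bound $\|e^{\Lambda h P}\|\le C(1+\Lambda h)^k e^{\Lambda h\lambda(P)}$.
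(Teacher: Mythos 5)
Your proposal takes essentially the same approach as the paper: compare the ODE to constant-coefficient ODEs frozen at the left endpoint of short subintervals, chain the per-interval bounds (absorbing the polynomial prefactor $C(1+\Lambda h)^k$ and the freezing error into the $e^{\Lambda\epsilon}$ slack), and, for the lower bound, propagate an approximate maximal generalized eigenvector across subintervals using the $10\epsilon$ spectral gap together with continuity of the eigenprojections. The only substantive difference is bookkeeping --- you take $a$-subintervals of length $h\sim\Lambda^{-1/2}$ so that $\Lambda h\to\infty$, while the paper fixes a large $W$ and works on $t$-intervals of length $W\sqrt{T}$, absorbing the polynomial factor by taking $W$ large rather than $\Lambda h$ large --- plus a small arithmetic slip ($\Lambda h^2 = O(1)$, not $O(\Lambda^{-1/2})$, when $h\sim\Lambda^{-1/2}$) that is harmless since the accumulated exponent error $O(\Lambda^{1/2})$ is still $o(\Lambda)$.
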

We note that
\[
\int_{s / T}^{t / T} \lambda(P(\tau)) d \tau = \int_{s / T}^{t / T} \lambda(B_y(\tau u_2 + (1 - \tau) u_1)) d \tau = {1 \over u_2 - u_1} \int_a^b \lambda(B_y(u)) d u,
\]
where $a = {s \over T} u_2 + \left (1 - {s \over T} \right ) u_1$ and $b = {t \over T} u_2 + \left (1 - {t \over T} \right ) u_1$. Thus, the bounds we are proving are consistent with \eqref{eq:GrowthRate}.

\begin{proof}
We note that, because $-L_y$ is of size roughly $1 / \sqrt{T}$, we see that $R$ will schematically experience exponential growth at a rate of ${1 \over \sqrt{T}}$. It is natural to rescale units in order to make the exponential growth rate comparable to $1$ instead.
Let $W > 0$ be some fixed scale. We shall treat the ODE as a perturbation of a constant coefficient ODE on intervals of length $W \sqrt{T}$ and we shall patch estimates on these intervals together.

In order to prove the upper bound, we note that by linearity it suffices to consider separately the case of $F = 0$ and nonzero initial data and the case of nonzero $F$ and vanishing initial data. We shall prove estimates in both cases and add them together.

We first consider the case of $F = 0$ and nonzero data. By rescaling and translating the domain, solving the ODE \eqref{rescaledODE} over $[s,s+W\sqrt{T}]$ is equivalent to solving the ODE
\begin{align*}
Q(0)&=R(s)\\
Q'(\tau)&= {1 \over 2} \sqrt{T}P(s/T+\tau/\sqrt{T})(-L_y)Q = P_T (\tau) Q
\end{align*}
for $\tau\in [0,W]$ and where $P_T(\tau) = {1 \over 2} \sqrt{T} P(s / T + \tau / \sqrt{T}) (-L_y)$. Now note that, holding $s/T$ constant and taking $u'=(s/T)u_1+(1-s/T)u_2$, we have that
\[
P_T (\tau) = {1 \over 2} \sqrt{T}P(s/T+\tau/\sqrt{T})(-L_y)\to {1 \over 2} P(s / T) \left(\lim_{T\to\infty}- \sqrt{T}L_y\right) = \sqrt{{u_2 - u_1 \over 2}} P(s / T) = \sqrt{{u_2-u_1 \over 2}} B_y(u')
\]
uniformly as $T \rightarrow \infty$.


We now compare the solution $Q$ of this ODE with the constant coefficient ODE
\begin{align*}
    Q_1 (0) &= R(s)
    \\ Q_1' (\tau) &= \sqrt{{{u_2 - u_1} \over 2}} P(s / T)Q_1.
\end{align*}
Denoting by $E(\tau)$ the error $E(\tau) = Q(\tau) - Q_1 (\tau)$, we get that $E$ satisfies the ODE
\begin{align*}
    E(0) &= 0
    \\ E' (\tau) &= P_T (\tau) Q(\tau) - \sqrt{{{u_2 - u_1}  \over 2}} P(s / T) Q_1 (\tau)
    \\ &= \sqrt{{(u_2 - u_1) \over 2}} P(s / T) E(\tau) + \left (P_T (\tau) - \sqrt{{(u_2 - u_1) \over 2}} P(s / T) \right ) Q (\tau).
\end{align*}
We know that
\begin{equation} \label{eq:ConstantCoeffODEBound}
    \begin{aligned}
    |Q_1 (\tau)| &\le C_1 (B_y) (1 + \tau^k) |R(s)| \exp \left (\sqrt{{u_2 - u_1}  \over 2} \lambda (P(s / T)) \tau \right ),
    \end{aligned}
\end{equation}
where $C_1 (B_y)$ is some constant depending on $B_y$. By applying the Duhamel principle, the solution $E$ is given by
\[
E(\tau) = \int_0^\tau \exp \left ( \sqrt{{{u_2 - u_1}  \over 2}} P(s / T) (\tau - t) \right ) \left (P_T (t) - \sqrt{{{u_2 - u_1}  \over 2}} P(s / T) \right ) Q(t) d t.
\]
Thus, we have that
\begin{equation}
    \begin{aligned}
    |E(t')| \le C_1 (B_y) \tau (1 + \tau^k) \exp \left (\sqrt{{u_2 - u_1}  \over 2} \lambda (P(s / T)) \tau \right ) \sup_{0 \le t \le \tau} \left |P_T (t) - \sqrt{{u_2 - u_1 \over 2}} P(s / T) \right | \sup_{0 \le t \le \tau} |Q(t)|
    \end{aligned}
\end{equation}
for all $0 \le t' \le \tau$. Now, we note that $\sup_{0 \le t \le \tau} |Q(t)| \le \sup_{0 \le t \le \tau} |Q_1 (t)| + \sup_{0 \le t \le \tau} |E(t)|$. Thus, taking the supremum in $t'$ between $0$ and $\tau$ gives us that
\begin{equation} \label{eq:ODEErrorBound}
    \begin{aligned}
    \sup_{0 \le t \le \tau} |E(t)| &\le C_1 (B_y) \tau (1 + \tau^k) \exp \left (\sqrt{{u_2 - u_1}  \over 2} \lambda (P(s / T)) \tau \right )
    \\ &\times \sup_{0 \le t \le \tau} \left |P_T (t) - \sqrt{{u_2 - u_1 \over 2}} P(s / T) \right |  \left (\sup_{0 \le t \le \tau} |Q_1 (t)| + \sup_{0 \le t \le \tau} |E(t)| \right ).
    \end{aligned}
\end{equation}
Let $\tilde{\epsilon} > 0$ be arbitrary. We now pick $T$ so large such that
\[
\left |P_T (t) - \sqrt{{(u_2 - u_1) \over 2}} P(s / T) \right | \le \epsilon_1,
\]
where $\epsilon_1 > 0$ is chosen such that $1 - \epsilon_1 C_1 (B_y) W (1+W^k) \exp \left (\sqrt{{u_2 - u_1 \over 2}} \lambda(u') W \right ) > 0$, and such that
\[
{W (1+W^k) \over 1 - \epsilon_1 C_1 (B_y) W (1+W^k)^2 \exp \left (\sqrt{{u_2 - u_1 \over 2}} \lambda(P(s / T)) W \right )} C_1^2 (B_y) \exp \left (\sqrt{{u_2 - u_1 \over 2}} \lambda(P(s / T)) W \right )^2 \epsilon_1 \le \tilde{\epsilon}.
\]
Then, using \eqref{eq:ConstantCoeffODEBound} and \eqref{eq:ODEErrorBound}, this gives us that
\begin{equation} \label{eq:ConstantCoeffComparison}
    \begin{aligned}
    \left |R(s+ \tau \sqrt{T})-\exp\left(\sqrt{\frac{u_2-u_1}{2}}P(s / T) \tau \right)R(s) \right | \le \sup_{0 \le t \le \tau} |E(t)| \le \tilde{\epsilon} |R(s)|
    \end{aligned}
\end{equation}
for all $\tau \le W$.
Now, by choosing $W$ sufficiently large and $\tilde\epsilon$ sufficiently small in terms of $\epsilon$ and $B_y$, we get that
\begin{equation} \label{eq:SubintEst}
    \begin{aligned}
    |R(s+W\sqrt{T})|\le \exp\left(\left(\sqrt{\frac{u_2-u_1}{2}}\lambda(P(s / T))+\epsilon/2\right)W\right)|R(s)|.
    \end{aligned}
\end{equation}
We note that this will, in general, force us to take $T$ even larger. We also note that $T$ and $W$ can be chosen uniformly in $u' \in [u_1,u_2]$ in the above.

We now decompose $[s,t]$ into the intervals $[s,s + W \sqrt{T}], [s + W \sqrt{T},s + 2 W \sqrt{T}], \dots,$ where the final interval may have to have length up to $2 W \sqrt{T}$ instead of $W \sqrt{T}$ because $t - s$ may not be an integer multiple of $W$. Let $N_I$ denote the number of such intervals. We then iteratively apply the above estimate on each interval starting with $[s,s + W \sqrt{T}]$ in order to bound the solution on the whole interval. This gives us the estimate
\[
|R(t)| \le C(B_y,\epsilon)\exp \left (\sqrt{{(u_2 - u_1) \over 2}} W \left ( \sum_{i = 1}^{N_I} \lambda \left (P \left ({s + (i - 1) W \sqrt{T} \over T} \right ) \right ) + {\epsilon \over 2} \right ) \right ) |R(s)|.
\]
where the constant factor comes from using \eqref{eq:ConstantCoeffComparison} and \eqref{eq:ConstantCoeffODEBound} for the extra length of the final interval.

Now, we note that
\[\sqrt{{(u_2 - u_1) \over 2}} W \sum_{i = 1}^I \lambda \left (P \left ({s + (i - 1) W \sqrt{T} \over T} \right ) \right ) = \sqrt{{(u_2 - u_1) T \over 2}} {W \over \sqrt{T}} \sum_{i = 1}^{N_I} \lambda \left ( P \left ({s + (i - 1) W \sqrt{T} \over T} \right ) \right ),
\]
and the sum
\[
{W \over \sqrt{T}} \sum_{i = 1}^{N_I} \lambda \left ( P \left ({s + (i - 1) W \sqrt{T} \over T} \right ) \right )
\]
converges to the integral of $\lambda(P)$ between $s / T$ and $t / T$ uniformly in $s,t$ as $T \rightarrow \infty$. Thus, after possibly picking $T$ larger, we get that
\begin{equation} \label{eq:FinalEst}
    \begin{aligned}
    |R(t)| \le C(B_y,\epsilon)\exp \left (\sqrt{{(u_2 - u_1) T \over 2}} \left ( \int_{s / T}^{t / T} \lambda(P(\tau)) d \tau + \epsilon \right ) \right ) |R(s)|,
    \end{aligned}
\end{equation}
which is the desired upper bound.

We now consider the case of nonzero $F$ and vanishing initial data at $t = s$. Let $L(t,s)$ denote the solution operator sending data at time $s$ to the solution at time $t$. Using the Duhamel principle, we note that
\[
R(t) = \int_s^t L(t,\hat t) F(\hat t) d \hat t.
\]
Using the bound on the solution operator obtained above, this means that
\[
|R(t)| \le C(B_y,\epsilon) \int_s^t \exp \left (\sqrt{{(u_2 - u_1) T \over 2}} \left(\int_{\hat t / T}^{t / T} \lambda(P(\tau)) d \tau + \epsilon \right)\right ) |F(\hat t)| d \hat t,
\]
finishing the proof of~\eqref{bound:ODEgrowth}.

We now proceed to prove the lower bound \eqref{constr:ODEgrowth} when $F = 0$. We shall follow the same strategy as for the upper bound, propagating estimates on intervals by comparing with constant coefficient ODEs. We decompose the interval $[0,T]$ into the intervals $[0,W \sqrt{T}], [W \sqrt{T},2 W \sqrt{T}], \dots$, where the last interval may have length up to $2 W \sqrt{T}$. We shall now propagate an estimate from the left endpoint of each interval to the right endpoint of each interval. The desired result will then follow by iterating over the intervals. Indeed, it suffices to prove the estimate
\[
|R((k + 1) W \sqrt{T})| \ge \exp \left (W \sqrt{{u_2 - u_1 \over 2}} \left (\lambda \left ( P \left ({k W \over \sqrt{T}} \right ) \right ) - {\epsilon \over 2} \right ) \right )|R(k W \sqrt{T})|,
\]
as the resulting exponential bound \eqref{constr:ODEgrowth} will follow in the same way that \eqref{eq:FinalEst} followed from \eqref{eq:SubintEst}. We shall now prove this bound.

For convenience, we will let $\lambda_0=\lambda \left ( P \left ({k W \over \sqrt{T}} \right )\right )$. Let $\Lambda(t)$ denote the vector space consisting of the span of all generalized eigenvectors of $P(t)$ with real part equal to $\lambda(P(t))$, and let $\Lambda^c (t)$ denote the complementary vector space consisting of the span of all of the remaining generalized eigenvectors. Moreover, let $Pr_t$ denote the projection associated with $\Lambda(t)$ with respect to the splitting given by $\Lambda(t)$ and $\Lambda^c (t)$, and let $Pr_t^c$ denote $I - Pr_t$ where $I$ is the identity. We note that $Pr_t$ can be written as a sum of eigenprojections associated to the matrix $P(t)$. We shall take data $R(0)$ having length $1$ and lying in $\Lambda(0)$.

Proceeding by induction on the intervals, we assume that we are given some interval $[k W \sqrt{T},(k + 1) W \sqrt{T}]$ and that we have that
\[
R(k W \sqrt{T}) = A v + \hat{\epsilon} B w,
\]
where $v \in \Lambda(k W \sqrt{T})$ and $w \in \Lambda^c (k W \sqrt{T})$ both have unit length. The numbers $A > 0$ and $B > 0$ are the amplitudes, and the key point is that $R$ essentially lies in the eigenspace of the eigenvalues having largest real part because we assume that $B \le A$ and that $\hat{\epsilon}$ is small. We shall show that this structure propagates with growth in the amplitude $A$, meaning that
\[
R ((k + 1) W \sqrt{T}) = A' v' + \hat{\epsilon} B' w',
\]
where $v' \in \Lambda((k + 1) W \sqrt{T})$ and $w' \in \Lambda^c ((k + 1) W \sqrt{T})$ both have unit length, where
\[
A' \ge A \exp \left (W \sqrt{{u_2 - u_1 \over 2}} \left (\lambda_0 - {\epsilon \over 2} \right ) \right )
\]
is the new amplitude, and where $B' \le A'$.

Let $s = k W \sqrt{T}$. Now, because $R(s) = A v + \hat{\epsilon} B w$, we have that
\begin{equation}
    \begin{aligned}
    \exp \left (W\sqrt{{u_2 - u_1 \over 2}} P(s / T)  \right ) R(s) = \exp \left (W\sqrt{ {u_2 - u_1 \over 2}} P(s / T) \right ) (A v + \hat{\epsilon} B w)
    \\ = A \exp \left (W \sqrt{{u_2 - u_1 \over 2}} P(s / T) \right ) v + \hat{\epsilon} B \exp \left (W\sqrt{{u_2 - u_1 \over 2}} P(s / T)  \right ) w.
    \end{aligned}
\end{equation}
We shall use \eqref{eq:ConstantCoeffComparison} to compare the solution with the solution to the constant coefficient ODE. We shall first analyze the solution of the constant coefficient ODE more closely.

Let $\tilde{v} = A\exp \left (W\sqrt{{(u_2 - u_1) \over 2}} P(s / T) \right ) v$. Now, we can decompose $\tilde{v}$ into vectors $v_1 = Pr_{(k + 1) W \sqrt{T}} (\tilde{v}) \in \Lambda((k + 1) W \sqrt{T})$ and $v_2 = \tilde{v} - v_1$. We now recall that eigenprojection is continuous as a function of the matrix entries (see \cite{Kato95}), meaning that $Pr_t$ is continuous as a function of $t$. By taking $T$ sufficiently large, we can thus make $|Pr_{(k + 1) W \sqrt{T}} (\tilde{v}) - Pr_{k W \sqrt{T}} (\tilde{v})| \le \epsilon' |\tilde{v}|$ for $\epsilon' > 0$ arbitrary.
Because $Pr_{k W \sqrt{T}} (\tilde{v}) = \tilde{v}$, this means that
\[
|v_2| \le \epsilon' |\tilde{v}|
\]
in the above decomposition. Now, we have that
\[
|\tilde{v}| \le |v_1| + |v_2| \le |v_1| + \epsilon' |\tilde{v}|,
\]
meaning that we have that
\[
|\tilde{v}| \le {1 \over 1 - \epsilon} |v_1|.
\]
Thus, we have that
\[
|v_2| \le {\epsilon' \over 1 - \epsilon'} |v_1|.
\]

Because $w \in \Lambda^c (k W \sqrt{T})$, we know that $w$ can be written as a linear combination of generalized eigenvectors whose eigenvalues have real part at most $\lambda_0 - 10 \epsilon$. Thus, for $W$ sufficiently large, we have that
\begin{equation}
    \begin{aligned}
    \left |\exp \left (W\sqrt{{(u_2 - u_1) \over 2}} P(s / T)  \right )  Pr_s^C (R(s))  \right | &= \left |\hat{\epsilon} B \exp \left (W\sqrt{{(u_2 - u_1) \over 2}} P(s / T)  \right ) w \right |
    \\ &\le \hat{\epsilon} B \exp \left (W \sqrt{{u_2 - u_1 \over 2}} (\lambda_0- 5 \epsilon) \right ).
    \end{aligned}
\end{equation}
where we are using the facts that $W$ is sufficiently large and that we can provide uniform lower bounds on angles between elements of the basis for some basis of generalized eigenvectors. 
Let
\[
\tilde{w} = \hat{\epsilon} B \exp \left (W\sqrt{{(u_2 - u_1) \over 2}} P(s / T) \right ) w.
\]
We now set $w_1 = Pr_{(k + 1) W \sqrt{T}} (\tilde{w})$ and $w_2 = \tilde{w} - w_1$. We note that $w_1 \in \Lambda((k + 1) W \sqrt{T})$ and $w_2 \in \Lambda^c ((k + 1) W \sqrt{T})$. We also note that $Pr_{\Lambda(k W \sqrt{T})} (\tilde{w}) = 0$. For $T$ sufficiently large and using the continuity of the eigenprojections as was done for $\tilde{v}$, we have that
\[
|w_1| \le \epsilon' |\tilde{w}|
\]
for $\epsilon' > 0$ arbitrary.

We have now decomposed
\[
\exp \left (W\sqrt{{(u_2 - u_1) \over 2}} P(s / T) \right ) R(s) = \exp \left (W\sqrt{{(u_2 - u_1) \over 2}} P(s / T)  \right ) (A v + \hat{\epsilon} B w) = v_1 + v_2 + w_1 + w_2,
\]
where $v_1, w_1 \in \Lambda((k + 1) W \sqrt{T})$ and $v_2, w_2 \in \Lambda^c ((k + 1) W \sqrt{T})$. Moreover, we recall the estimate \eqref{eq:ConstantCoeffComparison}, which says that
\begin{equation}
    \begin{aligned}
    |E((k + 1) W \sqrt{T})| &= |E(s + W \sqrt{T})|
    \\ &= \left |R(s + W \sqrt{T}) - \exp \left (W\sqrt{{(u_2 - u_1) \over 2}} P(s / T)  \right ) R(s) \right |\\
    &\le \tilde{\epsilon} |R(s)|.
    \end{aligned}
\end{equation}
If we set $E_1 = Pr_{(k + 1) W \sqrt{T}} (E((k + 1) W \sqrt{T}))$ and $E_2 = E((k + 1) W \sqrt{T}) - E_1$, this means that, for $T$ sufficiently large, we can take $\tilde\epsilon$ sufficiently small to get that $|E_1| \le \epsilon' A$ and $|E_2| \le \epsilon' A$. We have now decomposed
\[
R((k + 1) W \sqrt{T}) = v_1 + w_1 + E_1 + v_2 + w_2 + E_2,
\]
where $v_1, w_1, E_1 \in \Lambda((k + 1) W \sqrt{T})$ and $v_2, w_2, E_2 \in \Lambda^c ((k + 1) W \sqrt{T})$. We thus have that
\[
v' = {v_1 + w_1 + E_1 \over |v_1 + w_1 + E_1|},
\]
and that
\[
w' = {v_2 + w_2 + E_2 \over |v_2 + w_2 + E_2|}.
\]
We must now show that $A' = |v_1 + w_1 + E_1| \ge A \exp \left (W\sqrt{{u_2 - u_1 \over 2}}\left (\lambda_0 - {\epsilon \over 2} \right ) \right )$, and that $|v_2 + w_2 + E_2| \le \hat{\epsilon} A'$. These estimates follow immediately from the above considerations. Indeed, we have that $|v_1 + w_1 + E_1| \ge |v_1| - |w_1| - |E_1|$. Then, for $W$ sufficiently large, we have that
\[
|v_1| = \left |Pr_{(k + 1) W \sqrt{T}} \left (\exp \left (W\sqrt{{u_2 - u_1 \over 2}} P(s / T) \right ) A v \right ) \right | \ge C(B_y) (1 - 10 \epsilon') \exp \left (W\sqrt{{u_2 - u_1 \over 2}}\left(\lambda_0-\frac{\epsilon}{4}\right) \right ) A.
\]
Moreover, we have that
\[
|w_1| \le \epsilon' \exp \left (W\sqrt{{u_2 - u_1 \over 2}}  (\lambda_0 - 5 \epsilon) \right ) A.
\]
Combining these facts with \eqref{eq:ConstantCoeffComparison} gives us that
\[
A' = |v_1 + w_1 + E_1| \ge C(B_y) (1 - 10 \epsilon') A \exp \left (W\sqrt{{u_2 - u_1 \over 2}} \left(\lambda_0-\frac{\epsilon}{4}\right)\right ) - 2 \epsilon' A \exp \left (W\sqrt{{u_2 - u_1 \over 2}}  (\lambda_0 - 5 \epsilon) \right ).
\]
Taking $W$ sufficiently large gives us that
\[
A' \ge \exp\left (W\sqrt{u_2 - u_1 \over 2} \left (\lambda_0 - {\epsilon \over 2} \right ) \right ) A,
\]
giving us the desired growth of the amplitude. We note that this also tells us that
\[
|v_1 + w_1 + E_2| \ge {1 \over 2} |v_1|
\]
as long as $\epsilon'$ is sufficiently small.

Similarly, we have that
\[
|v_2| \le {\epsilon' \over 1 - \epsilon'} |v_1|,
\]
and we have that
\[
|w_2| \le \hat{\epsilon} B \exp \left (W\sqrt{u_2 - u_1 \over 2} (\lambda_0- 5 \epsilon) \right ).
\]
Combining this with \eqref{eq:ConstantCoeffComparison} and taking $W$ sufficiently large gives us that
\[
|v_2 + w_2 + E_2| \le {\epsilon' \over 1 - \epsilon'} |v_1| + \frac{\hat{\epsilon}}{4} B \exp \left (W\sqrt{u_2 - u_1 \over 2} \left(\lambda_0- {\epsilon \over 2}\right) \right )  + 2 \epsilon' A.
\]
Now, because $\epsilon'$ can be made arbitrarily small by taking $T$ sufficiently large and by the lower bounds on $|v_1|$, we have that
\[
|v_2 + w_2 + E_2| \le \frac{\hat{\epsilon}}{10} |v_1|.
\]
Thus,  we have that
\[
|v_2 + w_2 + E_2| \le \hat{\epsilon} |v_1 + w_1 + E_1|
\]
for $\epsilon'$ sufficiently small. This means that
\[
R((k + 1) W \sqrt{T}) = A' v' + \hat{\epsilon} B' w'
\]
with $B' \le A'$ when $T$ is sufficiently large, giving us the desired result.
\end{proof}

As a direct consequence of \eqref{constr:ODEgrowth}, we know that $\varphi_0$ will grow appropriately along integral curves of the vector field $L$. We must now control the other terms $\varphi_j$ in the expansion. We have the following estimates on these terms, which are proven using induction along with \eqref{bound:ODEgrowth} on the ODEs along the integral curves of $L$.

\begin{claim}\label{varphigrowth}
For any $j\le M$ and any $b\le 2M+m-2j+6$ and any $\epsilon > 0$, there is some constant $C_{j,b,\epsilon,B_y}$ so that 
\begin{equation}\label{upperboundinducted}
||D^b\varphi_j||_\infty(t)\le A C_{j,b,\epsilon,m,M} \exp \left(\sqrt{\frac{u_2-u_1}{2}T}\left(\int_{0}^{t/T}\lambda(P(\tau))d\tau + \epsilon \right) \right ).
\end{equation}
for all $t\in [0,T]$ with $T$ sufficiently large where $A$ is the size of the $C^{2 M + m - 2 j}+6$ norm of the initial data for $\varphi_0$.
\end{claim}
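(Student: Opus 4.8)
The plan is to prove Claim~\ref{varphigrowth} by a nested induction --- an outer induction on $j$ from $0$ up to $M$, and for each fixed $j$ an inner induction on the derivative count $b$ from $0$ up to $2M+m-2j+6$ --- whose only analytic ingredient is the ODE estimate \eqref{bound:ODEgrowth} from the preceding Claim. Everything else amounts to tracking how the forcing terms in \eqref{eq:phijODEs} move through the induction. Concretely, I fix a point in the (bounded, compactly supported) region where the $\varphi_j$ live, take the integral curve of the constant vector field $L$ through it and trace it back to $t=0$; as in the text the curve meeting $\{t=0\}$ at $x=-u_1$ has $u'(t)=u_2\tfrac{t}{T}+(1-\tfrac{t}{T})u_1$, so that $B_y(u'(t))=P(t/T)$ with $P$ as in the statement of that Claim.

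Since $\partial_L=\partial_t+L_x\partial_x+L_y\partial_y+L_z\partial_z$ has constant coefficients it commutes with every coordinate derivative $D$, so applying $D^b$ to the equation for $\varphi_j$ in \eqref{eq:phijODEs} and restricting to the chosen curve shows that $R:=D^b\varphi_j$ solves an ODE of the form \eqref{rescaledODE}, $R'=\tfrac12 P(t/T)(-L_y)R+F(t)$ (using $L_z=0$), with forcing
\[
F \;=\; -\tfrac12\big[D^b,\,B_yL_y+B_zL_z\big]\varphi_j \;+\; \tfrac12 D^b\big(-\square\varphi_{j-1}+B_y\partial_y\varphi_{j-1}+B_z\partial_z\varphi_{j-1}\big)
\]
evaluated along the curve. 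The commutator term involves only $D^{b'}\varphi_j$ with $b'<b$, multiplied by derivatives of $B_y,B_z$; since $B_y,B_z$ depend only on $u'=t-x$ these coefficient derivatives are uniformly bounded (each $B_y^{(\ell)}$ is bounded with compact support, and $L_y$ is a fixed number of size $\sim T^{-1/2}$), so this term is controlled by the inner-induction hypothesis. The $\square\varphi_{j-1}$ contribution is a constant-coefficient combination of $D^{b+2}\varphi_{j-1}$, and this is exactly why the admissible derivative count drops by two when $j$ increases by one: what is needed for $\varphi_{j-1}$ is the bound with $b+2\le 2M+m-2(j-1)+6$ derivatives, i.e.\ $b\le 2M+m-2j+6$, which is precisely the range claimed. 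Hence the outer-induction hypothesis controls this term as well, and altogether $|F(\hat t)|\le C A\exp\!\big(\sqrt{\tfrac{(u_2-u_1)T}{2}}\big(\int_0^{\hat t/T}\lambda(P(\tau))\,d\tau+\epsilon\big)\big)$ for an appropriate constant.

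Feeding this into \eqref{bound:ODEgrowth} with $s=0$, the homogeneous term contributes $|D^b\varphi_j(0)|$ times an exponential; this vanishes for $j\ge1$ because $\varphi_j(0,\cdot)=0$ there (the base case $j=0,\ b=0$ being \eqref{bound:ODEgrowth} with $F=0$), and for $j=0$ is at most $CA$. For the Duhamel integral the key identity is that the exponents add, $\int_{\hat t/T}^{t/T}\lambda(P)\,d\tau+\int_0^{\hat t/T}\lambda(P)\,d\tau=\int_0^{t/T}\lambda(P)\,d\tau$, so the integrand equals (up to constants) $CA\exp\!\big(\sqrt{\tfrac{(u_2-u_1)T}{2}}\big(\int_0^{t/T}\lambda+2\epsilon\big)\big)$, independent of $\hat t$; integrating over $[0,t]\subset[0,T]$ costs only an extra factor $T$. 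Because the ranges of $j$ and $b$ are finite (depending only on $M$ and $m$), the whole induction uses a bounded number of such steps, each contributing at most a factor $T^{O(1)}$ and an additive $O(\epsilon)$ to the exponent; so I would run every invocation of \eqref{bound:ODEgrowth} with its parameter $\epsilon$ replaced by a small fixed fraction of the target $\epsilon$, and use $T^{O(1)}\le\exp(\epsilon'\sqrt{T})$ for $T$ large, to close the induction with the stated constant and exponent. Since all bounds are uniform over the choice of $L$-curve through a point of the support, taking the supremum over $\Sigma_t$ yields \eqref{upperboundinducted}.

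The only real obstacle is the bookkeeping of the two nested inductions: one must order them as ``all needed derivatives of $\varphi_{j-1}$ first, then $\varphi_j$ by increasing $b$'' so the forcing $F$ is always built from already-controlled quantities, verify that the $-2$ loss in derivatives is exactly matched by $\square\varphi_{j-1}$, and confirm that the finitely many accumulated exponential losses and polynomial-in-$T$ factors can all be absorbed into a single $\epsilon$ once $T$ is large. No new PDE difficulty arises beyond \eqref{bound:ODEgrowth}. With Claim~\ref{varphigrowth} in hand, $\varphi_0$ grows at the rate \eqref{constr:ODEgrowth} while all $\varphi_j$ and their derivatives are dominated by the same exponential, so the right-hand side of the $\Pi$-equation in \eqref{eq:phijODEs} is exponentially small after the $\mu^{-M}$ gain; applying Lemma~\ref{lem:exproottmultiplier} then shows $\Pi$ is negligible, which completes the proof of Theorem~\ref{thm:generalinstab}.
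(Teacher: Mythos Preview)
Your proposal is correct and follows essentially the same approach as the paper: a double induction on $j$ and $b$, differentiating the transport equations \eqref{eq:phijODEs}, splitting the forcing into a commutator term (handled by the inner induction on $b$) and a $\varphi_{j-1}$ term (handled by the outer induction on $j$, with the $-2$ derivative loss coming from $\square\varphi_{j-1}$), and then feeding the resulting forcing bound into \eqref{bound:ODEgrowth} and absorbing the polynomial-in-$T$ factor into the exponential. Your write-up is in fact more explicit than the paper's on several points (the vanishing of initial data for $j\ge1$, the additivity of the exponents in the Duhamel term, and the $\epsilon$-budgeting across the finitely many induction steps), but the underlying argument is the same.
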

\begin{proof}
We prove this by a double induction on $j$ and $b$. Suppose that we obtained $\eqref{upperboundinducted}$ for every pair $(j,b)$ where either $j$ is smaller or $j$ is the same and $b$ is smaller. The base case of $j = b = 0$ follows immediately from using \eqref{bound:ODEgrowth} on the equation for $\varphi_0$.

Now, applying $D^{\beta}$ to the equation \eqref{eq:phijODEs} for $\varphi_j$ for some multiindex $\beta$ with $|\beta|\le b$, we get that
\begin{equation}\label{eq:differentiated}
\partial_{L} D^{\beta}\varphi_j+B_yL_yD^{\beta}\varphi_j=-L_y[D^\beta,B_y]\varphi_j+D^\beta\left(-\square\varphi_{j-1}-B_y\partial_y\varphi_{j-1}-B_z\partial_z\varphi_{j-1}\right)
\end{equation}
where we are defining $\varphi_{j-1}=0$ for convenience of notation. 
Now, by the inductive hypothesis, there is some constant $C_{j,b,\epsilon}$ such that the right-hand side of $\eqref{eq:differentiated}$ is bounded by
\[
C_{j,b,\epsilon}A \exp\left(\sqrt{\frac{u_2-u_1}{2}T}\left(\int_{0}^{t/T}\lambda(P(\tau))d\tau + \epsilon / 10 \right)\right).
\]
Now, using the estimate \eqref{bound:ODEgrowth} and using that $T$ is sufficiently large, we have that
\begin{equation}
    \begin{aligned}
    |D^\beta \phi_j| (t) &\le C_{j,b,\epsilon}A \exp\left(\sqrt{\frac{u_2-u_1}{2}T}\left(\int_{0}^{t/T}\lambda(P(\tau))d\tau + \epsilon / 10 \right)\right)\\
    &\qquad+\hat C_{j,b,\epsilon} \int_0^t \exp \left (\sqrt{{(u_2 - u_1) T \over 2}} \left ( \int_{\hat t/T}^{t / T} \lambda(P(\tau)) d \tau + \epsilon / 10 \right ) \right )
    \\ &\qquad\times \exp\left(\sqrt{\frac{u_2-u_1}{2}T}\left(\int_{0}^{\hat t / T}\lambda(P(\tau))d \tau + \epsilon / 10 \right)\right) d \hat t
    \\ &\le C_{j,b,\epsilon} t \exp \left(\sqrt{\frac{u_2-u_1}{2}T}\left(\int_{0}^{t/T}\lambda(P(\tau))d\tau + \epsilon / 5 \right) \right )
    \\ &\le C_{j,b,\epsilon} \exp \left(\sqrt{\frac{u_2-u_1}{2}T}\left(\int_{0}^{t/T}\lambda(P(\tau))d\tau + \epsilon \right) \right ),
    \end{aligned}
\end{equation}
giving us the desired result.
\end{proof}
We now pick $\epsilon$ very small and $\delta$ such that $\delta \le {\epsilon \over 2 m}$. We use the geometric optics ansatz with $\mu = \delta \sqrt{T}$. Let $\chi$ be a smooth bump function with $|\chi| \le 1$, with $\chi = 1$ in a neighborhood of some point $p$ where $-x_1 = u_1$, and with $\chi$ compactly supported in the unit ball. Such a function will have $H^m$ and $C^{2 M + m + 6}$ norm comparable to some constant depending on $B_y$, $m$, and $M$ (this dependence means that we may have to pick $T$ even larger in order to appropriately apply the above results). We now recall $R_0$ from \eqref{constr:ODEgrowth} and take data equal to
\[
e^{-m \delta \sqrt{T}} ||\chi||_{H^m}^{-1} ||\chi||_{C^{2 M + m + 6}}^{-1} \chi R_0
\]
for $\varphi_0$. The initial data for $\varphi_0$ then has $C^{2 M + m + 6}$ norm at most $1$. This guarantees that appropriate traces at $t = 0$ of the solution 
\[
\sum_{j = 0}^M {\phi_j \over (i \mu)^j} + \Pi
\]
constructed using the geometric optics ansatz will have $H^m$ norm at most $1$. This corresponds to the solution
we construct having initial data with $H^m$ norm at most $1$. We note that the data for $\varphi_0$ has amplitude comparable to $\exp(-m \delta \sqrt{T})$ near $p$. Now, we pick some  $\bar\epsilon$ sufficiently small compared to $\delta$ and we have shown that
\[
|\varphi_0| (T) \ge C_{B_y,\delta} \exp (-m \delta \sqrt{T}) \exp \left (\sqrt{(u_2 - u_1) T \over 2} \left ( \int_0^{t / T} \lambda(P(\tau)) d \tau - \bar\epsilon \right ) \right ),
\]
where we have followed an integral curve of $L$ starting at the point $p$. Moreover, by \eqref{upperboundinducted}, we have that
\begin{equation}
    \begin{aligned}
    {|\varphi_j| (T) \over |\mu|^j} &\le C_{j,B_y,\delta} \exp (-(m + j) \delta \sqrt{T}) \exp \left (\sqrt{(u_2 - u_1) T \over 2} \left ( \int_0^{t / T} \lambda(P(\tau)) d \tau + \bar\epsilon \right ) \right )
    \\ &\le C_{j,B_y,\delta} \exp \left (-{\delta \sqrt{T} \over 10} \right ) |\varphi_0| (T),
    \end{aligned}
\end{equation}
where we have followed the integral curve of $L$ starting at $p$. Thus, for $T$ sufficiently large, we get that the dominant term among the $\phi_j$ in the geometric optics ansatz is $\phi_0$.

We now simply estimate the remainder $\Pi$. By commuting the equation for $\Pi$ with unit derivatives and using Lemma~\ref{lem:exproottmultiplier} with $K$ appropriately chosen, we get that
\[
\Vert \partial \Pi \Vert_{H^1 (\Sigma_t)} \le C_{M,K} {1 \over |\mu|^M} \exp (10 K \sqrt{t}) \le C \exp (10 K \sqrt{t} - M \delta \sqrt{t}).
\]
Taking $M$ sufficiently large and using the Sobolev embedding theorem will give us that $||\Pi||_{L^\infty (\Sigma_T)} \le C_{M,\delta,K} \exp \left (-{\delta \sqrt{T} \over 10} \right ) |\varphi_0| (T)$. Thus, for $T$ sufficiently large as a function of $M$, $\delta$, $K$, and $\epsilon$, we have that
\[
|\eta| (T) \ge |\varphi_0| (T) - \sum_{j = 1}^M {|\phi_j| (T) \over |\mu|^j} - |\Pi| (T) \ge \exp \left (\sqrt{(u_2 - u_1) T \over 2} \left ( \int_0^{t / T} \lambda(P(\tau)) d \tau - \epsilon \right ) \right ),
\]
as desired.
\end{proof}

\section{Related Directions} \label{sec:RelatedDirections}
In this section, we shall describe problems related to the one studied in this paper. We shall discuss if and how the strategies followed above can be applied in these cases, and some of the additional difficulties that arise.

We first note that the decay rates we get are not sharp. They can be improved by applying, for example, the $r^p$ method of Dafermos and Rodnianski (see \cite{DafRod10}). Using these methods along with modified weighted Sobolev inequalities like those in Section~\ref{sec:KlaiSobInequalities} would allow the decay rate $(1 + t)^{-2 + \delta}$ for good derivatives. This would also allow us to show that the radiation field of the renormalized perturbation grows like $(t + r)^\delta$. One idea which we believe could be used to get even sharper results would be to use fractional angular Laplacians. Indeed, for getting pointwise decay, all that is really necessary is commuting with $1 + \epsilon$ angular derivatives. This is because the sphere is two dimensional, and the embedding from $H^1$ into $L^\infty$ just barely fails. Such an operator is better behaved when hitting the background traveling wave because it has lower weights (from Section~\ref{sec:Geometry}, we know that every derivative gives a power of $\sqrt{t}$, so this should schematically grow like $t^{{1 \over 2} + \epsilon}$). However, commuting with fractional derivatives also introduces other difficulties, such as nonlocality. We also have not used the observation that certain commutation fields (scaling and the boost in the $x$ direction) do not introduce bad weights. Indeed, in the setting of plane symmetric solutions, the $\partial_v$ derivative is better than the angular derivatives in terms of decay. In fact commuting with $r^2 \partial_v$ or with an appropriate modification of the conformal Morawetz vector field $K_0 = (t^2 + r^2) \partial_t + 2 t r \partial_r$ should introduce only $\sqrt{t}$ weights even though they have quadratic weights.\footnote{We note that the standard null form is better from the viewpoint of this analysis as well, as in that null form, the worst derivative $\partial_u$ is always hit with the best derivative $\partial_v$, while the angular derivatives which are better than $\partial_u$ but worse than $\partial_v$ are always squared.} The fact that the boost in the $x$ direction introduces better weights was already used in \cite{AbbresciaWong19}. These observations could also be useful establishing better decay estimates.

It is also natural to try to remove the assumptions of compact support for both the traveling wave and the perturbation. We believe that both of these assumptions in the paper are technical, and we conjecture that global nonlinear stability continues to be true as long as Condition~\ref{cond:unexciting} is satisfied and the traveling wave solution and perturbation each have data which decays sufficiently quickly (for the traveling wave, this means that the profile functions $f(t - x)$ have sufficiently fast decay rates away from $0$, while for the perturbation, this means that the data have sufficiently fast decay rates away from the origin $r = 0$ in $\Sigma_0$). In fact, we note that in the work of Liu-Zhou in \cite{LiuZhou19}, they do not require compact support in the traveling wave $f$, and instead, they only require sufficiently fast decay.

Another natural problem is to try to extend these results to other dimensions. Because proving global stability for the trivial solution of nonlinear wave equations satisfying the null condition is much harder in $2 + 1$ dimensions, we shall first restrict ourselves to $n + 1$ dimensions with $n \ge 4$. We shall discuss a bit about $2 + 1$ dimensions after. We also note that there is a section describing the Lorentzian minimal surface equation in $2 + 1$ dimensions later in this section, but the nonlinearities in the minimal surface equation are better behaved than general nonlinearities satisfying the null condition (the $1 + 1$ dimensional case is discussed there, but this case is special, as there is no dispersive mechanism).

We believe that the analogous stability problem when Condition~\ref{cond:unexciting} is satisfied can be solved in $n + 1$ with $n \ge 4$ using the same strategy as in this paper. There would, however, be required modifications. One helpful fact in this case is that solutions to the wave equation decay faster in higher dimensions (in fact, in dimensions $n + 1$ with $n \ge 4$, we recall from \cite{Kl85} that global stability holds for the trivial solution of general wave equations with quadratic nonlinearities). Another difference which works to your benefit is that the gain in the volume of the quantity analogous to $S_t$ is larger. In $3 + 1$ dimensions, the volume goes from $t^2$ to $t$, which is a gain of $t$. In $n + 1$ dimensions, the volume goes from $t^{n - 1}$ to $t^{{n - 1 \over 2}}$, which is a gain of $t^{{n - 1} \over 2}$. The difference which requires more care is that Sobolev embedding is worse in higher dimensions. This means that proving pointwise decay will require commuting with more vector fields (see Section~\ref{sec:KlaiSobInequalities}), giving worse weights when the derivatives hit the traveling wave. From a rough calculation, we believe the gain in volume makes up for this. When Condition~\ref{cond:unexciting} is not satisfied but Condition~\ref{cond:M} is satisfied, the proof we gave can be directly adapted to show linear instability.

In $2 + 1$ dimensions, the problem is harder due to the weaker decay of solutions to the linear wave equations. Indeed, in two dimensions, both quadratic and cubic terms must have special structure in order for existing proofs of global stability to work (see \cite{Katayama17}) because general cubic nonlinearities just fail to result in sufficient decay. The first result for nonlinearities satisfying the null condition in $2 + 1$ dimensions was the work of Godin in \cite{Godin93} in which global existence was established for certain class of nonlinearities satisfying the null condition. Then, in \cite{Alinhac01} and \cite{Alinhac201}, Alinhac was able to prove a general almost global existence result and was able to prove global existence in the absence of quadratic semilinear terms satisfying the null condition. The full problem allowing for semilinear terms remained open a while longer, but has now been resolved in the works of Katayama in \cite{Katayama17} and Zha in \cite{Zha19}.\footnote{We thank Dongbing Zha for making us aware of these results as well as the result \cite{Godin93} by Godin.} In $2 + 1$ dimensions, there is an instability statement analogous to Theorem~\ref{thm:generalinstab} that can be proven in the same way as in this paper. For stability, we believe that the geometric observations from Section~\ref{sec:Geometry} as well as certain aspects of the scheme used in the proof of Theorem~\ref{thm:main} would be useful in studying stability of plane waves in $2 + 1$ dimensions. However, given the added difficulties in $2 + 1$ dimensions, there would have to be substantial modifications (see \cite{Katayama17} and \cite{Zha19}). We believe that this is an interesting problem to pursue.

Next, one may ask what happens when~\eqref{eq:zerothversion} and the traveling wave do not satisfy the conditions of any of the theorems. The most interesting such case is when $m_{ij\ell}$ in the right-hand side of \eqref{eq:zerothversion} is antisymmetric, or the sum of antisymmetric terms and standard null forms (see the discussion at the beginning of Section~\ref{sec:unstable}). In that case, the geometric optics ansatz does not give growth, and we are not aware of a good construction to show instability. It is plausible that such traveling wave solutions are in fact stable, but we do not know how to prove this.

We shall now describe the main parts of adapting the above proof to the Lorentzian minimal surface equation. This problem has already been studied by Abbrescia-Wong in \cite{AbbresciaWong19} and by Liu-Zhou in \cite{LiuZhou19}. To our knowledge, these, along with \cite{AbbresciaChen19} for the wave map equation, have been the only examples of proving stability for traveling wave solutions to nonlinear wave equations in higher dimensions. We note that in $1 + 1$ dimensions, every solution is some kind of traveling wave, and there has been a lot of work on proving the stability of the trivial solution (see \cite{Wong17}, \cite{LuliYangYu18}, and \cite{Zha20}) and on proving general global existence results for certain equations (see \cite{AbbresciaWong219}). We note that Shao and Zha prove global stability of suitable plane wave solutions in $1 + 1$ dimensions for any quasilinear wave equation satisfying the null condition in \cite{ShaZha20}. Finally, there is a long and rich history of studying conservation laws in $1 + 1$ dimensions, see \cite{Daf16}.

The Lorentzian minimal surface equation is formally a stationary point of the Lagrangian
\[
\iint_{\R^{n+1}} \sqrt{1+\partial_\alpha\varphi\partial^\alpha\varphi}
\]
The name comes from the fact that when the metric is Euclidean, rather than the Minkowski metric, minimal surfaces are stationary points.

The minimal surface equation can be written as
\[
\Box \varphi = -{m(d \varphi,d (m(d \varphi,d \varphi))) \over 2(1-m(d \varphi,d \varphi))}
\]
where $m$ is the standard null form (see \cite{Lind04}). Because only the standard null form appears, the equation is consistent with satisfying a quasilinear version of Condition~\ref{cond:unexciting}. In \cite{AbbresciaWong19}, Abbrescia-Wong showed global stability of traveling wave solutions to this equation in $n + 1$ dimensions with $n \ge 3$. Meanwhile, Liu-Zhou showed in \cite{LiuZhou19} the global stability of traveling wave solutions in $n + 1$ dimensions with $n \ge 2$. We shall restrict ourselves to $2 + 1$ dimensions in the following discussion.

We will use the coordinate system $(v',u',y)$. We shall now derive the equations for a perturbation of the minimal surface equation. After deriving these equations, we shall describe how the above scheme can be modified to deal with this problem. 

In these coordinates, the Lorentzian minimal surface equation takes the form 
\bal
&(4\partial_{u'}\partial_{v'}+\partial_y\partial_y)\varphi(1+\partial_y\varphi\partial_y\varphi+4\partial_{u'}\varphi\partial_{v'}\varphi)\\
&-\bigg(4\partial_{v'}\varphi\partial_y\varphi\partial_{u'}\partial_y\varphi+4\partial_{u'}\varphi\partial_y\varphi\partial_{v'}\partial_y\varphi+4(\partial_{v'}\varphi)^2\partial_{u'}^2\varphi+4(\partial_{u'}\varphi)^2\partial_{v'}^2\varphi+8\partial_{u'}\varphi\partial_{v'}\varphi\partial_{u'}\partial_{v'}\varphi+\partial_y\varphi\partial_y\varphi\partial_y\partial_y\varphi\bigg).
\eal
For any profile $f$, the function $f(u')$ is a solution to the above equation.

We shall begin by describing how a good gauge can be chosen (see \cite{AbbresciaWong19} where this gauge is described). We take $\varphi=f(u')+\eta$ and get that the linear part has an extra term of the form $-4(f')^2\partial_{v'}^2\eta$, which one wouldn't expect to decay. However, we can get rid of this term by choosing the appropriate gauge, namely setting $\overline{v}=v'+g(u')$ for $g'=(f')^2$ (this corresponds to turning each $\partial_{v'}$ into $\partial_{\overline v}$ and each $\partial_{u'}$ into $\partial_{u'}+(f')^2\partial_{\overline v}$), we get the linear part to be the wave equation. In fact, we get the usual Lorentzian minimal surface equation above plus the terms
\begin{equation} \label{eq:MinSurfEq}
    \begin{aligned}
    &+4f'\partial_{v'}\eta\partial_y^2\eta+8f'\partial_{v'}\eta\partial_{u'}\partial_{v'}\eta-8f'\partial_{u'}\eta\partial_{v'}^2\eta-4f''(\partial_{v'}\eta)^2-4f'\partial_y\eta\partial_{v'}\partial_y\eta\\
    &+4(f')^2(\partial_y\eta)^2\partial_{v'}^2\eta+4(f')^2(\partial_{v'}\eta)^2\partial_y^2\eta-8(f')^2\partial_{v'}\eta\partial_y\eta\partial_{v'}\partial_y\eta-8f'f''(\partial_{v'}\eta)^3.
    \end{aligned}
\end{equation}
The equation is now written in a way that the background traveling wave only influences the perturbation in nonlinear terms. We believe that the scheme used in this paper can now be adapted to understand this problem. To begin with, Section~\ref{sec:Geometry} can clearly be adapted to this problem. The Klainerman-Sobolev Inequalities in Section~\ref{sec:KlaiSobInequalities} can also be used. However, the scheme used in Sections \ref{sec:BootstrapAssumptions} and \ref{sec:ClosingEnergy} must be adapted to deal with the additional difficulties that the equations are quasilinear and are now in $2 + 1$ dimensions instead of $3 + 1$ dimensions. Because we are proving stability and the main stability mechanism is decay, the fact that the equations are now quasilinear perturbations of the flat wave equation are not expected to introduce serious new difficulties. The fact that the equations are in $2 + 1$ dimensions requires substantial modifications for several reasons. To begin with, solutions of the wave equation decay more slowly in two dimensions. Quadratic semilinear terms, even ones satisfying the null condition, are difficult to deal with. However, we note that there is additional structure in the Lorentzian minimal surface equation. The nonlinearities are schematically of the form $\partial \phi \partial \partial m(d \phi,d \phi)$ where $m$ is some null form, meaning that the nonlinearities should decay even faster than most nonlinearities satisfying the null condition. However, the linearization around the background traveling wave does introduce nonlinear terms which are only quadratic. Recalling Section~\ref{sec:Geometry}, we see that $\partial_y = \overline{\partial} + O(1 / \sqrt{t})$ in $S_t$ and $\partial_{v'} = \overline{\partial} + O(1 / t)$ in $S_t$. Thus, looking at the terms in \eqref{eq:MinSurfEq}, we see that they are better in terms of decay than most quadratic terms satisfying the null condition because they have either two $\partial_{v'}$ derivatives or one $\partial_{v'}$ derivative and two $\partial_y$ derivatives. This structure would have to be used.

In \cite{LiuZhou19}, Liu-Zhou are able to treat more general plane wave solutions than the ones considered here. They are able to consider solutions which, for the $2 + 1$ dimensional Lorentzian minimal surface equation, are schematically of the form $(a + b y) f(t - x)$. The transformation in Section~\ref{sec:transformation} does not seem to easily generalize when perturbing such solutions, so the strategy we have followed cannot be directly applied. In \cite{LiuZhou19}, they are still able to perform renormalized energy estimates in this setting. We believe that it would be interesting to study the stability of such solutions to systems of semilinear wave equations as well, although we note that there are equations which only admit solutions of the form $f(t - x)$ and not $y f(t - x)$ as solutions (for example, the standard case of $\Box \phi = m(d \phi,d \phi)$ with $m$ the standard null form only admits solutions of the form $f(t - x)$ and not $y f(t - x)$ except in the trivial case where $f = 0$). On way to proceed would be to either use a renormalization scheme like that in \cite{LiuZhou19} or finding a transformation analogous to the one in Section~\ref{sec:transformation}. Should this be possible, we believe that the observations in Section~\ref{sec:Geometry} and the use of low regularity Klainerman-Sobolev Inequalities in the scheme carried out in Section~\ref{sec:KlaiSobInequalities} could still be useful.

It is also natural to ask what happens to systems of equations satisfying the weak null condition. The weak null condition was originally introduced by Lindblad and Rodnianski in \cite{LindRod03} in the context of studying the stability of Minkowski space for the Einstein Equations in wave coordinates. We note that it is still an open problem to prove global stability for the trivial solution of general semilinear systems satisfying the weak null condition. However, assuming an additional structural condition on the system which he called the \emph{hierarchical weak null condition}, Keir was able to establish a very general result on the global stability of trivial solutions of nonlinear wave equations on nontrivial backgrounds in \cite{Keir18}. As the general problem is still open, we shall restrict ourselves to equations satisfying the hierarchical weak null condition.

Because our discussion on systems of equations satisfying the hierarchical weak null condition will only be schematic, we shall not give the precise definition and shall only introduce an example below. A very thorough description of both the weak null condition and the heirarchical weak null condition is given in \cite{Keir18}.

An example of a system satisfying the hierarchical null condition which admits traveling wave solutions is
\begin{equation}
    \begin{aligned}
    \Box \phi_1 &= m(d \phi_2,d \phi_2)
    \\ \Box \phi_2 &= (\partial_t \phi_1)^2,
    \end{aligned}
\end{equation}
as we can take $\phi_1 = 0$ and $\phi_2 = f(t - x)$. Such equations could potentially have even worse instabilities. Indeed, if we consider instead the system
\begin{equation}
    \begin{aligned}
    \Box \phi_1 &= m(d \phi_2,d \phi_2)
    \\ \Box \phi_2 &= (\partial_t \phi_2 - \partial_x \phi_2) (\partial_t \phi_1 - \partial_x \phi_1)
    \end{aligned}
\end{equation}
and we linearize around the solution $\phi_1 = 0$ and $\phi_2 = f(t - x)$, then we can see that the linearization will contain terms like those in \eqref{eq:ExpGrowth}. This means that solutions of the perturbation should be able to experience exponential growth, as can be seen by using geometric optics. In the case where Condition $1$ is met and also no terms like those in \eqref{eq:ExpGrowth} appear in the linearization, we conjecture that there is global stability in the cases where global stability is known to be true for the trivial solution (see \cite{Keir18}). This condition can be described as saying that the background traveling wave only excite standard null forms and not the antisymmetric null forms or the other quadratic terms. Proving this is more delicate and would require other ideas. For example, commuting with two weighted commutation fields now seems to be too much, and one possibility to improve already comes from commuting with fractional angular derivatives as was described earlier in this Section.

We finally mention the behavior of systems of semilinear wave equations admitting multiple traveling wave solutions which move in different directions. If we consider the system of equations
\begin{equation}
    \begin{aligned}
    \Box \phi_1 &= m(d \phi_2,d \phi_2)
    \\ \Box \phi_2 &= m(d \phi_1,d \phi_2) + m(d \phi_3,d \phi_2)
    \\ \Box \phi_3 &= m(d \phi_2,d \phi_2),
    \end{aligned}
\end{equation}
we note that $\phi_2 = 0$, $\phi_1 = f(t - x)$, and $\phi_3 = f(t - (x + y) / \sqrt{2})$ is a solution to this system of equations. Thus, this system admits vector valued solutions where different components of the vector are traveling waves moving in different directions. In fact, we note that we find another solution in $\phi_2 = 0$, $\phi_1 = f(t - x) + f(t - (x + y) / \sqrt{2})$, and $\phi_3 = 0$. In this example, the vector valued solution is such that the second component has traveling waves moving in different directions. We thus see that solutions with traveling waves moving in different directions can exist. As a followup to the main results in Theorem~\ref{thm:main} and Theorem~\ref{thm:generalinstab}, it is natural to study the stability and instability of these solutions.

The global stability of such solutions which also satisfy Condition~\ref{cond:unexciting} is actually a corollary of Theorem~\ref{thm:main}. The smallness $\epsilon$ of the data will now have to depend on the background traveling wave along with the angles between the directions that each traveling wave is propagated along.

To make the setting more precise, we assume that we are given a semilinear system such as the one in \eqref{eq:firstversion}, and we assume that we are given a collection of unit vectors $\omega_i^j \in S^2$ where $i \in \{ 1, \dots, k \}$. For each fixed $i$, $j$ is allowed to range from $1$ to some number $N_i$. The unit vectors $\omega_i^j$ represent the $N_i$ different directions that the $i$th component of the background solution will propagate. If the $i$th component of the background solution is 0, we take $N_i = 0$ and the collection of numbers $\omega_i^j$ for that $i$ is empty.
Then, we assume that we are given a vector valued function $f$ where the $i$th component $f^i$ is a sum of functions $f^{i,j}$, each a function of $t - \omega_i^j \cdot x$ supported where $|t - \omega_i^j \cdot x| \le 1$. We assume that $f$ is a solution to the equation \eqref{eq:firstversion}. We then have the following corollary.

\begin{corollary}
As long as Condition~\ref{cond:unexciting} is satisfied in an appropriate sense, the solution $f$ is globally nonlinearly stable under sufficiently small and smooth perturbations supported in the unit ball.
\end{corollary}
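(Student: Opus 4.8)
The plan is to treat the corollary as a spatially localized version of Theorem~\ref{thm:main}, exploiting the fact that traveling waves propagating in different directions separate at late times. After grouping the $f^{i,j}$ by direction we may assume that the background is $f = \sum_{p=1}^{P} F^{(p)}(t - \omega^{(p)}\cdot x)$, one $\R^N$-valued profile per distinct direction $\omega^{(p)}\in S^2$, each supported in the slab $\{|t - \omega^{(p)}\cdot x|\le 1\}$. Set $\theta_0 = \min_{p\ne q}|\omega^{(p)} - \omega^{(q)}| > 0$ and choose $T_0 = C_0\theta_0^{-2}$ for a suitable absolute constant $C_0$. On the compact slab $\{0\le t\le T_0\}$, the bounds on $\psi = \phi - f$ and on all the weighted-commutator quantities appearing in $E_1,E_2$ follow from the standard local theory for semilinear wave equations in $H^3$, with a constant depending on $T_0$, hence on $\theta_0$; this produces small Cauchy data at $t = T_0$ from which we start the global argument. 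For $t \ge T_0$ we run the scheme of Sections~\ref{sec:transformation}--\ref{sec:ClosingEnergy}, and the only thing to verify is that the geometry of the interaction between $f$ and the transformed perturbation is no worse than in the single-wave case.

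The geometric input is as follows. By finite speed of propagation $\psi$, and hence $\gamma$, is supported in $\{r\le t+1\}$, so its interaction with $F^{(p)}$ lives in $S_t^{(p)} := \Sigma_t\cap\{|t-\omega^{(p)}\cdot x|\le 1\}\cap\{u\ge -1\}$. Exactly as in Lemma~\ref{lem:VolumeEst}, after the rotation taking $\omega^{(p)}$ to $e_x$, the set $S_t^{(p)}$ lies in a cap of the sphere $\{r\approx t\}$ centered at $t\omega^{(p)}$ of angular radius $\lesssim t^{-1/2}$. For $p\ne q$ the cap centers are separated by $\approx t|\omega^{(p)}-\omega^{(q)}|\ge t\theta_0$, which exceeds the cap radius once $t\ge T_0$; hence the sets $\{S_t^{(p)}\}_{p=1}^{P}$ are pairwise disjoint for all $t\ge T_0$. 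We then apply the transformation of Section~\ref{sec:transformation} to each direction: we set $\gamma = A\psi$ with $A = \prod_{p} A^{(p)}(t-\omega^{(p)}\cdot x)$, where each $A^{(p)}$ solves the matrix ODE of Section~\ref{sec:transformation} in the frame rotated so that $\omega^{(p)} = e_x$ (this uses that Condition~\ref{cond:unexciting} holds for each direction after the corresponding rotation). Because the slabs $\{|t-\omega^{(p)}\cdot x|\le 1\}$, restricted to $\mathrm{supp}\,\psi\subset\{r\le t+1\}$ with $t\ge T_0$, are pairwise disjoint, on $\mathrm{supp}\,\gamma$ the matrix $A$ coincides with a single-direction transformation and the cross terms produced by $\Box A$ (which are supported where two slabs meet) vanish there; one is left with $\Box\gamma = A\,m(d(A^{-1}\gamma),d(A^{-1}\gamma))$, exactly as in \eqref{eq:TransformedEq}, except that $A$ is non-constant on the disjoint union $\mathcal S_t := \bigcup_{p} S_t^{(p)}$ rather than on a single $S_t$.

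With this in hand, Sections~\ref{sec:Geometry}--\ref{sec:ClosingEnergy} apply essentially verbatim with $S_t$ replaced by $\mathcal S_t$. The volume and measure bounds of Section~\ref{sec:Geometry} hold for $\mathcal S_t$ with the constant multiplied by $P$: $\mu(\mathcal S_t)\lesssim t$, $\sigma(S(r)\cap\mathcal S_t)\lesssim t^{-1}$, and the mixed-norm bounds on $\chi_{\mathcal S_t}$ of Lemma~\ref{lem:VolumeEst}. The weight bound Lemma~\ref{lem:roottfactors} is applied on each connected piece $S_t^{(p)}$ after rotating $\omega^{(p)}$ to $e_x$: the set $Z$ of commutation fields is invariant under spatial rotations, so $\Gamma$-strings are unaffected and one still gains $(1+t)^{k/2}$ per weighted field hitting $A$. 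The commutator and Klainerman--Sobolev results of Sections~\ref{sec:KlaiSobInequalities}--\ref{sec:Commutators} are stated rotation-covariantly and are untouched. In the nonlinear error analysis of Section~\ref{sec:ClosingEnergy} every occurrence of $\chi_{S_t}$ becomes $\chi_{\mathcal S_t}$; crucially, products of derivatives of $A$ coming from two different directions are supported in $S_t^{(p)}\cap S_t^{(q)} = \varnothing$, so no genuinely new ``inter-wave'' error terms arise. Propagating the bootstrap assumptions of Section~\ref{BtstrpA} then gives $E_1(T),E_2(T)\lesssim\epsilon$ and the stated decay, with $\epsilon$ now allowed to depend on $f$ and on $\theta_0$, i.e.\ on the angles between the propagation directions, as claimed.

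The step I expect to require the most care is not any single estimate but the verification that the localization is clean: namely, that the single-wave assumption entered the proof of Theorem~\ref{thm:main} only through the set $S_t$ and the plane-wave-adapted frame of Lemma~\ref{lem:frameconversion}, both of which are local to an individual cap $S_t^{(p)}$, and that the multiplier $A$ together with the identity $\Box(A\psi)=\cdots$ behaves well even though the slabs $\{|t-\omega^{(p)}\cdot x|\le 1\}$ overlap away from the perturbation (handled by the finite-speed support restriction). The short-time slab $[0,T_0]$ and the passage to late times are routine, and the geometric disjointness claim, while new, is elementary.
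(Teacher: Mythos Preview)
Your proposal is correct. It shares with the paper the two essential steps---running local theory up to a separation time $T_0$ depending on the minimum angle $\theta_0$, and observing that the interaction caps $S_t^{(p)}\subset\{r\le t+1\}$ become pairwise disjoint for $t\ge T_0$---but organizes the late-time argument differently. The paper argues \emph{piece by piece}: in each slab $\{|t-\omega^{(p)}\cdot x|\le 1\}$ the perturbation equation is, after rotation, literally the single-wave problem, so one reruns Theorem~\ref{thm:main} slab by slab from $t=T_0$; this leaves inherited data on a Lipschitz null hypersurface $\mathcal C$ (a union of planes $t-\omega^{(p)}\cdot x=1$ and the cone $t=r-1$) bounding the region where $f\equiv0$, and the exterior is then the pure null-condition problem, patched across $\mathcal C$ using that each $\gamma^{(p)}$ equals a constant matrix times $\psi$ on the outgoing face. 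You instead introduce a single global gauge $\gamma=A\psi$ with $A=\prod_p A^{(p)}$ and rerun the full bootstrap of Sections~\ref{sec:BootstrapAssumptions}--\ref{sec:ClosingEnergy} once, with $\chi_{S_t}$ replaced by $\chi_{\mathcal S_t}$. The product is harmless because on $\operatorname{supp}\psi$ for $t\ge T_0$ at most one factor $A^{(p)}$ is nonconstant and the others equal the identity (one is on the $t-\omega^{(q)}\cdot x>1$ side of every other slab, where $A^{(q)}(10)=\mathrm{Id}$); the same localization kills the cross terms in $\Box A$ and in $\Gamma^\alpha A$, so Lemma~\ref{lem:roottfactors} and the estimate \eqref{eq:NullFormEstimate} go through per cap after rotating $\omega^{(p)}$ to $e_x$ (the set $Z$ being rotation-invariant). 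Your route buys a cleaner argument with no characteristic-data patching across $\mathcal C$; the paper's route is slightly more modular in that each slab is handled by a direct appeal to the single-wave analysis rather than by re-verifying the transformed equation in a multi-slab setting. Both are valid realizations of the same geometric idea.
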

We note that the required smallness will depend on the $f^i$, the $\omega^i$, and the system of equations.

We shall first describe the appropriate version of Condition~\ref{cond:unexciting}. Because we now have traveling waves moving in several different directions given by $\omega_i^j$, we must assume that the null forms $m_{i j l}$ vanish appropriately for every $\omega_a^b$. More precisely, the appropriate version of Condition~\ref{cond:unexciting} is that $m_{i j l} (d t - d \overline{x}_a^b,d \overline{y}_a^b) = m_{i j l} (d t - d \overline{x}_a^b,d \overline{z}_a^b) = 0$ for every $a$ and $b$ where $(t,\overline{x}_a^b,\overline{y}_a^b,\overline{z}_a^b)$ is a flat coordinate system coming from a spatial rotation sending the $x$ axis to $\omega_a^b$.

We shall now sketch how this result follows from Theorem~\ref{thm:main}. For every $i$, we denote by $S_t^{i,j}$ the intersection of the set $u \ge -1$ and $|t - \omega_i^j \cdot x| \le 1$. This is the intersection of the support of the perturbation and the support of a traveling wave which makes the the $i$th component of the solution and moves in the direction $\omega_i^j$. Traveling waves moving in the direction $\omega_i^j$ can only interact with the perturbation in the set $S_t^{i,j}$. We define
\[
\tilde S_t=\bigcup_{\omega_{i_1}^{j_1} \ne \omega_{i_2}^{j_2}}S_t^{i_1,j_1}\cap S_t^{i_2,j_2},
\]
that is the set where we can simultaneously interact with several traveling waves moving in different directions. The observation which allows us to reduce this problem to the one we have already studied is that there is some $T$ sufficiently large depending on $\{\omega_i^j\}$ such that $\tilde S_t$ is empty for $t\ge T$. This is simply the fact that traveling waves moving in different directions eventually separate from each other in the support of the perturbation. By picking $\epsilon$ sufficiently small in terms of $T$, $f$, and the system of equations, we can construct a solution between $t = 0$ and $t = T$ using local well-posedness results, which do not require decay.

From here, the problem can be solved piece by piece, as we have reduced ourselves to a regime where the perturbation can only be influenced by traveling waves moving in a single direction at every point. Indeed, when $t \ge T$ and $|t - \omega_i^j \cdot x| \le 1$, the equations for the perturbation look exactly the same as the equations for the perturbation in \eqref{eq:PerturbationEquation} after a rotation sending $\omega_i^j$ to the unit vector pointing along the positive $x$ axis.
This is because the plane waves traveling in the $\omega_{i'}^{j'}$ direction with $\omega_{i'}^{j'} \ne \omega_i^j$ are $0$ in this region. Thus, we can solve in the region between $t - \omega_i^j \cdot x = -1$ and $t - \omega_i^j \cdot x = 1$ taking the data inherited at $t = T$ in the same way as in Theorem~\ref{thm:main}.
After this is done for each $\omega_i^j$, we note that we now have inherited data on a Lipschitz surface $\mathcal{C}$ which is a union of null planes $t - \omega_i^j \cdot x = 1$ and the null cone $t = r - 1$. In this region, the equations for the perturbation become
\[
\Box \eta_i = \sum m_{i j l} (\nabla \eta_j,\nabla \eta_l)
\]
because $f = 0$ in this region. The problem can be solved in this remaining region using the bounds inherited from the previous step in the same way as Theorem~\ref{thm:main} is established using the fact that the renormalized perturbations $\gamma^{i,j}$ coming from solving each problem in $|t - \omega_i^j \cdot x| \le 1$ are equal to some constant times the original perturbation on $t - \omega_i^j \cdot x = 1$. This means that we have control over the appropriate quantities on $\mathcal{C}$ to solve the equation in the remaining region.

When Condition~\ref{cond:unexciting} is not satisfied but Condition~\ref{cond:M} is satisfied, we are in a setting which is analogous to Theorem~\ref{thm:generalinstab}. For $t$ sufficiently large, the traveling waves separate, and we can repeat the proof of Theorem~\ref{thm:generalinstab} in order to prove linear instability with respect to perturbations arising from data at these later times. We conjecture that the solutions are still linearly unstable with respect to perturbations arising at $t = 0$.

\bibliographystyle{abbrv}
\bibliography{sources}

\begin{thebibliography}{10}

\bibitem{AbbresciaWong19}
L.~Abbrescia and W.~W.~Y. Wong.
\newblock Global nearly-plane-symmetric solutions to the membrane equation.
\newblock 2019.
\newblock preprint, https://arxiv.org/abs/1903.03553v1.

\bibitem{AbbresciaChen19}
L.~E. Abbrescia and Y.~Chen.
\newblock Global stability of some totally geodesic wave maps.
\newblock 2019.
\newblock preprint, https://arxiv.org/abs/1907.07226v1.

\bibitem{AbbresciaWong219}
L.~E. Abbrescia and W.~W.~Y. Wong.
\newblock Geometric analysis of 1+1 dimensional quasilinear wave equations,
  2019.

\bibitem{AdamsFournier03}
R.~A. Adams and J.~J.~F. Fournier.
\newblock {\em Sobolev spaces}, volume 140 of {\em Pure and Applied Mathematics
  (Amsterdam)}.
\newblock Elsevier/Academic Press, Amsterdam, second edition, 2003.

\bibitem{Alinhac01}
S.~Alinhac.
\newblock The null condition for quasilinear wave equations in two space
  dimensions {I}.
\newblock {\em Invent. Math.}, 145(3):597--618, 2001.

\bibitem{Alinhac201}
S.~Alinhac.
\newblock The null condition for quasilinear wave equations in two space
  dimensions. {II}.
\newblock {\em Amer. J. Math.}, 123(6):1071--1101, 2001.

\bibitem{ShaZha20}
L.~D. Cha and A.~Shao.
\newblock Global stability of traveling waves for $(1+1)$-dimensional systems
  of quasilinear wave equations, 2020.

\bibitem{Chr86}
D.~Christodoulou.
\newblock Global solutions of nonlinear hyperbolic equations for small initial
  data.
\newblock {\em Comm. Pure Appl. Math.}, 39(2):267--282, 1986.

\bibitem{ChrKl93}
D.~Christodoulou and S.~Klainerman.
\newblock {\em {The global nonlinear stability of the {M}inkowski space}},
  volume~41 of {\em {Princeton Mathematical Series}}.
\newblock Princeton University Press, Princeton, NJ, 1993.

\bibitem{Daf16}
C.~M. Dafermos.
\newblock {\em Hyperbolic conservation laws in continuum physics}, volume 325
  of {\em Grundlehren der Mathematischen Wissenschaften [Fundamental Principles
  of Mathematical Sciences]}.
\newblock Springer-Verlag, Berlin, fourth edition, 2016.

\bibitem{DHR19}
M.~Dafermos, G.~Holzegel, and I.~Rodnianski.
\newblock The linear stability of the {S}chwarzschild solution to gravitational
  perturbations.
\newblock {\em Acta Math.}, 222(1):1--214, 2019.

\bibitem{DafRod10}
M.~Dafermos and I.~Rodnianski.
\newblock A new physical-space approach to decay for the wave equation with
  applications to black hole spacetimes.
\newblock In {\em X{VI}th {I}nternational {C}ongress on {M}athematical
  {P}hysics}, pages 421--432. World Sci. Publ., Hackensack, NJ, 2010.

\bibitem{DRSR16}
M.~Dafermos, I.~Rodnianski, and Y.~Shlapentokh-Rothman.
\newblock Decay for solutions of the wave equation on {K}err exterior
  spacetimes {III}: {T}he full subextremal case {$|a|<M$}.
\newblock {\em Ann. of Math. (2)}, 183(3):787--913, 2016.

\bibitem{Godin93}
P.~Godin.
\newblock Lifespan of solutions of semilinear wave equations in two space
  dimensions.
\newblock {\em Comm. Partial Differential Equations}, 18(5-6):895--916, 1993.

\bibitem{Hormander97}
L.~H\"{o}rmander.
\newblock {\em Lectures on nonlinear hyperbolic differential equations},
  volume~26 of {\em Math\'{e}matiques \& Applications (Berlin) [Mathematics \&
  Applications]}.
\newblock Springer-Verlag, Berlin, 1997.

\bibitem{John79}
F.~John.
\newblock Blow-up for quasilinear wave equations in three space dimensions.
\newblock {\em Comm. Pure Appl. Math.}, 34(1):29--51, 1981.

\bibitem{Katayama17}
S.~Katayama.
\newblock {\em Global solutions and the asymptotic behavior for nonlinear wave
  equations with small initial data}, volume~36 of {\em MSJ Memoirs}.
\newblock Mathematical Society of Japan, Tokyo, 2017.

\bibitem{Kato95}
T.~Kato.
\newblock {\em Perturbation theory for linear operators}.
\newblock Classics in Mathematics. Springer-Verlag, Berlin, 1995.
\newblock Reprint of the 1980 edition.

\bibitem{Keir18}
J.~Keir.
\newblock {The weak null condition and global existence using the $p$-weighted
  energy method}, 2018.
\newblock Preprint: https://arxiv.org/abs/1808.09982.

\bibitem{Kl80}
S.~Klainerman.
\newblock Global existence for nonlinear wave equations.
\newblock {\em Comm. Pure Appl. Math.}, 33(1):43--101, 1980.

\bibitem{Kl82}
S.~Klainerman.
\newblock Long-time behavior of solutions to nonlinear evolution equations.
\newblock {\em Arch. Rational Mech. Anal.}, 78(1):73--98, 1982.

\bibitem{Kl85}
S.~Klainerman.
\newblock Uniform decay estimates and the {L}orentz invariance of the classical
  wave equation.
\newblock {\em Comm. Pure Appl. Math.}, 38(3):321--332, 1985.

\bibitem{Kl86}
S.~Klainerman.
\newblock The null condition and global existence to nonlinear wave equations.
\newblock In {\em Nonlinear systems of partial differential equations in
  applied mathematics, {P}art 1 ({S}anta {F}e, {N}.{M}., 1984)}, volume~23 of
  {\em Lectures in Appl. Math.}, pages 293--326. Amer. Math. Soc., Providence,
  RI, 1986.

\bibitem{KlSz17}
S.~Klainerman and J.~Szeftel.
\newblock Global nonlinear stability of schwarzschild spacetime under polarized
  perturbations, 2017.

\bibitem{Lind04}
H.~Lindblad.
\newblock A remark on global existence for small initial data of the minimal
  surface equation in {M}inkowskian space time.
\newblock {\em Proc. Amer. Math. Soc.}, 132(4):1095--1102, 2004.

\bibitem{LindRod03}
H.~Lindblad and I.~Rodnianski.
\newblock The weak null condition for {E}instein's equations.
\newblock {\em C. R. Math. Acad. Sci. Paris}, 336(11):901--906, 2003.

\bibitem{LindRod05}
H.~Lindblad and I.~Rodnianski.
\newblock Global existence for the {E}instein vacuum equations in wave
  coordinates.
\newblock {\em Comm. Math. Phys.}, 256(1):43--110, 2005.

\bibitem{LindRod10}
H.~Lindblad and I.~Rodnianski.
\newblock The global stability of {M}inkowski space-time in harmonic gauge.
\newblock {\em Ann. of Math. (2)}, 171(3):1401--1477, 2010.

\bibitem{LiuZhou19}
J.~Liu and Y.~Zhou.
\newblock Uniqueness and stability of traveling waves to the time-like extremal
  hypersurface in minkowski space.
\newblock 2019.
\newblock preprint, https://arxiv.org/abs/1903.04129v1.

\bibitem{LukNotes}
J.~Luk.
\newblock {Introduction to nonlinear wave equations}, 2014.
\newblock Available online at https://web.stanford.edu/~jluk/NWnotes.pdf.

\bibitem{LuliYangYu18}
G.~K. Luli, S.~Yang, and P.~Yu.
\newblock On one-dimension semi-linear wave equations with null conditions.
\newblock {\em Adv. Math.}, 329:174--188, 2018.

\bibitem{Sogge08}
C.~D. Sogge.
\newblock {\em Lectures on non-linear wave equations}.
\newblock International Press, Boston, MA, second edition, 2008.

\bibitem{WatsonBessel}
G.~N. Watson.
\newblock {\em A treatise on the theory of {B}essel functions}.
\newblock Cambridge Mathematical Library. Cambridge University Press,
  Cambridge, 1995.
\newblock Reprint of the second (1944) edition.

\bibitem{Wong17}
W.~W.~Y. Wong.
\newblock Global existence for the minimal surface equation on {$\Bbb
  R^{1,1}$}.
\newblock {\em Proc. Amer. Math. Soc. Ser. B}, 4:47--52, 2017.

\bibitem{Zha19}
D.~Zha.
\newblock Global and almost global existence for general quasilinear wave
  equations in two space dimensions.
\newblock {\em J. Math. Pures Appl. (9)}, 123:270--299, 2019.

\bibitem{Zha20}
D.~Zha.
\newblock On one-dimension quasilinear wave equations with null conditions.
\newblock {\em Calc. Var. Partial Differential Equations}, 59(3):Paper No. 94,
  19, 2020.

\end{thebibliography}
\end{document}